\begin{document}

\title{Cardinalities in Height 1}
\author{Yifan Li}
\maketitle

\begin{abstract}
\pagenumbering{roman}
In this article, we give an introduction to the notion of $ambidexterity$ and norm map, and construct inductively the canonical norm map for $m$-truncated maps for some $m\geq-1$, on which the definitions of $integration$ and $cardinality$ are built. We then use several propositions to justify the properties of cardinality and integration and their compatibility with monoidal structure. We give a brief introduction of the definition and behaviors of $semiadditive\ height$. Focusing on stable monoidal $p$-local $\infty$-categories of height 1, for any finite group $G$, with the help of \mobius function and Burnside ring, we give an explicit decomposition of the cardinality of $BG$ into an expression of the cardinality of $BC_p$. Eventually, we generalize the result and conclude with a formula of the cardinality of any $\pi$-finite space $A$.
\end{abstract}
\newpage

\tableofcontents{}
\newpage
\pagenumbering{arabic}
\section{Introduction}

\subsection{Background}
The notion of \emph{Ambidexterity} was introduced in \cite{AmbiKn} for the study of $K(n)$-$local$ spectra. Informally speaking, a space $A$ is \catC-ambidextrous, where \catC\ is an \inftycat\  means that there is a way to sum up, or ``integrate'' $A$-family of maps from $c$ to $d$, where $c$ and $d$ are fixed objects in \catC. An \inftycat\  \catC\  is called $m$-$semiadditive$ for some $m\geq-1$ if any $m$-finite space is \catC-ambidextrous, and $\infty$-$semiadditive$ if it is $m$-semiadditive for all $m\geq -1$.

\cite{ambiSpan} continued the study of $m$-semiadditivity and proved the universality of the \inftycat\  of the spans of $m$-finite spaces among $m$-semiadditive \inftycats.

\cite{TeleAmbi} followed the work of \cite{AmbiKn}, and went further to telescopic local spectra.

\cite{AmbiHeight} used ambidexterity to give a definition of $semiadditive\ height$, which generalizes the chromatic height in certain cases. With this fundamental invariant defined, we have sufficient conditions for a stable $m$-semiadditive \inftycat\ to be $\infty$-semiadditive, and there are more interesting phenomena discovered for semiadditive height, including so-called ``semiadditive redshift'' with regard to categorification.

Before introducing the main result and the organization of this thesis, we now give an intuitive explanation of the notion of ambidexterity and higher semiadditivity, so that we could have an intuitive taste of this notion and the results of the papers above.

\subsubsection{Ambidexterity and Integration}
Let us for now consider ordinary categories. Recall the definition of semiadditive category:

\begin{defn}
Category \catC \ is semiadditive if:
\begin{itemize}
    \item \catC \ admits all, including empty, finite products and coproducts, including the empty case, with initial object $\varnothing$ and final object $\ast$
    \item for $c_1, c_2, ..., c_n\in Obj($\catC$)$, there exists natural isomorphism $\coprod_{1\leq i \leq n}c_i \to \prod_{1\leq i \leq n}c_i$. In particular, the unique map $0\to 1$ is an isomorphism, which means that \catC \ is pointed.
\end{itemize}
\end{defn}

\begin{rem}
We have a canonical candidate for the natural isomorphism in the above definition, namely the one given by the ``identity'' matrix, characterized by the following condition:
\begin{equation*}
    c_i \to \coprod_{1\leq i \leq n}c_i \to \prod_{1\leq i \leq n}c_i \to c_j = \left \{
\begin{aligned}
    \Id_{c_i}\text{ if } i = j,\\
    0_{i,j} \text{ if }i\neq j.
\end{aligned}
\right.
\end{equation*}
In fact, for \catC \ to be semiadditive, it suffices to check that the above canonical morphism just defined is an isomorphism for any $c_1, c_2, ..., c_n \in obj($\catC$)$.
\end{rem}

Being semiadditive is a property, and the main feature is that we could sum up a finite family of morphisms between two objects. For \inftycat \catC, which is enriched in topological spaces, the motivation is to generalize to get an analogous feature, so that we could sum up, or integrate a family of morphisms indexed by a space $A$ between two objects. Namely, given a space $A$ and a map
\begin{equation*}
    \phi: A \to \Map_{\mathcal{C}}(X, Y),
\end{equation*}
we want to define a map
\begin{equation*}
    \int_A \phi: X \to Y.
\end{equation*}

The construction is straightforward as we can follow the construction of finite summation of semiadditive category:
\begin{equation*}
    X \oto{\Delta} \limit{A}X \oto{\limit{A}\phi} \limit{A}Y \iso \colim{A}Y \oto{\nabla} Y,
\end{equation*}
but this requires several assumptions. Obviously, we need to assume that the appeared limits and colimits do exist, moreover, the key assumption is that there is a canonical isomorphism $\limit{A}Y \iso \colim{A}Y$, and we shall seek an inductive construction for such isomorphism.

In general, for a diagram $F: A \to \mathcal{C}$, a morphism $\eta: \colim{A}F \to \limit{A}F$ is equivalent to specifying a compatible family of morphisms
\begin{equation*}
    a, b\in A,   \eta_{a,b}: F(a)\to F(b).
\end{equation*}

Now for $a, b\in A$ fixed, and denote the space of paths from $a$ to $b$ by $A_{a, b}$, the diagram $F$ provides a diagram:\begin{equation*}
    F_{a,b} : A_{a,b} \to Map(F(a), F(b)),
\end{equation*}
and unlike the case of semiadditivity, there is a priori no canonical compatible way to choose one morphism or path from the diagram. However, by assuming that we could canonically sum up the $A_{a,b}$-indexed family of maps, then we can consider the integration
\begin{equation*}
    \eta_{a,b}=\int_{A_{a,b}}F_{a,b}: F(a)\to F(b),
\end{equation*}
then this choice is compatible, and we could equivalently construct the following map:
\begin{equation*}
    \Nm_A: \colim{A}F \to \limit{A}F,
\end{equation*}
it is called the norm map.

The above construction is in general for any diagram $F: A\to \mathcal{C}$, but in order to define the integration, we just need to consider the special case where $F$ is constant on an object $Y$. A morphism $\colim{A}Y \to \limit{A}Y$ is the same as a map of spaces
\begin{equation*}
    A \times A \to \Map_{\mathcal{C}}(Y,Y).
\end{equation*}
And for $(a,b)\in A\times A$, the $\eta_{a,b}$ constructed above is $\int_{A_{a,b}}\Id_Y$, which we denote by $\card{A_{a,b}}_Y$, and if the norm map
\begin{equation*}
    \Nm_A^Y: \colim{A}Y \to \limit{A}Y
\end{equation*}
is an isomorphism for any $Y\in \mathcal{C}$, we are finally able to sum up any family of morphisms indexed by $A$, in which case we say $A$ is \catC-ambidextrous. In fact, we prove in Lemma \ref{check constant} that if $\Nm_A^Y$ is an isomorphism for any $Y\in\mathcal{C}$, then $\Nm_A:\colim{A}F\to\limit{A}F$ is an isomorphism for general diagram $F$.

\subsubsection{Inductive Process}
As we have seen, the construction of the norm map of $A$ relies recursively on the norm map of $A_{a,b}$, therefore we can give a general construction of the norm map of $m$-finite spaces, inductively on $m$.

\begin{defn}\label{m-finite}
For $m\geq 0$, space $A$ is $m$-finite if:
\begin{itemize}
    \item $\pi_n(A)=0$ for $n>m$;
    \item $\card{\pi_0(A)}<\infty$ and $\card{\pi_n(A,a)}<\infty$ for any $n\geq 1$ and $a\in A$.
\end{itemize}
\end{defn}

We have two special cases in particular, namely, $A$ is $(-2)$-finite, if it is contractible and $(-1)$-finite, if it is contractible or empty space.

Note that it is a general fact that if $A$ is $m$-finite, $m>-2$, then its path spaces $A_{a,b}$ are $(m-1)$-finite. Therefore if we assume that for \catC, all $(m-1)$-finite spaces are \catC-ambidextrous, then we define the norm maps as above, and it is a $property$ of \catC\  that the norm maps for all $m$-finite spaces are isomorphisms, which inductively helps to define the norm maps for $(m+1)$-spaces. The property of \catC\  that all $m$-finite spaces are \catC-ambidextrous is called $m$-semiadditive, which is a generalization of the property of semiadditivity.

Let us see some low-dimensional examples:
\begin{itemize}
    \item[-2:] $(-2)$-finite spaces are contractible, the canonical norm maps are just the identity. The correspondent integration of one point is just taking the value of itself.
    \item[-1:] $(-1)$-finite spaces are either contractible or empty. The former case is treated as above. In the  latter case, the colimit is always the initial object $\varnothing$, while the limit is always the final object $\ast$. The norm maps are just the unique morphism $\varnothing\to\ast$, therefore to say \catC\ is $(-1)$-semiadditive is just saying that \catC\ is pointed.
    \item[0:] This is just the case of ordinary semiadditivity, where we start with. It is worth noticing that in the construction of norm maps
    \begin{equation*}
        \coprod_{1\leq i\leq n} c_i \to \prod_{1\leq i\leq n} c_i,
    \end{equation*}
    the use of ``identity matrix'' is exactly the inductive step of using the $(-1)$-finite integration.
    \item[1:] In this case, we may assume that $A$ is connected as then the unconnected case follows. A connected $1$-finite space is of the form $BG$ where $G$ is a finite group, and a diagram $F: BG\to \mathcal{C}$ is equivalent to a homotopy $G$-action on an object $X$ of \catC. The limit and colimit of the diagram are the homotopy invariant and coinvariant respectively, and if \catC\ is semiadditive, we have the norm map
    \begin{equation*}
        \Nm_{BG}^X: X_{hG} \to X^{hG},
    \end{equation*}
    which can be informally thought of as summing over the orbits.
\end{itemize}

\begin{rem}
In the following chapters, we are going to define norm map, ambidexterity, and $m$-semiadditivity in a more axiomatic way. The informal constructions and examples in this section are mainly for the purpose of having the initial idea and motivation of the higher generalization, which gives prototype examples that help to ``visualize'' the formal constructions and further definitions on these higher structures and properties.
\end{rem}
\newpage
\subsection{Organization}
In this subsection, we give a description of the content and purpose of each of the following sections.
\subsubsection{Section 2}
In this section, we partly follow the axiomatic framework of normed functors and integration developed in \cite{TeleAmbi} and apply the general results to our main focus, the local systems. We study various functorial properties of integration and its compatibility with the symmetric monoidal structure. We define the cardinality of a map as a special case of integration, and give propositions that help with the calculation of the cardinality.
\subsubsection{Section 3}
With the framework built in the previous section, we focus on $p$-local \inftycat\ and build the definition of semiadditive height in terms of cardinalities of certain Eilenberg-Maclane spaces. We state several main theorems in \cite{AmbiHeight}, and briefly introduce the theory of modes.
\subsubsection{Section 4}
In this section, we introduce our main algebraic tool for calculation, namely the Burnside ring and the \mobius function. We define the \mobius function in general for locally finite posets, with the classical example for integers. Then we study the Burnside ring and give a formula for its primitive idempotents with \mobius function, following the setting and method of \cite{BurnsideIdem}.
\subsubsection{Section 5}
We develop the main result of this thesis in this section. For $p$-local symmetric monoidal \inftycat\ \catC\ and space $BG$, we give a ring morphism from the Burnside ring of $G$ to the endomorphism space of the unit of \catC that encodes the cardinalities of the classifying spaces of subgroups of $G$. Then with the formula from the previous section, we can express the cardinality of $BG$ in terms of the cardinalities of classifying spaces of $p$-subgroups of $G$. In the case of height 1, we can further improve to an expression with just the cardinality of $BC_p$. Finally, we will generalize to spaces with higher homotopy structure, and finish with a formula for the cardinality of any $\pi$-finite space.

For the composition and structure of the thesis, we try our best to make it self-contained. However, there are many more great properties and insights that cannot be covered thoroughly in this thesis. Therefore, while we try to give the full proof or a description of the idea of proof, we are always willing to state and refer to other interesting and important results without proof alongside, in the belief that this helps to understand the larger picture of the theory.
\newpage
\subsection{Acknowledgments}
This article originated from my master's thesis. The deepest gratitude goes to my supervisor, Shachar Carmeli, for introducing me to the interesting field of ambidexterity and height and for the patient and generous instructions and guidance throughout this project. Working with homotopy could be confusing occasionally, but the discussions with him always inspire me and solve the problems. 


\newpage
\subsection{Terminology and Notation}
In this thesis, we work on the notations developed in \cite{htt} and \cite{HA}. For the theory of ambidexterity and height, we follow the notations used consistently in \cite{TeleAmbi} and \cite{AmbiHeight}. On the algebraic side, we use notations slightly modified from that of \cite{BurnsideIdem}.
\begin{enumerate}\addtocounter{enumi}{-1}
    \item As we are working with \inftycats, it is necessary to note that the terms are used in the correspondent sense. For example, when we say isomorphism, we mean an invertible morphism, i.e. an $equivalence$. Similarly, when we consider commutative diagrams, we think of them as homotopically commutative diagrams. Also, the pullbacks, pushforwards, and other limits and colimits of spaces are taken in the \inftycat\ $\mathcal{S}$.
    \item For functor $F: \mathcal{C}\to \mathcal{D}$, a left adjoint to $F$ is a pair $(L,u)$, with functor $L: \mathcal{D}\to\mathcal{C}$, and unit natural transformation $u: \Id_{\mathcal{D}}\to F \circ L$ as defined in \cite[Definition 5.2.2.7]{htt}.
    \item A right adjoint to $F$ is dually defined as pair $(R,c)$, with functor $R:\mathcal{D}\to\mathcal{C}$, and a counit natural transformation $c: F\circ R \to \Id_D$ defined as the dual of \cite[Definition 5.2.2.7]{htt}.
    \item For a map of spaces $q:A\to B$ and \inftycat\ \catC, $q$-limits (colimits) are the limits (colimits) indexed by the (homotopy) fibers of $q$.
    \item The definition of $m$-finite space is given in Definition \ref{m-finite}. For a map of spaces $q:A\to B$, $q$ is $m$-finite, $m\geq -2$, if the fibers of $q$ are all $m$-finite.
    \item A space or map of spaces is $\pi$-finite if it is $m$-finite for some $m\geq -2$.
    \item For a group $G$, we use $H\leq G$ to denote a subgroup $H$ of $G$.
\end{enumerate}

\newpage

\section{Norm and Ambidexterity}

\subsection{Normed Functors and Integration}
\subsubsection{Normed Functor}
We begin by reviewing the definition and basic facts of normed functors as in \cite{TeleAmbi}.

\begin{defn}
For \inftycats\ \catC\ and \catD, a normed functor
\begin{equation*}
    q:\mathcal{D}\normed \mathcal{C}
\end{equation*}
consists of functor $q^*:\mathcal{C}\to\mathcal{D}$, its left adjoint $q_!:\mathcal{D}\to\mathcal{C}$ unit $u_!^q$, right adjoint $q_*:\mathcal{D}\to\mathcal{C}$ with counit $c_*^q$, and a natural transformation
\begin{equation*}
    \Nm_q: q_!\to q_*
\end{equation*}
which we call a $norm$. If $\Nm_q$ is moreover an natural isomorphism natural transformation, $q$ is $iso$-$normed$.
\end{defn}

\begin{rem}
We shall sometimes refer directly to $q^*$ as a normed functor. As the right and left adjoints are essentially unique when they exist, this convention should not be ambiguous.
Also, we shall use $c_!^q$ and $u_*^q$ to denote the correspondent counit and unit map of $u_!^q$ and $c_*^q$ respectively. For simplicity, we shall drop the $q$-subscript when it is clear that the normed functor is $q$.
\end{rem}

\begin{note}
We have another way to demonstrate the norm map, namely, a norm $\Nm_q:q_!\to q_*$ is equivalent to a natural transformation $\nu_q: q^*q_!\to \Id$. If $q$ is iso-normed, then $\nu_q$ is a counit, exhibiting $q_!$ as a right adjoint of $q^*$. We therefore call $\nu_q$ the "wrong way counit" even when $q$ is not iso-normed.
\end{note}

The next lemma will be useful to justify the later definition of ambidexterity of spaces.

\begin{lem}\label{check constant}
For a normed functor $q:\mathcal{D}\normed\mathcal{C}$, if the norm $\Nm_q:q_!\to q_*$ is an isomorphism at $q^*X$ for all $X\in\mathcal{C}$, then $q$ is iso-normed.
\end{lem}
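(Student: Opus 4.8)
The plan is to prove that $\Nm_{q,Y}\colon q_!Y\to q_*Y$ is an equivalence for \emph{every} $Y\in\mathcal{D}$, by exhibiting both a left inverse and a right inverse to it; a morphism of an $\infty$-category admitting both is an equivalence. The two inverses will come from applying naturality of $\Nm_q$ to two canonical maps whose source or target automatically lies in the image of $q^*$: the unit $u_{!,Y}\colon Y\to q^*q_!Y$ of the adjunction $q_!\dashv q^*$, and the counit $c_{*,Y}\colon q^*q_*Y\to Y$ of the adjunction $q^*\dashv q_*$. The key point is simply that $q^*q_!Y=q^*(q_!Y)$ and $q^*q_*Y=q^*(q_*Y)$ are objects of the form $q^*X$, so by hypothesis $\Nm_q$ is invertible at each of them.

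First I would construct a retraction. Naturality of $\Nm_q$ along $u_{!,Y}$ gives the relation $q_*(u_{!,Y})\circ\Nm_{q,Y}=\Nm_{q,\,q^*q_!Y}\circ q_!(u_{!,Y})$, in which $\Nm_{q,\,q^*q_!Y}$ is an equivalence by hypothesis. Composing on the left with $\Nm_{q,\,q^*q_!Y}^{-1}$ and then with $c_{!,\,q_!Y}\colon q_!q^*q_!Y\to q_!Y$, and using the triangle identity $c_{!,\,q_!Y}\circ q_!(u_{!,Y})=\Id_{q_!Y}$ of $q_!\dashv q^*$, shows that $r:=c_{!,\,q_!Y}\circ\Nm_{q,\,q^*q_!Y}^{-1}\circ q_*(u_{!,Y})$ satisfies $r\circ\Nm_{q,Y}=\Id_{q_!Y}$; hence $\Nm_{q,Y}$ is a split monomorphism. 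Dually, naturality of $\Nm_q$ along $c_{*,Y}$ gives $\Nm_{q,Y}\circ q_!(c_{*,Y})=q_*(c_{*,Y})\circ\Nm_{q,\,q^*q_*Y}$ with $\Nm_{q,\,q^*q_*Y}$ invertible; precomposing $q_!(c_{*,Y})\circ\Nm_{q,\,q^*q_*Y}^{-1}$ with the section $u_{*,\,q_*Y}$ and using the triangle identity $q_*(c_{*,Y})\circ u_{*,\,q_*Y}=\Id_{q_*Y}$ of $q^*\dashv q_*$ produces $s$ with $\Nm_{q,Y}\circ s=\Id_{q_*Y}$; hence $\Nm_{q,Y}$ is a split epimorphism.

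Finally I would conclude by the elementary observation that a morphism with a left inverse and a right inverse is an equivalence: passing to the homotopy category, $r\simeq r\circ(\Nm_{q,Y}\circ s)\simeq(r\circ\Nm_{q,Y})\circ s\simeq s$, so $r\simeq s$ is a two-sided inverse, and a morphism of $\mathcal{D}$ is an equivalence precisely when it becomes an isomorphism in the homotopy category. Since this holds for all $Y$, the natural transformation $\Nm_q$ is an isomorphism, i.e., $q$ is iso-normed.

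This argument is a short diagram chase, so there is no substantial obstacle; the two places to be careful are (i) setting up the two naturality squares with $\Nm_q$ as the vertical maps and $q_!,q_*$ applied to $u_{!,Y}$ respectively $c_{*,Y}$ as the horizontals, and invoking the correct adjunction's triangle identity on each side, and (ii) making sure the final ``split mono $+$ split epi $\Rightarrow$ equivalence'' step is read up to coherent homotopy, which is legitimate because equivalences in an $\infty$-category are detected in the homotopy category.
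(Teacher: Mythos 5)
Your proof is correct and is essentially the paper's own argument: the two naturality squares you write down (along $u_{!,Y}$ and along $c_{*,Y}$, with the norm invertible at $q^*q_!Y$ and $q^*q_*Y$ and the two triangle identities) are exactly the two commutative diagrams in the paper's proof, followed by the same "one-sided inverses on both sides imply equivalence" conclusion.
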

\begin{proof}
Consider the following diagrams:
\begin{equation*}
    \begin{tikzcd}
        &q_!q^*q_*\arrow{r}{c_*}\arrow{d}{\Nm_q}[swap]{\sim}&q_!\arrow{d}{\Nm_q}\\
        q_*\arrow{r}{u_*}&q_*q^*q_*\arrow{r}{c_*}&q_*,
        \end{tikzcd}
    \qquad
    \begin{tikzcd}
        q_!\arrow{r}{u_!}\arrow{d}{\Nm_q}&q_!q^*q_!\arrow{r}{c_!}\arrow{d}{\Nm_q}[swap]{\sim}&q_!\\
        q_*\arrow{r}{u_!}&q_*q^*q_!.
    \end{tikzcd}
\end{equation*}
The diagrams commute from the naturality of (co)unit maps, and by the zig-zag identities, the composition along the bottom row of the left diagram is the identity. Since the left vertical arrow is an isomorphism, the left diagram shows that $\Nm_q$ has a left inverse. Similarly, the right diagram shows that $\Nm_q$ has a right inverse so $\Nm_q$ is an isomorphism.
\end{proof}

A trivial example of normed functors is just the identity normed functor $\Id:\mathcal{C}\to\mathcal{C}$, with $q^*=q_!=q_*=\Id$, and the norm is just the identity transformation. As adjunctions can be composed, we can thereby easily define the composition of normed functors.

\begin{defn}
Given a pair of functors that is composable
\begin{equation*}
    \mathcal{E}\onormed{p}\mathcal{D}\onormed{q}\mathcal{C},
\end{equation*}
we have the composition $qp:\mathcal{E}\normed\mathcal{C}$, with
\begin{equation*}
    (qp)^*=p^*q^*,\ (qp)_!=q_!p_!,\ (qp)_*=q_*p_*
\end{equation*}
and the composition of norms:
\begin{equation*}
    \Nm_{qp}:q_!p_!\oto{\Nm_q}q_*p_!\oto{\Nm_q}q_*p_*,
\end{equation*}
or equivalently the composition of wrong way counit
\begin{equation*}
    \nu_{qp}:p^*q^*q_!p_!\oto{\nu_q}p^*p_!\oto{\nu_p}\Id.
\end{equation*}
It is obvious that if $p$ and $q$ are iso-normed, so is $qp$.
\end{defn}

\subsubsection{Integration}
With an iso-normed functor, we can associate an integration operation of a map as follows.

\begin{defn}\label{int}
Let $q:\mathcal{D}\normed\mathcal{C}$ be an iso-normed functor, for each $X, Y\in \mathcal{C}$, we define the $integral$ map
\begin{equation*}
    \int_q:\Map_{\mathcal{D}}(q^*X, q^*Y)\to \Map_{\mathcal{C}}(X,Y)
\end{equation*}
to be the following composition
\begin{equation*}
\Map_{\mathcal{D}}(q^*X,q^*Y)\oto{q_*}\Map_{\mathcal{C}}(q_*q^*X,q_*q^*Y)\oto{\Nm_q^{-1}}\Map_{\mathcal{C}}(q_*q^*X,q_!q^*Y)\oto{c_!\circ-\circ u_*}\Map_{\mathcal{C}}(X,Y).
\end{equation*}
or equivalently, using the correspondent unit $\mu_q:\Id\to q_!q^*$ of the wrong way counit, as the composition
\begin{equation*}
\Map_{\mathcal{D}}(q^*X,q^*Y)\oto{q_!}\Map_{\mathcal{C}}(q_!q^*X,q_!q^*Y)\oto{c_!\circ - \circ \mu}\Map_{\mathcal{C}}(X,Y).
\end{equation*}
\end{defn}

\begin{rem}
It is obvious and worth noticing that the integral map is natural in $X$ and $Y$.
\end{rem}

In particular, we define the cardinality of the normed functor $q$.

\begin{defn}
Let $q:\mathcal{D}\to\mathcal{C}$ be an iso-normed functor, the cardinality of $q$ is defined as the natural transformation of $\Id_{\mathcal{C}}$
\begin{equation*}
    \card{q}\coloneqq c_!^q\circ \mu_q.
\end{equation*}
In particular, by comparing the two equivalent definitions in Definition \ref{int} it is easy to see that the transformation morphism at $X\in\mathcal{C}$ in $\Map_{\mathcal{C}}(X,X)$ is
\begin{equation*}
    \card{q}_X = \int_q\Id_{q^*X} = \int_q q^*\Id_X.
\end{equation*}
\end{defn}

The integration defined as above follows some nice rules with which we are familiar analogously from the integration in the sense of analysis.

\begin{prop}[homogeneity]\label{homogeneity}
Let $q:\mathcal{D}\to\mathcal{C}$ be an iso-normed functor, and $X,Y,Z\in\mathcal{C}$.
\begin{enumerate}
    \item For all maps $f:q^*X\to q^*Y$ and $g:Y\to Z$,
    \begin{equation*}
        g\circ \left(\int_qf\right) = \int_q \left(q^*g\circ f\right) \in \Map_{\mathcal{C}}(X, Z).
    \end{equation*}
    \item For all maps $f:X\to Y$ and $g:q^*Y\to q^*Z$,
    \begin{equation*}
        \left(\int_qg\right)\circ f = \int_q\left(g\circ q^*f\right)\in \Map_{\mathcal{C}}(X,Z).
    \end{equation*}
\end{enumerate}
\end{prop}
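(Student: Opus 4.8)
The plan is to prove both statements by unwinding the definition of $\int_q$ in Definition \ref{int}, using the formulation via the wrong-way unit $\mu_q:\Id\to q_!q^*$ and the counit $c_!^q:q_!q^*\to\Id$, together with the naturality of all the relevant transformations. Specifically, for a map $f:q^*X\to q^*Y$ we have $\int_q f = c_!^q \circ q_!(f) \circ \mu_q$ as a map $X\to Y$, where I am suppressing the components: $\mu_q$ at $X$, $c_!^q$ at $Y$.

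For part (1), the goal is to compare $g\circ\bigl(\int_q f\bigr)$ with $\int_q(q^*g\circ f)$ for $g:Y\to Z$. Writing out the right-hand side, $\int_q(q^*g\circ f) = c_!^q \circ q_!(q^*g\circ f)\circ\mu_q = c_!^q\circ q_!(q^*g)\circ q_!(f)\circ\mu_q$, using functoriality of $q_!$. The key point is the triangle
\begin{equation*}
    \begin{tikzcd}
        q_!q^*Y \arrow{r}{q_!q^*g}\arrow{d}[swap]{c_!^q} & q_!q^*Z \arrow{d}{c_!^q}\\
        Y \arrow{r}{g} & Z,
    \end{tikzcd}
\end{equation*}
which commutes by naturality of the counit $c_!^q:q_!q^*\to\Id$. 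Hence $c_!^q\circ q_!(q^*g) = g\circ c_!^q$, and substituting gives $\int_q(q^*g\circ f) = g\circ c_!^q\circ q_!(f)\circ\mu_q = g\circ\bigl(\int_q f\bigr)$, as desired.

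For part (2), given $f:X\to Y$ and $g:q^*Y\to q^*Z$, I expand $\int_q(g\circ q^*f) = c_!^q\circ q_!(g)\circ q_!(q^*f)\circ\mu_q$, and now the relevant square is the naturality square for $\mu_q:\Id\to q_!q^*$ applied to $f$, namely $q_!(q^*f)\circ\mu_q = \mu_q\circ f$ (components at $X$ on the right, at $Y$ after). Substituting yields $c_!^q\circ q_!(g)\circ\mu_q\circ f = \bigl(\int_q g\bigr)\circ f$. I expect no serious obstacle here: the only subtlety worth a sentence is confirming that the map called $\mu_q$ in Definition \ref{int} really is natural, which follows because it is the unit of the adjunction $q^*\dashv q_!$ exhibited by the wrong-way counit $\nu_q$ when $q$ is iso-normed (as recorded in the Note preceding Lemma \ref{check constant}), and units of adjunctions are natural transformations. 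One should also note that both identities are really statements about the component maps at the relevant objects, so strictly speaking the displayed commuting squares are instances of naturality evaluated at single objects; this is routine and I would not belabor it.
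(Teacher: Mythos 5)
Your proof is correct and takes essentially the same route as the paper: both unwind the second formula for $\int_q$ from Definition \ref{int}, using naturality of the counit $c_!^q$ for part (1) and naturality of the unit $\mu_q$ for part (2); the paper merely packages these naturality squares into two larger commutative diagrams. Your remark that $\mu_q$ is natural by construction (as the unit associated to the wrong-way counit) is an adequate justification.
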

\begin{proof}
(1) Consider the commutative diagram
\begin{equation*}
    \begin{tikzcd}
        X\arrow{r}{\mu}\arrow{d}{\mu}
        &q_!q^*X\arrow{r}{f}\arrow{d}{f}
        &q_!q^*Y\arrow{r}{c_!}\arrow{d}{g}
        &Y\arrow{d}{g}\\
        q_!q^*X\arrow{r}{f}
        &q_!q^*Y\arrow{r}{g}
        &q_!q^*Z\arrow{r}{c_!}
        &Z.
    \end{tikzcd}
\end{equation*}
The composition along the top row and then the right
column is $g\circ \left(\int_qf\right)$, while the composition along the left column and then the bottom row is $\int_q \left(q^*g\circ f\right)$.

(2) Similar as in (1), but using the commutative diagram
\begin{equation*}
    \begin{tikzcd}
        X\arrow{r}{\mu}\arrow{d}{f}
        &q_!q^*X\arrow{r}{f}\arrow{d}{f}
        &q_!q^*Y\arrow{r}{g}\arrow{d}{g}
        &q_!q^*Z\arrow{d}{c_!}\\
        Y\arrow{r}{\mu}
        &q_!q^*Y\arrow{r}{g}
        &q_!q^*Z\arrow{r}{c_!}
        &Z.
    \end{tikzcd}
\end{equation*}
\end{proof}

Also, given a pair of composable iso-normed functors $\mathcal{E}\onormed{p}\mathcal{D}\onormed{q}\mathcal{C}$, an analogy of Fubini's Theorem is satisfied.

\begin{prop}[Higher Fubini's Theorem]\label{Fubini}
For composable normed functors as above, $X,Y\in\mathcal{C}$, and $f:p^*q^*X\to p^*q^*Y$,
\begin{equation*}
    \int_q\left(\int_pf\right)=\int_{qp}f\in \Map_{\mathcal{C}}(X, Y).
\end{equation*}
\end{prop}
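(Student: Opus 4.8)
The plan is to unwind both sides using the "wrong way unit" formulation of integration from Definition~\ref{int}, namely $\int_q g = c_!^q \circ (q_! g) \circ \mu_q$, so that everything reduces to a diagram chase with units, counits, and the naturality of the various transformations. First I would recall that the composite normed functor $qp$ has $(qp)_! = q_! p_!$, $(qp)^* = p^* q^*$, and wrong-way unit $\mu_{qp}$ built from $\mu_q$ and $\mu_p$; concretely, since $\nu_{qp}$ is the composite $p^*q^*q_!p_! \xrightarrow{\nu_q} p^*p_! \xrightarrow{\nu_p} \Id$, the dual unit $\mu_{qp}\colon \Id \to q_!p_!p^*q^*$ is the composite $\Id \xrightarrow{\mu_q} q_!q^* \xrightarrow{q_!\mu_p q^*} q_!p_!p^*q^*$ (here I would want to double-check the precise form of $\mu_{qp}$ against the composition law, since the counit $c_!^{qp}$ likewise factors as $c_!^p$ followed by $c_!^q$ appropriately whiskered). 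Then $\int_{qp} f$ expands to $c_!^q \circ q_!(c_!^p) \circ q_!p_!(f) \circ q_!(\mu_p) \circ \mu_q$ after suitable whiskering by $q^*$, $q_!$, etc.

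Next I would expand the left-hand side. Writing $h \coloneqq \int_p f\colon q^*X \to q^*Y$, we have $h = c_!^p \circ p_!(f) \circ \mu_p$ (whiskered by $q^*$), and then $\int_q h = c_!^q \circ q_!(h) \circ \mu_q = c_!^q \circ q_!(c_!^p) \circ q_!p_!(f) \circ q_!(\mu_p) \circ \mu_q$. The key point is that $q_!$ is a functor, so it commutes with composition, and this makes the two expressions literally the same string of arrows. The only care needed is bookkeeping the whiskerings: $\int_p f$ is a map in $\mathcal{D}$ between $q^*X$ and $q^*Y$, so applying $q_!$ and then $c_!^q, \mu_q$ is exactly the $\int_q$ recipe, and tracing through shows this agrees termwise with the $\int_{qp}$ recipe once one knows $\mu_{qp} = (q_!\mu_p q^*)\circ \mu_q$ and the analogous factorization of $c_!^{qp}$.

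I would organize the chase as a single commutative diagram: a rectangle whose top-left corner is $X$, whose bottom-right corner is $Y$, with the top and right edges realizing $\int_q(\int_p f)$ and the left and bottom edges realizing $\int_{qp} f$, filled in by naturality squares for $\mu_q$ against the map $\int_p f$, functoriality of $q_!$, and the definition of the composite norm. Each cell commutes either by naturality of a (co)unit or by $q_!$ being a functor.

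The main obstacle is purely the coherence bookkeeping: verifying that the composite wrong-way unit $\mu_{qp}$ and composite counit $c_!^{qp}$ really do decompose as the whiskered composites I claimed, which requires chasing the definition $\nu_{qp} = \nu_p \circ (p^*\nu_q p_!)$ through the equivalence between norms $\Nm_q\colon q_!\to q_*$ and wrong-way counits $\nu_q\colon q^*q_! \to \Id$ described in the Note after the definition of normed functor. Once that decomposition is in hand — and it is forced by the zig-zag identities for the adjunction $(q_!, q^*)$ exhibited by $\nu_q$ when $q$ is iso-normed — the rest is a routine identification of two composites of the same arrows, so there is no genuine mathematical content beyond the structural identity $q_!(g \circ f) = q_!(g) \circ q_!(f)$.
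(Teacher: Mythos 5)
Your argument is correct, but it runs along a different track than the paper's. You work with the second description of the integral from Definition \ref{int}, namely $\int_q g = c_!^q\circ q_!(g)\circ \mu_q$, so that after expanding, both $\int_q\left(\int_p f\right)$ and $\int_{qp} f$ become literally the same composite $c_!^q\circ q_!(c_!^p)\circ q_!p_!(f)\circ q_!(\mu_p)\circ\mu_q$ by functoriality of $q_!$; all the content is then concentrated in identifying the composite counit $c_!^{qp}$ and the unit $\mu_{qp}$ associated to $\nu_{qp}=\nu_p\circ(p^*\nu_q p_!)$ with the corresponding whiskered composites, which holds because adjunctions compose with whiskered (co)units and the unit compatible with a given counit is essentially unique. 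The paper instead uses the first description, via $u_*$, $\Nm^{-1}$ and $c_!$: it assembles a single commutative diagram in which the inverse of $\Nm_{qp}$ is decomposed as $\Nm_p^{-1}$ followed by $\Nm_q^{-1}$ (the definition of the composite norm), and the remaining cells commute by naturality of the counit and the definition of $\int_p f$. The two proofs carry the same information; yours is somewhat more economical once the composite-adjunction bookkeeping is granted (and in the $\infty$-categorical setting that step is best justified by uniqueness of the unit attached to a given counit, rather than a literal one-categorical check of the triangle identities), while the paper's chase stays entirely with the data explicitly provided in the definition of composition of normed functors and so never needs that uniqueness statement.
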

\begin{proof}
Consider the following diagram
\begin{equation*}
    \begin{tikzcd}
        &q_*p_*p^*q^*X\arrow{r}{f}
        &q_*p_*p^*q^*Y\arrow{r}{\Nm_{qp}}\arrow[swap]{d}{\Nm_p^{-1}}
        &q_!p_!p^*q^*Y\arrow{rd}{c_!^{qp}}\arrow[equal]{d}\\
        X\arrow{ur}{u_*^{qp}}\arrow{dr}{u_*^q}
        &&q_*p_!p^*q^*Y\arrow{r}{\Nm_q^{-1}}\arrow[swap]{d}{c_!^p}
        &q_!p_!p^*q^*Y\arrow[swap]{d}{c_!^p}
        &Y.\\
        &q_*q^*X\arrow{r}{\int_p f}\arrow{uu}{u_*^p}
        &q_*q^*Y\arrow{r}{\Nm_q^{-1}}
        &q_!q^*Y\arrow{ur}{c_!^q}
    \end{tikzcd}
\end{equation*}
The triangles and top right square commute by the definition of composition of normed functors, the bottom right square commutes by the naturality of counit, and the left square commute by the definition of $\int_pf$. The composition along the top path from $X$ to $Y$ is $\int_{qp}f$, while the composition along the bottom path is $\int_q\left(\int_pf\right)$, therefore they are isomorphic.
\end{proof}

\subsubsection{Beck-Chevalley Conditions and Ambidextrous Squares}

The definitions in this section are in form of commutative diagrams, which seem a bit complicated at first glance. However, they lead to the simple and important result at the end of the section. First, let us look at a commutative square involving adjoint functors.

\begin{defn}
As defined in \cite{htt}, a commutative diagram of functors
\begin{equation*}\tag{$\square$}
    \begin{tikzcd}
    \mathcal{C} \arrow{r}{F_{\mathcal{C}}} \arrow[swap]{d}{q^*} & \title{\mathcal{C}} \arrow{d}{\tilde{q}^*} \\%
\mathcal{D} \arrow{r}{F_{\mathcal{D}}}& \tilde{\mathcal{D}}
    \end{tikzcd}
\end{equation*}
is equivalently a natural isomorphism $F_{\mathcal{D}}q^*\iso \tilde{q}^*F_{\mathcal{C}}$. If the vertical functors admit left adjoints $q_!\leftadj q^*$ and $\tilde{q}_!\leftadj \tilde{q}^*$, we define the $\bc_!$ (Beck-Chevalley) natural transformation
\begin{equation*}
    \beta_!:\tilde{q}_!F_{\mathcal{D}}\oto{\mu_!^q}\tilde{q}_!F_{\mathcal{D}}q^*q_!\iso \tilde{q}_!\tilde{q}^*F_{\mathcal{C}}q_!\oto{c^{\tilde{q}}_!}F_{\mathcal{C}}q_!.
\end{equation*}
Dually, if the vertical functors admit right adjoints $q^*\leftadj q_*$ and $\tilde{q}^*\leftadj \tilde{q}_*$, we define the $\bc_*$ natural transformation
\begin{equation*}
    \beta_*:F_{\mathcal{C}}q_*\oto{u_*^{\tilde{q}}}\tilde{q}_*\tilde{q}^*F_{\mathcal{C}}q_*\iso\tilde{q}_*F_{\mathcal{D}}q^*q_*\oto{c_*^q}\tilde{q}_*F_{\mathcal{D}}.
\end{equation*}
\end{defn}

With the two natural transformations, we can define the following $property$ of $\square$.

\begin{defn}
Consider the same commutative square $\square$, we say it satisfies the $\bc_!$ (resp. $\bc_*$) condition, if $q$ and $\tilde{q}$ both admit left adjoints (resp. right adjoints), and the natural transformation $\beta_!$ (resp. $\beta_*$) is an isomorphism.
\end{defn}

$\beta_!$ and $\beta_*$ are compatible with units and counits of the adjoints, in the sense that the following diagrams are commutative.

\begin{lem}\label{BCtriangle}
\begin{enumerate}
    \item When $q^*$ and $\qtil^*$ admit left adjoints, the following two diagrams are commutative.
    \begin{equation*}
\begin{tikzcd}
    &\qtil^*\qtil_!F_{\mathcal{D}}\arrow{d}{\beta_!}\\
    F_{\mathcal{D}}\arrow{ur}{u_!^{\tilde{q}}}\arrow[swap]{rd}{u_!^q}&\qtil^*F_{\mathcal{C}}q_!\arrow{d}{\wr}\\
    &F_{\mathcal{D}}q^*q_!
\end{tikzcd}
\qquad\qquad\qquad
\begin{tikzcd}
    \qtil_!\qtil^*F_{\mathcal{C}}\arrow{d}{\wr}\arrow{dr}{c_!^{\qtil}}\\
    \qtil_!F_{\mathcal{D}}q^*\arrow{d}{\beta_!}&F_{\mathcal{C}}\\
    F_{\mathcal{C}}q_!q^*\arrow[swap]{ru}{c_!^q}
\end{tikzcd}
    \end{equation*}
    \item Dually, when $q^*$ and $\qtil^*$ admit right adjoints, the following two diagrams are commutative.
    \begin{equation*}
        \begin{tikzcd}
            &F_{\mathcal{C}}q^*q_*\arrow{d}{\beta_*}\\
            F_{\mathcal{C}}\arrow{ur}{u_*^q}\arrow[swap]{dr}{u_*^{\qtil}}&\qtil_*F_{\mathcal{D}}q^*\arrow{d}{\wr}\\
            &\qtil_*\qtil^*F_{\mathcal{C}}
        \end{tikzcd}
\qquad\qquad\qquad
        \begin{tikzcd}
            F_{\mathcal{D}}q^*q_*\arrow{d}{\wr}\arrow{dr}{c_*^q}\\
            \qtil^*F_{\mathcal{C}}q_*\arrow{d}{\beta_*}&F_{\mathcal{D}}\\
            \qtil^*\qtil_*F_{\mathcal{D}}\arrow[swap]{ru}{c_*^{\qtil}}
        \end{tikzcd}
    \end{equation*}
\end{enumerate}
\end{lem}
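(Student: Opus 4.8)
The plan is to reduce part (2) to part (1) by duality, and then to prove part (1) by unwinding the definition of $\beta_!$ and carrying out a short diagram chase built from the naturality of units and counits together with one application of a zig-zag identity. For the reduction: passing to opposite categories sends the square $\square$ to a square of the same shape, interchanges left adjoints with right adjoints and units with counits, and under this dictionary $\beta_!$ corresponds to $\beta_*$ --- indeed $\beta_*$ is assembled from $u_*^{\tilde{q}}$, the isomorphism of $\square$, and $c_*^q$ in exactly the pattern by which $\beta_!$ is assembled from (a whiskering of) $c_!^{\tilde{q}}$, the isomorphism of $\square$, and $u_!^q$. Hence the two commuting triangles of part (1) become the two commuting triangles of part (2), and it suffices to treat part (1).

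For part (1), recall that $\beta_!\colon\tilde{q}_!F_{\mathcal{D}}\to F_{\mathcal{C}}q_!$ is by definition the composite of a whiskering $\mu_!^q$ of the unit $u_!^q$ of $q_!\dashv q^*$, followed by a whiskering of the isomorphism $F_{\mathcal{D}}q^*\iso\tilde{q}^*F_{\mathcal{C}}$ encoded by $\square$, followed by a whiskering of the counit $c_!^{\tilde{q}}$ of $\tilde{q}_!\dashv\tilde{q}^*$. For the left-hand triangle I would precompose $\tilde{q}^*\beta_!$ with the unit $u_!^{\tilde{q}}$ and then use naturality of $u_!^{\tilde{q}}\colon\Id\Rightarrow\tilde{q}^*\tilde{q}_!$ to slide this unit past the whiskered unit $\mu_!^q$ and then past the whiskered isomorphism of $\square$, so that it becomes adjacent to the whiskered counit $\tilde{q}^*c_!^{\tilde{q}}$; the composite $(\tilde{q}^*c_!^{\tilde{q}})\circ(u_!^{\tilde{q}}\tilde{q}^*)$ is the identity by the zig-zag identity for $\tilde{q}_!\dashv\tilde{q}^*$, the isomorphism of $\square$ then cancels against its inverse (the vertical equivalence of the diagram), and what remains is precisely the whiskered unit $u_!^q$, the other leg of the triangle. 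For the right-hand triangle, dually, I would postcompose $\beta_!$ (whiskered by $q^*$) with the counit $c_!^q$ and precompose with the inverse of the isomorphism of $\square$; using naturality of $c_!^{\tilde{q}}$ to pull it out to the front and naturality (the interchange law) of the isomorphism of $\square$ to commute it past $c_!^q$, one is reduced to the composite $(q^*c_!^q)\circ(u_!^q q^*)$, which is the identity by the zig-zag identity for $q_!\dashv q^*$; what survives is $c_!^{\tilde{q}}$, as desired.

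The only genuine difficulty is the bookkeeping: keeping careful track of all the whiskerings and of the isomorphism of $\square$ together with its inverse as they are transported through the chain of unit and counit maps, and checking that the correct zig-zag identity is invoked in each of the four cases. There is no deeper obstacle, because $\beta_!$, $\beta_*$, the units, the counits, and the zig-zag identities all make sense in the homotopy $2$-category of $\infty$-categories, where the classical calculus of mates applies verbatim; indeed $\beta_!$ and $\beta_*$ are exactly the mates of the isomorphism of $\square$ (respectively of its inverse) along the two adjunctions, and the four triangles are among the standard compatibilities of a mate with the units and counits used to form it, so one could alternatively just quote that general fact.
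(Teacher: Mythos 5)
Your argument is correct and is essentially the paper's own proof, which simply invokes the definitions of $\beta_!$ and $\beta_*$ together with the zig-zag identities; you have merely spelled out the diagram chase (and the duality reduction and the mate-calculus remark) in more detail. No gap.
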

\begin{proof}
Direct from the definition of $\beta_*$ and $\beta_!$, and the zig-zag identities.
\end{proof}

Now we move on and focus on commutative squares involving normed functors.

\begin{defn}
A $normed\ square$ is a commutative diagram of the form:
\begin{equation*}\tag{$\ast$}
    \begin{tikzcd}
    \mathcal{C}\arrow{r}{F_{\mathcal{C}}}\arrow[swap]{d}{q^*}&\tilde{\mathcal{C}}\arrow{d}{\tilde{q}^*}\\
    \mathcal{D}\arrow{r}{F_{\mathcal{D}}}&\tilde{\mathcal{D}},
    \end{tikzcd}
\end{equation*}
where $q:\mathcal{D}\normed\mathcal{C}$ and $\tilde{q}:\tilde{\mathcal{D}}\normed\tilde{\mathcal{C}}$ are normed functors.
It is $iso$-$normed$ if $q$ and $\tilde{q}$ are iso-normed.
\end{defn}

Given a normed square $(\ast)$, we have the associated $norm$-$diagram$ of functors, which does not necessarily commute:
\begin{equation*}\tag{$\diamond$}
    \begin{tikzcd}
    F_{\mathcal{C}}q_!\arrow{r}{\Nm_q}&F_{\mathcal{C}}q_*\arrow{d}{\beta_*}\\
    \tilde{q}_!F_{\mathcal{D}}\arrow{u}{\beta_!}\arrow{r}{\Nm_{\tilde{q}}}&\tilde{q}_*F_{\mathcal{D}}.
    \end{tikzcd}
\end{equation*}

The commutativity of $(\diamond)$ is a property of the normed square $(\ast)$, as given in the following definition.

\begin{defn}
A normed square is called $weakly\ ambidextrous$ if the associated norm diagram $\diamond$ commutes. An ambidextrous square is a weakly ambidextrous square that is iso-normed.
\end{defn}

\begin{rem}
It is immediate from the diagram $(\diamond)$ that an ambidextrous square satisfies the $\bc_!$ condition if and only if it satisfies the $\bc_*$ condition.
\end{rem}

\begin{note}
We shall sometimes say directly that a diagram $(\ast)$ is normed which implies that the vertical functors are normed. Similarly, we say $(\ast)$ is (weakly) ambidextrous which means that it is already normed by assumption.
\end{note}

As with previous definitions involving norms, we can define the property of being (weakly) ambidextrous using the wrong-way counit and its associated unit, which is justified by the following lemma.

\begin{lem}
Consider commutative square $(\ast)$, the diagram
\begin{equation*}\tag{$\triangleright$}
\begin{tikzcd}
    F_{\mathcal{D}}q^*q_!\arrow{dr}{\nu_q}\\
    \tilde{q}^*F_{\mathcal{C}}q_!\arrow{u}{\wr}&F_{\mathcal{D}}\\
    \tilde{q}^*\tilde{q}_!F_{\mathcal{D}}\arrow{u}{\beta_!}\arrow{ur}[swap]{\nu_{\tilde{q}}}
\end{tikzcd}
\end{equation*}
commutes if and only if $(\diamond)$ commutes.
\end{lem}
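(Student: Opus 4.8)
The plan is to transpose the norm diagram $(\diamond)$ across the adjunction $\qtil^*\leftadj\qtil_*$. Both $(\diamond)$ and $(\triangleright)$ assert that two natural transformations agree: $(\diamond)$ lives among transformations $\qtil_!F_{\mathcal{D}}\to\qtil_*F_{\mathcal{D}}$, while $(\triangleright)$ lives among transformations $\qtil^*\qtil_!F_{\mathcal{D}}\to F_{\mathcal{D}}$, and these two mapping spaces are identified by the adjunction. So the statement will follow once I show that this identification carries each leg of $(\diamond)$ to the corresponding leg of $(\triangleright)$; since the identification is an equivalence of mapping spaces, in particular a bijection on connected components, this yields the ``if and only if'' in one stroke.

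First I would fix the transposition explicitly. Post-composition with $\qtil^*$ and with $\qtil_*$ are adjoint functors on the appropriate functor categories, so $\alpha\mapsto c_*^{\qtil}F_{\mathcal{D}}\circ\qtil^*\alpha$ is the bijection from transformations $\qtil_!F_{\mathcal{D}}\to\qtil_*F_{\mathcal{D}}$ to transformations $\qtil^*\qtil_!F_{\mathcal{D}}\to F_{\mathcal{D}}$. I would also recall, as in the Note following the definition of a normed functor, that the wrong-way counit is recovered from the norm by $\nu_q=c_*^q\circ q^*\Nm_q$ (and likewise $\nu_{\qtil}=c_*^{\qtil}\circ\qtil^*\Nm_{\qtil}$), and write $\theta\colon\qtil^*F_{\mathcal{C}}\iso F_{\mathcal{D}}q^*$ for the isomorphism exhibiting the commutativity of $(\ast)$; this $\theta$ is exactly the isomorphism appearing in the vertical maps of $(\triangleright)$ and of Lemma \ref{BCtriangle}.

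Now I would chase the two legs. The diagonal of $(\diamond)$ is $\Nm_{\qtil}F_{\mathcal{D}}$, and its transpose is $(c_*^{\qtil}\circ\qtil^*\Nm_{\qtil})F_{\mathcal{D}}=\nu_{\qtil}F_{\mathcal{D}}$ by the definition of $\nu_{\qtil}$, which is precisely the diagonal of $(\triangleright)$. For the other leg, $\beta_*\circ F_{\mathcal{C}}\Nm_q\circ\beta_!$, I would transpose piece by piece: after applying $\qtil^*$ and then $c_*^{\qtil}F_{\mathcal{D}}$, the leftmost part $c_*^{\qtil}F_{\mathcal{D}}\circ\qtil^*\beta_*$ is rewritten, using the second commuting triangle of Lemma \ref{BCtriangle}(2), as $F_{\mathcal{D}}c_*^q\circ\theta q_*$; then naturality of $\theta$ turns $\theta q_*\circ\qtil^*F_{\mathcal{C}}\Nm_q$ into $F_{\mathcal{D}}q^*\Nm_q\circ\theta q_!$; and finally $F_{\mathcal{D}}c_*^q\circ F_{\mathcal{D}}q^*\Nm_q=F_{\mathcal{D}}(c_*^q\circ q^*\Nm_q)=F_{\mathcal{D}}\nu_q$. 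Reassembling, the transpose of this leg is $F_{\mathcal{D}}\nu_q\circ\theta q_!\circ\qtil^*\beta_!$, which is exactly the composite up the left side and along the diagonal of $(\triangleright)$.

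The conceptual content is entirely in the first two paragraphs; the only place that requires care is the term-by-term computation of the last paragraph, where one must keep the whiskerings straight and, above all, insert the comparison isomorphism $\theta$ of $(\ast)$ on the correct side when invoking Lemma \ref{BCtriangle}(2) — recall that $\beta_*$ is itself defined with this isomorphism folded in, so it is easy to double-count it or to drop it. Once that bookkeeping is done, there is nothing further to check.
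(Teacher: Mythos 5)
Your proof is correct and takes essentially the same route as the paper: the paper's diagram chase is precisely your transposition across $\qtil^*\leftadj\qtil_*$ (there justified by the zig-zag identities), combined with the second triangle of Lemma \ref{BCtriangle}(2) and the naturality of the comparison isomorphism of $(\ast)$ applied to $\Nm_q$. The identification of the transposes of the two legs of $(\diamond)$ with the two legs of $(\triangleright)$ matches the paper's computation, so nothing further is needed.
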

\begin{proof}
For diagram $(\diamond)$, it commutes if $\beta_*\Nm_q\beta_!$ and $\Nm_{\tilde{q}}$ are isomorphic, consider the following diagram
\begin{equation*}
    \begin{tikzcd}
        F_{\mathcal{D}}q^*q_!\arrow{r}{\Nm_q}
        &F_{\mathcal{D}}q^*q_*\arrow{d}{\sim}\arrow{dr}{c_*}\\
        \tilde{q}^*F_{\mathcal{C}}q_!\arrow{u}{\sim}\arrow{r}{\Nm_q}
        &\tilde{q}^*F_{\mathcal{C}}q_*\arrow{d}{\beta_*}&F_{\mathcal{D}},\\
        \tilde{q}^*\tilde{q}_!F_{\mathcal{D}}\arrow{r}{\Nm_{\tilde{q}}}\arrow{u}{\beta_!}
        &\tilde{q}^*\tilde{q}_*F_{\mathcal{D}}\arrow[swap]{ur}{\tilde{c}_*}
    \end{tikzcd}
\end{equation*}
by the zig-zag identities, $(\diamond)$ commutes if and only if the bottom square commutes after post-composing with $\tilde{c}_*$. From Lemma \ref{BCtriangle} the triangle commutes, and the upper square commutes obviously. Note that the composition on top $F_{\mathcal{D}}q^*q_!\to F_{\mathcal{D}}$ is $F_{\mathcal{D}}\nu_q$, and the composition on bottom $\tilde{q}^*\tilde{q}_*F_{\mathcal{D}}\to F_{\mathcal{D}}$ is $\nu_{\tilde{q}}F_{\mathcal{D}}$, thus we have $(\diamond)$ commutes if and only if the two maps $\tilde{q}^*\tilde{q}_*F_{\mathcal{D}}\to F_{\mathcal{D}}$ are isomorphic, that is if and only if $(\triangleright)$ commutes.
\end{proof}

We conclude this section with the interaction between integration and ambidextrous squares, which is the main feature we mentioned at the beginning.

\begin{prop}\label{ambiBC}
Consider again the normed square $(\ast)$, assume that it is ambidextrous and satisfies the $\bc_!$ condition (hence the $\bc_*$ condition). For all $X,Y\in \mathcal{C}$ and $f:q^*X\to q^*Y$, we have
\begin{equation*}
    F_{\mathcal{C}}\left(\int_qf\right)=\int_{\qtil}F_{\mathcal{D}}\left(f\right)\in \Map_{\tilde{\mathcal{C}}}(F_{\mathcal{C}}X,F_{\mathcal{C}}Y).
\end{equation*}
In particular, for all $X\in\mathcal{C}$,
\begin{equation*}
    F_{\mathcal{C}}\left(\card{q}_X\right)=\card{\qtil}_{F_{\mathcal{C}}\left(X\right)}\in \Map_{\tilde{\mathcal{C}}}(F_{\mathcal{C}}X,F_{\mathcal{C}}Y).
\end{equation*}
\end{prop}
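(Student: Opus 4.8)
The plan is to unwind the definition of $\int_q$ using the wrong-way counit formulation (the second composition in Definition~\ref{int}), namely that $\int_q f$ is the composite $X \xrightarrow{\mu_q} q_!q^*X \xrightarrow{f} q_!q^*Y \xrightarrow{c_!^q} Y$, and similarly $\int_{\qtil} F_{\mathcal D}(f)$ is $F_{\mathcal C}X \xrightarrow{\mu_{\qtil}} \qtil_!\qtil^*F_{\mathcal C}X \xrightarrow{F_{\mathcal D}(f)} \qtil_!\qtil^*F_{\mathcal C}Y \xrightarrow{c_!^{\qtil}} F_{\mathcal C}Y$, where I have silently used the commutativity isomorphism $F_{\mathcal D}q^* \iso \qtil^*F_{\mathcal C}$ of the square $(\ast)$ to identify $\qtil^*F_{\mathcal C}X$ with $F_{\mathcal D}q^*X$ and hence to make sense of $F_{\mathcal D}(f)$ as a map $\qtil^*F_{\mathcal C}X \to \qtil^*F_{\mathcal C}Y$. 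Then I would apply $F_{\mathcal C}$ to the first composite and compare it termwise to the second via the Beck--Chevalley transformation $\beta_!: \qtil_! F_{\mathcal D} \iso F_{\mathcal C}q_!$ (an isomorphism by the $\bc_!$ hypothesis).

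Concretely, the argument is a single commutative diagram. The middle square
\begin{equation*}
\begin{tikzcd}
\qtil_!\qtil^*F_{\mathcal C}X \arrow{r}{F_{\mathcal D}(f)} \arrow{d}{\wr} & \qtil_!\qtil^*F_{\mathcal C}Y \arrow{d}{\wr}\\
\qtil_!F_{\mathcal D}q^*X \arrow{r}{F_{\mathcal D}(f)} \arrow{d}{\beta_!} & \qtil_!F_{\mathcal D}q^*Y \arrow{d}{\beta_!}\\
F_{\mathcal C}q_!q^*X \arrow{r}{F_{\mathcal C}(f)} & F_{\mathcal C}q_!q^*Y
\end{tikzcd}
\end{equation*}
commutes by naturality of the commutativity isomorphism and naturality of $\beta_!$. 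For the two ends I would invoke Lemma~\ref{BCtriangle}: its left diagram (part (1), read with the commutativity iso of $(\ast)$) identifies the composite $F_{\mathcal C}X \xrightarrow{u_!^{\qtil}} \qtil^*\qtil_!F_{\mathcal C}X \to \dots$ — or rather, transposing, it tells us $\beta_! \circ \mu_{\qtil}$ agrees with $F_{\mathcal C}(\mu_q)$ up to the coherence isomorphism; and its right diagram says $c_!^q \circ \beta_! = c_!^{\qtil}$ after the same identification. Stacking the head triangle (from $\mu$), the middle square, and the tail triangle (to $c_!$) then gives exactly $F_{\mathcal C}(\int_q f) = \int_{\qtil} F_{\mathcal D}(f)$. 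The particular case of cardinalities is immediate: take $X = Y$, $f = q^*\Id_X$, and note $F_{\mathcal D}(q^*\Id_X)$ corresponds under the coherence iso to $\qtil^*\Id_{F_{\mathcal C}X} = \qtil^*(F_{\mathcal C}\Id_X)$, so the general statement specializes to $F_{\mathcal C}(\card q_X) = \card{\qtil}_{F_{\mathcal C}X}$.

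The only genuinely delicate point — and the step I expect to absorb most of the care — is the bookkeeping of the coherence isomorphism $F_{\mathcal D}q^* \iso \qtil^*F_{\mathcal C}$ from $(\ast)$. It appears implicitly every time we write $F_{\mathcal D}(f)$ for a morphism whose source and target are literally $\qtil^*F_{\mathcal C}X$ and $\qtil^*F_{\mathcal C}Y$ but which we wish to feed into $\qtil_!$; one must check that the vertical isomorphisms in the middle square are precisely the ones induced by this coherence iso applied to $X$ and to $Y$, and that Lemma~\ref{BCtriangle} is being applied with the matching convention. I do not anticipate needing the ambidexterity or $\bc_*$ hypotheses at all for this formulation — they are what make $\int_q$ well-defined via the other (norm-based) description in Definition~\ref{int} and guarantee $\beta_!$ is an iso iff $\beta_*$ is, but since we are given $\bc_!$ directly and are free to use the wrong-way-counit description of $\int_q$, the proof goes through using only $\beta_!$ being an isomorphism together with the two commutativities of Lemma~\ref{BCtriangle}. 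If one instead insists on the norm-based description, one would run the same diagram chase through the square $(\diamond)$, using weak ambidexterity to commute the norms past the Beck--Chevalley maps; that route is also fine but strictly longer.
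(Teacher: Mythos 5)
There is a genuine gap, and it sits exactly where you say you expect the care to be absorbed --- but it is not a bookkeeping issue with the coherence isomorphism. Your middle square (naturality of $\beta_!$ and of the commutativity iso) and your tail triangle (which is indeed the right-hand diagram of Lemma~\ref{BCtriangle}(1)) are fine. The problem is the head triangle: you claim that Lemma~\ref{BCtriangle} identifies $\beta_!\circ\mu_{\qtil}$ with $F_{\mathcal{C}}(\mu_q)$. It does not. The left diagram of Lemma~\ref{BCtriangle}(1) concerns $u_!^q:\Id\to q^*q_!$ and $u_!^{\qtil}:\Id\to\qtil^*\qtil_!$, the units of the genuine adjunctions $q_!\leftadj q^*$ and $\qtil_!\leftadj\qtil^*$; it is proved from the zig-zag identities alone and never sees the norms. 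By contrast, $\mu_q:\Id\to q_!q^*$ is the unit associated to the wrong-way counit $\nu_q$, which is built from $\Nm_q$. The compatibility of $\mu_q$ and $\mu_{\qtil}$ across $\beta_!$ is equivalent (by transposition) to the compatibility of $\nu_q$ and $\nu_{\qtil}$, i.e.\ to the commutativity of the diagram $(\triangleright)$, which the lemma preceding this proposition shows is equivalent to the weak ambidexterity of the square. So the hypothesis you explicitly discard is precisely the one that makes your head triangle commute.

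That the hypothesis cannot be dropped is easy to see: take $F_{\mathcal{C}}=\Id_{\mathcal{C}}$, $F_{\mathcal{D}}=\Id_{\mathcal{D}}$, and let $q$ and $\qtil$ have the same underlying adjoints but two different iso-norms. Then $(\ast)$ commutes, $\beta_!$ is the identity, and $\bc_!$ holds, yet $\int_q f\neq\int_{\qtil}f$ in general since the integral depends on the norm. For comparison, the paper's proof runs the norm-based description of $\int_q$ through a diagram using both $\beta_*$ and $\beta_!$, and invokes the ambidexterity assumption exactly once, to commute $\Nm_q^{-1}$ past the Beck--Chevalley maps (its ``top right square commutes by assumption''). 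Your wrong-way-counit route is a perfectly workable dual formulation --- but the ambidexterity input reappears there as the commutativity of $(\triangleright)$, and must be cited, not omitted.
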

\begin{proof}
The second equation is just the first applied to map $f=q^*Id_X$.

For the first equation, construct the following diagram:
\begin{equation*}
    \begin{tikzcd}
        &F_{\mathcal{C}}q_*q^*X\arrow{d}{\beta_*}[swap]{\sim}\arrow{r}{f}
        &F_{\mathcal{C}}q_*q^*Y\arrow{d}{\beta_*}[swap]{\sim}\arrow{r}{\Nm_q^{-1}}
        &F_{\mathcal{C}}q_!q^*Y\arrow{dr}{c_!}\\
        F_{\mathcal{C}}X\arrow{ru}{u_*}\arrow[swap]{dr}{\tilde{u}_*}
        &\tilde{q}_*F_{\mathcal{D}}q^*X\arrow{r}{f}\arrow[dash]{d}{\sim}
        &\tilde{q}_*F_{\mathcal{D}}q^*Y\arrow[dash]{d}{\sim}\arrow{r}{\Nm_{\tilde{q}}^{-1}}
        &\tilde{q}_!F_{\mathcal{D}}q^*Y\arrow[dash]{d}{\sim}\arrow{u}{\beta_!}[swap]{\sim}
        &F_{\mathcal{C}}Y.\\
        &\tilde{q}_*\tilde{q}^*F_{\mathcal{C}}X\arrow{r}{f}
        &\tilde{q}_*\tilde{q}^*F_{\mathcal{C}}Y\arrow{r}{\Nm_{\tilde{q}}^{-1}}
        &\tilde{q}_!\tilde{q}^*F_{\mathcal{C}}Y\arrow[swap]{ur}{\tilde{c_!}}
    \end{tikzcd}
\end{equation*}
The triangles commute by Lemma \ref{BCtriangle}, the top right square commutes by assumption, and the rest squares commute for obvious reasons. The composition along top path gives $F_{\mathcal{C}}\left(\int_qf\right)$ and the composition along bottom path gives $\int_{\qtil}F_{\mathcal{D}}\left(f\right)$, so they are isomorphic.
\end{proof}

\subsubsection{Monoidal Structure}
In this section, we study norms and integration with the existence of (symmetric) monoidal structures on the source and target \inftycats.

As in the previous section, we first define the compatibility condition to work with.

\begin{defn}\label{defn tensor normed}
Let \catC\  and \catD\  be monoidal \inftycats, and consider a normed functor $q:\mathcal{D}\normed\mathcal{C}$. $q$ is called a $\otimes$-$normed\ functor$, if $q^*$ is monoidal (and hence $q_!$ is oplax or colax monoidal), and for all $Y\in\mathcal{D}$ and $X\in\mathcal{C}$, the compositions of the canonical maps
\begin{equation*}
    q_!(Y\otimes(q^*X))\to(q_!Y)\otimes(q_!q^*X)\oto{\Id\otimes c_!}(q_!Y)\otimes X
\end{equation*}
and
\begin{equation*}
    q_!((q^*X)\otimes Y)\to(q_!q^*X)\otimes(q_!Y)\oto{c_!\otimes\Id}X\otimes(q_!Y)
\end{equation*}
are isomorphisms.
\end{defn}

\begin{rem}
Note that the composition of the canonical maps in the definition above does not involve the norm map, and them being isomorphisms is actually a property of $q^*$.
\end{rem}

We conclude this section with a proposition indicating that the interaction of the integral map and monoidal structure is very compatible, and we will explore this interaction more concretely in the next section.

\begin{prop}[{\cite[Proposition 2.3.4]{TeleAmbi}}]\label{general iso-normed}
Let $q:\mathcal{D}\normed\mathcal{C}$ be a $\otimes$-normed functor, the following are equivalent:
\begin{enumerate}
    \item $q$ is iso-normed;
    \item $\Nm_q$ is an isomorphism at $q^*\one_{\mathcal{C}}$.
\end{enumerate}
\end{prop}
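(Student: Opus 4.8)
The plan is to show the implication $(2) \Rightarrow (1)$, since $(1) \Rightarrow (2)$ is trivial (an isomorphism of natural transformations is in particular an isomorphism at the object $q^*\one_{\mathcal C}$). By Lemma \ref{check constant}, it suffices to prove that if $\Nm_q$ is an isomorphism at $q^*\one_{\mathcal C}$, then $\Nm_q$ is an isomorphism at $q^*X$ for \emph{every} $X \in \mathcal C$. So the whole question reduces to propagating invertibility from the unit object to an arbitrary object, and the mechanism for doing this should be the monoidal structure together with the $\otimes$-normed hypothesis.

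First I would exploit the fact that $q^*$ is monoidal, so $q^*X \simeq q^*(\one_{\mathcal C} \otimes X) \simeq (q^*\one_{\mathcal C}) \otimes (q^*X)$, and more usefully that $q^*X$ is a retract of — or better, directly expressible via — tensoring $q^*\one_{\mathcal C}$ with something. The cleanest route: consider the normed functor structure and the wrong-way counit $\nu_q : q^*q_! \to \Id_{\mathcal D}$, and note that $\Nm_q$ being invertible at $q^*X$ is equivalent to $\nu_q$ being the counit of an adjunction $q_! \leftadj q^*$ after restricting attention to the relevant piece. I would then write down, for $Y \in \mathcal D$, the canonical comparison map from the $\otimes$-normed condition: the composite $q_!(Y \otimes q^*X) \to (q_!Y) \otimes (q_!q^*X) \xrightarrow{\Id \otimes c_!} (q_!Y)\otimes X$ is an isomorphism by hypothesis. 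Taking $X = \one_{\mathcal C}$ here, together with the assumed invertibility of $\Nm_q$ at $q^*\one_{\mathcal C}$, should let me identify $q_! q^* X$, for general $X$, with $(q_!\one_{\mathcal D}) \otimes X$ compatibly with the structure maps, and similarly $q_* q^* X$ — and then the norm at $q^*X$ becomes the norm at $q^*\one_{\mathcal C}$ tensored with $\Id_X$, hence an isomorphism.

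Concretely, the key steps in order: (i) reduce to $(2)\Rightarrow(1)$ and, via Lemma \ref{check constant}, to checking invertibility of $\Nm_q$ at all $q^*X$; (ii) use monoidality of $q^*$ and the projection-formula-type isomorphisms of Definition \ref{defn tensor normed} to express $q_!q^*X$ as $(q_!q^*\one_{\mathcal C}) \otimes X$, naturally in $X$; (iii) argue the dual statement for $q_*$ — this is the delicate point, since $q_*$ is only lax monoidal a priori, so I would instead get at $q_*q^*X$ indirectly, for instance by using that $\Nm_q$ at $q^*\one_{\mathcal C}$ is invertible to \emph{define} the right-hand side comparison, or by dualizing the projection formula through the norm itself; (iv) check that under these identifications $\Nm_q$ at $q^*X$ corresponds to $(\Nm_q \text{ at } q^*\one_{\mathcal C}) \otimes \Id_X$; (v) conclude invertibility at every $q^*X$, then invoke Lemma \ref{check constant}.

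The main obstacle I anticipate is step (iii): the $\otimes$-normed hypothesis gives a genuine projection formula for $q_!$ but says nothing directly about $q_*$, which need not be even oplax monoidal. The trick will be to route the argument for $q_*$ through $q_!$ and the norm — that is, rather than proving $q_*q^*X \simeq (q_*q^*\one_{\mathcal C}) \otimes X$ on the nose, observe that once $\Nm_q$ is invertible at $q^*\one_{\mathcal C}$, the functor $q_!q^*(-)$ \emph{is} canonically $q_*q^*(-)$ on the unit, and then use naturality and the projection formula to transport this identification along $X$, so that the comparison $q_!q^*X \to q_*q^*X$ is forced to be $\big(q_!q^*\one_{\mathcal C} \xrightarrow{\sim} q_*q^*\one_{\mathcal C}\big) \otimes \Id_X$. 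Making this bootstrapping precise — keeping track of which maps are the canonical ones and verifying the requisite coherence — is where the real work lies; everything else is formal manipulation of adjunction units and counits of the kind already carried out in Lemma \ref{check constant} and Proposition \ref{ambiBC}.
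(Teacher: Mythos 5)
The paper itself does not prove this proposition; it is quoted from \cite{TeleAmbi}, so there is no internal proof to compare against. The remark following the statement --- about a map $\epsilon:\one_q\otimes\one_q\to\one_{\mathcal{C}}$ exhibiting $\one_q := q_!q^*\one_{\mathcal{C}}$ as self-dual --- points at exactly the ingredient your sketch is missing. Your reduction is right: $(1)\Rightarrow(2)$ is trivial, and by Lemma \ref{check constant} it suffices to show $\Nm_q$ is invertible at $q^*X$ for every $X\in\mathcal{C}$. Your step (ii) is also correct: monoidality of $q^*$ together with the projection formula of Definition \ref{defn tensor normed} applied with $Y=q^*\one_{\mathcal{C}}$ gives $q_!q^*X\simeq \one_q\otimes X$ naturally in $X$.

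The gap is exactly where you locate it, but the repair you propose does not close it. There is no morphism $q^*\one_{\mathcal{C}}\to q^*X$ along which naturality of $\Nm_q$ could ``transport'' invertibility, and $q_*$ is only lax monoidal, so the canonical comparison $q_*Y\otimes X\to q_*(Y\otimes q^*X)$ points the wrong way and is not invertible a priori; ``naturality plus the projection formula'' cannot by itself produce an identification $q_*q^*X\simeq(q_*q^*\one_{\mathcal{C}})\otimes X$, nor force the norm at $q^*X$ to be the norm at $q^*\one_{\mathcal{C}}$ tensored with $\Id_X$. The mechanism that actually consumes the hypothesis is duality: invertibility of $\Nm_q$ at $q^*\one_{\mathcal{C}}$ is equivalent to $\epsilon$ exhibiting $\one_q$ as self-dual, and the resulting coevaluation $\one_{\mathcal{C}}\to\one_q\otimes\one_q$ is what lets one build a unit $\Id\to\one_q\otimes(-)\simeq q_!q^*$ satisfying the triangle identities against the wrong-way counit $\nu_q$. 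This exhibits $q_!$ as a right adjoint of $q^*$, hence $q_!\simeq q_*$, and one then checks that this comparison is $\Nm_q$. (Equivalently: $q_!q^*\simeq\one_q\otimes(-)$ has right adjoint $q_*q^*$, while self-duality makes $\one_q\otimes(-)$ its own right adjoint, so uniqueness of adjoints identifies $q_*q^*$ with $\one_q\otimes(-)$.) Without producing the coevaluation from the hypothesis, the bootstrapping in your step (iii) has no starting point, so as written the argument is incomplete at its crucial step.
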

\begin{rem}
The Proposition 2.3.4 of \cite{TeleAmbi} actually gives another equivalent statement, which defines a map $\epsilon:\one_q\otimes \one_q\to\one_{\mathcal{C}}$ and says that $\epsilon$ exhibits $\one_q$ as a self dual object in \catC. The notion of dual objects follows from \cite{DualHist} and \cite{TopFieldTheory}. We will not use this interesting perspective though. 
\end{rem}

\subsection{Canonical Norms and Higher Semiadditivity}
We have established the basic formal framework of normed functors, in this section we specify to the case of normed functors between local systems, and define inductively the canonical norms for local systems on truncated spaces and the notion of $m$-semiadditivity. We use the general results from the previous section to develop their corresponding specific version.

\subsubsection{Canonical Norms}
Let \catC\  be an \inftycat, and $A$ a space, regarded as an $\infty$-groupoid. We call $\Fun(A,\mathcal{C})$ the \inftycat\ of \catC-valued $local\ systems$ on $A$. Consider a map of spaces $q:A\to B$, and assume that \catC\ admits all $q$-limits and $q$-colimits.

Now consider the functor given by pre-composition by $q$:
\begin{equation*}
    q^*:\Fun(B,\mathcal{C})\to\Fun(A,\mathcal{C}).
\end{equation*}
As \catC\ admits all $q$-limits and $q$-colimits, $q^*$ have left and right adjoint, given by left and right Kan extension respectively. Our goal in this section is to define the canonical norm map $\Nm_q:q_!\to q_*$. First we introduce the tools for inductive construction.

\begin{defn}[Base Change]\label{base change}
For an \inftycat\ \catC\ and pullback square spaces
\begin{equation*}\tag{$\ast$}
    \begin{tikzcd}
        \tilde{A}\arrow{r}{s_A}\arrow{d}{\tilde{q}}&A\arrow{d}{q}\\
        \tilde{B}\arrow{r}{s_B}&B,
    \end{tikzcd}
\end{equation*}
the associated $base$-$change$ square (valued in \catC) is
\begin{equation*}\tag{$\square$}
    \begin{tikzcd}
        \Fun(B,\mathcal{C})\arrow{r}{s_B^*}\arrow{d}{q^*}&\Fun(\tilde{B},\mathcal{C})\arrow{d}{\tilde{q}^*}\\
        \Fun(A,\mathcal{C})\arrow{r}{s_A^*}&\Fun(\tilde{A},\mathcal{C}).
    \end{tikzcd}
\end{equation*}
\end{defn}

\begin{lem}[{{\cite[Lemma 3.1.2]{TeleAmbi}}}]\label{inductive lemma}
Using the same notation as above. If \catC\  admits all $q$-colimits (resp. $q$-limits), then $\square$ satisfies the $\bc_!$ (resp. $\bc_*$) condition.
\end{lem}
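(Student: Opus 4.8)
The plan is to prove the $\bc_!$ statement; the $\bc_*$ case is then formally dual (pass to opposite categories, which swaps left and right Kan extensions and swaps $q$-colimits with $q$-limits). So assume $\mathcal{C}$ admits all $q$-colimits, so that $q^*$ and $\tilde q^*$ both have left adjoints $q_!$, $\tilde q_!$ given by left Kan extension, and we must show the Beck--Chevalley transformation $\beta_!:\tilde q_! s_B^* \to s_A^* q_!$ is an isomorphism. Since the target category $\Fun(\tilde A,\mathcal{C})$ has its (co)limits computed pointwise, it suffices to check that $\beta_!$ is an isomorphism after evaluation at each point $\tilde a \in \tilde A$; equivalently, since $s_A^*$ is evaluation-compatible, one reduces to comparing two colimits over fibers.

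The key computation is the pointwise formula for left Kan extension: for $F \in \Fun(A,\mathcal{C})$ and $b \in B$, one has $(q_! F)(b) \simeq \colim_{A_b} F|_{A_b}$, where $A_b = \{b\}\times_B A$ is the fiber of $q$ over $b$ (this uses that $\mathcal{C}$ admits $q$-colimits, i.e. colimits over the fibers of $q$). Now fix $\tilde a \in \tilde A$ with image $\tilde b = \tilde q(\tilde a) \in \tilde B$ and $b = s_B(\tilde b) \in B$. Evaluating the two sides of $\beta_!$ at $\tilde a$: the right-hand side $(s_A^* q_! F)(\tilde a) = (q_! F)(s_A(\tilde a)) \simeq \colim_{A_b} F|_{A_b}$, while the left-hand side $(\tilde q_! s_B^* F)(\tilde a) \simeq \colim_{\tilde A_{\tilde b}} (s_B^* F)|_{\tilde A_{\tilde b}} = \colim_{\tilde A_{\tilde b}} F \circ s_A|_{\tilde A_{\tilde b}}$. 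Because $(\ast)$ is a pullback square of spaces, the induced map on fibers $\tilde A_{\tilde b} \to A_b$ is an equivalence of spaces (the fiber of $\tilde q$ over $\tilde b$ is $\tilde b \times_{\tilde B}\tilde A \simeq \tilde b\times_{\tilde B} \tilde B\times_B A \simeq \tilde b \times_B A \simeq b\times_B A = A_b$). Hence the two colimit diagrams agree up to reindexing along an equivalence, so the comparison map between them is an equivalence. The remaining — and genuinely careful — step is to check that under these identifications the comparison map produced by the colimits is precisely $\beta_!$ evaluated at $\tilde a$, i.e. that $\beta_!$, which is defined abstractly via unit and counit of the Kan extension adjunctions, really is the reindexing map; this is where I would be most careful.

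To nail that last point I would argue as in Lemma \ref{BCtriangle}: $\beta_!$ is characterized by its interaction with the units $u_!^q$, $u_!^{\tilde q}$ and the counits. Concretely, one can identify $\beta_!$ with the mate of the canonical isomorphism $F_{\mathcal D} q^* \simeq \tilde q^* F_{\mathcal C}$ filling the square, and mates of a specified natural isomorphism between left-adjointable functors are computed by the universal property of the colimit; tracking this through the pointwise colimit formulas shows $\beta_!$ at $\tilde a$ is exactly the map induced on $\colim$ by the fiber equivalence $\tilde A_{\tilde b} \xrightarrow{\sim} A_b$ together with the identity natural transformation $F\circ s_A|_{\tilde A_{\tilde b}} \simeq F|_{A_b}$ along it. Since an equivalence of indexing $\infty$-groupoids induces an equivalence on colimits (cofinality, or simply that $\Fun(-, \mathcal{C})$ sends equivalences to equivalences and colimit is functorial), $\beta_!$ is an isomorphism at every $\tilde a$, hence an isomorphism.

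The main obstacle, as indicated, is not the colimit bookkeeping but the identification of the abstractly-defined $\beta_!$ with the concrete reindexing map; the cleanest route is to cite the standard fact that for a pullback square of spaces the base-change square is left adjointable (this is essentially the statement, and is proved in \cite{htt} in the guise of compatibility of Kan extensions with base change), so in a write-up I would either invoke that directly or, if a self-contained argument is wanted, spell out the mate calculation using Lemma \ref{BCtriangle} as the organizing identity. Everything else — pointwise formula for Kan extensions, the fiber-level equivalence coming from the pullback, invariance of colimits under equivalence of diagram shape — is routine.
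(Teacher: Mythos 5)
The paper does not prove this lemma --- it is quoted verbatim from \cite[Lemma~3.1.2]{TeleAmbi} --- so there is no internal proof to compare against; judged on its own, your argument is the standard one and is essentially the proof given in the cited reference. The three ingredients are exactly right: the pointwise formula $(q_!F)(b)\simeq \mathrm{colim}_{A_b}F|_{A_b}$ (valid because for an $\infty$-groupoid $B$ the slice $B_{/b}$ is contractible, so the comma category computing the Kan extension collapses to the fiber), the equivalence of fibers $\tilde{A}_{\tilde{b}}\simeq A_{s_B(\tilde{b})}$ coming from the pullback, and the identification of $\beta_!$ with the induced reindexing map, which you correctly single out as the only non-routine step and for which citing the adjointability of base-change squares in \cite{htt} (or running the mate calculation through Lemma~\ref{BCtriangle}) is the right move. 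The dualization to $\bc_*$ via $\Fun(A,\mathcal{C})^{op}\simeq\Fun(A,\mathcal{C}^{op})$ is also fine since $A$ is a groupoid. One bookkeeping error to fix in a write-up: with the paper's conventions the Beck--Chevalley map here is $\beta_!:\tilde{q}_!s_A^*\to s_B^*q_!$, a transformation of functors $\Fun(A,\mathcal{C})\to\Fun(\tilde{B},\mathcal{C})$, so it must be evaluated at points $\tilde{b}\in\tilde{B}$, not at points $\tilde{a}\in\tilde{A}$; you have systematically swapped $s_A$ with $s_B$ and $\tilde a$ with $\tilde b$ (e.g.\ $(q_!F)(s_A(\tilde a))$ does not typecheck). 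The fiberwise comparison you then carry out is the correct one, so this is a transcription slip rather than a gap.
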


Now for map of spaces $q:A\to B$, consider the diagonal map $\delta:A\to A\times_B A$. It is in general true that if $q$ is $m$-truncated, $m\geq -1$, then $\delta$ is $(m-1)$-truncated. Let \catC\ be an \inftycat\ that admits all $q$-(co)limits and $\delta$-(co)limits, and assume the existence of an iso-norm
\begin{equation*}
    \Nm_{\delta}:\delta_!\to\delta_*,
\end{equation*}
and consider the commutative diagram
\begin{equation*}
    \begin{tikzcd}
        A\arrow{dr}{\delta}
        \arrow[equal, bend left]{drr}
        \arrow[equal, bend right]{ddr}\\
        &A\times_BA\arrow{r}{\pi_1}
        \arrow{d}{\pi_2}&A
        \arrow{d}{q}\\
        &A\arrow{r}{q}&B,
    \end{tikzcd}
\end{equation*}
denote the unit associated to the wrong way counit of $\Nm_{\delta}$ by $\mu_{\delta}:\Id\to\sigma_!\sigma^*$, and by Lemma \ref{inductive lemma}, $\beta_!$ for $\square$ is an isomorphism, therefore we define the $diagonally\ induced$ norm
\begin{equation*}
    \Nm_q^{diag}:q_!\to q_*,
\end{equation*}
or equivalently with the wrong way counit by the following composition:
\begin{equation*}
    \nu_q:q^*q_!\oto{\beta_!^{-1}}(\pi_2)_!\pi_1^*\oto{\mu_{\delta}}(\pi_2)_!\delta_!\delta^*\pi_1^*\iso\Id.
\end{equation*}
\begin{rem}
As explained in \cite[Remark~4.1.9]{AmbiKn}, the appearance of $\mu_{\delta}$ in the above composition can be interpreted that the diagonally induced norm map $\Nm_q^{diag}$ is obtained by integrating the identity map along $\delta$.
\end{rem}
With the above construction, we can inductively define the canonical norm map.
\begin{defn}
For \inftycat\ \catC\ and integer $m\geq 2$, and map $q:A\to B$. Assume that \catC\ admits all $q$-(co)limits, we say $q$ is:
\begin{enumerate}
    \item $(-2)$-$\mathcal{C}$-ambidextrous, if $q$ is $(-2)$-truncated, namely an isomorphism. In this case, the inverse of $q^*$ is both the left and right adjoint, and the canonical norm is just the identity of the inverse, which is clearly an isomorphism.
    \item weakly $m$-$\mathcal{C}$-ambidextrous, $m\geq -1$, if $q$ is $m$-truncated, and the diagonal $A\oto{\delta}A\times_BA$ of $q$ is $(m-1)$-$\mathcal{C}$-ambidextrous. In this case, we define the canonical norm to be the diagonally induced norm $\Nm_q^{diag}$.
    \item $m$-$\mathcal{C}$-ambidextrous, $m\geq -1$, if the canonical norm is an isomorphism.
\end{enumerate}
\end{defn}
\begin{rem}
By Lemma \ref{check constant}, for a weakly $m$-\catC-ambidextrous map $q:A\to B$ to be $m$-\catC-ambidextrous, it suffices to check that the canonical norm is an isomorphism at $q^*X$ for all $X\in\Fun(B,\mathcal{C})$. In particular, for $q: A\to \pt$, it suffices to check that the canonical norm is an isomorphism at all constant functors in $\Fun(A,\mathcal{C})$, which justifies the informal construction in the section of background introduction.
\end{rem}
\begin{note}
A map of spaces $q:A\to B$ is (weakly) $\mathcal{C}$-ambidextrous, if it is (weakly) $m$-$\mathcal{C}$-ambidextrous for some $m\geq -2$.
\end{note}
\begin{rem}
Note that we can define in general that a truncated map $q:A\to B$ is weakly $\mathcal{C}$-ambidextrous if the diagonal map $\delta$ is $\mathcal{C}$-ambidextrous. However, we believe that it is better to keep track of the truncation and give a clear emphasis on the index $m$ in the definition.
\end{rem}
\begin{defn}
When $q:A\to B$ is weakly $\mathcal{C}$-ambidextrous, we define the associated $canonical$ $normed\ functor$
\begin{equation*}
    q_{\mathcal{C}}^{\can}:\Fun(A,\mathcal{C})\normed\Fun(B,\mathcal{C})
\end{equation*}
with \begin{equation*}
    (q_{\mathcal{C}}^{\can})^*=q^*,\quad (q_{\mathcal{C}}^{\can})_!=q_!,\quad
    (q_{\mathcal{C}}^{\can})_*=q_*,
\end{equation*}
and the norm $\Nm_{q_{\mathcal{C}}^{\can}}:q_!\to q_*$ to be the canonical norm.
\end{defn}
\begin{note}
When \catC\ is understood, we use simplified notation $q^{\can}$ for $q_{\mathcal{C}}^{\can}$, and $\Nm_q$ for $\Nm_{q_{\mathcal{C}}^{\can}}$. Also, we write $\int_q$ and $\card{q}$ instead of $\int_{q^{can}}$ and $\card{q^{can}}$. For $q:A\to \pt$, we say $A$ is \catC-ambidextrous if $q$ is \catC-ambidextrous, and we write $\int_A$ and $\card{A}$ instead of $int_q$ and $\card{q}$.
\end{note}
In light of \cite[Proposition~3.1.9]{TeleAmbi}, the canonical norms are compatible with identity, composition, and base change of maps of spaces. We skip the discussion of isomorphism (identity) as it is included in the definition of $(-2)$-ambidextrous.

\begin{prop}[{\cite[Proposition 3.1.9]{TeleAmbi}}]\label{ambisquare}
Let \catC\ be an \inftycat.
\begin{enumerate}
    \item (Composition) Given (weakly) \catC-ambidextrous maps $q:A\to B$ and $p:B\to C$, the composition $pg:A\to C$ is (weakly) \catC-ambidextrous, and $(pq)^{\can}$ is identified with $p^{\can}q^{\can}$;
    \item (Base Change) Consider pullback square $(\ast)$ as in Definition \ref{base change}, then if $q$ is (weakly) \catC-ambidextrous, $\tilde{q}$ is (weakly) \catC-ambidextrous, and the associated base-change square $\square$ is (weakly) ambidextrous.
\end{enumerate}
\end{prop}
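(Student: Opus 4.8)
The plan is to prove both statements by induction on the truncation level $m$, mirroring the inductive definition of the canonical norm, and to reduce everything to the formal results already established for abstract normed functors---chiefly Lemma \ref{check constant}, Lemma \ref{BCtriangle}, and the general machinery of ambidextrous squares satisfying Beck--Chevalley. For the base case $m=-2$, both $q$ and (in the base-change situation) $\qtil$ are isomorphisms, the canonical norms are identities, and all assertions are trivial. For the inductive step I would assume the proposition holds for all $(m-1)$-truncated $\mathcal{C}$-ambidextrous maps and prove it for $m$-truncated ones, using that the diagonal $\delta$ of an $m$-truncated map is $(m-1)$-truncated, and that pullbacks of $m$-truncated maps are $m$-truncated while diagonals are compatible with base change.

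First I would treat Base Change. Given the pullback square $(\ast)$ with $q$ weakly $\mathcal{C}$-ambidextrous, one forms the pullback of diagonals: the diagonal $\tilde\delta:\tilde A\to\tilde A\times_{\tilde B}\tilde A$ is the base change of $\delta:A\to A\times_B A$ along the induced map $s_A\times_{s_B} s_A$. By the inductive hypothesis applied to $\delta$ (which is $(m-1)$-$\mathcal{C}$-ambidextrous), $\tilde\delta$ is $(m-1)$-$\mathcal{C}$-ambidextrous, so $\tilde q$ is at least weakly $m$-$\mathcal{C}$-ambidextrous and has a diagonally induced norm. To see that the base-change square $\square$ is weakly ambidextrous I would unwind the definition of $\Nm_q^{diag}$ via the wrong-way counit $\nu_q$ built from $\mu_\delta$, use Lemma \ref{inductive lemma} to know the relevant $\bc_!$ transformations are isomorphisms, and then paste the naturality/base-change compatibilities (the inductive hypothesis for $\delta$ provides precisely that the diagonal base-change square is weakly ambidextrous) to identify $\beta_*\circ\Nm_q\circ\beta_!$ with $\Nm_{\tilde q}$ in the norm-diagram $(\diamond)$. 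Finally, if $q$ is actually $m$-$\mathcal{C}$-ambidextrous (norm an isomorphism), then since $\square$ is weakly ambidextrous and satisfies $\bc_!$, the norm-diagram shows $\Nm_{\tilde q}=\beta_*\circ\Nm_q\circ\beta_!$ is a composite of isomorphisms, so $\tilde q$ is $m$-$\mathcal{C}$-ambidextrous and $\square$ is ambidextrous.

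Next I would handle Composition. Given $q:A\to B$ and $p:B\to C$ both $m$-$\mathcal{C}$-ambidextrous, I want $pq$ to be $m$-$\mathcal{C}$-ambidextrous with $(pq)^{\can}\simeq p^{\can}q^{\can}$ as normed functors. The diagonal of $pq$ fits into a factorization $A\xrightarrow{\delta_q} A\times_B A\to A\times_C A$, where the second map is a base change of $\delta_p:B\to B\times_C B$ along $A\times_C A\to B\times_C B$; by the Base Change part just proved and the inductive hypothesis, both pieces are $(m-1)$-$\mathcal{C}$-ambidextrous, hence so is their composite by the inductive hypothesis applied at level $m-1$. This shows $pq$ is weakly $m$-$\mathcal{C}$-ambidextrous. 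The identification $(pq)^{\can}\simeq p^{\can}q^{\can}$ of the norms then follows by comparing the wrong-way counits: $\nu_{pq}$ is built from $\mu$ of the composite diagonal, and one expands this using the factorization above together with the compatibility of $\mu_\delta$ with composition and base change (again the inductive hypothesis, plus Lemma \ref{BCtriangle} to commute the Beck--Chevalley triangles). Once the norms agree, $\Nm_{pq}=\Nm_{p^{\can}q^{\can}}=\Nm_p\cdot p_*(\Nm_q)$ (the composite norm), which is an isomorphism since $\Nm_p$ and $\Nm_q$ are; so $pq$ is $m$-$\mathcal{C}$-ambidextrous.

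The main obstacle I expect is the bookkeeping in the inductive step: carefully identifying the diagonal of a composite and of a pullback with the appropriate composites/base changes of the original diagonals, and then tracking how $\mu_\delta$ (equivalently $\Nm_\delta$) transforms under these operations so that the wrong-way counits $\nu_{pq}$, resp. $\nu_{\tilde q}$, come out to the expected composites. All the genuinely ``homotopical'' content is packaged in Lemma \ref{inductive lemma} (base-change squares of spaces satisfy Beck--Chevalley) and the inductive hypothesis at level $m-1$; the work is to assemble these into commuting diagrams of the shapes $(\diamond)$ and $(\triangleright)$. I would not grind through every pasting diagram here, but would indicate the key ones and cite \cite[Proposition~3.1.9]{TeleAmbi} for the full verification.
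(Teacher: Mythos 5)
Your overall strategy --- induction on the truncation level, identifying the diagonal of a pullback with the base change of the original diagonal, and the diagonal of a composite with $\delta_q$ followed by a base change of $\delta_p$ --- is exactly the standard route of the cited reference; the paper itself gives no proof and simply points to \cite[Proposition~3.1.9]{TeleAmbi}. The composition half and the ``weakly ambidextrous'' halves of both statements are sound in outline.

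There is, however, a genuine gap in your final step of Base Change. The commuting norm diagram $(\diamond)$ for the base-change square identifies the \emph{whiskered} transformation $\Nm_{\tilde{q}}\,s_A^*$ with $\beta_*\circ s_B^*(\Nm_q)\circ\beta_!$. Since $\beta_!,\beta_*$ are invertible (Lemma \ref{inductive lemma}) and $\Nm_q$ is invertible, this shows only that $\Nm_{\tilde{q}}$ is an isomorphism at objects in the essential image of $s_A^*:\Fun(A,\mathcal{C})\to\Fun(\tilde{A},\mathcal{C})$. By Lemma \ref{check constant} you must check $\Nm_{\tilde{q}}$ at $\tilde{q}^*X$ for every $X\in\Fun(\tilde{B},\mathcal{C})$, and such objects are not generally pulled back from $A$ (already for $s_B:\pt\to B$ the functor $s_A^*$ restricts a local system to a single fiber, which is far from essentially surjective). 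The standard repair is either to work with Hopkins--Lurie's definition, in which ``$n$-ambidextrous'' quantifies over \emph{all} pullbacks so that base-change stability is built into the definition, or to invoke the fiberwise detection of ambidexterity (\cite[Proposition~4.3.5]{AmbiKn}, alluded to in a remark in the paper): the fibers of $\tilde{q}$ are among the fibers of $q$, whence $\tilde{q}$ is ambidextrous. Either way, an input beyond the norm diagram is required, and your sketch does not supply it. A second, minor point: in the composition step the map $A\times_BA\to A\times_CA$ is a base change of the $(m-1)$-truncated map $\delta_p$, so you are using Base Change at level $m-1$ (the inductive hypothesis), not the level-$m$ statement ``just proved''; the induction still closes, but the bookkeeping should say so.
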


We end this section with the definition of higher semiadditivity.

\begin{defn}
Let $m\geq -2$ be an integer, and consider \inftycat\ \catC. \catC\ is called $m$-semiadditive, if it admits all $m$-finite (co)limits and every $m$-finite map of spaces is \catC-ambidextrous. It is called $\infty$-semiadditive if it is $m$-semiadditive for all $m\geq -2$.
\end{defn}

\begin{rem}
Note that in the above definition we focus on $m$-finite maps, which is a restriction of $m$-truncated maps as we saw in the definition of canonical norms.
\end{rem}

\subsubsection{Integration}
Having constructed a family of canonically normed functors, we immediately obtain the integration for canonically iso-normed functors. In this section, we apply the previously developed integration theory to these functors.

\begin{rem}
It is clear that when \catC\ is $m$-semiadditive and $p:A\to \pt$ is $m$-finite for $m=-2,-1,0$, the integration is just as in the examples given in the background introduction.
\end{rem}

First recall the homogeneity Proposition \ref{homogeneity} and the analogy of Fubini's theorem \ref{Fubini}, which immediately specifies to the following propositions.

\begin{prop}\label{specified homo}
Let \catC\ be an \inftycat, and consider the map of spaces $q:A\to B$. Let $X, Y, Z\in \Fun(B,\mathcal{C})$.
\begin{enumerate}
    \item For all maps $f:q^*X\to q^*Y$ and $g:Y\to Z$,
    \begin{equation*}
        g\circ \left(\int_qf\right) = \int_q \left(q^*g\circ f\right) \in \Map_{\Fun(B,\mathcal{C})}(X, Z).
    \end{equation*}
    \item For all maps $f:X\to Y$ and $g:q^*Y\to q^*Z$,
    \begin{equation*}
        \left(\int_qg\right)\circ f = \int_q\left(g\circ q^*f\right)\in \Map_{\Fun(B,\mathcal{C})}(X,Z).
    \end{equation*}
\end{enumerate}
\end{prop}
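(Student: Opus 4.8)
The plan is to deduce Proposition \ref{specified homo} directly from the general homogeneity statement, Proposition \ref{homogeneity}, by applying it to a suitably chosen iso-normed functor. First I would observe that the content of the statement only makes sense, and only claims anything, under the implicit hypothesis that $q$ is weakly \catC-ambidextrous (otherwise there is no canonical norm and no integration operation); in that case the canonical normed functor $q^{\can}\colon\Fun(A,\mathcal{C})\normed\Fun(B,\mathcal{C})$ is iso-normed by definition, so the integral map $\int_{q}\coloneqq\int_{q^{\can}}$ is the one produced by Definition \ref{int}.

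With that in place, I would set $\mathcal{D}'=\Fun(A,\mathcal{C})$ and $\mathcal{C}'=\Fun(B,\mathcal{C})$ and apply Proposition \ref{homogeneity} verbatim to the iso-normed functor $q^{\can}\colon\mathcal{D}'\to\mathcal{C}'$, with the objects $X,Y,Z$ now taken to be objects of $\mathcal{C}'=\Fun(B,\mathcal{C})$. Part (1) of Proposition \ref{homogeneity}, reading $(q^{\can})^*=q^*$, gives for $f\colon q^*X\to q^*Y$ and $g\colon Y\to Z$ the identity $g\circ\bigl(\int_{q^{\can}}f\bigr)=\int_{q^{\can}}\bigl(q^*g\circ f\bigr)$ inside $\Map_{\Fun(B,\mathcal{C})}(X,Z)$, which is exactly clause (1) once we unwind the notational convention $\int_q=\int_{q^{\can}}$. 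Clause (2) is likewise the specialization of part (2) of Proposition \ref{homogeneity}.

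So there is essentially no new mathematics to carry out: the proof is the single sentence ``apply Proposition \ref{homogeneity} to the canonically iso-normed functor $q^{\can}$,'' and the only thing worth spelling out is that the hypotheses of Proposition \ref{homogeneity} are met, i.e.\ that $q^{\can}$ is iso-normed, which is precisely the running assumption that $q$ is (weakly) \catC-ambidextrous together with the definition of the canonical normed functor. If one wanted to be more self-contained one could instead reproduce the two commuting squares from the proof of Proposition \ref{homogeneity} with $q_!$, $\mu$, $c_!$ now standing for the left Kan extension, the unit associated to the wrong-way counit of the canonical norm, and the counit of $q_!\dashv q^*$; the diagrams are literally the same, only the ambient \inftycats\ have changed.

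The main (and only) obstacle is bookkeeping rather than substance: one must make sure the variance is right — in Proposition \ref{homogeneity} the objects $X,Y,Z$ live in the \emph{target} \catC\ of the iso-normed functor, and here the target is $\Fun(B,\mathcal{C})$, so $X,Y,Z\in\Fun(B,\mathcal{C})$ as stated; the maps $f$ are maps in $\Fun(A,\mathcal{C})$ of the form $q^*X\to q^*Y$, and $g$ is a map in $\Fun(B,\mathcal{C})$. Getting these placements straight is the whole of the work.
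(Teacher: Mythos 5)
Your proposal is correct and is exactly the paper's own proof: the paper's argument is the single sentence ``apply Proposition \ref{homogeneity} to the normed functor $q^{\can}:\Fun(A,\mathcal{C})\normed\Fun(B,\mathcal{C})$.'' Your additional remarks about the implicit ambidexterity hypothesis and the placement of $X,Y,Z$ in the target category are accurate clarifications of the same argument.
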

\begin{proof}
    Apply Proposition Proposition \ref{homogeneity} to the normed functor $q^{\can}:\Fun(A,\mathcal{C})\normed\Fun(B,\mathcal{C})$.
\end{proof}

\begin{prop}\label{specified fubini}
Let \catC\ be an \inftycat\ and consider the composition of maps of spaces
\begin{equation*}
    A\oto{p}B\oto{q}C,
\end{equation*}
then assume that $p$ and $q$ are both \catC-ambidextrous, for all $X,Y\in \Fun(C,\mathcal{C})$, and $f:p^*q^*X\to p^*q^*Y$, we have
\begin{equation*}
    \int_{qp}f = \int_q\left(\int_pf\right)\in \Map(X,Y).
\end{equation*}
\end{prop}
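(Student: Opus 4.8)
The plan is to reduce Proposition \ref{specified fubini} to the abstract Higher Fubini's Theorem (Proposition \ref{Fubini}) already proved for composable normed functors. The key observation is that the composition of maps of spaces $A \oto{p} B \oto{q} C$ induces, under the assumption that $p$ and $q$ are \catC-ambidextrous, a composable pair of iso-normed functors $\Fun(A,\mathcal{C}) \onormed{p^{\can}} \Fun(B,\mathcal{C}) \onormed{q^{\can}} \Fun(C,\mathcal{C})$. So the statement should follow by unwinding definitions and quoting the abstract result.

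First I would invoke Proposition \ref{ambisquare}(1) (Composition): since $p$ and $q$ are \catC-ambidextrous, the composite $qp : A \to C$ is \catC-ambidextrous, and moreover the canonical normed functor $(qp)^{\can}$ is \emph{identified with the composite normed functor} $q^{\can} p^{\can}$ in the sense of the composition of normed functors defined earlier in the excerpt (matching $(q^{\can}p^{\can})^* = p^* q^*$, $(q^{\can}p^{\can})_! = q_! p_!$, $(q^{\can}p^{\can})_* = q_* p_*$, and the composed norm). In particular all three functors $p^{\can}$, $q^{\can}$, and $(qp)^{\can}$ are iso-normed, so the integrals $\int_p$, $\int_q$, and $\int_{qp}$ are all defined in the sense of Definition \ref{int}. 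Then I would simply apply Proposition \ref{Fubini} to the composable pair $\Fun(A,\mathcal{C}) \onormed{p^{\can}} \Fun(B,\mathcal{C}) \onormed{q^{\can}} \Fun(C,\mathcal{C})$, which gives $\int_{q^{\can}}\bigl(\int_{p^{\can}} f\bigr) = \int_{q^{\can}p^{\can}} f$ for any $f : p^* q^* X \to p^* q^* Y$, and rewrite this using the identification $q^{\can}p^{\can} = (qp)^{\can}$ together with the notational convention $\int_p := \int_{p^{\can}}$, etc., to obtain exactly $\int_{qp} f = \int_q\bigl(\int_p f\bigr)$.

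The only subtlety — and the step I expect to be the main obstacle to making this fully rigorous rather than a one-line citation — is checking that the identification of $(qp)^{\can}$ with $q^{\can}p^{\can}$ furnished by Proposition \ref{ambisquare}(1) really is an identification of normed functors, i.e.\ that it matches not just the underlying functors and their adjoints but also the norm transformations (equivalently the wrong-way counits $\nu$). This is precisely the content of \cite[Proposition~3.1.9]{TeleAmbi}, whose proof one would have to trace: the canonical norm of a composite is built from the diagonally induced norm of the composite's diagonal, and one must see that this factors through the diagonals of $p$ and $q$ compatibly with the Beck--Chevalley isomorphisms of the relevant base-change squares. Granting that identification (as the excerpt permits us to, since Proposition \ref{ambisquare} is stated earlier), the rest is a formal consequence of Proposition \ref{Fubini} and the notational conventions, so the write-up is short: state the reduction, cite Proposition \ref{ambisquare}(1) for the identification of normed functors, and apply Proposition \ref{Fubini}.
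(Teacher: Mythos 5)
Your proposal is correct and matches the paper's own proof, which likewise simply applies Proposition \ref{Fubini} to the composite of canonical normed functors $\Fun(A,\mathcal{C})\normed\Fun(B,\mathcal{C})\normed\Fun(C,\mathcal{C})$, with the identification $(qp)^{\can}\cong q^{\can}p^{\can}$ supplied by Proposition \ref{ambisquare}(1). Your extra remark that this identification must be one of normed functors (norms included) is exactly the point the paper delegates to \cite[Proposition~3.1.9]{TeleAmbi}, so there is no gap.
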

\begin{proof}
Apply Proposition \ref{Fubini} to the composition of normed functors
\begin{equation*}
    \Fun(A,\mathcal{C})\normed\Fun(B,\mathcal{C})\normed\Fun(C,\mathcal{C}).
\end{equation*}
\end{proof}

Next we look at the base-change square $(\ast)$ as in Definition \ref{base change}.

\begin{prop}\label{pullback}
Let \catC\ be an \inftycat, and for $(\ast)$, assume that $q$ is \catC-ambidextrous, then by Proposition \ref{ambisquare}, $\tilde{q}$ is also \catC-ambidextrous. For all $X,Y\in\Fun(B,\mathcal{C})$ and $f:q^*X\to q^*Y$, we have
\begin{equation*}
    s_B^*\int_qf= \int_{\qtil}s_A^*f\in \Map_{\Fun(B,\mathcal{C})}(s^*_BX,s^*_BY).
\end{equation*}
In particular, for all $X\in\Fun(B,C)$,
\begin{equation*}
    s_B^*\card{q}_X=\card{\tilde{q}}_{s^*_BX}\in\Map_{\Fun(B,\mathcal{C})}(s^*_BX,s^*_BX).
\end{equation*}
\end{prop}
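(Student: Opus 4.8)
The plan is to recognize Proposition \ref{pullback} as a direct instance of the general statement Proposition \ref{ambiBC} about integration across ambidextrous squares, applied to the base-change square $(\square)$ of Definition \ref{base change}. Concretely, I would take $F_{\mathcal{C}} = s_B^*:\Fun(B,\mathcal{C})\to\Fun(\tilde B,\mathcal{C})$ and $F_{\mathcal{D}} = s_A^*:\Fun(A,\mathcal{C})\to\Fun(\tilde A,\mathcal{C})$, with vertical functors $q^*$ and $\tilde q^*$ and their canonical normed structures $q^{\can}$, $\tilde q^{\can}$. Since Proposition \ref{ambiBC} requires its normed square to be ambidextrous and to satisfy the $\bc_!$ condition, the task reduces to verifying those two hypotheses for $(\square)$.

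First I would check the $\bc_!$ condition: $\mathcal{C}$ admits all $q$-colimits (as $q$ is $\mathcal{C}$-ambidextrous, the adjoint $q_!$ exists), so Lemma \ref{inductive lemma} applies and gives exactly that $(\square)$ satisfies $\bc_!$. Next I would verify that $(\square)$ is ambidextrous: by Proposition \ref{ambisquare}(2), since $q$ is (weakly) $\mathcal{C}$-ambidextrous, the pullback $\tilde q$ is (weakly) $\mathcal{C}$-ambidextrous and the base-change square $(\square)$ is (weakly) ambidextrous; combined with the fact that both $q^{\can}$ and $\tilde q^{\can}$ are iso-normed (that is the content of "$\mathcal{C}$-ambidextrous" as opposed to "weakly"), $(\square)$ is ambidextrous in the sense of the earlier definition. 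With both hypotheses in hand, Proposition \ref{ambiBC} yields $F_{\mathcal{C}}(\int_q f) = \int_{\tilde q} F_{\mathcal{D}}(f)$, which unwinds to $s_B^*\int_q f = \int_{\tilde q} s_A^* f$, the first claimed equation. The special case then follows by taking $f = q^*\Id_X$, exactly as in the proof of Proposition \ref{ambiBC}: $s_B^*\card{q}_X = s_B^*(\int_q q^*\Id_X) = \int_{\tilde q} s_A^* q^*\Id_X = \int_{\tilde q}\tilde q^* s_B^*\Id_X = \card{\tilde q}_{s_B^* X}$, using the commutativity $s_A^* q^* \cong \tilde q^* s_B^*$ of $(\square)$.

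The main obstacle — really the only nontrivial point — is making sure the hypotheses of Proposition \ref{ambiBC} are genuinely met, i.e. that the normed square $(\square)$ is \emph{ambidextrous} and not merely weakly ambidextrous, and that the $\bc_!$ condition holds. Both are supplied by results already in the excerpt (Proposition \ref{ambisquare}(2) for ambidexterity of the base-change square, Lemma \ref{inductive lemma} for $\bc_!$), so the argument is essentially bookkeeping: one must be slightly careful that "$\mathcal{C}$-ambidextrous" for $q$ upgrades "weakly ambidextrous square" to "ambidextrous square," which is where the iso-normedness of $q^{\can}$ and $\tilde q^{\can}$ enters. Everything else is a formal consequence of Proposition \ref{ambiBC}.
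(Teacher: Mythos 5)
Your proposal is correct and follows exactly the paper's own argument: invoke Proposition \ref{ambisquare}(2) to see that the base-change square $(\square)$ is ambidextrous, Lemma \ref{inductive lemma} for the $\bc_!$ condition, and then apply Proposition \ref{ambiBC}. The extra care you take in noting that ``$\mathcal{C}$-ambidextrous'' (rather than merely weakly so) is what upgrades the square from weakly ambidextrous to ambidextrous is a correct and worthwhile clarification of a point the paper leaves implicit.
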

\begin{proof}
Let $(\square)$ denote the associated base-change square, Proposition \ref{ambisquare} tells us that $(\square)$ is ambidextrous and Lemma \ref{inductive lemma} tells us that it satisfies the $\bc_!$ condition. Therefore Proposition \ref{ambiBC} gives us the result.

\end{proof}

\begin{rem}
Apply the previous proposition to the fiber square
\begin{equation*}
    \begin{tikzcd}
        q^{-1}(b)\arrow{r}\arrow{d}&A\arrow{d}{q}\\
        \pt\arrow{r}{b}&B,
    \end{tikzcd}
\end{equation*}
for $X\in \Fun(B,\mathcal{C})$ we have
\begin{equation*}
    \card{q}_X(b) = \card{q^{-1}(b)}_{X(b)}\in \Map_{\mathcal{C}}(X(b),X(b)). 
\end{equation*}
In other words, $\card{q}$ is just a combination of $B$-family of the cardinalities of all fibers. In fact, \cite[Proposition~4.3.5]{AmbiKn} tells us that the property of a map of spaces $q:A\to B$ being ambidextrous is also determined by the ambidexterity of the fibers of $q$. 
\end{rem}
As a consequence, we have the following corollary, which is a form of ``distributivity'' of integration over pullback squares.
\begin{cor}\label{pullbackint}
Let \catC\ be an \inftycat, $q_1:A_1\to B$ and $q_2:A_2\to B$ be two \catC-ambidextrous maps of spaces. Consider the pullback square
\begin{equation*}
    \begin{tikzcd}
        A_2\times_BA_1\arrow{r}{\pi_1}\arrow[swap]{d}{\pi_2}\arrow[dr, "q_2\times_B q_1" description]&A_1\arrow{d}{q_1}\\
        A_2\arrow[swap]{r}{q_2}&B
    \end{tikzcd}
\end{equation*}
The map $q_1\times_Bq_2$ is \catC-ambidextrous and for all $X,Y,Z\in \Fun(B,\mathcal{C})$ and maps
\begin{equation*}
    f_1:q_1^*X\to q_1^*Y,\quad
    f_2:q_2^*Y\to q_2^*Z,
\end{equation*}
we have
\begin{equation*}
    \int_{q_2\times_Bq_1}(\pi_2^*f_2\circ \pi_1^*f_1) = \int_{q_2}f_2\circ \int_{q_1}f_1 \in \Map_{\Fun(B,\mathcal{C})}(X,Z).
\end{equation*}
In particular,
\begin{equation*}
    \card{q_2\times_Bq_1}_X=\card{q_2}_X\circ \card{q_1}_X \in \Map_{\Fun(B,\mathcal{C})}(X,X).
\end{equation*}
\end{cor}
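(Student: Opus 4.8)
The plan is to reduce the statement to the Fubini-type result (Proposition~\ref{specified fubini}) and the base-change compatibility (Proposition~\ref{pullback}), both already available. First I would observe that $q_2 \times_B q_1$ factors through the pullback as $\pi_2 \colon A_2\times_B A_1 \to A_2$ followed by $q_2 \colon A_2 \to B$, i.e.\ $q_2\times_B q_1 = q_2 \circ \pi_2$. Since $q_1$ is \catC-ambidextrous, Proposition~\ref{ambisquare}(2) applied to the displayed pullback square shows $\pi_2$ is \catC-ambidextrous (it is the base change of $q_1$ along $q_2$); as $q_2$ is \catC-ambidextrous by hypothesis, Proposition~\ref{ambisquare}(1) gives that the composite $q_2\circ\pi_2 = q_2\times_B q_1$ is \catC-ambidextrous, settling the first assertion and licensing all the integrals below.

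Next I would apply Fubini (Proposition~\ref{specified fubini}) to the composition $A_2\times_B A_1 \xrightarrow{\pi_2} A_2 \xrightarrow{q_2} B$: for the map $g \coloneqq \pi_2^* f_2 \circ \pi_1^* f_1 \colon \pi_2^* q_2^* X \to \pi_2^* q_2^* Z$ (note $\pi_1^* q_1^* \simeq \pi_2^* q_2^*$ via the commuting square, which is what makes this composite typecheck), this yields
\begin{equation*}
    \int_{q_2\times_B q_1}\big(\pi_2^* f_2 \circ \pi_1^* f_1\big) = \int_{q_2}\left(\int_{\pi_2}\big(\pi_2^* f_2 \circ \pi_1^* f_1\big)\right).
\end{equation*}
Now I would peel off the inner integral using the homogeneity proposition (Proposition~\ref{specified homo}): since $\pi_2^* f_2$ is of the form $\pi_2^*(\text{something on } A_2)$, part~(2) of that proposition lets me pull it out of $\int_{\pi_2}$ on the left, giving $\int_{\pi_2}(\pi_2^* f_2 \circ \pi_1^* f_1) = f_2 \circ \int_{\pi_2}(\pi_1^* f_1)$ — wait, more carefully: $f_2$ sits on the $B$-side pulled back to $A_2$, so it is a $\pi_2^*(\,\cdot\,)$ of a map on $A_2$, and homogeneity~(2) extracts it as post-composition. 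Then it remains to identify $\int_{\pi_2}(\pi_1^* f_1)$ with $q_2^* f_1$ (suitably, $q_2^*(\int_{q_1} f_1)$), and here is where base change enters: applying Proposition~\ref{pullback} to the pullback square with horizontal maps $q_2, \pi_1$ and vertical maps $q_1, \pi_2$ gives $q_2^* \int_{q_1} f_1 = \int_{\pi_2} \pi_1^* f_1$. Substituting back, the outer integral becomes $\int_{q_2}\big(f_2 \circ q_2^* \int_{q_1} f_1\big)$, and one final application of homogeneity~(1) (pulling $f_2$'s partner out the other side, or rather recognizing this as $\big(\int_{q_2} f_2\big) \circ \big(\int_{q_1} f_1\big)$ via homogeneity~(2) since $\int_{q_1}f_1$ is a genuine map $X\to Y$ on $B$) yields exactly $\int_{q_2} f_2 \circ \int_{q_1} f_1$. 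The ``in particular'' statement is the special case $X = Y = Z$, $f_1 = q_1^*\mathrm{Id}_X$, $f_2 = q_2^*\mathrm{Id}_X$, noting $\pi_2^* q_2^* \mathrm{Id}_X \circ \pi_1^* q_1^* \mathrm{Id}_X = (q_2\times_B q_1)^*\mathrm{Id}_X$.

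The main obstacle I anticipate is bookkeeping the identifications of the various pullback functors: the composite $\pi_2^* f_2 \circ \pi_1^* f_1$ only makes sense after committing to the equivalence $\pi_1^* q_1^* \simeq \pi_2^* q_2^*$ coming from the commutativity of the square, and then every invocation of homogeneity and base change must be checked to be compatible with that equivalence and with the identification $(q_2\times_B q_1)^* \simeq \pi_2^* q_2^*$. None of this is deep, but it is the kind of thing where a sign—here, a wrong-way adjacency—can slip. A clean way to organize it is to assemble a single commutative diagram, analogous to the one in the proof of Proposition~\ref{ambiBC}, whose top path spells out $\int_{q_2\times_B q_1}(\pi_2^*f_2\circ\pi_1^*f_1)$ via the wrong-way units/counits and whose bottom path spells out $\int_{q_2}f_2\circ\int_{q_1}f_1$, with the interior squares commuting by naturality, by Proposition~\ref{pullback}, and by the definition of the composite norm; I would present the argument that way rather than chaining the propositions textually.
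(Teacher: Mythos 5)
Your argument is correct and matches the paper's proof essentially step for step: factor $q_2\times_B q_1 = q_2\circ\pi_2$, use Proposition~\ref{ambisquare} for ambidexterity, apply Fubini, extract $f_2$ by homogeneity, identify $\int_{\pi_2}\pi_1^*f_1$ with $q_2^*\int_{q_1}f_1$ by base change (Proposition~\ref{pullback}), and finish with homogeneity again. The only slip is a label: extracting $f_2$ from under $\int_{\pi_2}$ as a post-composition is Proposition~\ref{specified homo}(1), not (2) --- part (2) is the one you correctly invoke at the very end to recognize $\int_{q_2}\bigl(f_2\circ q_2^*\int_{q_1}f_1\bigr)$ as $\bigl(\int_{q_2}f_2\bigr)\circ\bigl(\int_{q_1}f_1\bigr)$.
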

\begin{proof}
The second equation follows by applying the first to $f_2=q_2^*\Id_X$ and $f_1= q_1^*\Id_X$.

The map $\pi_2$ is \catC-ambidextrous by Proposition Proposition \ref{ambisquare}(2), so by Proposition Proposition \ref{ambisquare}(1) $q_2\times_B q_1= q_2\pi_2$ is \catC-ambidextrous. Then we prove the first equation with the following long equation
\begin{equation*}
\begin{aligned}
\int_{q_2\times_B q_1}(\pi_2^*f_2\circ \pi_1^*f_1) &= \int_{q_2\pi_2}(\pi_2^*f_2\circ \pi_1^*f_1)\\
&= \int_{q_2}\int_{\pi_2}(\pi_2^*f_2\circ \pi_1^*f_1)&\text{ by Proposition \ref{specified fubini}}\\
&= \int_{q_2}\left(f_2\circ \int_{\pi_2}\pi_1^*f_1\right)&\text{ by Proposition \ref{specified homo}(1)}\\
&= \int_{q_2}\left(f_2\circ q_2^*\int_{q_1}f_1\right) &\text{ by Proposition \ref{pullback}}\\
&= \int_{q_2}f_2\circ\int_{q_1}f_1. &\text{ by Proposition \ref{specified homo}(2)}
\end{aligned}
\end{equation*}
\end{proof}
Another important consequence of Corollary \ref{pullbackint} is the additivity of integral.

\begin{prop}
Let \catC\ be a $0$-semiadditive \inftycat, and let $q_i:A_i\to B$ for $i=1,...,k$ be \catC-ambidextrous maps. Then
\begin{equation*}
    (q_1,...,q_k):A_1\sqcup...\sqcup A_k\to B
\end{equation*}
is \catC-ambidextrous, and for all $X,Y\in\Fun(B,\mathcal{C})$, $f_i:q_i^*X\to q_i^*Y$ for $i = 1,...,k$, we have
\begin{equation*}
    \int_{q_1,...,q_k}(f_1,...,f_k)=\sum_{i=1}^k\left(\int_{q_i}f_i\right)\in \Map_{\Fun(B,\mathcal{C})}(X,Y).
\end{equation*}
\end{prop}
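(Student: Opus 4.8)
The plan is to reduce the statement about a finite coproduct $A_1 \sqcup \cdots \sqcup A_k$ to the $0$-semiadditive case over $B$ together with Corollary~\ref{pullbackint} or, more simply, to an inductive argument on $k$ combined with the additivity already packaged into $0$-semiadditivity. The key structural observation is that the map $(q_1,\dots,q_k)\colon A_1 \sqcup \cdots \sqcup A_k \to B$ factors through the composite $A_1 \sqcup \cdots \sqcup A_k \xrightarrow{\sqcup q_i} B \sqcup \cdots \sqcup B \xrightarrow{\nabla} B$, where $\sqcup q_i$ is the coproduct of the $q_i$ and $\nabla$ is the fold map. The first map is \catC-ambidextrous because ambidexterity is detected fiberwise (by the remark following Proposition~\ref{pullback} and \cite[Proposition~4.3.5]{AmbiKn}) and each fiber is a fiber of some $q_i$; the fold map $\nabla\colon B^{\sqcup k}\to B$ is a $0$-finite map, hence \catC-ambidextrous since \catC\ is $0$-semiadditive. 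By Proposition~\ref{ambisquare}(1) the composite is then \catC-ambidextrous, which gives the first claim.

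For the integral formula, I would first treat $k=2$ and then induct. For $k=2$, write the coproduct map as the composite through $B \sqcup B$ as above and apply the higher Fubini theorem (Proposition~\ref{specified fubini}) to split $\int_{(q_1,q_2)}$ as $\int_{\nabla}\bigl(\int_{\sqcup q_i}(-)\bigr)$. The inner integral along $\sqcup q_1 \sqcup q_2$ is, fiberwise over the two copies of $B$, just $\int_{q_1}f_1$ on the first copy and $\int_{q_2}f_2$ on the second (using the remark after Proposition~\ref{pullback} that $\card{q}$ and more generally $\int_q$ restrict to the fiberwise integrals), so it produces the pair $\bigl(\int_{q_1}f_1,\ \int_{q_2}f_2\bigr) \in \Map_{\Fun(B\sqcup B,\mathcal C)}$. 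It then remains to identify $\int_{\nabla}$ of a pair of endomorphisms of $X$ (pulled back along the two inclusions) with their sum. This is exactly the classical semiadditive fact recalled in the background section: the fold map $\nabla\colon \pt \sqcup \pt \to \pt$ has integration equal to summation, and the general $B$-fiberwise version follows from Proposition~\ref{pullback} applied to the base change along points $b\in B$, reducing to $B=\pt$.

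The inductive step from $k$ to $k+1$ is then formal: write $A_1 \sqcup \cdots \sqcup A_{k+1} = (A_1 \sqcup \cdots \sqcup A_k) \sqcup A_{k+1}$, apply the $k=2$ case to the two \catC-ambidextrous maps $(q_1,\dots,q_k)$ and $q_{k+1}$, and use the induction hypothesis $\int_{(q_1,\dots,q_k)}(f_1,\dots,f_k) = \sum_{i=1}^k \int_{q_i} f_i$ together with associativity of the sum in the semiadditive \inftycat\ $\Fun(B,\mathcal C)$. The main obstacle I anticipate is the bookkeeping in the $k=2$ base case: making precise that $\int$ along a coproduct of maps decomposes as the ``diagonal'' combination of the individual integrals, and that $\int_{\nabla}$ genuinely computes the semiadditive sum rather than merely something formally analogous to it. Both points are morally immediate from ``cardinality is fiberwise'' (the remark after Proposition~\ref{pullback}) plus the $0$-semiadditive identification of the fold-map integration with addition, but writing down the coherence — e.g.\ that the canonical equivalence $\Fun(B\sqcup B,\mathcal C)\simeq \Fun(B,\mathcal C)\times\Fun(B,\mathcal C)$ intertwines $(\sqcup q_i)_!$, $(\sqcup q_i)_*$ and the norm with the componentwise data — is where the real work lies. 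Everything downstream of that is an application of Propositions~\ref{specified fubini}, \ref{specified homo}, and~\ref{pullback}.
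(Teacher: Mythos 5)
Your proposal is correct and follows essentially the same route as the paper: factoring $(q_1,\dots,q_k)$ through $\sqcup q_i$ followed by the fold map $\nabla$, invoking \cite[Proposition~4.3.5]{AmbiKn} for ambidexterity of $\sqcup q_i$ and Proposition~\ref{ambisquare}(1) for the composite, then using Fubini (Proposition~\ref{specified fubini}) together with base change along the inclusions $j_1,j_2\colon B\to B\sqcup B$ (Proposition~\ref{pullback}) to identify the inner integral componentwise, and finally identifying $\int_{\nabla}$ with the semiadditive sum. The coherence point you flag as ``the real work'' is exactly what the paper discharges by applying Proposition~\ref{pullback} to the two evident pullback squares, so there is no gap.
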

\begin{proof}
By induction we can assume $k=2$. Now for $(q_1,q_2)$, write it as the composition
\begin{equation*}
    A_1\sqcup A_2\oto{q_1\sqcup q_2}B_1\sqcup B_2\oto{\nabla} B
\end{equation*}
where $\nabla$ is the fold map. By \cite[Proposition~4.3.5]{AmbiKn}, $q_1\sqcup q_2$ is \catC-ambidextrous. Consider the pullback square
\begin{equation*}
    \begin{tikzcd}
        A_1\arrow{r}{j_1}\arrow[swap]{d}{q_1}&A_1\sqcup A_2\arrow{d}{q_1\sqcup q_2}\\
        B\arrow{r}{j_1}&B\sqcup B,
    \end{tikzcd}
\end{equation*}
where $j_1$ denotes the natural inclusion of the first summand. Applying proposition \ref{pullback} to this pullback square we have
\begin{equation*}
    j_1^*\left(\int_{q_1\sqcup q_2}(f_1,f_2)\right)=\int_{q_1}f_1.
\end{equation*}
And similarly, for the second summand, we have
\begin{equation*}
    j_2^*\left(\int_{q_1\sqcup q_2}(f_1,f_2)\right)=\int_{q_2}f_2.
\end{equation*}
Combining them we get
\begin{equation*}
    \int_{q_1\sqcup q_2}(f_1,f_2)=\left(\int_{q_1}f_1, \int_{q_2}f_2\right).
\end{equation*}
\catC\ is $0$-semiadditive and $\nabla$ is $0$-finite, so $\nabla$ is \catC-ambidextrous, and by Proposition \ref{ambisquare}(1), the composition $(q_1,q_2)$ is also \catC-ambidextrous. Using Proposition \ref{specified fubini} and the definition of integral over $0$-finite map, we have
\begin{equation*}
    \int_{(q_1,q_2)}(f_1,f_2)\cong \int_{\nabla}\int_{q_1\sqcup q_2}(f_1,f_2)
    =\int_{\nabla}\left(\int_{q_1}f_1,\int_{q_2}f_2\right)
    = \int_{q_1}f_1+\int_{q_2}f_2.
\end{equation*}
\end{proof}
Now we move to the interaction of integration and maps of \inftycats. Again we start with a specific commutative diagram.

\begin{defn}
Let $F:\mathcal{C}\to\mathcal{D}$ be a functor of \inftycats, and $q:A\to B$ a map of spaces. The $(F,q)$-square is the following commutative diagram:
\begin{equation*}
\begin{tikzcd}
    \Fun(B,\mathcal{C})\arrow{r}{F_*}\arrow[swap]{d}{q^*}&\Fun(B,\mathcal{D})\arrow{d}{q^*}\\
    \Fun(A,\mathcal{C})\arrow{r}{F_*}&\Fun(A,\mathcal{D}).
\end{tikzcd}
\end{equation*}
Here $F_*$ means post-composition with $F$. If $q$ is weakly \catC\ and \catD ambidextrous, then the square is canonically normed.
\end{defn}

In light of \cite[Proposition~3.2.2]{TeleAmbi} and \cite[Theorem~3.2.3]{TeleAmbi}, the key property of $F$ to be compatible with the integration is the preserving of $q$-colimits.

\begin{prop}[{\cite[Proposition 3.2.2]{TeleAmbi}}]\label{F-q BC}
Let $F:\mathcal{C}\to\mathcal{D}$ be a functor of \inftycats, and $q:A\to B$ a map of spaces. If \catC\ and \catD\ admit all $q$-colimits (resp. $q$-limits), and $F$ preserves all $q$-colimits (resp. $q$-limits), then the $(F,q)$-square satisfies the $\bc_!$ (resp. $\bc_*$) condition.
\end{prop}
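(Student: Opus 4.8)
The plan is to reduce the statement about local systems on the spaces $A$ and $B$ to a statement about the pointwise behaviour of the functors involved, exploiting the fact that $q$-colimits (and $q$-limits) in functor categories are computed fiberwise. First I would unwind the definitions of the two Beck--Chevalley transformations $\beta_!$ and $\beta_*$ for the $(F,q)$-square. Recall that for a map of spaces $q\colon A\to B$ and an \inftycat\ \catC\ admitting all $q$-colimits, the left Kan extension $q_!\colon\Fun(A,\mathcal{C})\to\Fun(B,\mathcal{C})$ is computed pointwise: for $G\in\Fun(A,\mathcal{C})$ and $b\in B$ one has $(q_!G)(b)\simeq \colim_{q^{-1}(b)}(G|_{q^{-1}(b)})$, via the base-change square associated to the point $b\colon\pt\to B$ and Lemma \ref{inductive lemma}. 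The same is true for $q_*$ with limits in place of colimits.

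The key steps, in order, would be: (1) Reduce checking that $\beta_!\colon q_!F_*\to F_*q_!$ is an isomorphism to checking it is an isomorphism after evaluation at every object $G\in\Fun(B,\mathcal{C})$ and every point $b\in B$, since equivalences of functors are detected pointwise. (2) Identify, using the pointwise formula for $q_!$ and the fact that $F_*$ (post-composition with $F$) is itself evaluated pointwise, the value $(\beta_!)_{G,b}$ with the canonical comparison map
\begin{equation*}
    \colim_{q^{-1}(b)}\bigl(F\circ G|_{q^{-1}(b)}\bigr)\longrightarrow F\Bigl(\colim_{q^{-1}(b)}G|_{q^{-1}(b)}\Bigr).
\end{equation*}
The content of this identification is precisely the compatibility of the Beck--Chevalley transformation of the $(F,q)$-square with the base-change squares of Definition \ref{base change} attached to the points of $B$; one assembles this from Lemma \ref{BCtriangle} together with the explicit formula for $\beta_!$, tracking units and counits through the pasting of the $(F,q)$-square with a base-change square. (3) Observe that this comparison map is an equivalence exactly because $F$ preserves $q$-colimits, i.e.\ preserves colimits indexed by the fibers $q^{-1}(b)$, which is the hypothesis. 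The $\bc_*$ case is formally dual, replacing colimits by limits, $q_!$ by $q_*$, and ``preserves $q$-colimits'' by ``preserves $q$-limits''.

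The main obstacle I expect is step (2): making rigorous the claim that the Beck--Chevalley map $\beta_!$ of the $(F,q)$-square, after evaluation at a point $b$, becomes the naive comparison map $\colim(F\circ-)\to F(\colim-)$ over the fiber. This requires knowing that the left Kan extension functors are computed by a genuine pasting of Beck--Chevalley squares---i.e.\ that the base-change square for $b\colon\pt\to B$ interacts coherently with the $(F,q)$-square---rather than just pointwise on objects. One clean way to organize this is to invoke the compatibility of $\beta_!$ with composition and base change of the relevant squares (the analogue for general commutative squares of functors of Proposition \ref{ambisquare}), so that the $(F,q)$-square pasted with the base-change square over $b$ has Beck--Chevalley transformation equal to the composite of the two, and then note that the base-change part contributes an equivalence by Lemma \ref{inductive lemma} while the remaining factor is the fiberwise comparison. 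Since the statement is quoted from \cite[Proposition 3.2.2]{TeleAmbi}, I would at this point simply cite that reference for the coherence bookkeeping, and record the fiberwise reduction above as the conceptual content of the proof.
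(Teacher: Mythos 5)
The paper itself gives no proof of this proposition---it is quoted directly from \cite{TeleAmbi}---so the only comparison available is with the argument in that source, and your sketch reproduces it: reduce to evaluation at an object and at a point $b\in B$ by pasting the $(F,q)$-square with the base-change square of $b\colon\pt\to B$ (using that Beck--Chevalley transformations compose under horizontal pasting, with the base-change factor an equivalence by Lemma \ref{inductive lemma}), identify the remaining map with the canonical comparison $\mathrm{colim}_{q^{-1}(b)}(F\circ G)\to F\bigl(\mathrm{colim}_{q^{-1}(b)}G\bigr)$, and conclude from the hypothesis that $F$ preserves $q$-colimits. One small correction: $\beta_!$ here is a transformation between functors out of $\Fun(A,\mathcal{C})$, so the evaluation in your step (1) should be at $G\in\Fun(A,\mathcal{C})$, not $\Fun(B,\mathcal{C})$.
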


\begin{thm}[{\cite[Theorem 3.2.3]{TeleAmbi}}]\label{F-q Ambi}
Let $F:\mathcal{C}\to\mathcal{D}$ be a functor of \inftycats, and $q:A\to B$ a map of spaces. Assume $F$ preserves all $(m-1)$-finite colimits and $q$ is $m$-finite, if $q$ is (weakly) \catC-ambidextrous and (weakly) \catD-ambidextrous, then the $(F,q)$-square is (weakly) \catC-ambidextrous.
\end{thm}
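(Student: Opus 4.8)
The plan is to proceed by induction on $m$, mirroring the inductive structure of the definition of the canonical norm. For the base case $m = -2$, the map $q$ is $(-2)$-finite, hence an isomorphism, so $q^*$ is an equivalence with inverse serving as both adjoints, the canonical norm is the identity, and the $(F,q)$-square is trivially (weakly) ambidextrous. For the inductive step, suppose the statement holds for $(m-1)$-finite maps and let $q : A \to B$ be $m$-finite. By definition, $q$ is weakly $\mathcal{C}$-ambidextrous and weakly $\mathcal{D}$-ambidextrous means the diagonal $\delta : A \to A \times_B A$ is $(m-1)$-$\mathcal{C}$-ambidextrous and $(m-1)$-$\mathcal{D}$-ambidextrous, and the canonical norms $\Nm_q$ on both sides are the diagonally induced norms, defined via the wrong-way counit $\nu_q$ built from $\mu_\delta$ and the Beck--Chevalley isomorphism $\beta_!$ for the base-change square of the pullback $\pi_1, \pi_2 : A\times_B A \rightrightarrows A$.

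The key observation is that $\delta$ is $(m-1)$-finite (since $q$ is $m$-finite, even $m$-truncated, its diagonal drops the truncation index by one and remains $\pi$-finite), so the inductive hypothesis applies: the $(F,\delta)$-square is (weakly) ambidextrous — here using that $F$ preserves all $(m-1)$-finite colimits, which in particular gives that $F$ preserves $\delta$-colimits. The strategy is then to unwind the definition of the diagonally induced norm and check that the norm-diagram $(\diamond)$ for the $(F, q)$-square commutes (equivalently, using the earlier lemma, that the triangle $(\triangleright)$ commutes). Concretely, I would write $\nu_q$ as the composite $q^*q_! \xrightarrow{\beta_!^{-1}} (\pi_2)_!\pi_1^* \xrightarrow{\mu_\delta} (\pi_2)_!\delta_!\delta^*\pi_1^* \simeq \Id$ and the corresponding $\nu_{\tilde q}$ on the $\mathcal{D}$-side, then paste together: (a) the compatibility of $\beta_!$ for the outer square with the $\beta_!$'s for the base-change squares coming from $\pi_1$ and $\pi_2$ (Lemma~\ref{inductive lemma} supplies that these satisfy the $\bc_!$ condition, and Proposition~\ref{ambisquare}(2) that base change of (weakly) ambidextrous maps stays (weakly) ambidextrous), and (b) the inductive hypothesis that the $(F,\delta)$-square has commuting norm-diagram, which controls how $F$ interacts with $\mu_\delta$. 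Lemma~\ref{BCtriangle} is the main bookkeeping tool: it lets me replace each $\mu$ and $c$ appearing in the definitions by the corresponding Beck--Chevalley transformations, reducing the commutativity of $(\diamond)$ for $(F,q)$ to the already-known commutativity of $(\diamond)$ for $(F,\delta)$ together with functoriality of Beck--Chevalley under pasting of pullback squares.

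For the iso-normed (non-weak) case, once weak ambidexterity of the $(F,q)$-square is established, I would argue that the norm $\Nm_q$ of the $(F,q)$-square is an isomorphism: since $q$ is $m$-$\mathcal{C}$-ambidextrous and $m$-$\mathcal{D}$-ambidextrous by hypothesis, both $\Nm_q$ and $\Nm_{\tilde q}$ are isomorphisms, and $\beta_!$ is an isomorphism by Proposition~\ref{F-q BC} (as $F$ preserves $m$-finite, hence $q$-, colimits); then the commuting norm-diagram $(\diamond)$ forces $\beta_*$ to be an isomorphism as well, so the square is ambidextrous and satisfies both Beck--Chevalley conditions. The main obstacle I anticipate is purely organizational rather than conceptual: correctly pasting the large diagram expressing $\nu_q$ in terms of $\nu_\delta$ across the two base-change squares while keeping track of which Beck--Chevalley transformations are being identified, and verifying that the identifications furnished by Lemma~\ref{BCtriangle} and the coherence of base change (Proposition~\ref{ambisquare}(2), together with the naturality built into Definition~\ref{base change}) assemble into the single commuting diagram needed. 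This is exactly the kind of diagram chase that \cite[Theorem~3.2.3]{TeleAmbi} carries out, so I would follow that reference for the precise pasting.
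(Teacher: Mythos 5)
The paper does not prove this theorem; it is quoted directly from \cite[Theorem~3.2.3]{TeleAmbi} without argument. Your sketch correctly reproduces the inductive strategy of that reference (induction on $m$ via the diagonally induced norm, using the $\bc_!$ condition for the $(F,\delta)$-square supplied by $F$ preserving $(m-1)$-finite colimits, and pasting of Beck--Chevalley transformations), and the non-weak case is indeed immediate since being iso-normed is a hypothesis on $q$ itself rather than on the square.
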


\begin{rem}
It may seem asymmetric in the last theorem, that the condition is on the preserving of colimits only, without mentioning the limits. However, this will be partly resolved by the next corollary.
\end{rem}

\begin{cor}[{\cite[Corollary 3.2.4]{TeleAmbi}}]
Let $F:\mathcal{C}\to\mathcal{D}$ be a functor of $m$-semiadditive \inftycats, then $F$ preserves $m$-finite colimits if and only if it preserves $m$-finite limits.
\end{cor}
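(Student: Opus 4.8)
The plan is to deduce the corollary from Theorem~\ref{F-q Ambi} and Proposition~\ref{F-q BC}, together with the observation (recorded in the remark after the definition of an ambidextrous square) that an ambidextrous normed square satisfies the $\bc_!$ condition if and only if it satisfies the $\bc_*$ condition. I will first prove that preserving $m$-finite colimits implies preserving $m$-finite limits, and then obtain the reverse implication by passing to opposite $\infty$-categories.

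For the forward direction, assume $F$ preserves all $m$-finite colimits, and let $q\colon A\to B$ be an arbitrary $m$-finite map of spaces; it suffices to show that $F$ preserves $q$-limits, since taking $B=\pt$ with $A$ an arbitrary $m$-finite space then yields all $m$-finite limits. Because $\mathcal{C}$ and $\mathcal{D}$ are $m$-semiadditive, $q$ is both $\mathcal{C}$-ambidextrous and $\mathcal{D}$-ambidextrous, and both categories admit all $q$-(co)limits. Every $(m-1)$-finite space is $m$-finite, so $F$ preserves all $(m-1)$-finite colimits, and hence Theorem~\ref{F-q Ambi} applies and shows that the $(F,q)$-square is ambidextrous (it is iso-normed precisely because $q$ is honestly, not merely weakly, ambidextrous on both sides). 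At the same time, the fibers of $q$ are $m$-finite, so $q$-colimits are in particular $m$-finite colimits and are preserved by $F$; hence Proposition~\ref{F-q BC} shows that the $(F,q)$-square satisfies the $\bc_!$ condition. An ambidextrous square satisfying $\bc_!$ also satisfies $\bc_*$, so the Beck--Chevalley comparison map $\beta_*\colon F_*q_*\to q_*F_*$ for the $(F,q)$-square is an isomorphism; unwinding the definition of $\beta_*$ in terms of the unit and counit, this is exactly the assertion that $F$ commutes with right Kan extension along $q$, i.e. that $F$ preserves all $q$-limits.

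For the reverse direction, I would apply the forward direction to $F^{\mathrm{op}}\colon\mathcal{C}^{\mathrm{op}}\to\mathcal{D}^{\mathrm{op}}$. Ambidexterity of a map of spaces is manifestly self-dual: one checks by induction on $m$ that $q$ is $m$-$\mathcal{C}$-ambidextrous if and only if it is $m$-$\mathcal{C}^{\mathrm{op}}$-ambidextrous, the case $m=-2$ being trivial and the inductive step amounting to interchanging $q_!$ with $q_*$ and reversing the canonical norm. Hence $\mathcal{C}^{\mathrm{op}}$ and $\mathcal{D}^{\mathrm{op}}$ are again $m$-semiadditive. Since colimits in the opposite categories are limits in the original ones, the hypothesis ``$F$ preserves $m$-finite limits'' translates into ``$F^{\mathrm{op}}$ preserves $m$-finite colimits'', the forward direction yields ``$F^{\mathrm{op}}$ preserves $m$-finite limits'', and translating back gives ``$F$ preserves $m$-finite colimits''.

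I expect the only genuine subtlety to be the reverse direction: one cannot invoke Theorem~\ref{F-q Ambi} for $F$ there, since we are not assuming $F$ preserves colimits — this is exactly the asymmetry flagged in the remark just before the corollary — and the opposite-category reduction, resting on the self-duality of $m$-semiadditivity, is what repairs it. The remaining bookkeeping, chiefly the identification of $\beta_*$ with the canonical comparison map for limits so that the $\bc_*$ condition becomes literally the statement about preservation of limits, is a routine unwinding of definitions.
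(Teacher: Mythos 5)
Your forward direction is correct and is the standard argument: since every $(m-1)$-finite space is $m$-finite, $F$ preserves $(m-1)$-finite colimits, so Theorem~\ref{F-q Ambi} makes the $(F,q)$-square ambidextrous; Proposition~\ref{F-q BC} gives the $\bc_!$ condition; ambidexterity upgrades it to $\bc_*$; and invertibility of $\beta_*$ is precisely preservation of $q$-limits. Where you genuinely diverge from the cited proof is the converse. The proof in \cite{TeleAmbi} (and the natural one given the tools already stated here) is an induction on $m$: if $F$ preserves $m$-finite limits, it preserves $(m-1)$-finite limits, hence by the inductive hypothesis $(m-1)$-finite \emph{colimits}, which is all that Theorem~\ref{F-q Ambi} requires to make the $(F,q)$-square ambidextrous; then the ``resp.''\ clause of Proposition~\ref{F-q BC} yields $\bc_*$ directly from limit-preservation, and ambidexterity converts $\bc_*$ back into $\bc_!$, i.e.\ preservation of $q$-colimits. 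Your route through opposite categories instead rests on the claim that $q$ is $m$-$\mathcal{C}$-ambidextrous if and only if it is $m$-$\mathcal{C}^{\mathrm{op}}$-ambidextrous, with the canonical norms dual to one another. That statement is true --- it is essentially Hopkins--Lurie's observation that ambidexterity is a self-dual condition --- but it is established nowhere in this paper and is less ``manifest'' than you suggest: one must verify that the diagonally induced norm, built from $\beta_!$ and the wrong-way counit, dualizes to the diagonally induced norm of the opposite category, in which $q_!$ and $q_*$ and the two Beck--Chevalley transformations all trade places. So your proposal is correct in outline, but it outsources the converse to a nontrivial duality lemma; the induction on $m$ uses only the two results already quoted before the corollary and is the cleaner argument to write down.
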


Now we are able to define the ``compatible'' functors between $m$-semiadditive \inftycats.

\begin{defn}
Let \catC\ and \catD\ be $m$-semiadditive functors. A functor $F:\mathcal{C}\to\mathcal{D}$ is called $m$-semiadditive, if it preserves all $m$-finite colimits, or equivalently if it preserves all $m$-finite limits.
\end{defn}

\begin{note}
We shall directly say $F:\mathcal{C}\to\mathcal{D}$ is $m$-semiadditive, which implies that \catC\ and \catD\ are $m$-semiadditive.
\end{note}
The precise description of the property of compatibility is given by the following proposition.

\begin{prop}
Let $F:\mathcal{C}\to\mathcal{D}$ be an $m$-semiadditive functor, and $q:A\to B$ a map of spaces. For all $X,Y\in \Fun(B,\mathcal{C})$ and $f:q^*X\to q^*Y$,
\begin{equation*}
    F\left(\int_qf\right)=\int_qF(f)\in \Map_{\Fun(B,\mathcal{D})}(FX,FY).
\end{equation*}
In particular,
\begin{equation*}
    F(\card{q}_X) = \card{q}_{F(X)}\in\Map_{\Fun(B,\mathcal{D})}(FX,FX).
\end{equation*}
\end{prop}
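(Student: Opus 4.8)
The plan is to recognize the statement as the specialization of Proposition~\ref{ambiBC} to the $(F,q)$-square, so that essentially all of the work lies in verifying the hypotheses of that proposition for this particular square. Concretely, I would take the normed square $(\ast)$ of Proposition~\ref{ambiBC} to be the $(F,q)$-square, with horizontal functors $F_{\mathcal{C}} = F_{\mathcal{D}} = F_*$ (post-composition with $F$) and vertical normed functors $q^{\can}$ over $\mathcal{C}$ on the left and $q^{\can}$ over $\mathcal{D}$ on the right. Proposition~\ref{ambiBC} then yields exactly $F_*\bigl(\int_q f\bigr) = \int_q F_*(f)$, which is the first displayed equation once one reads the ``$F$'' in the statement as $F_*$ and uses the canonical identification $F_* q^* \simeq q^* F_*$ to see that $F_*(f)$ is a morphism $q^* F_* X \to q^* F_* Y$. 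The second displayed equation is then immediate by plugging in $f = q^*\Id_X$, exactly as in the proof of Proposition~\ref{ambiBC}: since $\card{q}_X = \int_q \Id_{q^*X}$ and $F_*(\Id_{q^*X}) = \Id_{q^* F_* X}$, we get $F(\card{q}_X) = \int_q \Id_{q^* F_* X} = \card{q}_{F(X)}$.

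So the substance is checking that the $(F,q)$-square is ambidextrous and satisfies the $\bc_!$ condition. First, the square is normed: here I would use that $q$ is $m$-finite (the implicit hypothesis that makes both integrals defined) together with the $m$-semiadditivity of \catC\ and \catD, so that $q$ is both \catC-ambidextrous and \catD-ambidextrous and hence the $(F,q)$-square is canonically (iso-)normed. Ambidexterity of the square itself is then Theorem~\ref{F-q Ambi}: $F$ preserves all $m$-finite colimits, in particular all $(m-1)$-finite colimits, $q$ is $m$-finite, and we have just noted that $q$ is \catC- and \catD-ambidextrous. For the $\bc_!$ condition I would invoke Proposition~\ref{F-q BC}: the $q$-colimits are colimits indexed by the fibers of $q$, which are $m$-finite spaces, so they are $m$-finite colimits, and $F$ preserves these by $m$-semiadditivity; the same reasoning with limits gives $\bc_*$, consistently with the remark that for an ambidextrous square the two conditions are equivalent.

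The main point to be careful about is the reduction clause: the proposition as stated allows an arbitrary map of spaces $q : A \to B$, but $\int_q$ only makes sense once $q$ is \catC- and \catD-ambidextrous. I would therefore either state at the outset that we assume $q$ is $m$-finite --- which is harmless, since \catC\ and \catD\ are $m$-semiadditive --- or, more generally, assume directly that $q$ is \catC-ambidextrous and \catD-ambidextrous; in the latter generality one still needs the $q$-colimits to be preserved by $F$, and this is precisely where the hypothesis that $F$ is $m$-semiadditive is used. Beyond this there is no genuine obstacle: once the $(F,q)$-square is seen to be ambidextrous and to satisfy $\bc_!$, Proposition~\ref{ambiBC} does all the remaining work and no further diagram chase is needed.
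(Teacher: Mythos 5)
Your proposal is correct and follows exactly the paper's own argument: verify ambidexterity of the $(F,q)$-square via Theorem~\ref{F-q Ambi}, verify the $\bc$-conditions via Proposition~\ref{F-q BC}, and then apply Proposition~\ref{ambiBC}. Your additional remark that one must implicitly assume $q$ is $m$-finite (or at least $\mathcal{C}$- and $\mathcal{D}$-ambidextrous) for $\int_q$ to be defined is a fair clarification of a hypothesis the paper leaves tacit.
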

\begin{proof}
The $(F,q)$-square is ambidextrous by Theorem \ref{F-q Ambi}, and satisfies the $\bc$-conditions by Proposition \ref{F-q BC}, so we can apply Proposition \ref{ambiBC} to the $(F,q)$-square and get the wanted equations.
\end{proof}

\subsubsection{Monoidal Structure}
In this section, we study the interaction of the property of higher semiadditivity with the monoidal structures.

Consider \inftycat\ \catC\ with a monoidal structure $(\mathcal{C},\otimes,\one)$. For every space $A$, the \inftycat\ $\Fun(A,\mathcal{C})$ has a canonical point-wise monoidal structure. For a map of spaces $q:A\to B$, the functor $q^*:\Fun(B,\mathcal{C})\to\Fun(A,\mathcal{C})$ is canonically monoidal.

\begin{rem}
If the monoidal structure is symmetric, then the associated monoidal structures are also canonically symmetric.
\end{rem}

\begin{prop}[{\cite[Proposition 3.3.1]{TeleAmbi}}]
Let $(\mathcal{C},\otimes,\one)$ be a monoidal \inftycat, let $q:A\to B$ be a \catC-ambidextrous map of spaces, such that $\otimes$ distributes over $q$-limits, then the normed functor
\begin{equation*}
    q^{\can}:\Fun(A,\mathcal{C})\to\Fun(B,\mathcal{C})
\end{equation*}
is $\otimes$-normed (as defined in Definition \ref{defn tensor normed}) in a canonical way.
\end{prop}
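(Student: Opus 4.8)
The plan is to reduce the claim to the projection formula and then derive it from the Beck--Chevalley results of the previous section. As the remark following Definition \ref{defn tensor normed} records, being $\otimes$-normed is a property of $q^*$ alone; since $q^*$ is canonically monoidal for the pointwise structures, it remains only to show that for all $X\in\Fun(B,\mathcal{C})$ and $Y\in\Fun(A,\mathcal{C})$ the canonical maps
\begin{equation*}
    \theta_{X,Y}\colon q_!(Y\otimes q^*X)\to(q_!Y)\otimes X
    \qquad\text{and}\qquad
    \theta'_{X,Y}\colon q_!((q^*X)\otimes Y)\to X\otimes(q_!Y)
\end{equation*}
are equivalences. These are mirror images of one another, so I would treat $\theta\coloneqq\theta_{X,Y}$ and obtain $\theta'$ by repeating the argument with the tensor factors swapped.

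First I would pass to fibres over $B$. Equivalences in $\Fun(B,\mathcal{C})$ are detected objectwise, and $q^*$, $\otimes$, and $q_!$ (left Kan extension along a map of spaces being computed fibrewise) are all computed objectwise over $B$; by Proposition \ref{ambisquare}(2) applied to the pullback of $q$ along $b\colon\pt\to B$, the value of $\theta$ at $b$ is the corresponding map for the \catC-ambidextrous space $q^{-1}(b)$ with $X$ replaced by $X(b)$, and $\otimes$ still distributes over $q^{-1}(b)$-indexed limits. So we may assume $q=p\colon A\to\pt$ with $A$ a \catC-ambidextrous space and $X\in\mathcal{C}$, in which case $\theta$ is the canonical assembly map $\colim{A}(Y\otimes p^*X)\to(\colim{A}Y)\otimes X$, natural in $Y$.

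Next I would recognise $\theta$ as a Beck--Chevalley transformation. Put $F\coloneqq(-\otimes X)\colon\mathcal{C}\to\mathcal{C}$. Since $F_*\circ p^*\iso p^*\circ F$ (both send $X'$ to $p^*X'\otimes p^*X=p^*(X'\otimes X)$), the $(F,p)$-square (the commutative square of the definition preceding Proposition \ref{F-q BC}) is defined, and unwinding the definitions identifies $\beta_!$ for this square with $\theta$ and $\beta_*$ with the canonical comparison $(\limit{A}Y)\otimes X\to\limit{A}(Y\otimes p^*X)$. The hypothesis that $\otimes$ distributes over $p$-limits says precisely that $F$ preserves $p$-limits, so by Proposition \ref{F-q BC} the $(F,p)$-square satisfies the $\bc_*$ condition, i.e. $\beta_*$ is an equivalence. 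I would then argue that the $(F,p)$-square is also weakly ambidextrous: it is iso-normed because $p$ is \catC-ambidextrous, and Theorem \ref{F-q Ambi} upgrades it to weakly ambidextrous once $F$ preserves the requisite lower-dimensional finite colimits. Granting that, the square is ambidextrous, so by the remark following the definition of an ambidextrous square it satisfies $\bc_!$ exactly when it satisfies $\bc_*$, which we have just checked; hence $\beta_!=\theta$ is an equivalence. (Equivalently: weak ambidexterity makes the norm diagram of the $(F,p)$-square commute, yielding $\beta_*\circ(\Nm_p^Y\otimes\Id_X)\circ\theta=\Nm_p^{Y\otimes p^*X}$; here the two norms are equivalences by \catC-ambidexterity and $\Nm_p^Y\otimes\Id_X$ is an equivalence since tensoring an equivalence by $\Id_X$ remains one, so $\theta$ must be an equivalence too.) Running the same argument with $F=(X\otimes-)$ disposes of $\theta'$, and as every ingredient --- the three adjoints, the canonical norm, the pointwise monoidal structure, the colax structure maps --- is canonical, $q^{\can}$ is $\otimes$-normed in a canonical way.

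The step I expect to be the real work is the weak ambidexterity of the $(F,p)$-square, that is, the compatibility of the canonical norm with the colax monoidal structure on $q_!$. To feed Theorem \ref{F-q Ambi} one needs $-\otimes X$ to preserve not merely $p$-limits but also the finite colimits one truncation level down; these agree with the corresponding limits only after invoking \catC-ambidexterity, so the distributivity hypothesis has to be used all along the inductive tower defining the norm (over the diagonals $\delta$, whose fibres drop one truncation level) --- in effect one reads it as ``$\otimes$ distributes over all $m$-finite limits'' when $q$ is $m$-truncated. A direct induction on the truncation level of $q$, carrying the projection formula along and using base change together with the compatibility of canonical norms with composition (Proposition \ref{ambisquare}), is an equivalent route that makes this bookkeeping explicit.
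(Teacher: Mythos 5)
The paper does not actually prove this proposition --- it is quoted from \cite{TeleAmbi} --- so I am comparing your argument with the proof there. Your central identification is the right one: the two structure maps in Definition \ref{defn tensor normed} are exactly the $\bc_!$ transformations of the $(F,q)$-squares for $F=-\otimes X$ and $F=X\otimes-$, and the fibrewise reduction to $A\to\pt$ via Proposition \ref{ambisquare}(2) is harmless. The problem is the step you yourself flag: having only that $-\otimes X$ preserves $q$-limits, you obtain $\bc_*$ from Proposition \ref{F-q BC} and then need the $(F,q)$-square to be weakly ambidextrous in order to transfer to $\bc_!$; but Theorem \ref{F-q Ambi} requires $F$ to preserve all $(m-1)$-finite colimits, which neither follows from preservation of $q$-limits nor is among the hypotheses. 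Your proposed repair --- reading the hypothesis as distributivity over all $m$-finite limits and inducting along the diagonals --- changes the statement rather than proving it, and even then the passage from limit-preservation to the colimit-preservation consumed by Theorem \ref{F-q Ambi} would itself require semiadditivity of \catC, not merely ambidexterity of $q$. So as written the proof does not close.

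The resolution is that the hypothesis in the cited source is distributivity of $\otimes$ over $q$-\emph{colimits} (the ``limits'' in the statement here is evidently a slip), and with that the argument collapses to one step: $F=-\otimes X$ preserves $q$-colimits, so the $(F,q)$-square satisfies the $\bc_!$ condition by Proposition \ref{F-q BC}, and $\beta_!$ is precisely the projection-formula map. No norm, no ambidexterity of the square, and no $\bc_*\Leftrightarrow\bc_!$ transfer is needed --- exactly as the remark after Definition \ref{defn tensor normed} predicts ($\otimes$-normedness is a property of $q^*$ alone), and this is why the source only assumes $q$ weakly \catC-ambidextrous, just enough for $q^{\can}$ to be defined. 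Your detour through $\bc_*$ is what forces you into the weak-ambidexterity step that cannot be established from the given hypotheses.
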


From the last proposition, we define the ``compatibly'' monoidal $m$-semiadditive \inftycat.

\begin{defn}
Let $(\mathcal{C},\otimes,\one)$ be a monoidal \inftycat\ which is $m$-semiadditive, it is called an $m$-semiadditive (symmetric) monoidal \inftycat\ if $\otimes$ distributes over $m$-finite colimits.
\end{defn}

\begin{prop}
Let $(\mathcal{C},\otimes,\one)$ be an $m$-semiadditive monoidal \inftycat, and $A$ an $m$-finite space.
\begin{enumerate}
    \item For every $X\in\mathcal{C}$, we have
    \begin{equation*}
        \card{A}_X = \Id_X\otimes\card{A}_{\one}\in \Map_{\mathcal{C}}(X,X).
    \end{equation*}
    \item $\card{A}$ is an isomorphism if and only if $\card{A}_{\one}$ is an isomorphism.
\end{enumerate}
\end{prop}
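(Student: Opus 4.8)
The plan is to deduce both statements from the $\otimes$-compatibility already developed, specifically from Proposition~\ref{general iso-normed} and the monoidal structure on $q^{\can}$ for $q\colon A\to \pt$. First I would observe that since $\catC$ is $m$-semiadditive monoidal and $A$ is $m$-finite, the map $q\colon A\to\pt$ is $\catC$-ambidextrous and $\otimes$ distributes over $q$-limits, so by the previous proposition $q^{\can}\colon\Fun(A,\mathcal{C})\normed\mathcal{C}$ is canonically $\otimes$-normed, with $q^*$ the monoidal diagonal functor sending $X$ to the constant local system $\underline{X}$. For (1), I would compute $\card{A}_X = \int_q q^*\Id_X$ directly. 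Write $X = X\otimes\one$ in $\mathcal{C}$, so that $q^*X \iso q^*X\otimes q^*\one$ as local systems, and $\Id_{q^*X} \iso \Id_{q^*X}\otimes\Id_{q^*\one}$. The key point is that the wrong-way counit $\nu_q\colon q^*q_!\to\Id$, the unit $\mu_q$, and the counit $c_!$ are all built from the $\otimes$-normed structure in a way compatible with $\otimes$; concretely, for a constant object $q^*X$ one uses the isomorphisms of Definition~\ref{defn tensor normed} (with $Y = q^*\one$, or rather the constant local system) to identify $q_!(q^*X) \iso q_!(q^*X\otimes q^*\one)\iso X\otimes q_!q^*\one$, under which $\mu_q$ and $c_!\circ\mu_q$ become $\Id_X$ tensored with the corresponding maps for $\one$. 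Tracing $\int_q q^*\Id_X$ through this identification yields $\Id_X\otimes(c_!\circ\mu_q)_{\one} = \Id_X\otimes\card{A}_{\one}$.

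Rather than unwinding all of this by hand, I expect the cleanest route is the following: the assignment $X\mapsto \underline{X} = q^*X$ together with the monoidal structure means that for fixed $X$, tensoring-with-$X$ gives a functor $X\otimes(-)\colon\mathcal{C}\to\mathcal{C}$, and one checks it is $m$-semiadditive (it preserves $m$-finite colimits because $\otimes$ distributes over them). Then the proposition just above (compatibility of $m$-semiadditive functors with integration, applied to $F = X\otimes(-)$ and the map $A\to\pt$) gives
\begin{equation*}
    X\otimes\card{A}_{\one} = \card{A}_{X\otimes\one} = \card{A}_X,
\end{equation*}
which is exactly (1). This reduces the whole problem to verifying that $X\otimes(-)$ is an $m$-semiadditive functor, which is immediate from the definition of $m$-semiadditive monoidal $\infty$-category (distributivity of $\otimes$ over $m$-finite colimits is precisely the statement that each $X\otimes(-)$ preserves $m$-finite colimits, and $\mathcal{C}$ is $m$-semiadditive by hypothesis).

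For (2), this is now formal. If $\card{A}_{\one}$ is an isomorphism, then $\Id_X\otimes\card{A}_{\one}$ is an isomorphism for every $X$ (tensoring an isomorphism with an identity yields an isomorphism), so $\card{A}_X = \Id_X\otimes\card{A}_{\one}$ is an isomorphism for all $X\in\mathcal{C}$; hence $\card{A}$ is an isomorphism as a natural transformation of $\Id_{\mathcal{C}}$. Conversely, if $\card{A}$ is an isomorphism, then in particular its component $\card{A}_{\one}$ at the unit is an isomorphism. So the two conditions are equivalent.

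The main obstacle is purely bookkeeping: making precise that the identification $q_!(q^*X)\iso X\otimes q_!q^*\one$ carries $\int_q q^*\Id_X$ to $\Id_X\otimes\int_q q^*\Id_{\one}$, i.e. that the integral is genuinely natural-and-monoidal in the object being integrated over a constant local system. The device of invoking the already-proven compatibility of $m$-semiadditive functors with $\int_q$, applied to $F = X\otimes(-)$, sidesteps this and is what I would actually write; the only thing to check carefully there is that $X\otimes(-)$ indeed satisfies the hypotheses of that proposition, which is exactly the content of $\mathcal{C}$ being $m$-semiadditive monoidal.
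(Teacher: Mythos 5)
Your ``cleanest route'' is exactly the paper's proof: the paper also fixes $X$, considers $F_X = X\otimes(-)$, notes it preserves $m$-finite colimits by the definition of $m$-semiadditive monoidal $\infty$-category, and applies the compatibility of $m$-semiadditive functors with cardinality to get $\Id_X\otimes\card{A}_{\one}=F_X(\card{A}_{\one})=\card{A}_{F_X(\one)}=\card{A}_X$, with (2) immediate. (Incidentally, the paper cites Proposition~\ref{pullback} at this step, which appears to be a mislabeled reference to the $m$-semiadditive-functor compatibility proposition you correctly identify.)
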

\begin{proof}
For (1), for a fixed object $X\in \mathcal{C}$, consider the functor
\begin{equation*}
    F_X:\mathcal{C}\to\mathcal{C}:Y\mapsto X\otimes Y.
\end{equation*}
Note that $F_X$ preserves $m$-finite colimits by the $m$-semiadditivity of \catC, so by applying Proposition \ref{pullback} to $F_X$ we have
\begin{equation*}
    \Id_X\otimes \card{A}_{\one}= F_X(\card{A}_{\one})= \card{A}_{F_X(\one)}=\card{A}_X.
\end{equation*}
(2) follows immediately from (1).
\end{proof}

\begin{note}
If \catC\ is an $m$-semiadditive monoidal \inftycat, from the previous proposition, for $m$-finite space A, we can identify $\card{A}$ with an element of $\pi_0(\one)=\pi_0(\Map_{\mathcal{C}}(\one,\one))$.
\end{note}

Note that tensor product can be viewed as an action of \catC\ on \catC, and it is not surprising that we can generalize the above proposition to the following.

\begin{prop}\label{ModuleCard}
Let $(\mathcal{C},\otimes,\one)$ be an $m$-semiadditive monoidal \inftycat, and $A$ an $m$-finite space. Consider \catC-linear \inftycat\ \catD, which is an \inftycat\ with an action of \catC, and the action is written as the functor $F:\mathcal{C}\to\Fun(\mathcal{D},\mathcal{D})$, and $F$ preserves colimits. Denote the cardinality of $A$ valued in \catC\ by $\card{A}^{\mathcal{C}}$, and in \catD\ by $\card{A}^{\mathcal{D}}$.
\begin{enumerate}
    \item For every $X\in\mathcal{D}$, we have
    \begin{equation*}
        \card{A}_{X}^{\mathcal{D}}=\card{A}_{\one}^{\mathcal{C}}\cdot \Id_X.
    \end{equation*}
    Here $(\cdot)$ denotes the action of $\mathcal{C}$ on $\mathcal{D}$.
    \item $\card{A}^{\mathcal{D}}$ is an isomorphism if $\card{A}^{\mathcal{C}}_{\one}$ is an isomorphism.
\end{enumerate}
\end{prop}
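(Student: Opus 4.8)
The plan is to mimic the proof of the preceding proposition (the one about $\card{A}_X = \Id_X \otimes \card{A}_{\one}$) but replace the self-action functor $F_X : Y \mapsto X \otimes Y$ with the action functor coming from the $\mathcal{C}$-linear structure on $\mathcal{D}$. Concretely, for a fixed $X \in \mathcal{D}$, consider the functor $G_X : \mathcal{C} \to \mathcal{D}$ given by $c \mapsto c \cdot X = F(c)(X)$, i.e.\ the orbit functor of the action at the object $X$. This is the composite of $F : \mathcal{C} \to \Fun(\mathcal{D},\mathcal{D})$ with evaluation at $X$, and since $F$ preserves colimits and evaluation at an object preserves all colimits, $G_X$ preserves all colimits, in particular all $m$-finite colimits. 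Hence $G_X$ is an $m$-semiadditive functor (its source and target are $m$-semiadditive, $\mathcal{C}$ by hypothesis and $\mathcal{D}$ because it is a $\mathcal{C}$-linear $\infty$-category with $\mathcal{C}$ being $m$-semiadditive monoidal, so $m$-finite colimits exist and are preserved by the relevant structure).

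Then I would apply the compatibility of integration with $m$-semiadditive functors — that is, the proposition stating $F(\card{q}_X) = \card{q}_{F(X)}$, applied here to $q : A \to \pt$ and the functor $G_X$ — to get
\begin{equation*}
    G_X(\card{A}_{\one}^{\mathcal{C}}) = \card{A}_{G_X(\one)}^{\mathcal{D}}.
\end{equation*}
Now $G_X(\one) = \one \cdot X = F(\one)(X) \simeq X$ since the action of the monoidal unit is (canonically equivalent to) the identity functor on $\mathcal{D}$, so the right-hand side is $\card{A}_X^{\mathcal{D}}$. On the left-hand side, $G_X$ applied to the endomorphism $\card{A}_{\one}^{\mathcal{C}} \in \Map_{\mathcal{C}}(\one,\one)$ is by definition the endomorphism $\card{A}_{\one}^{\mathcal{C}} \cdot \Id_X$ obtained by acting with the morphism $\card{A}_{\one}^{\mathcal{C}}$ on $\Id_X$ (functoriality of $F$ on morphisms, evaluated at $X$). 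This gives exactly statement (1). Statement (2) is then immediate: if $\card{A}_{\one}^{\mathcal{C}}$ is an isomorphism, then acting with it is an automorphism of $\Id_X$ for every $X$, so $\card{A}_X^{\mathcal{D}}$ is an isomorphism for every $X$, hence $\card{A}^{\mathcal{D}}$ is an isomorphism as a natural transformation.

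The main thing to get right — the only real obstacle — is the bookkeeping around what "$G_X$ applied to the morphism $\card{A}_{\one}^{\mathcal{C}}$" means and why it equals "$\card{A}_{\one}^{\mathcal{C}} \cdot \Id_X$" in the notation of the statement; this is essentially unwinding the definition of the action notation $(\cdot)$ and the functoriality of $F : \mathcal{C} \to \Fun(\mathcal{D},\mathcal{D})$ on the mapping space $\Map_{\mathcal{C}}(\one,\one) \to \Map_{\Fun(\mathcal{D},\mathcal{D})}(\id,\id) \to \Map_{\mathcal{D}}(X,X)$. I would also want to state cleanly why $\mathcal{D}$ is $m$-semiadditive and why $G_X$ lands in the hypotheses of the earlier functor-compatibility proposition (preservation of $m$-finite colimits), but both of these are direct: colimit-preservation of $F$ plus colimit-preservation of evaluation, and the fact that a cocomplete $\mathcal{C}$-linear category over an $m$-semiadditive $\mathcal{C}$ inherits $m$-semiadditivity. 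No genuinely hard step is expected; the proposition is a routine corollary of the functoriality proposition once the action functor is identified.
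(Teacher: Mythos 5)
Your proposal is correct and is essentially the paper's own proof: the paper likewise fixes $X\in\mathcal{D}$, forms the functor $F_X:\mathcal{C}\to\mathcal{D}$, $Z\mapsto Z\cdot X$, notes it preserves the relevant $m$-finite (co)limits, and applies the compatibility of cardinality with $m$-semiadditive functors to get $\card{A}_{\one}^{\mathcal{C}}\cdot\Id_X=F_X(\card{A}_{\one}^{\mathcal{C}})=\card{A}_{F_X(\one)}^{\mathcal{D}}=\card{A}_X^{\mathcal{D}}$. Your extra care in unwinding why $G_X$ preserves colimits and why $G_X(\one)\simeq X$ only makes explicit what the paper leaves implicit.
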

\begin{proof}
    For (1), for a fixed object $X\in\mathcal{D}$, consider the functor
    \begin{equation*}
        F_X:\mathcal{C}\to \mathcal{D}:Z\cdot X
    \end{equation*}
    Note that $F_X$ preserves $m$-finite limits by definition, so applying Proposition \ref{pullback} to $F_X$ we have
    \begin{equation*}
        \card{A}_{\one}^{\mathcal{C}}\cdot \Id_X 
        = F_X(\card{A}_{\one}^{\mathcal{C}})
        = \card{A}_{F_X(\one)}^{\mathcal{D}}
        =\card{A}_X^{\mathcal{D}}.
    \end{equation*}
    
(2) follows immediately from (1).
\end{proof}
We conclude with a brief introduction of the principal fiber sequence.

\begin{defn}
A map of spaces $q:A\to B$ is called principal if it can be extended to a fiber sequence $A\oto{q}B\oto{f}E$.
\end{defn}

Note that all the fibers of a principal sequence are isomorphism.
\begin{prop}\label{principal fiber}
Let \catC\ be an \inftycat, and $q:A\to B$ be a principal \catC-ambidextrous map of \catC-ambidextrous spaces with fiber $F$. For $X\in \mathcal{C}$, if $\card{F}_X$ is invertible, then
\begin{equation*}
    \card{q}_{B^*X}=\card{F}_{B^*X}\in \Map_{\Fun(B,\mathcal{C})}(B^*X,B^*X),
\end{equation*}
and
\begin{equation*}
    \card{A}_X=\card{F}_X\card{B}_X.
\end{equation*}
\end{prop}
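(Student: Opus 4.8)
The plan is to prove the first displayed equation, which lives in $\Fun(B,\mathcal{C})$, and then bootstrap the second from it. Throughout I read $\card{F}_{B^*X}$ as $B^*(\card{F}_X)$, the constant morphism of local systems with value $\card{F}_X$; this coincides with the cardinality of the projection $F\times B\to B$ at $B^*X$ by Proposition \ref{pullback} applied to the pullback of $F\to\pt$ along $B\to\pt$. Note also that $F$ is \catC-ambidextrous, being a fiber of the \catC-ambidextrous map $q$ (cf.\ the remark after Proposition \ref{pullback}), and that $\card{q}_{B^*X}$ is invertible: by Proposition \ref{pullback} its value at a point $b\in B$ is $\card{q^{-1}(b)}_X$, all fibers of a principal map are equivalent to $F$, so this value is the invertible map $\card{F}_X$, and equivalences of local systems on the $\infty$-groupoid $B$ are detected pointwise.

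For the first equation I would compute the cardinality at $B^*X$ of the map $q\times_B q\colon A\times_B A\to B$ in two ways and then cancel. On one hand, Corollary \ref{pullbackint} with $q_1=q_2=q$ gives $\card{q\times_B q}_{B^*X}=\card{q}_{B^*X}\circ\card{q}_{B^*X}$. On the other hand, principality provides an equivalence $A\times_B A\simeq A\times F$ over $A$ carrying the projection $\pi_1$ to $\mathrm{pr}\colon A\times F\to A$ (the standard trivialization of the pullback of a fibration along itself, using the null-homotopy of $f\circ q$ from the fiber sequence $A\oto{q}B\oto{f}E$). Thus $q\times_B q=q\circ\pi_1$ is identified with the composite $A\times F\oto{\mathrm{pr}}A\oto{q}B$, in which $\mathrm{pr}$ is the base change of $F\to\pt$ along $A\to\pt$ and hence \catC-ambidextrous (Proposition \ref{ambisquare}). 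Applying the Fubini Proposition \ref{specified fubini}, then Proposition \ref{pullback} (which identifies $\card{\mathrm{pr}}_{A^*X}=A^*\card{F}_X=q^*\card{F}_{B^*X}$), then the homogeneity Proposition \ref{specified homo}(1) yields
\begin{equation*}
    \card{q\times_B q}_{B^*X}=\int_q\card{\mathrm{pr}}_{q^*B^*X}=\int_q\left(q^*\card{F}_{B^*X}\right)=\card{F}_{B^*X}\circ\card{q}_{B^*X}.
\end{equation*}
Comparing the two computations gives $\card{q}_{B^*X}\circ\card{q}_{B^*X}=\card{F}_{B^*X}\circ\card{q}_{B^*X}$, and cancelling the equivalence $\card{q}_{B^*X}$ on the right gives the first equation.

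For the second equation I would factor $A\to\pt$ as $A\oto{q}B\oto{r}\pt$. Fubini (Proposition \ref{specified fubini}) gives $\card{A}_X=\int_r\left(\int_q\Id_{q^*B^*X}\right)=\int_r\card{q}_{B^*X}$, and substituting the first equation rewrites this as $\int_r(r^*\card{F}_X)$; pulling the constant $\card{F}_X$ out of the integral by homogeneity (Proposition \ref{specified homo}(1)) gives $\card{F}_X\circ\int_r\Id_{r^*X}=\card{F}_X\card{B}_X$, as claimed.

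The step I expect to be the main obstacle is the trivialization $A\times_B A\simeq A\times F$ over $A$: one must check carefully that this equivalence carries $\pi_1$ to the projection $\mathrm{pr}$, so that the factorization $q\times_B q\simeq q\circ\mathrm{pr}$ and the ensuing computation are valid, and one must match the two sides coherently as morphisms in $\Fun(B,\mathcal{C})$ — mere pointwise agreement, which is immediate from the remark after Proposition \ref{pullback}, would not by itself suffice. The remaining manipulations are careful but routine instances of the base-change, Fubini and homogeneity results already established.
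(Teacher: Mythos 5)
Your proposal is correct and follows essentially the same route as the paper: trivialize the self-pullback $A\times_B A\simeq A\times F$ using principality, compute $\card{q\times_B q}_{B^*X}$ in two ways (the paper phrases the second way as $\card{q\times_B\pi_B}=\card{q}\card{\pi_B}$ via a second application of Corollary \ref{pullbackint}, which is just the packaged form of your Fubini--base-change--homogeneity computation), cancel the invertible $\card{q}_{B^*X}$, and then integrate over $B$. Your explicit justifications of the invertibility of $\card{q}_{B^*X}$ and of the identification of $\pi_1$ with the projection are points the paper leaves implicit, but they do not constitute a different argument.
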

\begin{proof}
    Since $q$ is principal, the base change of $q$ along itself $A\times_B A\oto{\tilde{q}}A$ is principal with a section, so $\tilde{q}$ is isomorphic to projection $A\times F\oto{\pi_A}A$. From the pullback square
    \begin{equation*}
        \begin{tikzcd}
            A\times F\arrow{r}\arrow[swap]{d}{\pi_A}
            &B\times F\arrow{d}{\pi_B}\\
            A\arrow{r}&B
        \end{tikzcd}
    \end{equation*}
    we know $q\times_Bq\cong q\times_B \pi_B$. From Corollary \ref{pullbackint},
    \begin{equation*}
        \card{q}^2=\card{q\times_B q}
        =\card{q\times_B \pi_B}
        =\card{q}\card{\pi_B}.
    \end{equation*}
    For $X\in\mathcal{C}$, if $\card{F}_X$ is invertible, then $\card{q}_{B^*X}$ is invertible, so by Proposition \ref{pullback} we have
\begin{equation*}
    \card{q}_{B^*X}=\card{\pi_B}_{B^*X}=B^*(\card{F}_X),
\end{equation*}
so integrate along $B$, with Proposition \ref{specified homo} we have
\begin{equation*}
    \card{A}_X=\int_B\card{q}_{B^*X}=\int_BB^*(\card{F}_X)=\card{F}_X\card{B}_X.
\end{equation*}
\end{proof}

We could generalize the previous proposition to the following relative version, which will be used in the last chapter.

\begin{prop}\label{relative principal}
Consider a map of spaces $A\oto{q} B$, suppose it is induced by a $G$-equivariant principal fibration $F\to\tilde{A}\oto{\tilde{q}} \tilde{B}$ by taking the $G$-quotient, then if $\card{F/G\to BG}$ is invertible, we have
\begin{equation*}
    \card{A\to BG} = \card{F/G\to BG}\card{B\to BG}.
\end{equation*}
\end{prop}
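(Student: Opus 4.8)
The plan is to run the proof of Proposition \ref{principal fiber} ``relatively over $BG$''. Write $p_B:B\to BG$ and $p_A=p_B\circ q:A\to BG$, and let $\pi:B\times_{BG}(F/G)\to B$ denote the base change of $p_{F/G}:F/G\to BG$ along $p_B$. Throughout I assume, as is implicit in the statement, that $p_B$ and $p_{F/G}$ are \catC-ambidextrous; then so are $q$ (all of whose fibers are $\simeq F$, which is \catC-ambidextrous as a fiber of $p_{F/G}$), $\pi$, and $p_A$, by Proposition \ref{ambisquare} and \cite[Proposition~4.3.5]{AmbiKn}. As in the absolute case I will not try to identify $q$ with $\pi$ — these are genuinely different $B$-families of copies of $F$ — but only their base changes along $q$.

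The crux, and the main obstacle, is to show that the base change of $q$ along itself, $q\times_Bq:A\times_BA\to A$, is equivalent over $A$ to the base change of $\pi$ along $q$, namely to $A\times_{BG}(F/G)\to A$. This is where the hypothesis enters: by assumption $q=\tilde q_{hG}$ with $\tilde q:\tilde A\to\tilde B$ a $G$-equivariant principal fibration, so $\tilde A\simeq\mathrm{fib}(\tilde B\to E)$ for a pointed $G$-space $E$ with $G$-fixed basepoint, and $F\simeq\mathrm{fib}(\tilde q)$. Base-changing the fiber sequence $\tilde A\oto{\tilde q}\tilde B\to E$ along $\tilde q$ trivializes it $G$-equivariantly, since the composite $\tilde A\to E$ is $G$-equivariantly null; this gives a $G$-equivariant equivalence $\tilde A\times_{\tilde B}\tilde A\simeq\tilde A\times F$ over $\tilde A$. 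Passing to homotopy $G$-quotients — equivalently, using $\tilde A=A\times_{BG}EG$, $\tilde B=B\times_{BG}EG$ and the equivalence $\mathcal{S}_{/BG}\simeq\Fun(BG,\mathcal{S})$ — this descends to the desired equivalence $A\times_BA\simeq A\times_{BG}(F/G)$ over $A$, compatibly with the further maps to $B$. The work here is entirely in unwinding what ``induced by a $G$-equivariant principal fibration by taking the $G$-quotient'' means; everything afterwards is formal.

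Granting this, Corollary \ref{pullbackint} gives $\card{q}^2=\card{q\times_Bq}=\card{q\times_B\pi}=\card{q}\circ\card{\pi}$ as natural endomorphisms of $\Id_{\Fun(B,\mathcal{C})}$, using the previous step for the middle equality. Evaluating at $p_B^*X$ for $X\in\Fun(BG,\mathcal{C})$: the stalk of $\card{q}_{p_B^*X}$ at any $b\in B$ is $\card{q^{-1}(b)}_{X(b)}=\card{F}_{X_0}$ by the Remark after Proposition \ref{pullback}, and this is invertible, being the stalk of the hypothetically invertible $\card{F/G\to BG}_X$; hence $\card{q}_{p_B^*X}$ is invertible and may be cancelled from $\card{q}_{p_B^*X}\circ\card{q}_{p_B^*X}=\card{q}_{p_B^*X}\circ\card{\pi}_{p_B^*X}$, leaving $\card{q}_{p_B^*X}=\card{\pi}_{p_B^*X}$. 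Proposition \ref{pullback} applied to the square defining $\pi$ then identifies the right-hand side with $p_B^*\bigl(\card{F/G\to BG}_X\bigr)$.

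Finally, by the Fubini Proposition \ref{specified fubini} applied to $A\oto{q}B\oto{p_B}BG$,
\[
\card{A\to BG}_X=\int_{p_Bq}(p_Bq)^*\Id_X=\int_{p_B}\bigl(\card{q}_{p_B^*X}\bigr)=\int_{p_B}\bigl(p_B^*\card{F/G\to BG}_X\bigr),
\]
and homogeneity (Proposition \ref{specified homo}(1), with $g=\card{F/G\to BG}_X$ and $f=\Id_{p_B^*X}$) turns the last term into $\card{F/G\to BG}_X\circ\card{B\to BG}_X$. Since $X$ is arbitrary and endomorphisms of the identity functor commute, this is exactly $\card{A\to BG}=\card{F/G\to BG}\,\card{B\to BG}$.
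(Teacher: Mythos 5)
Your proposal is correct and follows essentially the same route as the paper: identify $q\times_Bq$ with the base change of $F/G\to BG$ along $p_B q$ by descending the equivariant trivialization of $\tilde A\times_{\tilde B}\tilde A$ to $G$-quotients, apply Corollary \ref{pullbackint} to get $\card{q}^2=\card{q}\circ\card{\pi}$, cancel using invertibility, and integrate along $B\to BG$ via Fubini and homogeneity. Your write-up is somewhat more explicit about the ambidexterity hypotheses and the stalkwise invertibility argument, but the structure is the same.
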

\begin{proof}
The proof is analogous to the previous one. As $\tilde{q}$ is principal, $\tilde{A}\times_{\tilde{B}}\tilde{A}\to A$ is a principal map with a section, so it is isomorphic to the projection $F\times \tilde{A}\to \tilde{A}$.

In particular, we have the following pullback squares:
\begin{equation*}
\begin{tikzcd}
    \tilde{A}\times_{\tilde{B}}\tilde{A}\arrow{r}\arrow{d}&\tilde{A}\arrow{d}\\
    \tilde{A}\arrow{r}&\tilde{B},
\end{tikzcd}
\qquad\qquad
\begin{tikzcd}
    \tilde{A}\times F\arrow{r}\arrow{d}&\tilde{B}\times F\arrow{d}{\pi_{\tilde{B}}}\\
    \tilde{A}\arrow{r}&\tilde{B},
\end{tikzcd}
\end{equation*}
where $\tilde{A}\times_{\tilde{B}}\tilde{A}\to \tilde{B}$ isomorphic to $\tilde{A}\times F\to \tilde{B}$.

Taking the $G$-quotient we still have two pullback squares
\begin{equation*}
\begin{tikzcd}
    A\times_BA\arrow{r}\arrow{d}&A\arrow{d}\\
    A\arrow{r}&B,
\end{tikzcd}
\qquad\qquad
\begin{tikzcd}
    A\times_{BG}F/G\arrow{r}\arrow{d}&B\times_{BG}F/G\arrow{d}{\pi_B}\\
    A\arrow{r}&B,
\end{tikzcd}
\end{equation*}
and in particular
\begin{equation*}
    q\times_Bq\cong q\times_B\pi_B.
\end{equation*}
Then from Corollary \ref{pullbackint},
\begin{equation*}
    \card{q}^2 = \card{q}\circ \card{\pi_B}.
\end{equation*}
For $X\in \Fun(BG, \mathcal{C})$, if $\card{F/G\to BG}_X$ is invertible, then $\card{q}_{B^*X}$ is invertible, so by Proposition \ref{pullback} we have
\begin{equation*}
    \card{q}_{B^*X}=\card{\pi_B}_{B^*X}.
\end{equation*}
Integrating along $B\to BG$, with Proposition \ref{specified homo} we have
\begin{equation*}
    \card{A\to BG}_X = \int_{B\to BG}\card{q}_{B^*X} = \int_{B\to BG}\card{\pi_B}_{B^*X}=\card{F/G\to BG}_X\circ\card{B\to BG}_X.
\end{equation*}
\end{proof}

\newpage

\section{Height}
In this section, we give a brief introduction to the notion of semiadditive height. We will focus on the definition of semiadditive height in the first section, and reference several propositions and theorems to justify the definition in the second.
\subsection{Semiadditive Height}
The definition of height requires a fixed prime $p$, so we first define the category of $p$-typical $m$-semiadditive \inftycats.
\begin{defn}
Let $p$ be a prime and $0\leq m\leq \infty$.
\begin{enumerate}
    \item An \inftycat\ \catC\ is $p$-typically $m$-semiadditive if all $m$-finite $p$-spaces are \catC-ambidextrous.
    \item A functor $F:\mathcal{C}\to \mathcal{D}$ between $p$-typically $m$-semiadditive if it preserves all $m$-finite $p$-space colimits.
    \item A (symmetric) monoidal \inftycat\ \catC\ is $p$-typically $m$-semiadditive (symmmetric) monoidal if it is $p$-typically $m$-semiadditive and is compatible with $m$-finite $p$-space colimits.
\end{enumerate}
\end{defn}
\begin{rem}
Normally we would just assume that \catC\ is $m$-semiadditive, so that we could define the properties about height for all prime $p$.
\end{rem}

\begin{prop}[{\cite[Proposition~3.1.2]{AmbiHeight}}]
 A $0$-semiadditive \inftycat\ \catC\ is $p$-typically $m$-semiadditive if and only if $B^kC_p$ is \catC-ambidextrous for all $k=1,...,m$.
\end{prop}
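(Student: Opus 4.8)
The plan is to reduce the statement to an induction on the Postnikov tower of a general $m$-finite $p$-space, using the composition and base-change stability of ambidexterity (Proposition \ref{ambisquare}) together with the characterization of ambidexterity of a map in terms of the ambidexterity of its fibers, which was quoted after Proposition \ref{pullback} from \cite[Proposition~4.3.5]{AmbiKn}. The forward direction is immediate: each $B^kC_p$ is an $m$-finite $p$-space for $k=1,\dots,m$, so $p$-typical $m$-semiadditivity gives ambidexterity of $B^kC_p$ directly. The content is the converse.

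For the converse, I would first observe that an $m$-finite $p$-space $A$ is \catC-ambidextrous as soon as the map $A\to\pt$ is, and by \cite[Proposition~4.3.5]{AmbiKn} plus Proposition \ref{ambisquare} it suffices to handle connected $A$ (the fibers over the finitely many components are connected $m$-finite $p$-spaces; a finite coproduct of ambidextrous maps is ambidextrous since \catC\ is $0$-semiadditive). So assume $A$ is connected with $\pi_k(A)$ a finite $p$-group for $1\le k\le m$ and trivial above. Then I would run an induction on $m$ using the Postnikov truncation: the map $A\to\tau_{\le m-1}A$ is a principal fibration with fiber $B^m\pi_m(A)$, hence (after further devissage) ambidexterity of $A\to\tau_{\le m-1}A$ reduces by base change and composition to ambidexterity of $B^m\pi_m(A)$, while $\tau_{\le m-1}A$ is a connected $(m-1)$-finite $p$-space, handled by the inductive hypothesis. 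Thus the whole problem collapses to: for a finite abelian $p$-group $P$, the spaces $B^kP$ are \catC-ambidextrous for $1\le k\le m$, given that the $B^kC_p$ are.

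To finish that reduction, I would use the structure theorem for finite abelian $p$-groups: $P\cong C_{p^{e_1}}\times\cdots\times C_{p^{e_r}}$, so $B^kP\simeq B^kC_{p^{e_1}}\times\cdots\times B^kC_{p^{e_r}}$, and a finite product of ambidextrous spaces is ambidextrous by Corollary \ref{pullbackint} (iterated fiber products over $\pt$). It then remains to see that $B^kC_{p^{e}}$ is ambidextrous for all $e\ge 1$, knowing it for $e=1$. Here I would use the short exact sequence $C_{p^{e-1}}\to C_{p^{e}}\to C_p$, delooped $k$ times, to present $B^kC_{p^{e}}\to B^kC_p$ as a principal fibration with fiber $B^kC_{p^{e-1}}$; ambidexterity of this map follows by base change and composition once we know ambidexterity of the fiber $B^kC_{p^{e-1}}$ and of the base $B^kC_p$, so an inner induction on $e$ closes the loop. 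Finally $B^kC_p$ for $1\le k\le m$ is ambidextrous by hypothesis (and for $k=0$ trivially, since \catC\ is $0$-semiadditive and $C_p$ is a $0$-finite $p$-space).

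The main obstacle is the careful bookkeeping in the two nested inductions — on the Postnikov length $m$ and on the exponent $e$ — and in particular making sure at each stage that the relevant map (Postnikov layer, or quotient by a subgroup) really is a \emph{principal} fibration so that Proposition \ref{ambisquare}(2) applies to its base change, and that \catC\ is known to admit the relevant $q$-(co)limits; the $0$-semiadditivity hypothesis is exactly what is needed to get the base cases and the finite-coproduct/finite-product steps off the ground. An alternative, cleaner route would be to invoke directly that the class of \catC-ambidextrous maps of spaces is closed under composition, base change and finite coproducts and contains the $B^kC_p$, and then note that every connected $m$-finite $p$-space is built from the $B^kC_p$ by finitely many such operations via its Postnikov tower and the classification of finite $p$-groups up to a filtration with $C_p$-quotients; I expect the paper to take essentially this form.
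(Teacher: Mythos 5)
The paper does not actually prove this proposition --- it is quoted verbatim from \cite[Proposition~3.1.2]{AmbiHeight} --- so there is no in-text argument to compare against; but your devissage is essentially the proof given in that reference (closure of ambidextrous maps under composition, base change, finite coproducts and fiber detection, plus the fact that every connected $m$-finite $p$-space is built from the spaces $B^kC_p$ by finitely many fibrations). Two points need tightening. First, your reduction ``collapses to: for a finite abelian $p$-group $P$, the spaces $B^kP$ are ambidextrous'' does not cover the $k=1$ layer, where $\pi_1(A)$ is a finite $p$-group that need not be abelian, so the structure theorem and the product decomposition $B(P_1\times P_2)\simeq BP_1\times BP_2$ do not suffice there; you must instead filter $G$ by a central series with successive quotients $C_p$ and use the fibrations $BC_p\to BG\to B(G/C_p)$ (exactly the mechanism of Proposition \ref{cyclic}), which you only gesture at in your closing sentence. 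Second, you should not lean on the Postnikov layers $A\to\tau_{\le m-1}A$ being \emph{principal}: for a space with nontrivial $\pi_1$-action this can fail, but it is also unnecessary, since \cite[Proposition~4.3.5]{AmbiKn} (quoted after Proposition \ref{pullback}) detects ambidexterity of a map on its fibers, and the fibers are still $B^m\pi_m(A)$. With those two repairs the argument is correct and is the same route as the cited source.
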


\begin{note}
We may denote the category of $p$-typically $m$-semiadditive \inftycats\ by $\cat^{\oplus_p-m}$.
\end{note}

As indicated in the previous proposition, in a $p$-typically $m$-semiadditive \inftycat, the cardinality of $\card{B^kC_p}$ plays an important role, so we define a special notation for it.

\begin{note}
Let $\mathcal{C}\in \cat^{\oplus_p-m}$, for every integer $0\leq k\leq m$, we use $p_{(k)}^{\mathcal{C}}$ to denote $\card{B^kC_p}$. We shall omit \catC\ and simply write $p_{(k)}$ whenever \catC\ is clear from the context.
\end{note}

Now we give a example of $\mathcal{O}_n\coloneqq \Mod_{E_n}(\Sp_{K(n)})$, where $K(n)$ is the Morava $K$-theories (See \cite{hoveymorava} or \cite{LurieChrome}).

\begin{example}[{\cite[Proposition 2.2.5]{AmbiHeight}}]
For all $n,k\leq 0$, we have
\begin{equation*}
    p_{(k)}^{\mathcal{O}_n}=p^{\binom{n-1}{k}}.
\end{equation*}
\end{example}

\begin{defn}
Consider an \inftycat\ \catC, let $\alpha:\Id_{\mathcal{C}}\to \Id_{\mathcal{C}}$ be a natural endomorphism. An object $X\in \mathcal{C}$ is
\begin{enumerate}
    \item $\alpha$-divisible if $\alpha_X$ is invertible.
    \item $\alpha$-complete if $\Map(Z, X)\cong \pt$ for all $\alpha$-divisible $Z\in \mathcal{C}$.
\end{enumerate}
\end{defn}

With the above definition and $p_{(n)}$ we are able to define the (semiadditive) height.

\begin{defn}
Let $\mathcal{C}\in \cat^{\oplus_p-m}$, and $0\leq n\leq m\leq \infty$. For every object $X\in \mathcal{C}$ we define the height of $X$ (denoted by $\htt_{\mathcal{C}}(X)$) as:
\begin{enumerate}
    \item $\htt_{\mathcal{C}}(X)\leq n$, if $X$ is $p_{(n)}^{\mathcal{C}}$-invertible.
    \item $\htt_{\mathcal{C}}(X)> n$, if $X$ is $p_{(n)}^{\mathcal{C}}$-complete.
    \item $\htt_{\mathcal{C}}(X)= n$, if $\htt_{\mathcal{C}}(X)\leq n$ and $\htt_{\mathcal{C}}(X)> n-1$.
\end{enumerate}
We say the height of \catC\ (denoted by $\Ht(\mathcal{C})$) is $\leq n$, ($>n$, $=n$), if for all $X\in \mathcal{C}$, $\htt_{\mathcal{C}}(X)\leq n$ ($\htt_{\mathcal{C}}(X)> n$, $\htt_{\mathcal{C}}(X)= n$).
\end{defn}

\begin{rem}
We say $X\in\mathcal{C}$ is of height $\infty$ if and only if $\htt_{\mathcal{C}}(X)>k$ for all $k\geq 0$. When the \inftycat\ \catC\ is clear, we shall write simply write $\htt(X)$.
\end{rem}

The following proposition states the compatibility of height and monoidal structure. 
\begin{prop}[{\cite[Corollary 3.1.16]{AmbiHeight}}]
Let \catC\ be a $p$-typically $m$-semiadditive monoidal \inftycat, for every $0\leq n\leq m$, we have $\Ht(\mathcal{C})\leq n$ if and only if $\htt(\one)\leq n$.
\end{prop}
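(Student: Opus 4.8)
The plan is to recognize $p_{(n)}^{\mathcal{C}} = \card{B^nC_p}$ and to invoke the compatibility of cardinality with the tensor product, applied to the $m$-finite $p$-space $B^nC_p$ (which is legitimate precisely because $n\leq m$, so $B^nC_p$ is $m$-finite, a $p$-space, and hence \catC-ambidextrous).

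The forward implication is trivial: if every $X\in\mathcal{C}$ is $p_{(n)}^{\mathcal{C}}$-invertible, then so is the unit, i.e. $\htt_{\mathcal{C}}(\one)\leq n$.

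For the converse, fix $0\leq n\leq m$ and suppose $(p_{(n)}^{\mathcal{C}})_{\one}$ is an equivalence. Given an arbitrary $X\in\mathcal{C}$, I would apply the functoriality of cardinality (the statement that an $m$-semiadditive functor commutes with $\card{-}$) to the colimit-preserving functor $F_X=X\otimes(-)\colon\mathcal{C}\to\mathcal{C}$. Using $F_X(\one)\simeq X$ and the fact that $F_X$ carries a map $\one\to\one$ to its tensor with $\Id_X$, this gives
\begin{equation*}
    (p_{(n)}^{\mathcal{C}})_X = \card{B^nC_p}_{F_X(\one)} = F_X\bigl(\card{B^nC_p}_{\one}\bigr) = \Id_X\otimes (p_{(n)}^{\mathcal{C}})_{\one}.
\end{equation*}
Tensoring an inverse of $(p_{(n)}^{\mathcal{C}})_{\one}$ with $\Id_X$ and composing with the unit isomorphism $X\otimes\one\simeq X$ produces a two-sided inverse of $(p_{(n)}^{\mathcal{C}})_X$, so $X$ is $p_{(n)}^{\mathcal{C}}$-invertible; as $X$ was arbitrary, $\Ht(\mathcal{C})\leq n$. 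Equivalently one may simply quote Proposition \ref{ModuleCard}(2) with \catD${}={}$\catC\ acting on itself.

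The one point I would spell out carefully — and the only place where anything is at stake — is that the tensor–cardinality identity above was stated for an $m$-semiadditive monoidal \inftycat, whereas here \catC\ is merely $p$-typically $m$-semiadditive monoidal. This is harmless: the proof of that identity uses only that $B^nC_p$ is \catC-ambidextrous and that $F_X$ preserves $B^nC_p$-indexed colimits, and for the $p$-space $B^nC_p$ both facts are built into the definition of a $p$-typically $m$-semiadditive monoidal \inftycat. So the argument goes through verbatim once this bookkeeping is in place, and there is no substantial obstacle.
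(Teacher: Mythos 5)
Your proof is correct. The paper states this result only as a citation to \cite{AmbiHeight} without giving its own proof, but your argument is exactly the one the paper's machinery supplies: the forward direction is the trivial specialization to $X=\one$, and the converse is the identity $\card{B^nC_p}_X=\Id_X\otimes\card{B^nC_p}_{\one}$ from the proposition on cardinality in monoidal $\infty$-categories (equivalently Proposition \ref{ModuleCard}(2) with $\mathcal{C}$ acting on itself), applied to $A=B^nC_p$ and combined with the definition of height via $p_{(n)}$-divisibility. Your closing bookkeeping remark is also the right one to make: the tensor--cardinality identity only requires that $F_X=X\otimes(-)$ preserve colimits indexed by $B^nC_p$ and the fibers of its iterated diagonals, all of which are $m$-finite $p$-spaces, so the hypothesis of $p$-typical $m$-semiadditive monoidality suffices.
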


\begin{example}
Let \catC\ be $0$-semiadditive, then $p_{(0)}$ is just multiply by $p$. An object $X\in \mathcal{C}$ is of height $0$ if and only if $p\cdot\Id_X$ is invertible, and of height $>0$ if and only if $X$ is $p$-complete. Therefore it is easy to see that a non-zero object can have height $>0$ for at most one prime $p$. In particular, if \catC\ is $p$-local, every object has height $0$ for each prime $q\neq p$. In fact, most of the time we are interested in the height of $p$-local \inftycats.
\end{example}

\subsection{Bounded Height}
In this section, we give a few propositions that justify the definition of semiadditive height and then focus on the $p$-local \inftycats. We conclude with a brief introduction of the theory of modes, which gives a universal \inftycat\ that characterizes the property of being stable, $p$-local, $\infty$-semiadditive, and of height $n$.

\begin{prop}[{\cite[Proposition 3.1.9]{AmbiHeight}}]
Let $\mathcal{C}\in \cat^{\oplus_p-m}$, and let $0\leq n_0\leq n_1\leq m$, then for every $X\in \mathcal{C}$,
\begin{enumerate}
    \item If $\htt(X)\leq n_0$, $\htt(X)\leq n_1$.
    \item If $\htt(X)>n_1$, $\htt(X)>n_0$.
\end{enumerate}
\end{prop}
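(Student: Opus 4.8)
The plan is to translate the height conditions into statements about divisibility and completeness for the natural transformations $p_{(k)}=\card{B^kC_p}$, to reduce the first assertion to the case $n_1=n_0+1$, to derive the second assertion formally from the first, and to settle the remaining case by identifying $p_{(n_0+1)}$ with the inverse of $p_{(n_0)}$ on any object on which $p_{(n_0)}$ is invertible.

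First I would unwind the definitions. Since $0\le n_0\le n_1\le m$ and $\mathcal{C}\in\cat^{\oplus_p-m}$, the spaces $B^kC_p$ are \catC-ambidextrous for all $k\le n_1$, so each $p_{(k)}$ is defined in that range; by definition $\htt_{\mathcal{C}}(X)\le n$ says that $(p_{(n)})_X$ is invertible, and $\htt_{\mathcal{C}}(X)>n$ says that $X$ is $p_{(n)}$-complete. Assertion (2) then follows from assertion (1) with the same indices: given $X$ that is $p_{(n_1)}$-complete and an arbitrary $p_{(n_0)}$-divisible object $Z$, assertion (1) makes $Z$ into a $p_{(n_1)}$-divisible object, so $\Map_{\mathcal{C}}(Z,X)\simeq\pt$; as $Z$ ranges over all $p_{(n_0)}$-divisible objects this says exactly that $X$ is $p_{(n_0)}$-complete. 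For assertion (1) it suffices, iterating one step at a time, to treat the case $n_1=n_0+1$; put $n:=n_0$.

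For that step the geometric input is the basepoint fibration over $B^{n+1}C_p$. The basepoint inclusion $q\colon\pt\to B^{n+1}C_p$ is principal, since it extends to the fibre sequence $\pt\oto{q}B^{n+1}C_p\oto{\Id}B^{n+1}C_p$, and its fibre is $\Omega B^{n+1}C_p\simeq B^nC_p$. As $\mathcal{C}$ is $p$-typically $m$-semiadditive and $n+1\le m$, the spaces $\pt$ and $B^{n+1}C_p$ are \catC-ambidextrous, and $q$ is \catC-ambidextrous because each of its fibres is equivalent to $B^nC_p$ (ambidexterity of a map of spaces being detected on fibres, cf.\ \cite[Proposition~4.3.5]{AmbiKn}). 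Now assume $\htt(X)\le n$, i.e.\ $(p_{(n)})_X=\card{B^nC_p}_X$ is invertible. Applying Proposition \ref{principal fiber} to $q$ — with $A=\pt$, $B=B^{n+1}C_p$ and fibre $F=B^nC_p$, and using that the cardinality of a point is the identity — gives
\[
    \Id_X=\card{\pt}_X=\card{B^nC_p}_X\,\card{B^{n+1}C_p}_X=(p_{(n)})_X\,(p_{(n+1)})_X .
\]
Hence $(p_{(n+1)})_X=(p_{(n)})_X^{-1}$ is invertible, i.e.\ $\htt(X)\le n+1$. Combined with the reductions above, this proves both parts.

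I expect no real obstacle once Proposition \ref{principal fiber} is in hand; the only genuine choice is to use the basepoint fibration of $B^{n+1}C_p$, whose fibre is $B^nC_p$, and the argument in fact delivers the sharper fact that $p_{(n+1)}$ and $p_{(n)}^{-1}$ agree on every object of height $\le n$. One could instead try to extract the same identity from the Fubini formula (Proposition \ref{specified fubini}) applied to $\pt\to B^{n+1}C_p\to\pt$ together with the fibrewise description of cardinality, but then one must rule out potential monodromy of $\Omega B^{n+1}C_p$ on itself, which Proposition \ref{principal fiber} handles for free. The remaining points are bookkeeping: verifying that all the maps and classifying spaces in play are \catC-ambidextrous — this is exactly where the hypothesis $n_1\le m$ is used — and observing that degenerate situations, such as a category in which $0$ is the only $p_{(n)}$-divisible object, are covered by the same computation.
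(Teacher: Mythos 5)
Your argument is correct, and it is essentially the standard proof: the paper itself states this result as a citation to \cite[Proposition~3.1.9]{AmbiHeight} without proof, and the proof there likewise rests on the loop-space fibration $B^{n}C_p\to\pt\to B^{n+1}C_p$ to show that $p_{(n+1)}$ acts as $p_{(n)}^{-1}$ on any $p_{(n)}$-divisible object, with part (2) following formally from part (1) via the definition of completeness. Your reductions (iterating $n_1=n_0+1$, deducing (2) from (1)) and your verification of the hypotheses of Proposition \ref{principal fiber} are all sound.
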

\begin{rem}
The proposition above ensures that there are no examples of $X\in\mathcal{C}$ with contradictory heights, such as $\htt(X)\leq n_0$ but $\htt(X)>n_1$. Also, it is obvious that a nonzero object $X$ could not be of height $\leq n$ and $>n$. However, we may have examples such that $\htt(X)\nleq n$ and $\htt(X)\ngtr n$. For stable \inftycats, this could be partly resolved by the following theorem.
\end{rem}

\begin{thm}[{\cite[Theorem 4.2.7]{AmbiHeight}}]
Let \catC\ be stable $m$-semiadditive \inftycat.
\begin{enumerate}
    \item For $m<\infty$, assume that \catC\ is idempotent complete (see Section 4.4.5 of \cite{htt}), and let $\mathcal{C}_0,...,\mathcal{C}_{m-1},\mathcal{C}_{>m-1}$ be the full subcategories generated by objects of height $0,...,m-1,>m-1$ respectively, then the inclusions determine an equivalence
    \begin{equation*}
        \mathcal{C}\cong \mathcal{C}_0\times...\times \mathcal{C}_{m-1}\times \mathcal{C}_{>m-1}.
    \end{equation*}
    Moreover, $\mathcal{C}_0$ is $p$-typically $\infty$-semiadditive and $\mathcal{C}_1,...,\mathcal{C}_{m-1}$ are $\infty$-semiadditive.
    \item For $m=\infty$, assume \catC\ admits sequential limits and colimits, if $\mathcal{C}_{\infty}=0$, then
    \begin{equation*}
        \mathcal{C}\cong \prod_{n\in\NN}\mathcal{C}_n.
    \end{equation*}
\end{enumerate}
\end{thm}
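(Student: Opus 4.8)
\emph{Proof strategy.} The plan is to reduce the whole statement to a single ``height-$n$ fracture'' and then iterate. Fix an integer $0\le n<m$ and write $\alpha=p_{(n)}\colon\Id_{\mathcal{C}}\to\Id_{\mathcal{C}}$ for the cardinality $\card{B^nC_p}$, viewed as a natural endomorphism of the identity. By definition the full subcategory $\mathcal{C}_{\le n}$ of objects of height $\le n$ is the subcategory of $\alpha$-divisible objects, and $\mathcal{C}_{>n}$ is the subcategory of $\alpha$-complete objects. The heart of the argument is to prove that the inclusions induce an equivalence $\mathcal{C}\simeq\mathcal{C}_{\le n}\times\mathcal{C}_{>n}$. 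Granting this, part (1) follows by downward induction on $m$: splitting $\mathcal{C}$ at $n=m-1$ gives $\mathcal{C}\simeq\mathcal{C}_{\le m-1}\times\mathcal{C}_{>m-1}$; the factor $\mathcal{C}_{\le m-1}$ is again stable, $m$-semiadditive and idempotent complete, so one splits it at $n=m-2$, and so on. After $m$ steps the ``height exactly $j$'' factor $\mathcal{C}_j=(\mathcal{C}_{\le j})_{>j-1}$ has been peeled off for each $j=0,\dots,m-1$, which yields $\mathcal{C}\simeq\mathcal{C}_0\times\cdots\times\mathcal{C}_{m-1}\times\mathcal{C}_{>m-1}$. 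Part (2) is the limit $m\to\infty$ of this procedure: the hypothesis $\mathcal{C}_\infty=\bigcap_n\mathcal{C}_{>n}=0$ together with the existence of sequential limits and colimits lets one reconstruct every object from the tower of its height-$\le n$ localizations and identify $\mathcal{C}$ with the limit of the finite products, namely $\prod_{n\in\NN}\mathcal{C}_n$.

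For the single fracture, the first step is to build the localization onto $\mathcal{C}_{\le n}$, i.e.\ the operation ``invert $\alpha$''. Since $\mathcal{C}$ is stable and idempotent complete (resp.\ admits sequential colimits when $m=\infty$), for each object $X$ one forms the telescope $X[\alpha^{-1}]$, the filtered colimit of the sequence $X\to X\to X\to\cdots$ all of whose maps are $\alpha_X$; it is $\alpha$-divisible, the canonical map $X\to X[\alpha^{-1}]$ is initial among maps from $X$ into $\alpha$-divisible objects, and its fiber $\Gamma_\alpha X$ is the ``$\alpha$-torsion'' part of $X$. This exhibits $\mathcal{C}_{\le n}$ as a smashing reflective subcategory and presents $\mathcal{C}$ as a recollement of $\mathcal{C}_{\le n}$ by the subcategory of $\alpha$-torsion objects; the basic multiplicativity of cardinalities (Corollary~\ref{pullbackint} and the monoidal compatibilities of the cardinality) is what keeps $\alpha$-inversion compatible with the ambient structure as one iterates.

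The crucial --- and hardest --- step is to upgrade this recollement to an honest product: one must show that the $\alpha$-torsion subcategory coincides with the $\alpha$-complete subcategory $\mathcal{C}_{>n}$, and that the inclusion $\mathcal{C}_{\le n}\hookrightarrow\mathcal{C}$ is moreover coreflective, so that the fracture sequence $\Gamma_\alpha X\to X\to X[\alpha^{-1}]$ splits naturally and $X\simeq X[\alpha^{-1}]\times\Gamma_\alpha X$. This is not formal (the analogous fracture of the integers at a prime does not split): it uses the ambidexterity of $B^nC_p$ in an essential way, through the ``bootstrap'' relating invertibility of $p_{(n)}$ to the ambidexterity of $B^{n+1}C_p$ --- which is $m$-finite, hence $\mathcal{C}$-ambidextrous, since $\mathcal{C}$ is $m$-semiadditive and $n+1\le m$. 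The upshot of that analysis is that $p_{(n)}$-completion is itself a finite (ambidextrous) limit rather than a genuine completion, which forces the torsion and complete subcategories to agree and makes the gluing between the two factors vanish. I expect this to be the main obstacle of the whole proof; once it is in place the product decomposition, and with it parts (1) and (2), is immediate.

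Finally, for the ``moreover'' part one applies semiadditive redshift to each factor. Each $\mathcal{C}_j$ is a direct factor of $\mathcal{C}$, hence stable and $m$-semiadditive, and by construction has height exactly $j\le m-1$, so in particular is $(j+1)$-semiadditive; a stable $(j+1)$-semiadditive category of height $\le j$ is $\infty$-semiadditive, and for $j=0$ the element $p=p_{(0)}$ is invertible on $\mathcal{C}_0$, so $\card{B^kC_p}$ is invertible for every $k$ and $\mathcal{C}_0$ is $p$-typically $\infty$-semiadditive. No assertion is made about the last factor $\mathcal{C}_{>m-1}$, which is only known to remain stable and $m$-semiadditive.
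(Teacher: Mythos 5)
Note first that the paper does not prove this statement at all --- it is imported verbatim from \cite[Theorem~4.2.7]{AmbiHeight} --- so your proposal has to be judged on its own terms, and as written it has two genuine gaps, even granting that its overall skeleton (a single fracture at each $n<m$, iterated, with a limit argument for $m=\infty$ and the bootstrap for the ``moreover'' clause) is a reasonable outline. The first gap is that your construction of the fracture does not get off the ground under the hypotheses of part (1): there the only assumption beyond stability and $m$-semiadditivity is idempotent completeness, and a sequential (filtered) colimit is not an absolute colimit, so splitting of idempotents gives you retracts and finite (co)limits but certainly not telescopes. Hence ``form $X[\alpha^{-1}]$ as the colimit of $X\oto{\alpha_X}X\oto{\alpha_X}\cdots$'' is simply unavailable in case (1); this is precisely why the statement demands only split idempotents for $m<\infty$ and reserves sequential (co)limits for the $m=\infty$ case. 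A correct argument must produce the two factors by exhibiting a natural idempotent endomorphism of each object, or by showing that $\mathcal{C}_{\le n}$ is (co)reflective by some means not relying on telescopes; your subsequent definition of the ``$\alpha$-torsion'' subcategory as the fiber of $X\to X[\alpha^{-1}]$ inherits the same problem.

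The second and more serious gap is that the heart of the theorem --- that the $p_{(n)}$-divisible and $p_{(n)}$-complete parts are orthogonal in both directions, so that the fracture splits --- is exactly the step you defer (``I expect this to be the main obstacle''), and the mechanism you gesture at is unsubstantiated. Ambidexterity of $B^{n+1}C_p$ together with Proposition~\ref{principal fiber} yields identities of cardinalities such as $p_{(n)}p_{(n+1)}=1$ on $p_{(n)}$-divisible objects, but nothing in the paper makes $p_{(n)}$-completion ``a finite (ambidextrous) limit'', and such identities alone cannot produce the projection: in $\mathcal{O}_N=\Mod_{E_N}(\Sp_{K(N)})$ with $N>n$, which lies entirely in $\mathcal{C}_{>n}$, the quoted example from \cite[Proposition~2.2.5]{AmbiHeight} gives $p_{(n)}p_{(n+1)}=p^{\binom{N-1}{n}}p^{\binom{N-1}{n+1}}=p^{\binom{N}{n+1}}$, which is neither $0$ nor $1$ nor idempotent, so the evident candidates built from these relations do not give the desired idempotent, and the vanishing of the gluing (the direction $\Map(\mathcal{C}_{>n},\mathcal{C}_{\le n})=0$, which fails for abelian groups) requires a genuinely different argument that your proposal does not supply. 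Finally, a smaller point: the bootstrap you invoke only yields $p$-typical $\infty$-semiadditivity from height $\le j$; to conclude that $\mathcal{C}_1,\dots,\mathcal{C}_{m-1}$ are $\infty$-semiadditive at all primes you must also use the other half, height $>j-1$ (i.e.\ $p_{(0)}$-completeness), to force height $0$ and hence $q$-typical $\infty$-semiadditivity at every prime $q\ne p$ --- a step your sketch omits.
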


\subsubsection{The $p$-local case}
In this subsection, we focus on the $p$-local \inftycats, which is a rather strong assumption and gives us further simplifications.

\begin{prop}[{\cite[Proposition 3.2.6]{AmbiHeight}}]
Let \catC\ be a $0$-semiadditive $p$-local \inftycat\ which admits all $1$-finite limits and colimits. \catC\ is $p$-typically $m$-semiadditive if and only if it is $m$-semiadditive.
\end{prop}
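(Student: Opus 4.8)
The implication that $m$-semiadditivity implies $p$-typical $m$-semiadditivity is immediate, and uses neither $p$-locality nor the standing $1$-finite (co)limit hypothesis: an $m$-finite $p$-space is in particular $m$-finite, so ambidexterity of every $m$-finite map of spaces restricts to ambidexterity of every one whose fibers happen to be $p$-spaces. The plan is therefore to prove the converse by induction on $m$, the case $m=0$ being the standing hypothesis of $0$-semiadditivity. For the inductive step, note that $p$-typical $m$-semiadditivity implies $p$-typical $(m-1)$-semiadditivity, so by the inductive hypothesis \catC\ is $(m-1)$-semiadditive; in particular, for any $m$-finite map $q\colon A\to B$ the diagonal is $(m-1)$-finite, hence \catC-ambidextrous, so $q$ is weakly \catC-ambidextrous and its canonical norm is defined. (That \catC\ in addition admits all $m$-finite limits and colimits must also be checked; I would obtain it by the same bootstrap that produces the ambidexterity, building $m$-finite (co)limits out of finite (co)products and the $1$-finite ones, and treat this as routine.) Since ambidexterity of a map of spaces is detected on its fibers (\cite[Proposition~4.3.5]{AmbiKn}) and integration is additive over finite coproducts, it then suffices to show that every \emph{connected} $m$-finite space is \catC-ambidextrous.

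For connected $m$-finite $A$ I would climb the Postnikov tower. The map $\tau_{\le k}A\to\tau_{\le k-1}A$ (for $1\le k\le m$, with $\tau_{\le 0}A=\pt$) has fiber $B^k\pi_k(A)$; since ambidextrous maps compose (Proposition~\ref{ambisquare}) while a map whose fibers are \catC-ambidextrous is \catC-ambidextrous, it is enough to know that $B\pi_1(A)$ and each $B^k\pi_k(A)$ for $2\le k\le m$ are \catC-ambidextrous. A finite abelian group splits as a product of cyclic groups of prime-power order, and a projection off a product again has \catC-ambidextrous fibers, so this reduces to two cases: (a) $B^kC_{q^j}$ is \catC-ambidextrous for every prime power $q^j$ and every $1\le k\le m$; and (b) $BG$ is \catC-ambidextrous for every finite group $G$. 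When $q=p$ in (a), $B^kC_{p^j}$ is an $m$-finite $p$-space, hence \catC-ambidextrous by hypothesis. For (b), choose a $p$-Sylow subgroup $P\le G$: then $BP$ is an $m$-finite $p$-space, hence \catC-ambidextrous, and the map $f\colon BP\to BG$ is $0$-finite (its fibers are the finite set $G/P$), hence \catC-ambidextrous, with $\card{f}$ equal fiberwise to multiplication by $[G:P]$, which is invertible because \catC\ is $p$-local; the retract argument below then upgrades this to \catC-ambidexterity of $BG$.

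The remaining point---case (a) with $q=\ell\neq p$---is where $p$-locality is genuinely used. I claim, by induction on $k\ge 0$, that $B^kC_{\ell^j}$ is \catC-ambidextrous and that $\card{B^kC_{\ell^j}}$ is an automorphism of $\Id_{\mathcal{C}}$. For $k=0$, $C_{\ell^j}$ is $0$-finite, hence \catC-ambidextrous, and $\card{C_{\ell^j}}=\ell^j\cdot\Id_{\mathcal{C}}$, which is invertible since \catC\ is $p$-local. For the step, write $Y=B^kC_{\ell^j}$; as $C_{\ell^j}$ is abelian, the point inclusion $r\colon\pt\to Y$ has fiber $\Omega Y=B^{k-1}C_{\ell^j}$, which is \catC-ambidextrous by induction, so $r$ is \catC-ambidextrous. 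Now run the retract argument (the same one promised for $f$ above): for the composite of maps of spaces $\pt\xrightarrow{r}Y\xrightarrow{s}\pt$ we have $s\circ r=\Id_{\pt}$, so the canonical norm of the composite---which by the composition-of-norms formula equals $(s_*\Nm_r)\circ(\Nm_s r_!)$---is the identity; since $\Nm_r$ is an isomorphism, it follows that $\Nm_s r_!$ is an isomorphism. On the other hand, for every $Z\in\Fun(Y,\mathcal{C})$ the composite $Z\to r_*r^*Z\xrightarrow{\Nm_r^{-1}}r_!r^*Z\xrightarrow{c_!}Z$ equals $\card{r}_Z$, which by the fiberwise description of cardinality (the remark after Proposition~\ref{pullback}) is $\card{B^{k-1}C_{\ell^j}}$ evaluated at the stalks of $Z$, an isomorphism by induction; hence $Z$ is a retract of $r_!r^*Z$, which lies in the image of $r_!$. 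As $\Nm_s$ is natural and a retract of an isomorphism is an isomorphism, $\Nm_s$ is an isomorphism at every $Z$, i.e.\ $Y$ is \catC-ambidextrous. Finally, applying Fubini (Proposition~\ref{specified fubini}) and homogeneity (Proposition~\ref{specified homo}) to $s\circ r=\Id_{\pt}$ gives $\Id_{\mathcal{C}}=\card{B^{k-1}C_{\ell^j}}\circ\card{B^kC_{\ell^j}}$, so $\card{B^kC_{\ell^j}}$ is invertible, completing the induction on $k$ and hence the proof.

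I expect the main obstacle to be this retract mechanism itself---turning ``weakly \catC-ambidextrous with invertible cardinality of the fiber'' into honest \catC-ambidexterity of the total space, via the composition-of-norms identity---together with phrasing the Postnikov and Sylow reductions cleanly against the fiberwise criterion for ambidexterity and discharging the bootstrap for existence of $m$-finite (co)limits; every use of $p$-locality is confined to the two places where a prime-to-$p$ integer is inverted.
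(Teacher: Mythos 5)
The paper itself gives no proof of this proposition --- it is quoted from \cite{AmbiHeight} --- so your argument can only be measured against the proof in that reference, and in its main mechanisms it is the same argument: reduce to connected $m$-finite spaces via the fiberwise criterion of \cite{AmbiKn} and additivity, climb the Postnikov tower, split the Eilenberg--MacLane fibers into primary cyclic pieces, use the $p$-typical hypothesis for the $p$-primary pieces, treat $\pi_1$ by the Sylow transfer (the index $[G:P]$ is invertible $p$-locally, so every local system on $BG$ is a retract of one induced along $BP\to BG$, and the composition-of-norms identity plus naturality of $\Nm$ upgrades this to ambidexterity of $BG$), and treat $B^kC_{\ell^j}$ with $\ell\neq p$ by the amenability bootstrap along $r:\pt\to B^kC_{\ell^j}$. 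Those retract arguments are set up correctly; note that the final identity $\Id=\card{B^{k-1}C_{\ell^j}}\circ\card{B^kC_{\ell^j}}$ is most cleanly obtained from Proposition \ref{principal fiber} applied to the principal fibration $\pt\to B^kC_{\ell^j}\to B^{k+1}C_{\ell^j}$, since the fiberwise description alone only gives the value of $\card{r}$ at points, which suffices for invertibility but not, by itself, for the identification with $s^*\card{B^{k-1}C_{\ell^j}}$.

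The one genuine gap is precisely the point you set aside as routine: the existence of the $m$-finite limits and colimits that $m$-semiadditivity demands but the hypotheses do not supply. Your proposed fix --- ``building $m$-finite (co)limits out of finite (co)products and the $1$-finite ones'' --- does not work: for $q\neq p$ and $k\geq 2$ there is no finite decomposition of $B^kC_{q^j}$ into $1$-finite spaces that would manufacture $B^kC_{q^j}$-indexed (co)limits, and ``$1$-finite colimits'' here means colimits indexed by $1$-finite \emph{spaces}, so you do not even have pushouts or coequalizers available to glue with. In the cited proof this existence statement is itself part of the amenability mechanism: when $\Omega A$ is ambidextrous with invertible cardinality, every $X\in\Fun(A,\mathcal{C})$ is a retract of $r_!r^*X$ (your own argument), and one must then show that the Kan extensions along $A\to\pt$ exist, i.e.\ that the functor $c\mapsto\Map(X,q^*c)$, which is exhibited as a retract of the corepresentable $\Map_{\mathcal{C}}(X(\pt),c)$, is itself corepresentable --- equivalently that the induced idempotent on $X(\pt)$ splits. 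That is genuine work in \cite{AmbiHeight}, not a routine assembly, and without it the statement ``$\mathcal{C}$ admits all $m$-finite (co)limits'' (and hence the very definition of the norms you invert at the higher Postnikov stages) is unsupported. So the skeleton and all the ambidexterity steps are sound and agree with the reference, but the existence bootstrap must be run through the same amenability/retract mechanism rather than the decomposition you propose; as written, that is the one place the proof would fail.
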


The main result for $p$-local \inftycats\ is the following theorem.

\begin{thm}[{\cite[Theorem 3.2.7]{AmbiHeight}}]
Let \catC\ be an $n$-semiadditive $p$-local \inftycat, which admits all $\pi$-finite limits and colimits. If \catC\ is of height $\leq n$, then
\begin{enumerate}
    \item \catC\ is $\infty$-semiadditive.
    \item For every $(n-1)$-connected nilpotent $\pi$-finite space $A$, the map $\card{A}$ is invertible.
    \item For every $n$-connected $\pi$-finite space $A$ and $X\in\mathcal{C}$, the fold map $A\otimes X\oto{\nabla}X$ is invertible.
\end{enumerate}
\end{thm}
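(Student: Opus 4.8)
The plan is to prove (1) first and bootstrap (2) and (3) from it.

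\textbf{Part (1).} First I would show by induction on $m\ge n$ that $\mathcal{C}$ is $m$-semiadditive and that $\card{B^mC_p}$ is invertible as a natural endomorphism of $\Id_{\mathcal{C}}$. The base case $m=n$ is the hypothesis: $n$-semiadditivity is assumed, and invertibility of $\card{B^nC_p}=p_{(n)}^{\mathcal{C}}$ is exactly the statement $\Ht(\mathcal{C})\le n$. For the inductive step, since $\mathcal{C}$ is $0$-semiadditive, $p$-local, and admits $1$-finite (co)limits, \cite[Proposition~3.2.6]{AmbiHeight} together with \cite[Proposition~3.1.2]{AmbiHeight} reduce $(m+1)$-semiadditivity to the single statement that $B^{m+1}C_p$ is $\mathcal{C}$-ambidextrous. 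This space is $(m+1)$-finite and weakly $\mathcal{C}$-ambidextrous, since its diagonal has fibres $\Omega B^{m+1}C_p\simeq B^mC_p$, which is $m$-finite and hence $\mathcal{C}$-ambidextrous by the inductive hypothesis; so by Lemma~\ref{check constant} it suffices that $\Nm^X_{B^{m+1}C_p}$ is an isomorphism for each $X\in\mathcal{C}$. The key input is that $\card{\Omega B^{m+1}C_p}=\card{B^mC_p}$ is invertible, and from this I would deduce that the diagonally induced norm is an isomorphism by a direct analysis of the wrong-way counit along the principal fibration $\pt\to B^{m+1}C_p$ with fibre $B^mC_p$ (in the spirit of the proof of Proposition~\ref{principal fiber}, but deriving rather than assuming invertibility of the norm). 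Once $B^{m+1}C_p$ is known $\mathcal{C}$-ambidextrous, Proposition~\ref{principal fiber} applies legitimately to that same fibration and gives $\card{B^mC_p}_X\circ\card{B^{m+1}C_p}_X=\card{\pt}_X=\Id_X$, so $\card{B^{m+1}C_p}$ is inverse to $\card{B^mC_p}$ and in particular invertible, closing the induction. Hence $\mathcal{C}$ is $m$-semiadditive for all $m$, i.e.\ $\infty$-semiadditive.

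\textbf{Part (2).} Granting (1), $\card A$ is defined for every $\pi$-finite $A$. A $(n-1)$-connected nilpotent $\pi$-finite space sits in a finite tower of principal fibrations whose fibres are Eilenberg--MacLane spaces $B^kM$ with $M$ finite abelian and $k\ge n$; iterating Proposition~\ref{principal fiber} (the fibre cardinalities being invertible) reduces invertibility of $\card A$ to that of each $\card{B^kM}$. Splitting $M$ into its $p$-part and prime-to-$p$ part and using multiplicativity of $\card{-}$ over products (Corollary~\ref{pullbackint} with $B=\pt$), it remains to treat $\card{B^kC_p}$ for $k\ge n$ — invertible by (1) — and $\card{B^kC_q}$ for primes $q\ne p$ — invertible since $\mathcal{C}$ is $p$-local, for instance $\card{C_q}=q$ is a unit and then so is each $\card{B^jC_q}$ via the principal fibration $\pt\to B^jC_q$.

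\textbf{Part (3).} For an $n$-connected $\pi$-finite $A$, the loop space $\Omega A$ is $(n-1)$-connected, nilpotent and $\pi$-finite, so (2) gives that $\card A$ and $\card{\Omega A}$ are invertible, and (1) gives that $\Nm^X_A\colon A\otimes X\to X^A$ is an isomorphism. Writing the fold map as the counit $\nabla=c_!\colon A\otimes X\to X$, which has a section $\mathrm{coev}$ coming from the basepoint of $A$, and unwinding Definition~\ref{int} as $\card A_X=\nabla\circ(\Nm^X_A)^{-1}\circ\Delta$, the claim becomes that $\nabla$ is an isomorphism. I would work up the Postnikov tower of $A$ (principal fibrations with fibres $B^kM$, $k\ge n+1$, together with the prime-to-$p$ case) to reduce to $A=B^{n+1}C_p$, where invertibility of $\card{B^{n+1}C_p}$ and of $\card{\Omega B^{n+1}C_p}=\card{B^nC_p}$, together with the transfer identity $\mathrm{ev}\circ\Nm_A\circ\mathrm{coev}=\card{\Omega A}_X$, pin down $\nabla$ as an isomorphism.

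\textbf{The hard part.} The technical heart is the step in (1): deducing that the canonical norm of $B^{m+1}C_p$ is an isomorphism from invertibility of $\card{B^mC_p}$. One cannot simply cite Proposition~\ref{principal fiber} here, because $\int_{B^{m+1}C_p}$ is unavailable until $B^{m+1}C_p$ is already known to be ambidextrous; so this requires a hands-on analysis of the diagonally induced norm, equivalently of the wrong-way counit $\nu_{B^{m+1}C_p}$, using the hypothesis on the loop space. By contrast, the reductions in (2) and (3) are essentially bookkeeping — modulo care with principal refinements of Postnikov towers of nilpotent spaces and with identifying the fold map in the base cases.
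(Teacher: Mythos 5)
The paper does not actually prove this theorem --- it is quoted from \cite{AmbiHeight}, with only a remark that (2) is an application of Proposition \ref{principal fiber} to the Postnikov tower of $A$; your overall strategy (induction on $m$ via $B^mC_p$ for (1), principal refinements of Postnikov towers for (2) and (3)) matches the cited source. The reductions you perform are sound, but the entire theorem hangs on the single step you explicitly defer: that for a connected pointed $(m+1)$-finite space $A$ with $\Omega A$ $\mathcal{C}$-ambidextrous and $\card{\Omega A}$ invertible, the diagonally induced norm $\Nm_A$ is an isomorphism. You correctly observe that Proposition \ref{principal fiber} cannot be invoked here (it presupposes that $A$ is already ambidextrous), but the replacement --- ``a direct analysis of the wrong-way counit'' --- is never carried out, and since (2) and (3) bootstrap from (1), this unproven lemma is the mathematical content of the whole statement.

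What the missing analysis has to do is roughly the following: writing $e:\pt\to A$ for the basepoint and $q:A\to\pt$, the map $e$ is $m$-finite with fibre $\Omega A$, hence $\mathcal{C}$-ambidextrous by the inductive hypothesis; since $A$ is connected, $e^*$ is conservative, so by Lemma \ref{check constant} one may test $\Nm_q$ on constant local systems after applying $e^*$, and one must then identify the composites built from $e^*\nu_q$ and the (co)units of the $e$-adjunctions with $\card{\Omega A}$, so that invertibility of $\card{\Omega A}$ yields a two-sided inverse for the norm. This is a genuine diagram chase on the order of Lemma \ref{check constant} combined with Proposition \ref{Fubini}, not a formality; until it is supplied, parts (2) and (3) --- whose reductions to Eilenberg--MacLane pieces via Proposition \ref{principal fiber} and Corollary \ref{pullbackint} are otherwise correct --- remain conditional. (Part (3) as written also stops short: invertibility of $\card{A}$ only exhibits the fold map $\nabla$ as a split epimorphism, and the promised Postnikov descent needed to upgrade this to an isomorphism is sketched rather than given.)
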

\begin{rem}
In formally speaking, (1) says that being $n$-semiadditive trivializes the higher semiadditive structure above $n$. (2) is an application of Proposition \ref{principal fiber} on the Postnikov (see \cite[Section~4.3]{HatcherTop}) tower of $A$. (3) gives a useful categorical result without referring to the higher semiadditive structure directly.
\end{rem}

\subsubsection{Mode Theory}
In \cite[Section~4.8.2]{HA}, an idempotent object $X$ of a symmetric monoidal \inftycat\ \catC\ is defined by a morphism $\one\oto{u}X$, such that
\begin{equation*}
    X\otimes \one\oto{1\otimes u}X\otimes X
\end{equation*}
is an isomorphism. Moreover, the idempotent object $X$ has a unique commutative algebra structure for which $u$ is the unit, so we also call $X$ an idempotent algebra, denoted by $R$ to emphasize the commutative ring structure. It is indicated in \cite[Proposition~4.2.8.10]{HA} that it is a property to have the structure of an $R$-module. We say that $R$ classifies or characterizes the property of being an $R$-module.

\begin{example}
When $\mathcal{C}=\Ab$, the idempotent algebras are called solid rings as in \cite[Definition~2.1]{CoreRing}. An example of this is $\rational$, and it classifies the property of being rational.
\end{example}

The \inftycat\ of presentable \inftycats\ and colimit-preserving functors, denoted by $\Pr^L$, has a symmetric monoidal structure by \cite[Proposition~4.8.1.15]{HA}. We call the idempotent algebra in $\Pr^L$ modes (also considered in \cite{GepnerUniversality} with smashing localizations).

\begin{example}
The category $\Sp$ is a mode, which classifies the property of being stable.
\end{example}

\begin{thm}[{\cite[Corollary 5.20]{ambiSpan}}]
The forgetful functor $\Mod_{\CMon_m(\Spc)}(\Pr_L)\to \Pr_L$ is fully faithful and its essential image are the $m$-semiadditive presentable \inftycats.
\end{thm}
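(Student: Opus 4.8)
The plan is to recognize this as an instance of mode theory: I want to identify $R:=\CMon_m(\Spc)$ --- which by the universality theorem of \cite{ambiSpan} is the free $m$-semiadditive presentable \inftycat\ on one generator --- as \emph{the} idempotent algebra of $(\Pr^L,\otimes,\Spc)$ whose modules are exactly the $m$-semiadditive presentable \inftycats. Once $R$ is known to be an idempotent algebra, the first half of the theorem is formal: \cite[Proposition~4.2.8.10]{HA} says that carrying an $R$-module structure is a property, so $\Mod_R(\Pr^L)\to\Pr^L$ is fully faithful, and that same result identifies the essential image with the $R$-local objects, i.e.\ those $\mathcal{M}\in\Pr^L$ for which the unit map $\mathcal{M}=\Spc\otimes\mathcal{M}\to R\otimes\mathcal{M}$ is an equivalence. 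So the theorem reduces to two points: (i) $\CMon_m(\Spc)$ is an idempotent algebra in $\Pr^L$, and (ii) its local objects are precisely the $m$-semiadditive presentable \inftycats.

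I would establish (i) and (ii) simultaneously by realizing the full subcategory $\Pr^L_{m\text{-ss}}\subseteq\Pr^L$ of $m$-semiadditive presentable \inftycats\ as a localization compatible with the symmetric monoidal structure of \cite[Proposition~4.8.1.15]{HA}. The first input is that the inclusion $\Pr^L_{m\text{-ss}}\hookrightarrow\Pr^L$ is an accessible reflective localization, with reflector $L$ (``$m$-semiadditive completion'') satisfying $L(\Spc)\simeq\CMon_m(\Spc)$; the existence of $L$ and this computation are what the universality of the span \inftycat\ of $m$-finite spaces from \cite{ambiSpan} provides. The second input --- the substantive one --- is that $\Pr^L_{m\text{-ss}}$ is a $\otimes$-ideal: for $\mathcal{C}$ $m$-semiadditive presentable and $\mathcal{D}\in\Pr^L$ arbitrary, $\mathcal{C}\otimes\mathcal{D}$ is again $m$-semiadditive. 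Granting this, $L$ is a smashing localization, so by \cite[Section~4.8.2]{HA} the object $R=L(\Spc)=\CMon_m(\Spc)$ carries a canonical idempotent algebra structure with $L\simeq R\otimes(-)$, giving (i); and then the $R$-local objects --- those on which the localization unit is invertible --- are exactly the essential image of the inclusion, i.e.\ the $m$-semiadditive presentable \inftycats, giving (ii).

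The heart of the proof, and the step I expect to be the main obstacle, is the $\otimes$-ideal property. The line of attack is as follows. Fix $\mathcal{C}$ $m$-semiadditive presentable and $\mathcal{D}\in\Pr^L$. The endofunctor $-\otimes\mathcal{D}$ of $\Pr^L$ preserves colimits (the monoidal structure is closed, with internal hom $\Fun^L(\mathcal{D},-)$), and since left adjoints tensor it carries the left Kan extension $q_!$ on $\Fun(-,\mathcal{C})$ to the one on $\Fun(-,\mathcal{C}\otimes\mathcal{D})$, for any map of spaces $q$. The crucial, non-formal point is that when $q$ is an $m$-finite map and $\mathcal{C}$ is $m$-semiadditive, the \emph{right} Kan extension $q_*$ on $\Fun(-,\mathcal{C})$ is also colimit-preserving --- the canonical norm exhibits it as equivalent to $q_!$ --- so $-\otimes\mathcal{D}$ intertwines the $q_*$'s as well. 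Feeding this into the inductive construction of the canonical norm, and using that $-\otimes\mathcal{D}$ also preserves pullback squares of spaces (hence the Beck--Chevalley isomorphisms of Lemma~\ref{inductive lemma}) and, by induction on the truncation level, the canonical norms of the diagonals, one obtains that $-\otimes\mathcal{D}$ takes the canonical norm of every $m$-finite map on $\Fun(-,\mathcal{C})$ to the corresponding norm on $\Fun(-,\mathcal{C}\otimes\mathcal{D})$; since $\mathcal{C}$ is $m$-semiadditive these norms are equivalences, hence so are their images, so $\mathcal{C}\otimes\mathcal{D}$ is $m$-semiadditive.

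Morally this last argument is Proposition~\ref{ambiBC} and Theorem~\ref{F-q Ambi} run for the ``categorified'' colimit-preserving functor $-\otimes\mathcal{D}\colon\Pr^L\to\Pr^L$, but that categorified setting is not literally covered by those statements, so the compatibility has to be verified by hand --- and this is where I expect the real work to be. The two delicate points are: propagating through the truncation-level induction the non-formal ingredient that $q_*$ gets intertwined (legitimate precisely because the ambidexterity already secured at lower levels forces $q_*\simeq q_!$ there), and disposing of the base cases $m=-2,-1,0$ directly --- e.g.\ that $\mathcal{C}\otimes\mathcal{D}$ is pointed and semiadditive, for which one uses that an $m$-semiadditive $\mathcal{C}$ with $m\geq0$ is already $\CMon(\Spc)$-linear, so $\mathcal{C}\otimes\mathcal{D}$ inherits biproducts. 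Once this compatibility is in hand, the remaining assembly --- the monoidal-localization formalism together with \cite[Proposition~4.2.8.10]{HA} --- is purely formal.
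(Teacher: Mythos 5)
The paper itself gives no proof of this statement --- it is quoted from \cite[Corollary 5.20]{ambiSpan} --- so the comparison is with the argument in that source, and your route is genuinely different from it. There, the $m$-semiadditive completion is modelled explicitly by $m$-commutative monoids, i.e.\ product-preserving functors out of the span \inftycat\ of $m$-finite spaces; one shows that $\mathcal{C}$ is $m$-semiadditive exactly when the forgetful functor $\CMon_m(\mathcal{C})\to\mathcal{C}$ is an equivalence and that $\CMon_m(\Spc)\otimes\mathcal{C}\simeq\CMon_m(\mathcal{C})$, from which idempotency and the identification of modules drop out; the span universality does the heavy lifting. You instead propose to prove the $\otimes$-ideal property by tensoring the inductive norm construction with $\mathcal{D}$, and that core step is viable: once lower ambidexterity makes $\delta_*\simeq\delta_!$ colimit-preserving, all functors and transformations entering the canonical norm (the wrong-way unit $\mu_\delta$, the Beck--Chevalley data of Lemma \ref{inductive lemma}, the units and counits of $q_!\dashv q^*$) are morphisms and $2$-morphisms of $\Pr^L$, and $-\otimes\mathcal{D}$ preserves adjunctions, so $\nu_q^{\mathcal{C}\otimes\mathcal{D}}$ is identified with $\nu_q^{\mathcal{C}}\otimes\Id_{\mathcal{D}}$ and remains a counit. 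Be aware, though, that your ``first input'' (reflectivity of the inclusion with $L(\Spc)\simeq\CMon_m(\Spc)$) is not a corollary of span universality alone; it is essentially the Section 5 construction of \cite{ambiSpan}, so a substantial piece is being imported there, and your base case $m=0$ likewise leans on the known $\CMon(\Spc)$-module statement.

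The genuine gap is in the formal assembly: ``reflective localization whose local objects form a $\otimes$-ideal'' does not by itself imply that $L$ is smashing. To identify $L$ with $\CMon_m(\Spc)\otimes(-)$ --- and in particular to get the containment you need for the essential image, namely that every $m$-semiadditive presentable $\mathcal{M}$ satisfies $\mathcal{M}\simeq\mathcal{M}\otimes\CMon_m(\Spc)$ --- you must know that $\mathcal{M}\to\CMon_m(\Spc)\otimes\mathcal{M}$ is an $L$-equivalence, i.e.\ that $\Map(\CMon_m(\Spc)\otimes\mathcal{M},\mathcal{E})\to\Map(\mathcal{M},\mathcal{E})$ is an equivalence for every $m$-semiadditive $\mathcal{E}$. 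Passing across the closed structure of $\Pr^L$, this is the universal property of $\Spc\to\CMon_m(\Spc)$ applied to the internal hom $\Fun^L(\mathcal{M},\mathcal{E})$, so it requires the further closure property that $\Fun^L(\mathcal{M},\mathcal{E})$ is again $m$-semiadditive (equivalently, that the localization is compatible with the symmetric monoidal structure, i.e.\ $L$-equivalences are stable under $-\otimes\mathcal{M}$). Your $\otimes$-ideal property only guarantees that $\CMon_m(\Spc)\otimes\mathcal{M}$ is local; it does not identify it with $L\mathcal{M}$. The missing ingredient is true and is in fact the cheaper half: $m$-finite limits and colimits in $\Fun^L(\mathcal{M},\mathcal{E})$ are computed pointwise (an $m$-finite pointwise limit of colimit-preserving functors preserves colimits because in $\mathcal{E}$ such limits are also colimits), so the norm data is pointwise and $\Fun^L(\mathcal{M},\mathcal{E})$ inherits $m$-semiadditivity from $\mathcal{E}$ --- but this must be stated and proved, since both the smashing conclusion and the second half of the theorem hinge on it.
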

Here $\Spc$ means the \inftycat\ of spaces, so $\CMon_m(\Spc)$ means the idempotent algebra of $m$-commutative monoids in spaces. The theorem tells  us that $\CMon_m(\Spc)$ classifies the property of being $m$-semiadditive, and we have a similar theorem for height.

\begin{thm}[{\cite[Theorem 5.3.6]{AmbiHeight}}]
For every $n\geq 0$, there exists a mode $\tsadi_n$, which characterizes the property of being stable, $p$-local, $\infty$-semiadditive and of height $n$.
\end{thm}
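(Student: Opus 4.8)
The strategy is to realize $\tsadi_n$ as an iterated smashing localization inside $\Pr^L$ — equivalently, as a tensor product of idempotent algebras — peeling off the four properties one at a time, and then carving out the ``height exactly $n$'' part as a direct factor. Throughout we work in $\Pr^L$ with the symmetric monoidal structure of \cite[Proposition 4.8.1.15]{HA}, so that ``mode'' means idempotent commutative algebra in $\Pr^L$ and ``characterizes the property $P$'' means: a presentable symmetric monoidal \inftycat\ $\mathcal{C}$ admits an (automatically unique) module structure over the mode exactly when it has property $P$.

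\textbf{Step 1: the stable, $p$-local, $\infty$-semiadditive mode.} Each constituent property is already classified by a mode: stability by $\Sp$, $p$-locality by the $p$-localization $\Sp_{(p)}$ of $\Sp$ (a smashing localization of $\Sp$, hence an idempotent algebra in $\Pr^L$ encoding both stability and $p$-locality), and $\infty$-semiadditivity by $\CMon_\infty(\Spc)$, the limit over $m$ of the modes $\CMon_m(\Spc)$ that classify $m$-semiadditivity. Since idempotent algebras are closed under tensor product and an $(R\otimes S)$-module is precisely an $R$-module that is also an $S$-module, the tensor product $M := \Sp_{(p)} \otimes \CMon_\infty(\Spc)$ is a mode classifying ``stable, $p$-local, $\infty$-semiadditive''.

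\textbf{Step 2: bounding the height.} Inside $M$ we have the distinguished elements $p_{(n-1)} = \card{B^{n-1}C_p}$ and $p_{(n)} = \card{B^n C_p}$ in $\pi_0$ of the endomorphism ring of $\one_M$. Inverting the element $p_{(n)}$ is a smashing localization, so $M_{\leq n} := M[p_{(n)}^{-1}]$ is again an idempotent $M$-algebra, hence a mode; by construction its modules are the stable, $p$-local, $\infty$-semiadditive presentable symmetric monoidal \inftycats\ on which $p_{(n)}$ acts invertibly, i.e.\ those of height $\leq n$ (using that height is detected on the unit, \cite[Corollary 3.1.16]{AmbiHeight}).

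\textbf{Step 3: pinning the height to exactly $n$.} By the decomposition theorem for stable, $\infty$-semiadditive, idempotent-complete \inftycats\ (Theorem~4.2.7 above), $M_{\leq n}$ splits as a finite product $M_0 \times \cdots \times M_n$ of its height-$k$ full subcategories; in particular the height-$n$ factor $M_n$ is a direct summand of $M_{\leq n}$, and such a summand is cut out by an idempotent commutative algebra (the image of $\one_{M_{\leq n}}$), so $M_n$ is a mode. Set $\tsadi_n := M_n$. It inherits stability, $p$-locality and $\infty$-semiadditivity as a localization of $M_{\leq n}$, and every object has height exactly $n$; conversely, if $\mathcal{C}$ is presentable symmetric monoidal, stable, $p$-local, $\infty$-semiadditive and of height $n$, then $\mathcal{C}$ is canonically an $M$-module (Step 1), it is $M_{\leq n}$-local since $p_{(n)}$ acts invertibly (height $\leq n$), and it lies in the height-$n$ summand since its unit is $p_{(n-1)}$-complete (height $> n-1$), forcing $M_{\leq n}\otimes\mathcal{C}\to M_n\otimes\mathcal{C}$ to be an equivalence and hence $\mathcal{C}$ to be an $\tsadi_n$-module.

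\textbf{Main obstacle.} The crux is Step 3: one must know that ``height exactly $n$'' is a \emph{smashing} localization, i.e.\ that the height-$n$ part is genuinely a direct factor at the level of the mode, which rests on the $\infty$-semiadditive decomposition theorem. In the $m=\infty$ regime that theorem requires the vanishing of the infinite-height part, so the delicate point is to verify that $M_{\leq n}$ (after the relevant $p_{(n-1)}$-completion) carries no objects of infinite height, so that the finite product decomposition applies and the completion at $p_{(n-1)}$ agrees with projection onto $M_n$. Once this is in place, stability, $p$-locality, $\infty$-semiadditivity, and the module identification are formal consequences of the universal properties of the constituent modes.
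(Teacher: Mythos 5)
The paper does not actually prove this theorem: it is quoted from \cite{AmbiHeight} without proof (the section explicitly defers to that reference), so there is no in-paper argument to compare your proposal against. That said, your outline is essentially the construction used in the source: form the mode for ``stable, $p$-local, $\infty$-semiadditive'' as a tensor product of idempotent algebras in $\Pr^L$, invert $p_{(n)}$ (a smashing localization) to impose height $\leq n$, and split off the height-exactly-$n$ factor via the semiadditive decomposition theorem. Two corrections to where you locate the difficulty. First, the point you flag as the ``main obstacle'' is not one: every object of $M_{\leq n}=M[p_{(n)}^{-1}]$ has height $\leq n$ by construction, and a nonzero object cannot simultaneously have height $\leq n$ and $>n$, so the subcategory of objects of height $>n$ (in particular of infinite height) vanishes; one therefore applies the finite case ($m=n+1$) of the decomposition theorem of \cite{AmbiHeight} quoted in Section 3, not the $m=\infty$ case, and no vanishing hypothesis remains to be checked. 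Second, the genuinely delicate step, which your Step 3 passes over quickly, is the converse module identification: for an arbitrary $M_{\leq n}$-module $\mathcal{C}$ you must show that the idempotent decomposition of $\one_{M_{\leq n}}$ pushes forward to the height decomposition of $\one_{\mathcal{C}}$, i.e.\ that $M_k\otimes_{M_{\leq n}}\mathcal{C}$ really is the height-$k$ factor of $\mathcal{C}$. This uses that height is detected on the unit together with the fact that the idempotents produced by the decomposition theorem are characterized by $p_{(k)}$-divisibility and $p_{(k-1)}$-completeness conditions preserved by the colimit-preserving symmetric monoidal unit functor $M_{\leq n}\to\mathcal{C}$. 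With those two adjustments your argument is sound and agrees with the proof in the cited reference.
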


\begin{rem}\label{WLOGsymmetric}
In particular, $\tsadi_n$ is symmetric monoidal, so in light of Proposition \ref{ModuleCard}, for the calculation of cardinality with value in \catC, which is stable, $p$-local, $\infty$-semiadditive and of height $n$, we may assume \catC\ is symmetric monoidal.
\end{rem}
There are more nontrivial phenomena and theorems in the theory of semiadditive height. For example, \cite[Theorem~3.3.2]{AmbiHeight} states gives a description of \emph{Semiadditive redshift}, in analogy to the \emph{chromatic redshift} discovered in \cite{ausoni2008chromatic}. We stop here and reference to $\cite{AmbiHeight}$ for further study of this field.
\newpage

\section{\mobius Inversion and Burnside Ring}
In this section, we give a brief note of \mobius function and inversion formula, Burnside ring, and the idempotents of a Burnside ring. The outline of the section of \mobius function is based on \cite{Rota}, the basic discussion of Burnside ring follows from \cite[Chapter~5]{RepTheory}, and the expression for the primitive idempotents is mainly based on \cite{BurnsideIdem}.

\subsection{\mobius Inversion for poset}
In this section, we give a brief introduction of the \mobius function and the \mobius inversion formula of locally finite posets. The classical form of the \mobius inversion formula originates from number theory, and we will show that it is a specific case of the general theory applied to the poset of $\integer$, where the partial order is given by divisibility.

First, we define the incidence algebra which allows us to manipulate the calculation in simple notations.

\begin{defn}
For a locally finite poset $(P,<)$, and a commutative ring $R$, we define the incidence algebra $I$ of $P$ relative to $R$ to be the set of functions $f:P\times P\to R$, such that $x\nleq y \Rightarrow f(x,y)=0$. We have pointwise addition and scalar multiplication
\begin{equation*}
    (f+g)(x,y)=f(x,y)+g(x,y),\quad (r\cdot f)(x,y)=r\cdot(f(x,y)),
\end{equation*}
and the convolution product
\begin{equation*}
    (f\ast g)(x,y) = \sum_{x\leq z\leq y}f(x,z)\cdot g(z,y).
\end{equation*}
\end{defn}
\begin{rem}\label{incidence and matrix}
It is sometimes more intuitive to think of $f\in I$ as an $R$-linear endomorphism $T_f$ of $R^P\coloneqq\Map(P,R)$
\begin{equation*}
    T_f(e_x)=\sum_{x\leq y}f(x,y)e_y,
\end{equation*}so the definitions of addition, scalar multiplication, and convolution product are consistent with the addition, scalar multiplication, and the composition of $R$-linear maps.
\end{rem}
The identity of the convolution product is given by $\delta$:
\begin{equation*}
\delta(x,y)=\left\{
    \begin{aligned}
    &1,\text{ if } x=y\\
    &0,\text{ otherwise.}
    \end{aligned}
    \right.
\end{equation*}
And we also have the function $\zeta$:
\begin{equation*}
\zeta(x,y)=\left\{
\begin{aligned}
    &1,\text{ if }x\leq y\\
    &0,\text{ otherwise.}
    \end{aligned}
    \right.
\end{equation*}
The \mobius function is the  multiplicative inverse of the zeta function. We show in the following proposition, using the fact that the poset $P$ is locally finite, that $\zeta$ is multiplicatively invertible.

\begin{prop}
If $P$ is locally finite, namely if $\{z|x\leq z\leq y\}$ is finite for all $x, y\in P$, then there exists a $\mu\in I$ such that $\mu\ast\zeta=\zeta\ast\mu=\delta$.
\end{prop}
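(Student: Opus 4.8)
The plan is to construct $\mu$ explicitly by recursion on the (finite) length of the interval $[x,y]=\{z : x\leq z\leq y\}$, and then verify that this recursively defined function is a genuine two-sided inverse of $\zeta$. Concretely, I would define $\mu(x,y)=0$ whenever $x\not\leq y$, set $\mu(x,x)=1$, and for $x<y$ put
\begin{equation*}
    \mu(x,y) = -\sum_{x\leq z< y}\mu(x,z).
\end{equation*}
This is well-defined precisely because local finiteness guarantees the index set $\{z : x\leq z < y\}$ is finite, and every $z$ appearing in the sum lies in a strictly shorter interval $[x,z]\subsetneq[x,y]$, so the recursion terminates. One should first check that $\mu$ so defined actually lies in the incidence algebra $I$, i.e. that it is supported on comparable pairs, which is immediate from the definition.

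Next I would verify $\mu\ast\zeta=\delta$. For $x\not\leq y$ both sides vanish; for $x=y$ the convolution sum has the single term $\mu(x,x)\zeta(x,x)=1=\delta(x,x)$; and for $x<y$,
\begin{equation*}
    (\mu\ast\zeta)(x,y) = \sum_{x\leq z\leq y}\mu(x,z)\zeta(z,y) = \sum_{x\leq z\leq y}\mu(x,z) = \mu(x,y) + \sum_{x\leq z<y}\mu(x,z) = 0 = \delta(x,y),
\end{equation*}
where the last equality is exactly the defining recursion for $\mu(x,y)$. So $\mu$ is a right inverse of $\zeta$.

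To finish I need a left inverse as well and then invoke the standard fact that in an associative unital ring a two-sided inverse exists once an element has both a left and a right inverse (and they then coincide). There are two clean ways to get the left inverse: either define a second function $\mu'$ by the dual recursion $\mu'(x,y)=-\sum_{x<z\leq y}\mu'(z,y)$ for $x<y$ and check symmetrically that $\zeta\ast\mu'=\delta$; or, more slickly, observe that $\zeta$ corresponds under Remark~\ref{incidence and matrix} to a "unipotent upper-triangular" operator on $R^P$ — it differs from the identity by the nilpotent-on-each-interval piece $\zeta-\delta$ — so it is invertible on both sides in $I$, and any right inverse is automatically the unique two-sided inverse. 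I would take the first, hands-on route: having both $\mu\ast\zeta=\delta$ and $\zeta\ast\mu'=\delta$, associativity gives $\mu'=\mu'\ast(\mu\ast\zeta)\ast\mu$... wait, more simply $\mu = (\mu'\ast\zeta)\ast\mu = \mu'\ast(\zeta\ast\mu)$, and combined with $\mu\ast\zeta=\delta$ one concludes $\mu'=\mu$ and hence $\zeta\ast\mu=\mu\ast\zeta=\delta$.

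The only genuinely delicate point — and the place where the hypothesis is truly used — is ensuring the recursion is legitimate: local finiteness is exactly what makes each defining sum finite and what makes "induction on interval length" a valid well-founded recursion (an arbitrary interval need not have a well-defined integer length, but finiteness lets one induct on cardinality of $[x,y]$). Everything else is a routine bookkeeping check with the convolution formula; I would state it as such rather than belabor it.
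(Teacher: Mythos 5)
Your construction of $\mu$ is the same as the paper's: the recursion $\mu(x,y)=-\sum_{x\leq z<y}\mu(x,z)$ with base case $\mu(x,x)=1$ (the paper's line ``$\mu(x,x)=0$'' is a typo, as its own later computation uses $\mu(y,y)=1$; your base case is the correct one). Where you diverge is in establishing the left-inverse identity $\zeta\ast\mu=\delta$. The paper proves $\sum_{x\leq z\leq y}\mu(z,y)=0$ for $x\neq y$ directly, by an inductive rearrangement of the double sum $\sum_{x\leq z\leq y}\sum_{z\leq w<y}\mu(z,w)$; you instead introduce the dual recursion $\mu'(x,y)=-\sum_{x<z\leq y}\mu'(z,y)$, observe symmetrically that $\zeta\ast\mu'=\delta$, and invoke the standard fact that a left inverse and a right inverse in an associative unital algebra coincide. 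Your route is cleaner and less error-prone than the paper's double-sum manipulation, at the cost of needing associativity of the convolution product explicitly (which is immediate from the identification with linear endomorphisms of $R^P$ in Remark~\ref{incidence and matrix}, or from a direct finite-sum interchange). One small slip: the chain you write, $\mu=(\mu'\ast\zeta)\ast\mu$, uses $\mu'\ast\zeta=\delta$, which is not what you established; the correct manipulation is $\mu=\mu\ast\delta=\mu\ast(\zeta\ast\mu')=(\mu\ast\zeta)\ast\mu'=\delta\ast\mu'=\mu'$. This is purely a matter of writing the factors in the right order and does not affect the validity of the argument.
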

\begin{proof}
We construct the value of $\mu(x,y)$ inductively on the number of elements in $\{z|x\leq z\leq y\}$. For the base case we set $\mu(x,x)=0$ for all $x\in P$, and if $x\neq y$, assume that $\mu(x,z)$ is defined for all $\{z|x\leq z< y\}$, then we define
\begin{equation*}
    \mu(x,y)=-\sum_{x\leq z<y}\mu(x,z).
\end{equation*}
Note that 
\begin{equation*}
\begin{aligned}
    (\mu\ast\zeta)(x,y)&=\sum_{x\leq z\leq y}\mu(x,z)\\
    (\zeta\ast\mu)(x,y)&=\sum_{x\leq z\leq y}\mu(z,y),
\end{aligned}
\end{equation*}
so by definition $\sum_{x\leq z\leq y}\mu(x,z)=0$ unless $x=y$.
Also, we have
\begin{equation*}
\begin{aligned}
    \sum_{x\leq z\leq y}\mu(z,y)
    &=\mu(y,y)-\sum_{x\leq z\leq y}\sum_{z\leq w<y}\mu(z,w)\\
    &=1-\sum_{x\leq w<y}\sum_{x\leq z\leq w}\mu(z,w)\\
    &=1-\sum_{x<w<y}\sum_{x\leq z\leq w}\mu(z,w) - \mu(x,x)\\
    &=\sum_{x<w<y}\sum_{x\leq z\leq w}\mu(z,w).
\end{aligned}
\end{equation*}
There for if we assume for all $w$ strictly between $x$ and $y$, $\sum_{x\leq z\leq y}\mu(z,w)=0$, then we have $\sum_{x\leq z\leq y}\mu(x,y)=0$. The base case is when $\{z|x<z<y\}=\varnothing$, then
\begin{equation*}
\mu(x,y)+\mu(y,y)=-\mu(x,x)+\mu(y,y)=0.
\end{equation*}
Therefore by induction, we have for all $x\neq y$
\begin{equation*}
    \sum_{x\leq z\leq y}\mu(z,y)=0,
\end{equation*}
so $\mu$ is indeed an inverse of $\zeta$. 
\end{proof}

\begin{rem}
There is a more ``geometric'' construction of $\mu(x,y)$. For $x\neq y$, consider the full subcategory of $P$ generated by elements in $(x, y)$, then take the geometric realization of the nerve. Then $\mu(x,y)$ is just the reduced Euler characteristic of this space. Here by ``reduced'' we mean the usual Euler characteristic minus $1$, corresponding to the augmented chain complex with $\integer$ in dimension $-1$.
\end{rem}

\begin{example}
Let $P(X)$ be the poset of subsets of a finite set $X$, then for $I\leq J$ we have
\begin{equation*}
    \mu(I,J)= (-1)^{\card{J-I}}.
\end{equation*}
\end{example}

\begin{example}\label{number theoretic}
Another example we see regularly is number theoretic, namely on the set of positive natural numbers, with the order given by divisibility. For $x|y$ we have
\begin{equation*}
    \mu(x,y)=\mu(1,y/x)=
    \begin{cases}
    (-1)^r, &y/x \text{ is the product of r distinct primes}\\
    0, &\text{otherwise}.
    \end{cases}
\end{equation*}
\end{example}

Recall from Remark \ref{incidence and matrix} that we could view $\zeta$ and $\mu$ as $R$-linear endomorphisms of $R^P$, them being inverse to each other immediately gives us the following corollary of \mobius inversion formula.

\begin{cor}
Let $f,g\in R^P$, then
\begin{equation*}
    f(y)=\sum_{x\leq y}g(x)\Longleftrightarrow g(y)= \sum_{x\leq y}\mu(x,y)f(x);
\end{equation*}
dually we have
\begin{equation*}
    f(x)=\sum_{y\geq x}g(x) \Longleftrightarrow g(x) = \sum_{y\geq x}\mu(x,y)f(y).
\end{equation*}
\end{cor}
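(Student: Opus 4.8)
The plan is to exploit the observation in Remark~\ref{incidence and matrix}: the incidence-algebra elements $\zeta$ and $\mu$ act on $R^P = \Map(P,R)$ as $R$-linear endomorphisms $T_\zeta$ and $T_\mu$, and the previous proposition tells us $T_\mu T_\zeta = T_\zeta T_\mu = \Id$. So the whole corollary is just the statement that two mutually inverse linear maps carry one relation to the other; the only real work is bookkeeping about which operator (and which of $T_\zeta$, $T_\zeta^{\mathrm{op}}$, i.e.\ the transpose) implements each of the two displayed equivalences.

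First I would fix notation: for $f,g \in R^P$ write the ``downward-sum'' operator $S$ by $(Sg)(y) = \sum_{x\le y} g(x)$ and check that $S = T_\zeta^{T}$ in the sense that $(Sg)(y) = \sum_x \zeta(x,y) g(x)$, i.e.\ $S$ is the transpose of the matrix $(\zeta(x,y))_{x,y}$. Dually, the ``upward-sum'' operator $S'$ with $(S'g)(x) = \sum_{y\ge x} g(y) = \sum_y \zeta(x,y) g(y)$ is $T_\zeta$ itself. Since $\mu \ast \zeta = \zeta \ast \mu = \delta$ holds in the incidence algebra, both $(\mu(x,y))$ and $(\zeta(x,y))$ (as matrices indexed by $P\times P$, with only finitely many nonzero entries in each row and column by local finiteness, so composition is well-defined) are two-sided inverses of each other; hence so are their transposes. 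Therefore $S$ is invertible with inverse the transpose of $(\mu(x,y))$, which sends $f$ to the function $y \mapsto \sum_{x\le y}\mu(x,y) f(x)$; that is exactly the right-hand side of the first equivalence. Thus $f = Sg \iff g = S^{-1} f$ is precisely the first displayed biconditional. The second (dual) equivalence is the same argument with $S'= T_\zeta$ in place of $S$, using $\zeta\ast\mu = \delta$ on the other side.

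Concretely the steps are: (1) translate the two displayed relations into the operator equations $f = Sg$ and $f = S'g$ respectively; (2) record that, by the preceding proposition, the matrices $(\zeta(x,y))$ and $(\mu(x,y))$ are mutually inverse, and note local finiteness guarantees all the infinite sums involved are finite so that matrix multiplication is associative and the manipulation is legitimate; (3) apply $S^{-1}$ (resp.\ $(S')^{-1}$) to both sides and read off that the inverse operator is given by convolution/multiplication against $\mu$, i.e.\ $g(y) = \sum_{x\le y}\mu(x,y) f(x)$ (resp.\ $g(x) = \sum_{y\ge x}\mu(x,y) f(y)$); (4) observe the reverse implication is obtained symmetrically, or simply because ``$T$ is invertible'' makes $f = Tg \iff g = T^{-1}f$ automatic.

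**The main obstacle** is essentially none of mathematical substance — it is purely a matter of getting the variance right, i.e.\ making sure the sum defining $g$ really is $\sum_{x \le y} \mu(x,y) f(x)$ and not $\sum \mu(y,x) f(x)$ or a sum over the wrong variable, which amounts to carefully distinguishing $T_\zeta$ from its transpose. One should also say a word about why the (a priori infinite) sums make sense: for fixed $y$ the set $\{x : x \le y\}$ need not be finite in a general locally finite poset, so strictly speaking one restricts attention to $f,g$ supported suitably (e.g.\ with $\{x \le y\}$ finite, or $f$ of finite support downward), exactly as is implicit in the classical number-theoretic Example~\ref{number theoretic}; I would add a one-line remark to that effect rather than belabor it, since all intended applications (finite posets of subgroups, divisor lattices of a fixed integer) are genuinely finite in the relevant interval.
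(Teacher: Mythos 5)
Your argument is exactly the paper's: the corollary is deduced immediately from Remark~\ref{incidence and matrix} by viewing $\zeta$ and $\mu$ as mutually inverse $R$-linear endomorphisms of $R^P$ and reading off the two equivalences from $T_\mu T_\zeta=T_\zeta T_\mu=\Id$ (the paper in fact gives no further proof beyond this observation). Your extra remarks on transposes and on the finiteness of the sums $\sum_{x\le y}$ are sensible bookkeeping but do not change the route.
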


\begin{rem}
Apply the above formula to Example \ref{number theoretic}, we have the familiar \mobius inversion formula for number theory.
\end{rem}
\subsection{Burnside Ring}
In this section, we give a brief introduction to the basic properties of the Burnside ring of a finite group $G$.
\begin{note}
For finite group $G$, let $\Omega^+(G)$ denote the commutative unital semi-ring of the $G$-isomorphism classes of finite $G$-sets. In particular, the addition is the direct sums, the multiplication is the Cartesian product, and the multiplicative identity is the trivial $G$-set.
\end{note}

\begin{defn}
The $Grothendieck\ ring$ $\bar{A}$ of a semi-ring $A$ is the formal construction that gives each element $A$ an additive inverse. Explicitly, it is defined to be the set of equivalence classes of Cartesian product $A\times A$, under the relation
\begin{equation*}
    (a_+, a_-)\sim(b_+,b_-)\Longleftrightarrow \exists c\in A,\ a_++b_-+c=b_++a_-+c.
\end{equation*}
The addition is given by
\begin{equation*}
    [a_+,a_-]+[b_+,b_-]=[a_++b_+,a_-+b_-],
\end{equation*}
and the multiplication is given by
\begin{equation*}
    [a_+,a_-]\cdot [b_+,b_-]=[a_+b_++a_-b_-,a_+b_-+a_-b_+].
\end{equation*}
There is a canonical morphism of semi-ring from $A$ to $\bar{A}$, given by
    $a\mapsto [a,0]$.
\end{defn}

\begin{defn}
For a finite group $G$, the $Burnside\ ring$ $\Omega(G)$ of $G$ is the Grothendieck ring of $\Omega^+(G)$.
\end{defn}
In many cases, it is useful to localize the Burnside ring, as in the following notation.
\begin{note}
For a finite $G$-set $X$, denoted its image in $\Omega(G)$ by $[X]$. For prime $p$, let $\integer_{(p)}$ be the $p$-localization of $\integer$, we denote by $\Omega_p(G)$ the $p$-localization, namely the tensor product $\Omega(G)\otimes\integer_{(p)}$, and similarly by $\Omega_{\rational}(G)$ the tensor product $\Omega(G)\otimes \rational$.

As an additive group, $\Omega(G)$ is free with basis $\{[G/H]|(H)\in C(G)\}$, where $C(G)$ denotes the set of conjugate classes of subgroups of $G$. We set
\begin{equation*}
    \tilde{\Omega}(G)\coloneqq \prod_{(H)\in C(G)}\integer,
\end{equation*}
and similarly
\begin{equation*}
    \begin{aligned}
       \tilde{\Omega}_{p}(G)&\coloneqq \prod_{(H)\in C(G)}\integer_p,\\
       \tilde{\Omega}_{\rational}(G)&\coloneqq\prod_{(H)\in C(G)}\rational.
    \end{aligned}
\end{equation*}
\end{note}

Now we give an injection of the Burnside ring into $\tilde{\Omega}(G)$, which is much simpler as a direct product of copies of $\integer$.

Fix a subgroup $H$ of $G$, we have a map:
\begin{equation*}
    \phi_H:\Omega(G)\to \integer:\ 
    [X]\mapsto \card{X^H},
\end{equation*}
where $X^H$ denotes the set of $H$-fixed elements of X. It is easy to verify that $\phi_H$ is a ring homomorphism. Moreover, we have the following ring homomorphism:
\begin{equation*}
    \phi\coloneqq \prod_{(H)\in C(G)}\phi_H:
    \Omega(G)\to\tilde{\Omega}(G).
\end{equation*}
For prime $p$ and $\rational$, we can define $\phi_{H,p}$ and $\phi_{H,\rational}$ as the extension of scalars of $\phi_H$ to $\integer_{(p)}$ and $\rational$ respectively, namely the tensor product of $\phi$ with $\integer_{(p)}\oto{\Id}\integer_{(p)}$ and $\rational\oto{\Id}\rational$. Similarly we define $\phi_p$ and $\phi_{\rational}$.

\begin{prop}\label{subring}
The ring homomorphisms $\phi$, $\phi_{p}$ and $\phi_{\rational}$ are injective.
\end{prop}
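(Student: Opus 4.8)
The plan is to prove injectivity of $\phi$ first, and then deduce the injectivity of $\phi_p$ and $\phi_\rational$ by a flatness / extension-of-scalars argument. The key point for $\phi$ is that the "mark homomorphism" is represented, with respect to the standard basis $\{[G/K] : (K) \in C(G)\}$ of $\Omega(G)$ and the standard coordinates of $\tilde\Omega(G)$, by a matrix that is triangular with nonzero diagonal entries. Concretely, I would fix a total ordering of the conjugacy classes of subgroups refining the partial order by subconjugacy (i.e., $(K) \preceq (H)$ if $K$ is conjugate to a subgroup of $H$), listing larger subgroups first. With respect to this ordering, the matrix $M = \big(\phi_H([G/K])\big)_{(H),(K)} = \big(|(G/K)^H|\big)$ has the property that $(G/K)^H = \varnothing$, hence $\phi_H([G/K]) = 0$, unless $H$ is subconjugate to $K$; and the diagonal entry $\phi_K([G/K]) = |(G/K)^K| = |N_G(K)/K|$ is a positive integer. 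So $M$ is lower-triangular (in the chosen order) with strictly positive integers on the diagonal, hence $\det M = \prod_{(K)} |N_G(K)/K| \neq 0$.

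From here the three injectivity statements all follow at once. Since $\Omega(G)$ is free of finite rank $r = |C(G)|$ as a $\integer$-module and $\tilde\Omega(G) \cong \integer^r$, the map $\phi$ is a $\integer$-linear map $\integer^r \to \integer^r$ whose matrix has nonzero determinant; such a map is automatically injective (its kernel would be a free module annihilated by a nonzero integer, hence zero). For $\phi_p$, note that $\Omega_p(G) = \Omega(G) \otimes_\integer \integer_{(p)}$ and $\tilde\Omega_p(G) = \tilde\Omega(G) \otimes_\integer \integer_{(p)}$, and $\phi_p = \phi \otimes \Id$; the matrix of $\phi_p$ over $\integer_{(p)}$ is the same matrix $M$, and $\det M$ is a nonzero element of $\integer$, hence a nonzerodivisor in the domain $\integer_{(p)}$, so $\phi_p$ is injective. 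Exactly the same argument with $\rational$ in place of $\integer_{(p)}$ gives injectivity of $\phi_\rational$ (here $\det M \in \rational^\times$, so $\phi_\rational$ is in fact an isomorphism).

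The only genuinely non-formal ingredient is the triangularity claim, i.e. the computation of $(G/K)^H$: an element $gK \in G/K$ is fixed by $H$ exactly when $g^{-1}Hg \subseteq K$, so the fixed set is nonempty iff $H$ is $G$-subconjugate to $K$, and when $H = K$ the fixed set is $N_G(K)/K$. I expect this to be the main (modest) obstacle — it requires being careful that "subconjugate" is genuinely a partial order on conjugacy classes (antisymmetry uses finiteness of $G$) so that a compatible total order exists, and that the diagonal entries $|N_G(K)/K|$ are nonzero. Everything after that is linear algebra over $\integer$, $\integer_{(p)}$, and $\rational$, using only that a square integer matrix of nonzero determinant stays injective after any flat (indeed any torsion-free, or any localization) base change.
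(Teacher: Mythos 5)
Your proposal is correct and is essentially the paper's argument: both rest on the triangularity of the table of marks with respect to the subconjugacy order, with diagonal entries $|N_G(K)/K|\neq 0$, and both reduce the localized cases to the integral one by extension of scalars. Your determinant formulation is just a repackaging of the paper's ``take a maximal $[G/H]$ in the support and evaluate $\phi_H$'' step, and your nonzerodivisor justification for $\phi_p$ and $\phi_{\rational}$ is if anything slightly more careful than the paper's.
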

\begin{proof}
Since for each ideal $I$ in $\integer_{(p)}$ or $\integer_{\rational}$, we have
\begin{equation*}
    I\cap \integer = (0)\Longleftrightarrow I=(0),
\end{equation*}
therefore the extension of scalars to $\integer_{(p)}$ and $\rational$ preserve injectivity, so it suffices to prove that $\phi$ is injective.

Suppose $x\neq 0$ is in the kernel of $\phi$, we rewrite $x$ using the additive basis:
\begin{equation*}
    x = \sum a_H[G/H].
\end{equation*}
Note that there is a partial order on $\{[G/H]|H\leq G\}$, namely $[G/H_1]\leq [G/H_2]$ if and only if $H_1$ is conjugate to a subgroup of $H_2$. Let $[G/H]$ be maximal among the elements in $x$ with $a_H\neq 0$. Since $(G/K)^H\neq \varnothing\Rightarrow [G/H]\leq [G/K]$, we have
\begin{equation*}
    0 = \phi_Hx= a_H(G/H)^H=a_H\card{NH/H}\neq 0,
\end{equation*}
where $NH$ is the normalizer of $H$, this is a contradiction.
\end{proof}
\begin{rem}
Since $\phi,\phi_{(p)}$ and $\phi_{\rational}$ are injective, we can view $\Omega(G), \Omega_p(G)$ and $\Omega_{\rational}(G)$ as subring of $\tilde{\Omega}(G)$, $\tilde{\Omega}_p(G)$ and $\tilde{\Omega}_{\rational}(G)$ respectively, which helps with the study of idempotent elements in the next section.
\end{rem}

\subsection{Idempotents}
In this section, we describe the primitive idempotents in the Burnside ring. We first study the congruence relation of elements in the Burnside ring, use the relation to characterize the primitive idempotents, and then give an expression for the primitive idempotents.

First, let us look at some general facts about idempotents in commutative ring.
\begin{defn}
For commutative ring $R$, an idempotent $a\in R$ (short for idempotent element) is an element of $R$ such that $a\cdot a=a$. Two idempotents $a$ and $b$ are orthogonal if $ab=0$. A nonzero idempotent $c$ is called primitive if it cannot be written as $c=a+b$, where $a$ and $b$ are nonzero orthogonal idempotents.
\end{defn}

\begin{lem}
Any two primitive idempotents of $R$ are orthogonal.
\end{lem}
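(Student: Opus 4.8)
The plan is to show that if $c$ and $c'$ are primitive idempotents of a commutative ring $R$, then $cc' = 0$. First I would observe that the product $cc'$ is itself idempotent: $(cc')(cc') = c^2 c'^2 = cc'$, using commutativity. Now I would like to decompose $c$ using $cc'$. The natural candidates are $a := cc'$ and $b := c - cc' = c(1-c')$. I would check that $a$ and $b$ are orthogonal idempotents summing to $c$: indeed $a + b = c$ trivially; $a$ is idempotent as just noted; $b^2 = c^2(1-c')^2 = c(1-c') = b$ since $(1-c')$ is idempotent; and $ab = cc' \cdot c(1-c') = c^2 c'(1-c') = c \cdot (c' - c'^2) = 0$. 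Since $c$ is primitive, it cannot be written as a sum of two \emph{nonzero} orthogonal idempotents, so one of $a$, $b$ must be zero.

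The next step is to rule out the case $b = 0$, i.e. $c = cc'$. By the symmetric argument (swapping the roles of $c$ and $c'$), writing $c' = cc' + c'(1-c)$, primitivity of $c'$ forces either $cc' = 0$ (which is what we want) or $c'(1-c) = 0$, i.e. $c' = cc'$. So the only remaining case to exclude is $c = cc' = c'$, which would force $c = c'$. If the statement is interpreted as being about two \emph{distinct} primitive idempotents, this case is excluded by hypothesis and we are done; otherwise the statement should be read as: any two primitive idempotents are either equal or orthogonal, and the above covers both possibilities. I would phrase the conclusion accordingly to match how the lemma is used later (in the Burnside ring, the relevant primitive idempotents $e_{(H)}$ are pairwise distinct, so orthogonality is the operative consequence).

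The only mild subtlety — the ``main obstacle,'' though it is quite minor — is bookkeeping around the degenerate case $c = c'$, and making sure the decompositions $c = cc' + c(1-c')$ genuinely exhibit orthogonal \emph{idempotents} (which needs $1 - c'$ to be idempotent, a one-line check) rather than merely a sum. Everything else is routine algebra in a commutative ring, requiring no structure beyond the definitions of idempotent, orthogonal, and primitive given above.
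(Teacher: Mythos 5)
Your proposal is correct and follows essentially the same route as the paper: both decompose a primitive idempotent as $c = cc' + (c - cc')$, verify these are orthogonal idempotents, and invoke primitivity. The paper's proof handles the degenerate case more tersely (observing that if $cc'\neq 0$ then $cc'\neq c$ or $cc'\neq c'$, implicitly using that the two idempotents are distinct), while you spell out the symmetric case analysis and the reading ``equal or orthogonal'' explicitly, which is just a more careful version of the same argument.
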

\begin{proof}
Consider primitive idempotents $a,b\in R$. If $ab\neq0$, then either $ab\neq a$ or $ab\neq b$. Without loss of generality, assume $ab\neq a$, then $ab$ and $a-ab$ are both nonzero idempotent, and they are orthogonal. So $a=ab+(a-ab)$ contradicts the fact that $a$ is primitive.
\end{proof}

For commutative ring $R$, we call the set of all primitive idempotents complete, if any idempotent can be decomposed into a finite sum of primitive idempotents.

\begin{lem}
If $R$ has a finite complete set of primitive idempotents $\{a_1,...,a_k\}$, then $1=a_1+...+a_k$.
\end{lem}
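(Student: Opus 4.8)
The plan is to apply the completeness hypothesis to the idempotent $1$ itself, and then to show that the decomposition it produces must involve \emph{all} of the $a_i$. First I would observe that $1 = 1\cdot 1$, so the unit is an idempotent; by completeness we may therefore write $1 = \sum_{i\in S} a_i$ for some subset $S\subseteq\{1,\dots,k\}$. (I am reading ``complete'' in the natural way, that every idempotent is a sum of \emph{distinct} primitive idempotents; if one allows repetitions one instead writes $1 = \sum_i n_i a_i$ with $n_i$ a nonnegative integer and runs the same multiplication argument below.)

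Next I would show $S = \{1,\dots,k\}$ by contradiction. Suppose $j\notin S$ and multiply the identity $1 = \sum_{i\in S} a_i$ by $a_j$. Every index $i\in S$ is distinct from $j$, so by the preceding lemma (any two distinct primitive idempotents are orthogonal) we have $a_j a_i = 0$ for all $i\in S$, whence
\begin{equation*}
    a_j = a_j\cdot 1 = \sum_{i\in S} a_j a_i = 0.
\end{equation*}
This contradicts the fact that a primitive idempotent is nonzero by definition. Hence no such $j$ exists, i.e.\ $S = \{1,\dots,k\}$, which is exactly the claim $1 = a_1 + \dots + a_k$.

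The honest assessment is that there is no real obstacle here: the statement is a two-line consequence of orthogonality of distinct primitive idempotents together with their being nonzero, both of which are already available. The only point deserving a moment's care is the precise meaning of ``complete'' — whether a primitive idempotent may appear with multiplicity in a decomposition — and even in that reading the multiplication-by-$a_j$ trick of the second step applies verbatim, forcing every coefficient to be nonzero; the resulting bookkeeping is routine and I would relegate it to a parenthetical remark.
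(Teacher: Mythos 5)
Your proof is correct, and it differs from the paper's only in where the completeness hypothesis is invoked. You decompose the unit $1$ itself into primitive idempotents and then use orthogonality (multiplication by a supposedly missing $a_j$) to force every $a_j$ to appear; the paper instead forms the residual element $e = 1-(a_1+\cdots+a_k)$, observes that $e$ is idempotent and orthogonal to each $a_i$, and gets a contradiction from completeness if $e\neq 0$, since a nonzero idempotent orthogonal to all primitive idempotents cannot equal a sum of them. The ingredients are identical in both arguments --- orthogonality of distinct primitive idempotents and nonvanishing of primitive idempotents --- so neither is stronger. Your route has the minor advantage of not needing to check that $1-(a_1+\cdots+a_k)$ is idempotent (which itself quietly uses pairwise orthogonality to see that $a_1+\cdots+a_k$ is idempotent), while the paper's route never has to discuss which subset of the $a_i$, or with what multiplicities, occurs in a decomposition --- the issue you defer to your parenthetical. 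On that last point, note that multiplying $1=\sum_i n_i a_i$ by $a_j$ yields $a_j = n_j a_j$, which is slightly more than ``$n_j\neq 0$'' and is exactly what one needs to conclude $\sum_i n_i a_i=\sum_i a_i$; as you say, this bookkeeping is routine.
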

\begin{proof}
The element $1-(a_1+...+a_k)$ is idempotent, and it is orthogonal to $a_1,...,a_k$. If it is nonzero, then it equals a finite sum of primitive idempotents, which is not possibly orthogonal to all the primitive idempotents.
\end{proof}

\begin{lem}
If a commutative ring $R$ has a finite complete set of primitive idempotents, so does any subring $R'$ of $R$.
\end{lem}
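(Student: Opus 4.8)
The plan is to split the statement into a finiteness count and a purely internal argument inside $R'$. Write $\{a_1,\dots,a_k\}$ for the finite complete set of primitive idempotents of $R$, so that $1=a_1+\cdots+a_k$ by the previous lemma. First I would observe that $R$ has only finitely many idempotents at all: for an idempotent $e\in R$ and each $i$, the element $a_ie$ is an idempotent of the ring $a_iR$ (with unit $a_i$), and since $a_i$ is primitive, $a_ie$ and $a_i-a_ie$ cannot form a pair of nonzero orthogonal idempotents, forcing $a_ie\in\{0,a_i\}$; hence $e=\sum_i a_ie$ lies in a set of at most $2^k$ elements. Every idempotent of a subring $R'$ is in particular an idempotent of $R$, so $R'$ also has only finitely many idempotents, and therefore only finitely many primitive idempotents.

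It then remains to show that a commutative ring with only finitely many idempotents — here $R'$ — automatically has a complete set of primitive idempotents. I would order the idempotents of $R'$ by $f\le e\iff fe=f$ and prove, by induction on the cardinality of the finite down-set $D(e)=\{f\in R':f\text{ idempotent},\,f\le e\}$, that every idempotent $e\in R'$ is a finite sum of primitive idempotents of $R'$. If $e=0$ this is the empty sum, and if $e\neq 0$ admits no nontrivial decomposition it is primitive by definition; otherwise write $e=e_1+e_2$ with $e_1,e_2\in R'$ nonzero, idempotent and orthogonal. A short computation with $e_1e_2=0$ gives $e_ie=e_i$ (so $e_i\le e$), $e_i\neq e$, and $f\le e_i\Rightarrow f\le e$, whence $D(e_i)\subseteq D(e)$; the inclusion is strict since $e\in D(e)\setminus D(e_i)$. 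Applying the inductive hypothesis to $e_1$ and $e_2$ writes $e=e_1+e_2$ as a finite sum of primitive idempotents of $R'$. Combined with the finiteness of the primitive idempotents from the first step, this exhibits the required finite complete set.

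The one point that needs care — and essentially the only obstacle — is that the $a_i$ need not lie in $R'$, so one cannot produce the primitive idempotents of $R'$ simply by truncating the decomposition $1=a_1+\cdots+a_k$; this is exactly why the completeness half of the argument must be carried out intrinsically inside $R'$ via the down-set induction rather than by transporting idempotents down from $R$. Everything else is routine manipulation with orthogonal idempotents (the identities $e_ie=e_i$ and $fe_i=f\Rightarrow fe=f$, and the closure of the set of idempotents of $R'$ under the operations used). Equivalently, the second step is the standard fact that the idempotents of $R'$ form a Boolean algebra which, being finite, is atomic with the primitive idempotents as its atoms.
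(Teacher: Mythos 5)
Your proof is correct and follows the same route as the paper's one-line argument: every idempotent of $R'$ is an idempotent of $R$, and $R$ has only finitely many idempotents. You additionally supply, correctly, the two details the paper leaves implicit --- that the decomposition $e=\sum_i a_ie$ with $a_ie\in\{0,a_i\}$ bounds the idempotents of $R$ by $2^k$, and that a commutative ring with finitely many idempotents automatically has a complete set of primitive ones.
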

\begin{proof}
An idempotent of $R'$ is also an idempotent in $R$, and $R$ has finitely many idempotents.
\end{proof}
\begin{prop}
For finite group $G$, the burnside ring $\Omega(G)$ and $p$-local burnside ring $\Omega_p(G)$ with prime $p$ have a finite complete set of primitive idempotents.
\end{prop}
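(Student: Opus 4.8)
The plan is to deduce this from the analogous (and nearly trivial) statement for $\tilde{\Omega}(G)$ and $\tilde{\Omega}_p(G)$, transported back along the inclusions of Proposition \ref{subring}.

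First I would handle $\tilde{\Omega}(G)=\prod_{(H)\in C(G)}\integer$. The index set $C(G)$ is finite, so this is a finite product of copies of $\integer$, and $\integer$ has only the two idempotents $0$ and $1$; in particular its unique nonzero idempotent $1$ is primitive. Hence the coordinate elements $e_{(H)}$ --- equal to $1$ in the $(H)$-th factor and $0$ elsewhere --- are primitive, pairwise orthogonal, sum to $1$, and every idempotent of the product (being a tuple of $0$'s and $1$'s) is the sum of the $e_{(H)}$ in its support. So $\tilde{\Omega}(G)$ has a finite complete set of primitive idempotents. The identical argument applies to $\tilde{\Omega}_p(G)=\prod_{(H)\in C(G)}\integer_{(p)}$, since $\integer_{(p)}$ is local and therefore has no nontrivial idempotents.

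By Proposition \ref{subring}, $\phi$ and $\phi_p$ identify $\Omega(G)$ and $\Omega_p(G)$ with unital subrings of $\tilde{\Omega}(G)$ and $\tilde{\Omega}_p(G)$ --- unital because $\phi$ sends the class of the trivial $G$-set to $(1,\dots,1)$. Applying the lemma that a subring of a ring with a finite complete set of primitive idempotents again has one then finishes the proof. If one wants to see this last step directly: the idempotents of $\Omega(G)$ form a subset of the finitely many idempotents of $\tilde{\Omega}(G)$ that is closed under $a\mapsto 1-a$ and under multiplication, hence a finite Boolean algebra; such an algebra is atomic, its atoms are precisely its primitive idempotents, and every element is the disjoint sum of the atoms beneath it --- and likewise for $\Omega_p(G)$. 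I do not expect any genuine difficulty here; the only points to check are that the inclusions of Proposition \ref{subring} are unital and that the idempotents of the subring are closed under complement and product, both of which are immediate.
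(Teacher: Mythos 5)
Your argument is correct and follows essentially the same route as the paper: observe that the finite product $\tilde{\Omega}(G)=\prod_{(H)\in C(G)}\integer$ (resp. $\prod_{(H)\in C(G)}\integer_{(p)}$) trivially has a finite complete set of primitive idempotents, then transport this to $\Omega(G)$ and $\Omega_p(G)$ via the injections of Proposition \ref{subring} and the lemma that subrings inherit this property. Your extra remarks (unitality of $\phi$, the Boolean-algebra closure of the idempotents) only flesh out details the paper leaves implicit.
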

\begin{proof}
It is clear that for finite set $I$, $\prod_I\integer$ or $\prod_I\integer_{(p)}$ has a finite complete set of primitive idempotents, then from Proposition \ref{subring} we know that $\Omega(G)$ or $\Omega_p(G)$ is a subring of the direct product above, therefore also has a finite complete set of primitive idempotents.
\end{proof}
Now we characterize the image of $\phi$ via the property called congruence relation. The most important tool we use is called counting lemma.

\begin{lem}[Counting Lemma]
Let $G$ be a finite group, $X$ a finite $G$-set, and $\cyclic{g}$ the cyclic subgroup of $G$ generated by $g\in G$. Then $\card{G}\cdot \card{X/G}=\sum_{g\in G}\card{X^{\cyclic{g}}}$.

\end{lem}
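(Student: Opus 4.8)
The plan is to count, in two different ways, the cardinality of the set
\[
S \coloneqq \{(g,x)\in G\times X \mid gx = x\}.
\]
First I would fix $x\in X$ and sum over $G$: the fiber of the second projection $S\to X$ over $x$ is the stabilizer $G_x$, so
\[
\card{S} \;=\; \sum_{x\in X}\card{G_x} \;=\; \sum_{x\in X}\frac{\card{G}}{\card{Gx}},
\]
using the orbit–stabilizer theorem. Grouping the sum over $x$ according to the orbit of $x$, each orbit $O\subseteq X$ contributes $\card{O}\cdot \frac{\card{G}}{\card{O}} = \card{G}$, so the total is $\card{G}\cdot\card{X/G}$. This gives the left-hand side.

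For the right-hand side I would instead fix $g\in G$ and sum over $G$: the fiber of the first projection $S\to G$ over $g$ is $\{x\in X \mid gx = x\} = X^{\cyclic{g}}$ — here one uses that an element is fixed by $g$ iff it is fixed by every power of $g$, i.e.\ by the cyclic subgroup $\cyclic{g}$. Hence
\[
\card{S} \;=\; \sum_{g\in G}\card{X^{\cyclic{g}}}.
\]
Equating the two expressions for $\card{S}$ yields $\card{G}\cdot\card{X/G} = \sum_{g\in G}\card{X^{\cyclic{g}}}$, as claimed.

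There is essentially no obstacle here: this is the standard double-counting argument (a mild refinement of Burnside's lemma, writing $X^g = X^{\cyclic{g}}$). The only point requiring a word of care is the passage from $\sum_{x}\card{G_x}$ to $\card{G}\cdot\card{X/G}$, which is exactly the orbit-sum bookkeeping in the Burnside/Cauchy–Frobenius lemma and which I would spell out in the one line above. If one wanted to avoid invoking Burnside's lemma as a black box, the self-contained version is precisely the two-column count of $S$ presented here.
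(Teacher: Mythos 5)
Your proof is correct and is essentially the paper's argument: both double-count the incidence set $\{(g,x)\mid gx=x\}$, identifying the fibers over $g$ with $X^{\cyclic{g}}$ and recovering $\card{G}\cdot\card{X/G}$ from the other projection via orbit–stabilizer. The only cosmetic difference is that you project to $X$ and then group by orbits, while the paper projects directly to $X/G$; the content is identical.
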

\begin{proof}
Let $Y=\{(g,x)\in(G,X)\ |\ gx=x\}$, we prove the claimed equation by double counting of the elements of $Y$. Consider the maps
\begin{equation*}
    p:Y\to G: (g,x)\mapsto g,
    \qquad
    q:Y\to X/G: (g,x)\mapsto [x].
\end{equation*}
Note that $p^{-1}(g)={g}\times X^{\cyclic{g}}$, and $q^{-1}([x])=G_x\times Gx$, where $Gx$ is the stabilizer of $x$ and $Gx$ is the orbit of $x$. In particular, $\card{q^{-1}([x])}=\card{G}$. Therefore the left side of the equation is the summation over the fibers of $q$, and the right side is the summation over the fibers of $p$.
\end{proof}

The counting lemma implies the congruence
\begin{equation}
    \sum_{g\in G}\phi_{\cyclic{g}}(X)\equiv 0\modulo\card{G}
\end{equation}
for each finite $G$-set $X$.

Let $NH$ denote the normalizer subgroup of $H$ in $G$, and $WH\coloneqq NH/H$. For finite $G$-set $X$, it is clear that $X^H$ is a finite $WH$-set, and for $nH\in WH$, $(X^H)^{nH}=X^{\cyclic{n,H}}$ where $\cyclic{n,H}$ denotes the subgroup generated by $H$ and $n\in G$. Applying the counting lemma we have
\begin{equation}\label{cong}
    \sum_{nH\in WH}\phi_{\cyclic{n,H}}(X)\equiv 0\modulo \card{WH}.
\end{equation}
\begin{rem}
Another important property of $WH$ is that it acts on $G/H$ freely, therefore for each $(K)\in C(G)$, we have
\begin{equation*}
    (G/H)^{K}\equiv 0\modulo \card{WH}.
\end{equation*}
Therefore we can construct
\begin{equation*}
    x_{(H)}= \frac{1}{\card{WH}}\phi([G/H])\in \tilde{\Omega}(G).
\end{equation*}
Note that as $(G/H)^H=\card{WH}$, the collection of $x_{(H)}$ forms a triangular matrix with units on the diagonal. Therefore $\{x_{(H)}|(H)\in C(G)\}$ forms a basis of $\tilde{\Omega}(G)$. 
\end{rem}
\begin{note}
For $f\in \tilde{\Omega}(G)$, we denote the value of $f$ on the basis element $(H)\in C(G)$ by $f(H)$.
\end{note}
We prove in the next theorem that the congruence relation is the characteristic property of elements in the Burnside ring $\Omega(G)\subseteq \tilde{\Omega}(G)$.
\begin{thm}
An element $f\in \tilde{\Omega}(G)$ is in the subring $\Omega(G)$ if and only if for each $(H)\in C(G)$,
\begin{equation}\label{cong2}
    \sum_{nH\in WH}f(\cyclic{n,H})\equiv 0\modulo \card{WH},
\end{equation}
namely, when the congruence relation in (\ref{cong}) is satisfied
\end{thm}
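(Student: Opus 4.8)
The plan is to prove the two implications separately, treating the ``only if'' direction as essentially already available and concentrating on the converse via an index computation inside the free abelian group $\tilde{\Omega}(G)$.

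\textbf{Forward direction.} First I would note that for each fixed $(H)\in C(G)$ the assignment $f\mapsto \sum_{nH\in WH} f(\cyclic{n,H})$ is additive in $f$, so the set $S\subseteq\tilde{\Omega}(G)$ of elements satisfying all the congruences (\ref{cong2}) is a subgroup. Since $\Omega(G)$ (identified with its image $\phi(\Omega(G))$) is generated as an abelian group by the classes $[G/K]$, it then suffices to verify the congruences on each $\phi([G/K])$, and this is precisely (\ref{cong}) applied to the $G$-set $X=G/K$, which was deduced from the Counting Lemma. Hence $\Omega(G)\subseteq S$.

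\textbf{Converse.} The strategy is to upgrade the inclusion $\Omega(G)\subseteq S$ to an equality by showing both sides have the same finite index in $\tilde{\Omega}(G)$. On the one hand, $\phi([G/H])=\card{WH}\,x_{(H)}$ by the very definition of $x_{(H)}$, and since $\{x_{(H)}\mid (H)\in C(G)\}$ is a $\integer$-basis of $\tilde{\Omega}(G)$ (established in the remark above the theorem), this gives $[\tilde{\Omega}(G):\Omega(G)]=\prod_{(H)\in C(G)}\card{WH}$. On the other hand, consider the homomorphism $\Psi\colon\tilde{\Omega}(G)\to\prod_{(H)\in C(G)}\integer/\card{WH}$ whose $(H)$-component sends $f$ to $\sum_{nH\in WH}f(\cyclic{n,H})\bmod \card{WH}$; then $S=\ker\Psi$, so it remains to show $\Psi$ is surjective, which yields $[\tilde{\Omega}(G):S]=\prod_{(H)}\card{WH}$ and forces $S=\Omega(G)$.

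\textbf{Surjectivity of $\Psi$ and the main obstacle.} Surjectivity comes from triangularity over the subconjugacy poset on $C(G)$: since $\cyclic{n,H}\supseteq H$ with equality exactly when $nH=eH$, the $(H)$-component of $\Psi(f)$ equals $f(H)$ plus a fixed $\integer$-linear combination of values $f(H')$ with $(H')>(H)$. Processing conjugacy classes from maximal to minimal, at the step $(H)$ one chooses the integer $f(H)$ to realize any prescribed residue mod $\card{WH}$ without disturbing components already handled. The point I expect to require the most care is exactly this inductive argument: checking that $\Psi_{(H')}$ depends on $f(H)$ only when $(H)\leq(H')$, so that the downward induction over the finite poset $C(G)$ is genuinely well-founded, and that the coefficient of $f(H)$ in $\Psi_{(H)}$ is $1$. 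The rest (additivity, the Counting Lemma input, and the determinant/index bookkeeping over $\integer$) is routine.
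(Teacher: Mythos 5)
Your proof is correct, but it takes a genuinely different route from the paper's. The paper proves the converse by a direct descent on the element $f$: writing $f=\sum_{(H)}\alpha(H)x_{(H)}$ in the unitriangular basis $\{x_{(H)}\}$, picking a maximal $(H)$ with $\alpha(H)\neq 0$ (so that $f$ vanishes at all strictly larger classes and $f(H)=\alpha(H)$), reading off $\alpha(H)\equiv 0\ (\mathrm{mod}\ \card{WH})$ from the congruence at $(H)$, subtracting the integer multiple $\frac{\alpha(H)}{\card{WH}}\phi([G/H])\in\Omega(G)$, and inducting. You instead sandwich: $\Omega(G)\subseteq S=\ker\Psi$, and both have index $\prod_{(H)}\card{WH}$ in $\tilde{\Omega}(G)$ — the first index from $\phi([G/H])=\card{WH}\,x_{(H)}$ and the basis property of the $x_{(H)}$, the second from surjectivity of $\Psi$. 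Both arguments ultimately rest on the same two facts (unitriangularity over the subconjugacy poset, and the observation that the congruence at $(H)$ reads $f(H)$ plus contributions from strictly larger classes, with coefficient $1$ on $f(H)$ because $\cyclic{n,H}=H$ exactly when $nH=eH$, and a proper overgroup of $H$ cannot be conjugate to $H$). What differs is the packaging: the paper's descent is constructive, exhibiting $f$ explicitly as a $\integer$-combination of the $\phi([G/H])$, whereas your index argument trades the divisibility-of-coefficients step for a surjectivity check plus finite-index bookkeeping, and has the side benefit of making the "only if" direction (additivity of the congruences plus the Counting Lemma on the generators $[G/K]$) explicit rather than implicit. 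One small point of care in your version, which you correctly flag: the downward induction for surjectivity needs a linear extension of the partial order on $C(G)$ so that setting $f(H)$ does not perturb components already handled; as you note, $\Psi_{(H')}$ involves $f(H)$ only when $(H)\geq(H')$, so the induction is well-founded.
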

\begin{proof}
Write $f$ as $f= \sum_{(H)\in C(G)}\alpha(H)x_{H}$, and as in the proof of Proposition \ref{subring}, choose a maximal $(H)$ such that $\alpha(H)\neq 0$, then the congruence relation for $(H)$ means that
\begin{equation*}
    \alpha(H)\equiv 0\modulo\card{WH}.
\end{equation*}
Since $\card{WH}x_{(H)}\in \Omega(G)$, we can remove the summand. Therefore by induction, we prove that $f\in \Omega(G)$.
\end{proof}
\begin{rem}
The summation in (\ref{cong2}) can be rewritten as
\begin{equation}\label{cong3}
    \sum_{(K)}n(H,K)f(K)\equiv 0\modulo \card{WH},
\end{equation}
where the sum is over $G$-conjugacy classes $(K)$ of subgroups $K$, such that $H$ is a normal subgroup of $K$ and $K/H$ is cyclic, including the case $K=H$. Here $n(H,K)$ are integers, and $n(H,H)=1$.
\end{rem}

For prime $p$, let $W_pH$ be a Sylow $p$-subgroup of $WH$, then $X^H$ is a finite $W_pH$-set, then for each subgroup $H$ of $G$ we have a ``$p$-primary'' congruence in the form
\begin{equation}\label{cong4}
    f(H) + \sum_{(K)}l(H,K)f(K)\equiv 0\modulo \card{W_pH}
\end{equation}
where $l(H,K)\in \integer$ and $(H)\leq (K)$.
The ``$p$-primary'' congruence \ref{cong4} as a whole for all prime $p$, which is justified in the next corollary, is as powerful as the congruence in (\ref{cong3}). We will use (\ref{cong3}) and (\ref{cong4}) without worrying too much about the actual summation and the values of the integral coefficients $n(H,K)$ and $l(H,K)$.
\begin{cor}
An element $f\in\tilde{\Omega}(G)$ is in the subring $\Omega(G)$ if and only if for each $(H)\in C(G)$, the congruence relation in (\ref{cong4}) is satisfied for all prime $p$.
\end{cor}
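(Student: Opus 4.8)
The plan is to deduce the corollary from a prime-by-prime refinement of the preceding theorem. The forward implication ($f\in\Omega(G)\Rightarrow$ (\ref{cong4})) is essentially already at hand: the congruence (\ref{cong4}) was obtained by applying the Counting Lemma to $X^H$ viewed as a $W_pH$-set, so it holds for every finite $G$-set $X$, hence for every class in $\Omega^+(G)$; since the relation ``$\equiv 0$ modulo $\card{W_pH}$'' is closed under subtraction it then holds on all of $\Omega(G)$, and on $\Omega_p(G)$ after tensoring with $\integer_{(p)}$. So the work is entirely in the converse.

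For the converse I would first prove the following $p$-local analogue of the preceding theorem: \emph{for a fixed prime $p$, an element $f\in\tilde{\Omega}_p(G)$ lies in $\Omega_p(G)$ if and only if (\ref{cong4}) holds for every $(H)\in C(G)$.} The argument mirrors the proof of the preceding theorem. Since the $x_{(H)}$ form an $\integer_{(p)}$-basis of $\tilde{\Omega}_p(G)$, I would write $f=\sum_{(H)\in C(G)}\alpha(H)\,x_{(H)}$ with $\alpha(H)\in\integer_{(p)}$ and pick $(H)$ maximal in the poset $C(G)$ among those with $\alpha(H)\neq 0$. Then $f(K)=\sum_{(J)\geq(K)}\alpha(J)\,x_{(J)}(K)=0$ for every $(K)>(H)$, since every such $(J)$ is $>(H)$, whereas $f(H)=\alpha(H)\,x_{(H)}(H)=\alpha(H)$ because $x_{(H)}(H)=\card{(G/H)^H}/\card{WH}=1$. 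Feeding this into (\ref{cong4}) for $(H)$ collapses that congruence to: $\card{W_pH}$ divides $\alpha(H)$ in $\integer_{(p)}$. The crucial point is then that $\alpha(H)\,x_{(H)}$ already lies in $\Omega_p(G)$: one has $\card{W_pH}\,x_{(H)}=\frac{\card{W_pH}}{\card{WH}}\,\phi([G/H])$, and since $W_pH$ is a \emph{Sylow} $p$-subgroup of $WH$ the index $[WH:W_pH]$ is prime to $p$, so $\card{W_pH}/\card{WH}$ is a unit of $\integer_{(p)}$ and hence $\card{W_pH}\,x_{(H)}\in\Omega_p(G)$; together with $\card{W_pH}\mid\alpha(H)$ in $\integer_{(p)}$ this gives $\alpha(H)\,x_{(H)}\in\Omega_p(G)$. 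Subtracting it leaves an element of $\tilde{\Omega}_p(G)$ with strictly fewer nonzero coefficients that still satisfies all of (\ref{cong4}) — by the forward implication applied to the subtracted term — and induction on the number of nonzero coefficients completes the $p$-local statement.

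Finally, I would glue the $p$-local statements together. An integer congruence modulo $\card{WH}$ is the conjunction of its reductions modulo the prime powers $\card{W_pH}$ (Chinese Remainder Theorem), equivalently $\Omega(G)=\bigcap_p\Omega_p(G)$ inside $\tilde{\Omega}_{\rational}(G)$ because $\integer=\bigcap_p\integer_{(p)}$; moreover only the finitely many primes dividing $\card{G}$ contribute, as $\card{W_pH}=1$ otherwise. Hence $f\in\tilde{\Omega}(G)$ lies in $\Omega(G)$ iff it lies in $\Omega_p(G)$ for every prime $p$ iff, by the $p$-local statement, the congruence (\ref{cong4}) holds for every $p$ and every $(H)\in C(G)$.

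I expect the main obstacle to be the key step of the $p$-local statement: recognizing that one need only clear the Sylow-$p$-part $\card{W_pH}$ of $\card{WH}$ from the denominator of $x_{(H)}$ in order to return to $\Omega_p(G)$. This is precisely where being a \emph{Sylow} $p$-subgroup is used essentially — a smaller $p$-subgroup of $WH$ would not do — and it is the reason the family (\ref{cong4}) is, after $p$-localization, as strong as the full congruence (\ref{cong3}). The rest is routine bookkeeping with the triangular basis $\{x_{(H)}\}$, exactly as in the proof of the preceding theorem.
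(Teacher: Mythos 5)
Your proof is correct, and it reaches the conclusion by inverting the paper's logical order. The paper argues directly in $\tilde{\Omega}(G)$: for the maximal $(H)$ with $\alpha(H)\neq 0$ in the triangular basis $\{x_{(H)}\}$, the congruences (\ref{cong4}) for all primes $p$ give $\alpha(H)\equiv 0$ modulo each $\card{W_pH}$ simultaneously, hence modulo $\card{WH}$, so the summand $\alpha(H)x_{(H)}$ is an integer multiple of $\phi([G/H])$ and can be removed; the $p$-local version is then stated afterwards as a further corollary. You instead prove that $p$-local version first, one prime at a time, and recover the integral statement from $\Omega(G)=\bigcap_p\Omega_p(G)$ inside $\tilde{\Omega}_{\rational}(G)$. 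Both inductions run on the same engine (maximality in $C(G)$ plus $f(H)=\alpha(H)$), but your route costs you the extra gluing fact $\integer=\bigcap_p\integer_{(p)}$, while it buys a cleaner isolation of where the Sylow condition enters: your observation that $\card{W_pH}\,x_{(H)}\in\Omega_p(G)$ precisely because $[WH:W_pH]$ is a unit in $\integer_{(p)}$ is the content that the paper disposes of only in the terse proof of its subsequent $p$-local corollary ("multiply $q$ on both sides"). Either organization is fine; just be aware that in the paper's numbering the $p$-local statement you use as a lemma is itself derived from the corollary you are proving, so if you adopt your order you should prove the $p$-local statement from scratch (as you do) rather than cite it.
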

\begin{proof}
Write $f$ as $f=\sum_{(H)\in C(G)}\alpha(H)x_H$, it suffices to show that $\alpha(H)\equiv 0\modulo \card{WH}$.

Consider a maximal $H$ such that $\alpha(H)\neq 0$, then we have $f(H)=\alpha(H)\equiv 0\modulo \card{W_pH}$ for all prime $p$, which means that $\alpha(H)\equiv 0\modulo \card{WH}$. Therefore we can remove this summand, and finish the proof inductively.
\end{proof}

\begin{cor}
An element $f\in \tilde{\Omega}_p(G)$ is in the subring $\tilde{\Omega}_p(G)$ if and only if for each $(H)\in C(G)$, the congruence relation in \ref{cong4} is satisfied for $p$.
\end{cor}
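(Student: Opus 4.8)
The plan is to mimic the proof of the preceding corollary, noting first that the statement as intended reads: $f\in\tilde{\Omega}_p(G)$ lies in the \emph{subring} $\Omega_p(G)$ if and only if the $p$-primary congruence (\ref{cong4}) holds at every $(H)\in C(G)$. Two features make the $p$-local case easier than the integral one: (i) after $p$-localization the congruences at primes $\ell\neq p$ become vacuous, so only the single congruence at $p$ survives; and (ii) $p$-localizing the basis $\{x_{(H)}\}$ of $\tilde{\Omega}(G)$ still gives a $\integer_{(p)}$-basis of $\tilde{\Omega}_p(G)$, and now not merely $|WH|\,x_{(H)}$ but already $|W_pH|\,x_{(H)}$ lies in $\Omega_p(G)$, since $|WH|/|W_pH|=[WH:W_pH]$ is prime to $p$ hence a unit in $\integer_{(p)}$.

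For the ``only if'' direction I would argue that the subset of $\tilde{\Omega}_p(G)$ satisfying (\ref{cong4}) at a fixed $(H)$ is the kernel of a $\integer_{(p)}$-linear map $\tilde{\Omega}_p(G)\to\integer_{(p)}/|W_pH|\integer_{(p)}$, hence a $\integer_{(p)}$-submodule. By the previous corollary every class $[X]\in\Omega(G)$ already satisfies (\ref{cong4}) for all primes, in particular for $p$; since $\Omega_p(G)=\Omega(G)\otimes\integer_{(p)}$ is generated over $\integer_{(p)}$ by such classes, the whole of $\Omega_p(G)$ satisfies it.

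For the ``if'' direction I would run the downward induction along the subconjugacy partial order on $C(G)$, exactly as in the characterization theorem. Write $f=\sum_{(H)}\alpha(H)x_{(H)}$ with $\alpha(H)\in\integer_{(p)}$, and pick $(H)$ maximal with $\alpha(H)\neq0$; by triangularity of the $x$-basis (all other coordinates appearing in (\ref{cong4}) at $(H)$ vanish by maximality, and $f(H)=\alpha(H)$ since $(G/H)^H$ has $|WH|$ elements) the congruence collapses to $\alpha(H)\equiv0\bmod|W_pH|$ in $\integer_{(p)}$. The arithmetic heart of the argument is then that divisibility by $|W_pH|$ in $\integer_{(p)}$ is equivalent to divisibility by $|WH|$, the complementary factor being a unit; hence $\alpha(H)x_{(H)}\in\Omega_p(G)$, and subtracting it and iterating exhibits $f\in\Omega_p(G)$ after finitely many steps.

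The only genuinely new point compared with the integral case is this passage from the full congruence modulo $|WH|$ to the $p$-primary one modulo $|W_pH|$, i.e.\ the fact that units of $\integer_{(p)}$ are invisible to divisibility; this is where I expect to be most careful, though it is elementary. Everything else---triangularity of $\{x_{(H)}\}$, $\integer_{(p)}$-linearity of the congruence conditions, and the structure of the induction---is inherited essentially verbatim from the proofs already given.
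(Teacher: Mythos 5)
Your proposal is correct and follows essentially the same route as the paper, whose one-sentence proof simply observes that the congruences at primes $q\neq p$ become vacuous after $p$-localization and invokes the previous corollary; you have merely made explicit the details that argument leaves implicit (the $\integer_{(p)}$-linearity of the congruence conditions, the triangular induction in the $x_{(H)}$-basis, and the fact that $|W_pH|$- and $|WH|$-divisibility coincide in $\integer_{(p)}$). You also correctly note the typo in the statement: the subring should read $\Omega_p(G)$, not $\tilde{\Omega}_p(G)$.
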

\begin{proof}
    Note that for primes $q\neq p$, the congruence relations satisfy automatically as we can multiply $q$ on both sides of the fraction, so this corollary follows immediately from the last one.
\end{proof}

Now we come back to the idempotents.
\begin{note}
For finite group $G$ and prime $p$, $G$ is called $p$-perfect if $O^p(G)=G$. We denote by $P_p(G)$ the subset of $C(G)$ representing $p$-perfect subgroups of $G$.

For subgroup $H$ of G, we denote by $H_S$ the minimal normal subgroup of $H$ such that $H/H_S$ is solvable. $H$ is perfect if and only if $H_S=H$. We denote by $P(G)$ the subset of $C(G)$ representing perfect subgroups of $G$.
\end{note}

\begin{prop}
An idempotent $f\in\tilde{\Omega}(G)$ is contained in $\Omega(G)$ if and only if for all $(H)\in C(G)$, $f(H)=f(H_S)$.
\end{prop}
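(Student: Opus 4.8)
The plan is to reduce the statement to the congruence characterization of $\Omega(G)\subseteq\tilde\Omega(G)$ proved above, and to exploit the fact that for an idempotent the relevant congruences collapse to conditions modulo a prime. Concretely, an idempotent $f\in\tilde\Omega(G)$ takes only the values $0$ and $1$ on each basis coordinate, since $f(H)^2=f(H)$ in $\integer$ for every $(H)\in C(G)$. So $f$ is the indicator function of a set $\mathcal F\subseteq C(G)$, and the claim becomes: $f\in\Omega(G)$ iff $\mathcal F$ is closed under the operation $(H)\mapsto(H_S)$, i.e. $(H)\in\mathcal F\iff (H_S)\in\mathcal F$.

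First I would establish the ``only if'' direction. Suppose $f\in\Omega(G)$ is idempotent; fix $(H)$. For each prime $p$ apply the $p$-primary congruence (\ref{cong4}) at the subgroup $H$: $f(H)+\sum_{(K)}l(H,K)f(K)\equiv 0\pmod{\card{W_pH}}$, where the sum runs over $(K)$ with $(H)\le(K)$ and $K/H$ a cyclic $p$-group (this is the content of the paragraph preceding (\ref{cong4})). The key structural input is that every such $K$ lies ``between $H$ and the solvable closure'': since $K/H$ is cyclic, hence solvable, one has $K_S\le H$, so in fact $K_S=H_S$. Iterating the congruences down the lattice of subgroups above $H$ whose successive quotients are cyclic $p$-groups — i.e. working through the chain of $p$-subnormal overgroups until one reaches $H_S$ — forces $f(H)\equiv f(H_S)\pmod{\card{W_pH'}}$ for the relevant $H'$, and since $f$ only takes values $0,1$ while $\card{W_pH'}$ is a power of $p>1$ whenever the two differ, we get $f(H)=f(H_S)$. (One does this prime by prime; ranging over all $p$ is harmless, and indeed any single sufficiently rich choice suffices because $H/H_S$-free solvable filtrations are built from cyclic $p$-quotients.) I would spell this out as a downward induction on the poset $\{(K):H\trianglelefteq K,\ K/H\text{ solvable}\}$.

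For the ``if'' direction, assume $f$ is an idempotent in $\tilde\Omega(G)$ with $f(H)=f(H_S)$ for all $(H)$; I must verify the congruence (\ref{cong4}) for every $(H)$ and every prime $p$. In that congruence all the overgroups $K$ appearing satisfy $K_S=H_S$ as noted above, so $f(K)=f(H_S)=f(H)$ for every term; the left side becomes $f(H)\bigl(1+\sum_{(K)}l(H,K)\bigr)$. Thus it suffices to know that $1+\sum_{(K)}l(H,K)\equiv 0\pmod{\card{W_pH}}$ — but this is precisely the congruence (\ref{cong4}) evaluated on the particular element $g\in\Omega(G)$ that is constant equal to $1$ on the overgroup set in question (for instance $g=\phi([G/1])$ restricted appropriately, or more cleanly the image under $\phi$ of any honest $G$-set whose $K$-fixed-point count is constant along that chain), so the coefficient congruence holds because it holds for a genuine element of $\Omega(G)$. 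Hence $f$ satisfies all the $p$-primary congruences, and by the corollary above $f\in\Omega(G)$.

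The main obstacle is the bookkeeping in the ``only if'' direction: one has to argue that chaining the individual congruences (\ref{cond}=\ref{cong4}) across all $p$-subnormal overgroups of $H$ genuinely propagates the equality $f(H)=f(H')$ all the way down to $H_S$, rather than just up one cyclic-$p$ step. This is where the solvable-closure identity $K_S=H_S$ (valid because $K/H$ is solvable) does the real work, together with the observation that a solvable group is built by successive cyclic $p$-power extensions, so that $H/H_S$ solvable guarantees a chain of exactly the type that (\ref{cong4}) sees. I would isolate this as a lemma: \emph{if $H\trianglelefteq K$ with $K/H$ solvable then $H_S=K_S$, and there is a chain $H=K_0\trianglelefteq K_1\trianglelefteq\cdots\trianglelefteq K_r=H_S\cdot(\text{something})$ with each $K_{i+1}/K_i$ cyclic of prime-power order}, and then the theorem is a short induction using it. Everything else is a formal consequence of the congruence theorem and its corollaries already in hand.
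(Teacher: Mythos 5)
Your ``if'' direction is essentially sound and close to the paper's. For every overgroup $K$ appearing in the congruence at $H$ one indeed has $K_S=H_S$ (since $K/H$ is cyclic, $K_S\leq H$; and $H/(H\cap K_S)$ embeds into the solvable group $K/K_S$, so $H_S\leq K_S$), hence every term equals $f(H)$; and the coefficient identity $1+\sum_{(K)}l(H,K)\equiv 0\modulo\card{W_pH}$ you need does follow by evaluating the congruence on the unit $[G/G]\in\Omega(G)$, all of whose $\phi_K$-values are $1$. The paper phrases the same argument with (\ref{cong2}), where the sum visibly consists of $\card{WH}$ equal terms, so no coefficient trick is required; also note idempotency is not needed in this direction.

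The ``only if'' direction, however, has a genuine gap. The congruences (\ref{cong2})--(\ref{cong4}) at $H$ involve only overgroups $K\geq H$ inside the normalizer $NH$, so no amount of ``iterating down the lattice of subgroups above $H$'' can reach $H_S$, which is a \emph{subgroup} of $H$; your closing lemma, whose chain ends at an overgroup rather than at $H_S$, reflects this confusion. Even if you instead apply the congruences at each term of a chain $H_S=H_0\leq H_1\leq\dots\leq H_n=H$ (each $H_i$ normal in $H_{i+1}$ with cyclic quotient of prime order), you cannot extract the pairwise relation $f(H_i)\equiv f(H_{i+1})$ from them: the congruence at $H_i$ is a single relation modulo $\card{WH_i}$ (or $\card{W_pH_i}$) aggregating \emph{all} cyclic-quotient overgroups of $H_i$ in $NH_i$, with coefficients $n(H_i,K)$, $l(H_i,K)$ that the paper deliberately leaves unspecified, and it can even be vacuous --- for instance when $H$ is self-normalizing, so $\card{WH}=1$, or when $\card{W_pH}=1$ --- which is exactly where your argument would still have to produce $f(H)=f(H_S)$. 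The paper's proof avoids the global congruences entirely in this direction: since $H/H_S$ is solvable, it chooses such a chain \emph{inside} $H$ and applies the counting lemma directly to the prime-order group $H_{i+1}/H_i$ acting on $f^{H_i}$, giving $f(H_i)\equiv f(H_{i+1})\modulo p(i)$ with a modulus that is always a prime $>1$; idempotency (values $0$ or $1$) then upgrades each congruence to an equality. You need this local, intrinsic-to-$H$ congruence (or an equivalent substitute), not (\ref{cong4}), to make the forward direction go through.
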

\begin{proof}
    Suppose $f\in \Omega(G)$, given subgroup $H$, we have a series
    \begin{equation*}
        H_S=H_0\leq H_1\leq...\leq H_n = H,
    \end{equation*}
    such that $H_{i}$ is normal subgroup of $H_{i+1}$ and $H_{i+1}/H_{i}$ is cyclic of prime order $p(i)$. For subgroup $H$ of $G$, let $f^{H}$ denote the set of fixed elements by $H$ of $f$ viewed as a $G$-set. Then using the counting lemma of $H_{i+1}/H_i$ acting on $f^{H_i}$ gives that $f(H_{i+1})\equiv f(H_i)\mod p(i)$, and since $f(H_K)$ for any $K$ is either $1$ or $0$, we have $f(H_i)=f(H_{i+1})$, and then $f(H)=f(H_S)$.
    
    Conversely, assume $f(H)=f(H_S)$, then we have immediately that $f(H)=f(K)$ for all pairs where $H$ is a normal subgroup of $K$ and $K/H$ is cyclic. Therefore the congruence relation in \ref{cong3} is satisfied for all $H$.
\end{proof}
We can prove analogously the local version of the corollary above, all we need to do is to use the $p$-local version of the congruence relations. The statement of $p$-local primitive idempotents is the next proposition.
\begin{prop}\label{p-local idempotents}
An idempotent $f\in\tilde{\Omega}_p(G)$ is contained in $\Omega_p(G)$ if and only if for all $(H)\in C(G)$, $f(H)=f(O^p(H))$.
\end{prop}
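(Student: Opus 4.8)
The plan is to run the $p$-local analogue of the proof of the previous proposition, with $O^p(H)$ in place of the solvable residual $H_S$, the $p$-primary congruence $(\ref{cong4})$ in place of $(\ref{cong3})$, and $\card{W_pH}$ in place of $\card{WH}$. Two facts will be used throughout: an idempotent $f$ of $\tilde{\Omega}_p(G)=\prod_{(H)\in C(G)}\integer_{(p)}$ has every coordinate $f(H)$ equal to $0$ or $1$, since $\integer_{(p)}$ is an integral domain; and the assignment $X\mapsto X^H$ together with the Counting Lemma is $\integer$-linear in the $G$-set, hence extends to virtual $G$-sets with $\integer_{(p)}$-coefficients.

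For the forward direction I would take $f\in\Omega_p(G)$, regard it as a virtual $G$-set, and fix $H\leq G$. Since $H/O^p(H)$ is a $p$-group it has a subnormal series with cyclic factors of order $p$, which pulls back to a chain $O^p(H)=H_0\leq H_1\leq\cdots\leq H_n=H$ with each $H_i$ normal in $H_{i+1}$ and $H_{i+1}/H_i\cong C_p$. Applying the Counting Lemma to the action of $H_{i+1}/H_i$ on $f^{H_i}$, and using $(f^{H_i})^{\cyclic{g}}=f^{H_{i+1}}$ for a generator $g$, should give $f(H_i)\equiv f(H_{i+1})\modulo p$; as both values lie in $\{0,1\}$ and $p\geq 2$ this forces $f(H_i)=f(H_{i+1})$, so chaining along the series gives $f(H)=f(O^p(H))$.

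For the converse I would assume $f$ idempotent with $f(H)=f(O^p(H))$ for all $(H)$ and check the $p$-primary congruence $(\ref{cong4})$ for each $(H)$, which by the corollary characterizing $\Omega_p(G)$ via the congruences $(\ref{cong4})$ suffices. The sum in $(\ref{cong4})$ ranges over classes $(K)$ with $H$ normal in $K$ and $K/H$ a cyclic $p$-group, and the key point is the group-theoretic claim $O^p(K)=O^p(H)$ for every such $K$. One inclusion is easy: $K/H$ being a $p$-group forces $O^p(K)\leq H$, hence $H/O^p(K)$ is a $p$-group and $O^p(H)\leq O^p(K)$. For the other, $O^p(H)$ is characteristic in $H$ and $H$ is normal in $K$, so $O^p(H)$ is normal in $K$, and $K/O^p(H)$ is an extension of the $p$-group $H/O^p(H)$ by the $p$-group $K/H$, hence a $p$-group, so $O^p(K)\leq O^p(H)$. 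Given this, $f(K)=f(O^p(H))=f(H)$ for every $K$ in the sum, and $(\ref{cong4})$ collapses to $f(H)\bigl(1+\sum_{(K)}l(H,K)\bigr)\equiv 0\modulo\card{W_pH}$; evaluating $(\ref{cong4})$ on the trivial $G$-set, where every $\phi_K$ returns $1$, yields $1+\sum_{(K)}l(H,K)\equiv 0\modulo\card{W_pH}$, so the congruence holds whether $f(H)$ is $0$ or $1$, and hence $f\in\Omega_p(G)$.

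The step I expect to be the main obstacle is the group-theoretic lemma $O^p(K)=O^p(H)$ in the converse — in particular getting the normality of $O^p(H)$ in $K$ and the $p$-group extension argument exactly right — together with making the Counting-Lemma computation rigorous for $\integer_{(p)}$-coefficient virtual $G$-sets rather than honest finite $G$-sets. The remaining bookkeeping runs parallel to the non-$p$-local case already treated.
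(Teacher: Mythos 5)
Your proposal is correct and follows essentially the same route the paper intends: it is the $p$-local analogue of the preceding proposition's proof (counting lemma along a subnormal chain with $C_p$ quotients for the forward direction, and verification of the $p$-primary congruences in the converse), which is exactly what the paper's one-line proof ``argue analogously using the $p$-local congruences'' leaves to the reader. Your explicit lemma $O^p(K)=O^p(H)$ and the evaluation of the congruence on the trivial $G$-set correctly fill in the details the paper glosses over, so there is no gap.
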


\begin{cor}
The set of primitive idempotents of $\Omega(G)$ corresponds bijectively to $P(G)$. The set of primitive idempotents of $\Omega_p(G)$ corresponds bijectively to $P_p(G)$.
\end{cor}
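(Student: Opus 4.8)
The plan is to reduce the statement to a purely combinatorial fact about a self-map of the finite set $C(G)$, feeding in the two preceding propositions to decide which idempotents of the ambient ring descend to the Burnside ring. First I would record the elementary structure of idempotents in $\tilde{\Omega}(G)=\prod_{(H)\in C(G)}\integer$: since $\integer$ (and likewise $\integer_{(p)}$) is an integral domain, its only idempotents are $0$ and $1$, so an idempotent of $\tilde{\Omega}(G)$ is precisely a subset $S\subseteq C(G)$, namely its support, the primitive ones being exactly the singletons $\{(H)\}$, i.e. the coordinate idempotents $\epsilon_{(H)}$. Then I would introduce the map $\rho\colon C(G)\to C(G)$, $\rho((H))=(H_S)$; it is well defined on conjugacy classes because $H_S$ — the terminal term of the derived series, equivalently the unique minimal normal subgroup with solvable quotient — is characteristic in $H$. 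The two facts that carry the argument are: $\rho$ is idempotent, $\rho\circ\rho=\rho$, because $H_S$ is a perfect group and $K_S=K$ for perfect $K$; and the image of $\rho$ is exactly $P(G)$, for the same two reasons. Consequently the nonempty fibres $\rho^{-1}((K))$, $(K)\in P(G)$, partition $C(G)$, and a support $S$ is stable under both $\rho$ and $\rho^{-1}$ (call it $\rho$-saturated) if and only if it is a union of such fibres.

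Next I would invoke the proposition characterizing $\Omega(G)$ inside $\tilde{\Omega}(G)$: an idempotent $f$ with support $S$ lies in $\Omega(G)$ iff $f(H)=f(H_S)$ for every $(H)$, which says exactly that $S$ is $\rho$-saturated. Hence the idempotents of $\Omega(G)$ are precisely the sums $\sum_{(K)\in T}e_{(K)}$ with $T\subseteq P(G)$, where $e_{(K)}\coloneqq\sum_{(H)\in\rho^{-1}((K))}\epsilon_{(H)}$; these $e_{(K)}$ are nonzero (since $(K)\in\rho^{-1}((K))$), pairwise orthogonal, and sum to $1$. To conclude I would check that the primitive ones are exactly the $e_{(K)}$: a sum over $T$ with $|T|\ge 2$ splits nontrivially, and $e_{(K)}$ itself cannot split, because a decomposition $e_{(K)}=a+b$ into nonzero orthogonal idempotents of $\Omega(G)$ would write $\rho^{-1}((K))$ as a disjoint union of two nonempty $\rho$-saturated subsets — impossible, since whichever subset contains $(K)$ must, by saturation, contain every $(H)$ with $\rho((H))=(K)$, i.e. all of $\rho^{-1}((K))$. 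This produces the desired bijection $(K)\mapsto e_{(K)}$ from $P(G)$ onto the primitive idempotents of $\Omega(G)$.

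For the $p$-local statement I would run the identical argument with Proposition \ref{p-local idempotents} in place of its integral counterpart, the map $H\mapsto O^p(H)$ in place of $H\mapsto H_S$, ``$p$-group'' in place of ``solvable'', and ``$p$-perfect'' in place of ``perfect''; the only extra input is that $O^p(H)$ is again $p$-perfect, so that $(H)\mapsto(O^p(H))$ is an idempotent self-map of $C(G)$ with image $P_p(G)$ — and this holds because $O^p(H)$ is generated by, hence contains, all elements of $H$ of order prime to $p$, which is precisely why $O^p(O^p(H))=O^p(H)$. The step with genuine content, as opposed to the formal idempotent bookkeeping, is exactly this passage from the congruence characterizations of the previous subsection to the clean statement ``idempotents of $\Omega(G)$ $=$ $\rho$-saturated subsets of $C(G)$'', together with the verification that $\rho$ (respectively $H\mapsto O^p(H)$) is an idempotent endomap of $C(G)$ with image $P(G)$ (respectively $P_p(G)$); this in turn rests on the standard group-theoretic facts that the perfect radical $H_S$ is perfect and the $p$-residual $O^p(H)$ is $p$-perfect.
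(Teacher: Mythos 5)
Your argument is correct and is exactly the intended one: the paper states this corollary without proof as an immediate consequence of the two preceding propositions, and your write-up simply makes explicit the implicit reasoning — idempotents of $\tilde{\Omega}(G)$ are supports $S\subseteq C(G)$, the condition $f(H)=f(H_S)$ (resp.\ $f(H)=f(O^p(H))$) says $S$ is saturated for the idempotent retraction onto $P(G)$ (resp.\ $P_p(G)$), and the minimal nonzero saturated supports are the fibres, indexed by $P(G)$ (resp.\ $P_p(G)$). The group-theoretic inputs you verify (that $H_S$ is perfect and $O^p(H)$ is $p$-perfect) are the right ones and are correctly justified.
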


Now we focus on the expression of primitive idempotents of $\Omega_p(G)$, but the analogous constructions and propositions for $\Omega(G)$ are easy to deduce.

\begin{note}
For finite group $G$, the set of subgroups of $G$ is a locally finite poset. When we write $\sigma$, $\zeta$, and $\mu$, we mean the corresponding function defined for this poset. 

For a $p$-perfect subgroup $H$, we define
\begin{equation*}
    \lambda(D,H)\coloneqq \sum_S\mu(D,S),
\end{equation*}
the summation is over the set of all subgroups $S$ of $G$ such that $O^p(S)=H$. Clearly $\lambda(D,H)\neq 0$ only if $O^p(D)\leq H$ and $D\leq NH$.
\end{note}

For each $p$-perfect subgroup $H$ of $G$, define
\begin{equation*}
    e_p^H\coloneqq \frac{1}{\card{NH}}\sum_{D\leq NH}\card{D}\lambda(D,H)[G/D].
\end{equation*}
\begin{thm}\label{idempotents}
$\{e_p^H\ |\ (H)\in P_p(G)\}$ is the complete set of primitive idempotents of $\Omega_p(G)$.
\end{thm}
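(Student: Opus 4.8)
The plan is to verify the three defining properties of the primitive idempotents of $\Omega_p(G)$ for the proposed elements $e_p^H$: first that each $e_p^H$ actually lies in $\Omega_p(G)$ (i.e.\ satisfies the $p$-local congruences), second that each $e_p^H$ is idempotent, and third that $\{e_p^H \mid (H)\in P_p(G)\}$ is a \emph{complete} orthogonal family, so that by the counting results above it must be the full set of primitive idempotents. The key computational input is the value of $\phi_K(e_p^H)=e_p^H(K)$ for each subgroup $K\leq G$; everything reduces to showing this value equals $1$ when $O^p(K)$ is conjugate to $H$ and $0$ otherwise. Once this formula is established, membership in $\Omega_p(G)$ follows from Proposition \ref{p-local idempotents} (the value depends only on $O^p(K)$), idempotency and orthogonality are immediate (a $\{0,1\}$-valued function in $\tilde\Omega_p(G)$ multiplies componentwise), and completeness follows because $\sum_{(H)\in P_p(G)} e_p^H$ takes the value $1$ at every $(K)$, hence equals the unit $[G/G]$.

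First I would compute $e_p^H(K)=\phi_K(e_p^H)$. Using $\phi_K([G/D])=|(G/D)^K|$ and the standard formula $|(G/D)^K| = |\{gD : K^g \leq D\}|$, one rewrites
\begin{equation*}
    e_p^H(K) = \frac{1}{|NH|}\sum_{D\leq NH} |D|\,\lambda(D,H)\,|(G/D)^K|.
\end{equation*}
The Möbius-theoretic identity $\sum_{D}\lambda(D,H)(\cdots) = \sum_{D:\,O^p(D)=H}(\cdots)$ obtained by unfolding the definition $\lambda(D,H)=\sum_{S:\,O^p(S)=H}\mu(D,S)$ and applying Möbius inversion over the subgroup poset collapses the alternating sum; I would manipulate the double sum so that the inner sum over $S$ with $O^p(S)=H$ gets paired against a $\zeta$-sum, leaving a single orbit count. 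The upshot, following the argument in \cite{BurnsideIdem}, is that $e_p^H(K)$ counts (with a normalization by $|NH|$ that cancels) the fixed points contributed precisely by those $D$ with $O^p(D)=H$ lying under $K$, and this is nonzero exactly when $O^p(K)$ is $G$-conjugate to $H$, in which case it is $1$.

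The main obstacle is precisely this collapsing computation: keeping track of the $G$-conjugacy bookkeeping (the sum runs over actual subgroups $D\leq NH$, not conjugacy classes, and $(G/D)^K$ must be expressed $G$-equivariantly), and showing that after applying Möbius inversion the normalization constant $1/|NH|$ is exactly right to produce the value $1$ rather than some index. A clean way to organize this is to first check the formula after tensoring with $\mathbb{Q}$, where $\tilde\Omega_{\mathbb{Q}}(G)=\prod_{(H)}\mathbb{Q}$ splits completely and one only needs to identify which coordinate functionals $\phi_K$ each $e_p^H$ is supported on; then descend to $\Omega_p(G)$ using the congruence characterization (Proposition \ref{p-local idempotents}) to see that the rational idempotent actually has $p$-local coefficients. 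With the value formula $e_p^H(K)=\delta_{O^p(K)\sim_G H}$ in hand, the remaining steps — membership, idempotency, mutual orthogonality, and $\sum_H e_p^H = 1$ — are all one-line verifications, and the count of primitive idempotents matches $|P_p(G)|$ by the earlier corollary, so the list is complete.
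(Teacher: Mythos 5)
Your proposal is correct and takes essentially the same route as the paper: evaluate the candidate idempotents under the fixed-point homomorphisms using $\phi_K([G/D])=\frac{1}{\card{D}}\sum_{g\in G}\zeta(K^g,D)$ together with M\"obius inversion $\zeta\ast\mu=\delta$, obtain the value $1$ exactly when $(O^p(K))=(H)$ and $0$ otherwise, and then descend to $\Omega_p(G)$ via Proposition \ref{p-local idempotents} and the bijection with $P_p(G)$. The only (cosmetic) difference is that you compute $\phi_K(e_p^H)$ in one pass by unfolding $\lambda(D,H)=\sum_{S:\,O^p(S)=H}\mu(D,S)$, whereas the paper first identifies the rational primitive idempotents $e^S=e_{(S)}$ and then sums over the classes with $O^p(S)=H$.
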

\begin{proof}
First, we consider the ring $\Omega_{\rational}(G)$, which can be thought of as the biggest ring, in the sense that every localization of $\Omega(G)$ is a subring of it.

In the case of $\Omega_{\rational}(G)$, the corresponding expression turns to:
\begin{equation*}
     e^H\coloneqq \frac{1}{\card{NH}}\sum_{D\leq H}\card{D}\mu(D,H)[G/D].
\end{equation*}
As $\phi_{\rational}$ is injective as a ring homomorphism, and therefore injective as a linear map between $\rational$ vector-spaces of the same dimension, it is an isomorphism. Therefore the primitive idempotents of $\Omega_{\rational}(G)$ are just $\{e_{(H)}|(H)\in C(G)\}$. Therefore it suffices to prove that
\begin{equation*}
    \begin{aligned}
    \phi_K(e^H)&=1 &\text{   if } (H)=(K)\\
               &=0 &\text{   otherwise}.
    \end{aligned}
\end{equation*}
For $g\in G$ and $K$ subgroup of $G$, denote $g^{-1}Kg$ by $K^g$. Note that we have 
\begin{equation*}
\begin{aligned}
    \phi_K([G/D])&= (G/D)^K\\
                 &= \card{\{g\in G|K^g\in D\}}/\card{D}\\
                 &=\frac{1}{\card{D}}\sum_{g\in G}\zeta(K^g, D),
\end{aligned}
\end{equation*}
so by the \mobius inversion formula
\begin{equation*}
\begin{aligned}
    \phi_K(e^H)&=\frac{1}{\card{NH}}\sum_{g\in G}\sum_{D\leq H}\zeta(K^g,D)\mu(D,H)\\
    &=\frac{1}{\card{NH}}\sum_{g\in G}\sigma(K^g,H)\\
    &=1 \qquad\text{  if } (H)=(K)\\
    &=0 \qquad\text{  otherwise}.
\end{aligned}
\end{equation*}
So we have proved that for each subgroup $H$, $e^H=e_{(H)}\in\Omega_{\rational}(G)$.

Now to come to the $p$-local case. From Proposition \ref{p-local idempotents} we know that the primitive idempotent for $p$-perfect $H$ in $\Omega_p(G)$ is the summation of $\{e_{(S)}|O^p(S)=H\}$. Therefore given a complete set of representatives of the $\{e_{(S)}|(O^p(S)=(H)\}$, denoted by $S_1,S_2,...$, we have
\begin{equation*}
    \begin{aligned}
    \sum_{i}e^{S_i}&=\sum_S\frac{1}{\card{NH:NS}}e^S\\
    &=\sum_S\sum_{D\leq S}\frac{\card{D}}{\card{NH}}\mu(D,S)[G/D]\\
    &=\frac{1}{\card{NH}}\sum_{D\leq NH}\card{D}\lambda(D,H)[G/D]\\
    &=e_p^H,
    \end{aligned}
\end{equation*}
where the summation $\sum_S$ means summation over subgroups $S$ such that $O^p(S)=H$.
This equation shows that $e_p^H$ is exactly the expression of the primitive idempotent corresponding to $(H)\in P_p(G)$.
\end{proof}
\begin{rem}
It seems that we just proved that the expression holds in $\Omega_{\rational}(G)$ instead of $\Omega_p(G)$. However, note that we can decompose the primitive idempotents uniquely in $\Omega_p(G)$ with basis $\{[G/H]\}$, and the coefficients are in $\integer_{(p)}$, but this can also be viewed as the unique decomposition in $\Omega_{\rational}(G)$ with the same basis, so they must coincide. In particular, in the above expression, the coefficient of $[G/D]$ in $e_p^H$ is in $\integer_{(p)}$.
\end{rem}
\newpage

\section{Cardinality of Classifying Space}
In this final chapter, we use the algebraic tools introduced in the last chapter to get expressions for the cardinalities of the classifying space of cyclic group and general finite group. Moreover, we could make one step further to get an expression for the cardinality of any $\pi$-finite space. The deduction of final expressions depends very much on the assumption that \catC\ is of height $1$, but the expression in section $5.2$, which we call the partition of unity, applies in more general cases.
\subsection{Cyclic Group}
First, we have the following expression for the classifying space of a cyclic group, which we could use easily in the following sections.

\begin{prop}\label{cyclic}
Let \catC\ be a $p$-local \inftycat\ of height $1$ with prime $p$. For each finite $p$-group $G$ of order $p^m$, we have
\begin{equation*}
    \card{BG} = \card{BC_p}^m.
\end{equation*}
\end{prop}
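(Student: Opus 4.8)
The plan is to induct on $m$, the base case $m=1$ being a tautology, $\card{BC_p}^1 = \card{BC_p}$. For the inductive step, I would exploit the standard fact that any nontrivial finite $p$-group $G$ has a central subgroup $Z \cong C_p$, giving a central extension
\begin{equation*}
    1 \to C_p \to G \to G/C_p \to 1
\end{equation*}
with $G/C_p$ a $p$-group of order $p^{m-1}$. Taking classifying spaces, this yields a fiber sequence $BC_p \to BG \to B(G/C_p)$, and since the extension is central this fiber sequence is in fact \emph{principal}: $BG \to B(G/C_p)$ extends to $BG \to B(G/C_p) \to B^2C_p$ (the extension is classified by a map $B(G/C_p) \to B^2C_p$, and $BG$ is its fiber). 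So I am in exactly the situation of Proposition \ref{principal fiber}, with $q: BG \to B(G/C_p)$ a principal map whose fiber is $F = BC_p$.

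To apply Proposition \ref{principal fiber} I need three things: that $BC_p$, $B(G/C_p)$, and the map $q$ are all \catC-ambidextrous, and that $\card{BC_p}_X$ is invertible for the relevant $X$. Ambidexterity of the spaces: \catC\ is $p$-local of height $1$, hence $p$-typically $1$-semiadditive (a $1$-finite $p$-space like $BG$, $BC_p$ is \catC-ambidextrous — one should note \catC\ is assumed to admit the relevant limits/colimits, or invoke that height $\leq 1$ with $1$-semiadditivity suffices via the theorems of Section 3). Ambidexterity of $q$ follows since its fibers $BC_p$ are ambidextrous (cf.\ \cite[Proposition~4.3.5]{AmbiKn}, quoted in the remark after Proposition \ref{pullback}). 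Invertibility of $\card{BC_p}_X$: here is where height $1$ is essential — $BC_p$ is a $0$-connected (indeed simply-connected after one step... more precisely $BC_p$ is $0$-connected) $\pi$-finite space, in fact $BC_p = B^1 C_p$ is $(1-1)$-connected, so by Theorem \cite[Theorem~3.2.7]{AmbiHeight}(2) applied with $n=1$, $\card{BC_p}$ is invertible. Then Proposition \ref{principal fiber} gives
\begin{equation*}
    \card{BG}_X = \card{BC_p}_X \cdot \card{B(G/C_p)}_X,
\end{equation*}
and by the inductive hypothesis $\card{B(G/C_p)} = \card{BC_p}^{m-1}$, so $\card{BG} = \card{BC_p}^m$.

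The main obstacle, I expect, is not any single deep step but rather carefully checking the hypotheses of Proposition \ref{principal fiber}: confirming that $BG \to B(G/C_p)$ really is principal (central extensions give principal fibrations of classifying spaces) and that $\card{BC_p}$ is invertible so the proposition applies at all. The invertibility is the crux and is exactly where the height $1$ assumption enters — without it the formula fails. One subtlety to handle cleanly: Proposition \ref{principal fiber} is stated for a map of \catC-ambidextrous spaces over a base $B$, with invertibility of $\card{F}_X$ for $X \in \mathcal{C}$; since here the base $B(G/C_p)$ is itself ambidextrous over a point and $F = BC_p$, I should make sure the hypothesis "$\card{F}_X$ invertible for $X\in\mathcal{C}$" is met — it is, since $\card{BC_p}_X = \Id_X \otimes \card{BC_p}_{\one}$ (if \catC\ is monoidal; by Remark \ref{WLOGsymmetric} and Proposition \ref{ModuleCard} we may reduce to this case) and $\card{BC_p}_{\one}$ is invertible by the height $1$ hypothesis. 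Then chaining the principal-fiber formula with induction finishes it.
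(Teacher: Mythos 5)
Your proposal is essentially identical to the paper's proof: both induct on $m$, use the nontrivial center of a $p$-group to produce a central $C_p$, obtain the principal fiber sequence $BC_p\to BG\to B(G/C_p)$, and apply Proposition \ref{principal fiber} together with the height $1$ invertibility of $\card{BC_p}$. Your version simply spells out the hypothesis-checking (principality via the classifying map $B(G/C_p)\to B^2C_p$, ambidexterity, invertibility) in more detail than the paper does.
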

\begin{proof}
Recall from group theory that, a $p$-group always has a non-trivial center $H$, therefore $G$ has a normal subgroup isomorphic to $C_p$, so we have a principal fiber sequence
\begin{equation*}
    BC_p\to BG\to B(G/C_p),
\end{equation*}
induced by the sequence $C_p\to G\to G/C_p$. As \catC\ is of height $1$, $\card{C_p}$ is invertible, so by Proposition \ref{principal fiber}, $\card{BG}=\card{C_p}\card{B(G/H)}$. If we assume that $\card{B(G/C_p)} = \card{BC_p}^{m-1}$, then $\card{G} = \card{BC_p}^m$. Therefore by induction on $m\geq 1$, the proof is finished.
\end{proof}

\subsection{General Finite Group}
In this section, we use the expression for the Burnside ring of $G$ to give a ``partition of unity'' in the endomorphisms of the identity in the category of local systems $\Fun(BG, \mathcal{C})$. First, we clarify the notion of ``canonical'' ring structure on the endomorphism unit object.
\begin{prop}
Let \catC\ be a stable symmetric monoidal \inftycat, and let $\one$ be the unit object. Then $\pi_0(\one)=\pi_0(\Map(\one,\one))$ has a canonical ring structure. 
\end{prop}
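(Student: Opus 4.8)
The goal is to equip $\pi_0(\one) = \pi_0(\Map_{\mathcal{C}}(\one,\one))$ with a canonical commutative ring structure. The plan is to extract the additive structure from stability and the multiplicative structure from the symmetric monoidal structure, and then check compatibility. First I would observe that since $\mathcal{C}$ is stable, $\Map_{\mathcal{C}}(\one,\one)$ is the underlying space of the mapping spectrum $\mathrm{map}_{\mathcal{C}}(\one,\one) = \mathrm{End}(\one)$, so $\pi_0(\Map(\one,\one)) = \pi_0(\mathrm{End}(\one))$ is an abelian group under composition-of-loops/addition, with unit the zero map; this gives the additive group $(\pi_0(\one),+,0)$. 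Concretely, addition of $f,g:\one\to\one$ is $\one \xrightarrow{\Delta} \one\oplus\one \xrightarrow{f\oplus g}\one\oplus\one\xrightarrow{\nabla}\one$ using the biproduct that exists because a stable $\infty$-category is in particular additive (semiadditive with a zero object).

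Next I would use the symmetric monoidal structure to define multiplication. For $f,g: \one\to\one$, set $f\cdot g$ to be the composite
\begin{equation*}
    \one \xrightarrow{\;\sim\;} \one\otimes\one \xrightarrow{\;f\otimes g\;} \one\otimes\one \xrightarrow{\;\sim\;} \one,
\end{equation*}
using the unit isomorphism $\lambda:\one\otimes\one\xrightarrow{\sim}\one$ (equal to $\rho$ by the triangle/coherence axioms). Associativity and unitality of this product follow from the associativity and unit coherence data of the monoidal structure; commutativity follows from the symmetry $\tau_{\one,\one}$ together with the fact that $\lambda\circ\tau_{\one,\one}=\rho=\lambda$, i.e. the braiding on $\one\otimes\one$ is compatible with the unit identification. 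Passing to $\pi_0$, these higher coherences collapse to honest equalities of elements, so $(\pi_0(\one),\cdot,[\Id_{\one}])$ is a commutative monoid. One clean way to package this: $\mathrm{End}(\one)$ is an $\mathbb{E}_\infty$-ring spectrum (the endomorphism ring of the unit in a symmetric monoidal stable $\infty$-category), and $\pi_0$ of an $\mathbb{E}_\infty$-ring spectrum is a commutative ring.

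The remaining point is the distributive law relating the two structures, i.e. that $\otimes$ is additive (bilinear) in each variable. This is where stability is genuinely used: in a stable $\infty$-category the functor $\one\otimes(-)$ preserves finite colimits, in particular finite coproducts $=$ biproducts, so $\one\otimes(\one\oplus\one)\simeq(\one\otimes\one)\oplus(\one\otimes\one)$ compatibly with $\nabla$ and $\Delta$; chasing this through the definitions of $+$ and $\cdot$ gives $h\cdot(f+g) = h\cdot f + h\cdot g$ on $\pi_0$, and symmetrically on the other side. I expect this distributivity/bilinearity verification to be the main obstacle — not because it is deep, but because it requires being careful that the biproduct isomorphisms used to define $+$ are the \emph{same} ones transported through $\otimes$, which is precisely the content of "$\otimes$ is exact in each variable." Finally I would remark on canonicity: no choices were made beyond the coherence data already part of the symmetric monoidal stable structure on $\mathcal{C}$, so the ring structure is canonical, and it is functorial for symmetric monoidal exact functors. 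For the purposes of this paper, the upshot is that $\card{A}$, $\card{q}_X$, and the images of Burnside ring elements all live in this ring $\pi_0(\one)$.
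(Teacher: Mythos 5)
Your proposal is correct, but it takes a genuinely different route from the paper. The paper's proof does not touch the monoidal structure at all: it uses only stability, which makes \catC\ enriched in abelian groups (indeed in spectra), so that $\pi_0(\Map(\one,\one))$ is an abelian group under the semiadditive addition, and it takes \emph{composition} of endomorphisms as the multiplication; bilinearity of composition is automatic from the enrichment, so the ring axioms come for free. You instead take the multiplication to be $f\cdot g=\lambda\circ(f\otimes g)\circ\lambda^{-1}$ coming from the symmetric monoidal structure, use stability only for the additive structure, and check distributivity by hand via $\one\otimes(-)\simeq\Id_{\mathcal{C}}$ (note that this equivalence, rather than any exactness hypothesis on $\otimes$, is the real reason $\one\otimes(-)$ preserves biproducts --- stability of \catC\ is not what is doing the work there). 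The two multiplications agree by the usual Eckmann--Hilton interchange argument, since $f\circ g$ can be rewritten through the unitors as $(f\otimes\Id_{\one})\circ(\Id_{\one}\otimes g)$, so both approaches produce the same canonical ring. What your route buys is commutativity of $\pi_0(\one)$ directly from the symmetry $\tau_{\one,\one}$ (equivalently, from the $\mathbb{E}_\infty$-structure on the endomorphism spectrum of the unit), which the paper's one-line argument does not claim but which is implicitly used later when $\pi_0(\one)$ receives maps from the (commutative) Burnside ring; what the paper's route buys is brevity and the fact that the ring structure exists already for a stable \inftycat\ without any monoidal hypotheses, the symmetric monoidal structure being needed only for commutativity and for the compatibility statements used later.
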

\begin{proof}
Since \catC\ is stable, it is enriched in abelian groups, therefore $\pi_0(\one)$ has a canonical ring structure, with the composition of maps being the product, and the addition is explicitly given by the semiadditive addition.
\end{proof}
\begin{rem}
From the canonical ring structure of $\pi_0(\one)$, we easily have a induced ring structure of $\pi_0(\one^{BG})$, where $\one^{BG}\coloneqq \one_{\Fun(BG,\mathcal{C})}$.
\end{rem}

\begin{lem}\label{G decomp}
The following diagram is a pullback square:
\begin{equation*}
\begin{tikzcd}
    BK_1\sqcup...\sqcup BK_m
    \arrow{r}\arrow{d}&
    BH_1\arrow{d}\\
    BH_2\arrow{r}&BG,
\end{tikzcd}
\end{equation*}
where $K_1, ..., K_m$ correspond to the decomposition $G/H_1\times G/H_2 = G/K_1\sqcup...\sqcup G/K_m$.
\end{lem}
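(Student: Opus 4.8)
The plan is to identify the pullback square with a statement about $G$-sets and then transport it along the functor $G\mapsto BG$ from $G$-sets (or orbits) to spaces. First I would recall the general principle that for a finite group $G$ the construction $H\mapsto BH$ extends to a functor from the orbit category of $G$ to spaces, sending a $G$-set $S$ to the space $S_{hG}$ (the homotopy quotient, equivalently $\coprod_i B\mathrm{Stab}(s_i)$ where $s_i$ range over orbit representatives). In particular $BH \simeq (G/H)_{hG}$, so the map $BH_1\to BG$ is the one induced by the terminal map $G/H_1\to \ast = G/G$ of $G$-sets, and similarly for $H_2$. The claim is then that the functor $(-)_{hG}\colon \mathcal{S}^{BG}\to \mathcal{S}$ — which is just left Kan extension, i.e. $q_!$ for $q\colon BG\to\ast$ — sends the obvious pullback square of $G$-sets
\begin{equation*}
\begin{tikzcd}
    G/H_1\times G/H_2 \arrow{r}\arrow{d} & G/H_1\arrow{d}\\
    G/H_2\arrow{r} & G/G
\end{tikzcd}
\end{equation*}
to a pullback square of spaces.

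The key steps, in order: (1) rewrite $G/H_1\times G/H_2$ as a disjoint union of orbits $\coprod_{j=1}^m G/K_j$ (double coset decomposition), so that the top-left corner, after applying $(-)_{hG}$, becomes $\coprod_j BK_j$; this is exactly the indexing in the statement. (2) Show that $(-)_{hG}$, viewed as a colimit $\mathrm{colim}_{BG}\colon \mathcal{S}^{BG}\to\mathcal{S}$, preserves the relevant pullback. The cleanest way is to note that the square in question, as a diagram of $G$-spaces, is a pullback in $\mathcal{S}^{BG}$ (pullbacks of spaces are computed objectwise/fiberwise), and that $\mathrm{colim}_{BG}$ is the same as the forgetful-type functor $q_!$ which here agrees with taking the total space of the associated fibration over $BG$: concretely, $(-)_{hG}$ applied to a $G$-space $X$ is the Borel construction $X\times_G EG$, and a square of $G$-spaces that is a fiberwise pullback over a point becomes, after Borel construction, a pullback over $BG$ hence over $\ast$. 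Alternatively and most directly, one observes $BH_1\times_{BG} BH_2$ has, by the pasting/base-change description, fiber over $BG$ equal to $(G/H_1)\times(G/H_2)$ with its $G$-action, whose homotopy quotient is $\coprod_j BK_j$.

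Concretely I would argue: the fiber of $BH_i\to BG$ is the discrete space $G/H_i$ (the $G$-set of cosets), so by the standard "fiber of a map to $BG$ is a $G$-set" dictionary the pullback $BH_1\times_{BG}BH_2$ is the space whose fiber over $BG$ is $G/H_1\times G/H_2$ with the diagonal $G$-action; this space is the homotopy quotient $(G/H_1\times G/H_2)_{hG}$, and decomposing the $G$-set into orbits $\coprod_j G/K_j$ yields $(G/H_1\times G/H_2)_{hG}\simeq\coprod_j (G/K_j)_{hG}\simeq\coprod_j BK_j$. Since each square
\begin{equation*}
\begin{tikzcd}
    BK_j\arrow{r}\arrow{d} & BH_1\arrow{d}\\
    BH_2\arrow{r} & BG
\end{tikzcd}
\end{equation*}
commutes by functoriality of $B(-)$ on the inclusions $K_j\hookrightarrow H_i$, and the target is the homotopy pullback, we are done.

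The main obstacle I expect is being careful about the homotopy-theoretic bookkeeping: establishing cleanly that "the fiber over $BG$ of $BH\to BG$ is the $G$-set $G/H$" and that taking homotopy quotients commutes with both disjoint unions (easy) and, crucially, with forming this particular pullback. The honest content is the base-change compatibility — it is really the statement that the square of spaces obtained by applying $q_!$ (for $q\colon BG\to\ast$) to a pullback square of $G$-sets is again a pullback — which is a manifestation of the fact that $\mathcal{S}/BG \simeq \mathrm{Fun}(BG,\mathcal{S})$ and that under this equivalence a map $X\to BG$ with discrete fiber $S$ corresponds to the $G$-set $S$, pullbacks on one side matching products of $G$-sets (fiber products over the point) on the other. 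Once that dictionary is in place the proof is a one-line orbit decomposition; I would spend the bulk of the write-up making that dictionary precise, perhaps citing the relevant statement about $\mathcal{S}_{/BG}$ rather than reproving it.
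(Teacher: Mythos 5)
Your proposal is correct and follows essentially the same route as the paper: start from the pullback square of $G$-sets $G/H_1\times G/H_2 \to G/H_i \to \pt$, decompose the product into orbits $\coprod_j G/K_j$, and apply the ($G$-homotopy) quotient functor, using that it carries this square to a pullback over $BG$. The only difference is one of emphasis — the paper simply asserts that taking $G$-quotients preserves the pullback, while you justify it via the dictionary $\mathcal{S}_{/BG}\simeq \mathrm{Fun}(BG,\mathcal{S})$, which is exactly the right underpinning for that assertion.
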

\begin{proof}
Consider the category of spaces, denoted by $\Spc$, and the following pullback diagram in $\Spc$:
\begin{equation*}
    \begin{tikzcd}
        G/H_1\times G/H_2\arrow{r}\arrow{d}
        &G/H_1\arrow{d}\\
        G/H_2\arrow{r}&\pt,
    \end{tikzcd}
\end{equation*}
taking $G$-quotients, we have the following diagram:
\begin{equation*}
    \begin{tikzcd}
        BK_1\sqcup...\sqcup BK_m
    \arrow{r}\arrow{d}&
    BH_1\arrow{d}\\
    BH_2\arrow{r}&BG.
    \end{tikzcd}
\end{equation*}
Since the functor of taking quotients $(-)/G:\Spc^{BG}\to \Spc$ preserves pullback, the proof is finished.
\end{proof}
The last lemma immediately implies the following homomorphism from a localization of the Burnside ring of $G$ to $\pi_0(\one^{BG})$.
\begin{prop}
Let \catC\ be a stable symmetric  monoidal $p$-local \inftycat\ where $p$ is a prime, and $Q$ a Sylow $p$-subgroup of $G$. Suppose \catC\ is $0$-semiadditive, there is a ring homomorphism
\begin{equation*}
    \psi:\Omega_p(G)[G/Q]^{-1}\longrightarrow
    \pi_0(\one^{BG}).
\end{equation*}
\end{prop}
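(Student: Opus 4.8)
The plan is to build $\psi$ in stages: first as a homomorphism of semirings out of $\Omega^{+}(G)$, then extend it over the Grothendieck construction, then over the $p$-localization, and finally over the localization at $[G/Q]$. A finite $G$-set $X$ determines a map of spaces $q_X\colon X/G\to BG$ (sending the orbit $G/H$ to the covering $BH\to BG$), whose fibres are the underlying finite sets of $X$; in particular $q_X$ is a $0$-finite map of spaces, hence \catC-ambidextrous since \catC\ is $0$-semiadditive. I would \emph{define} $\psi([X]):=\card{q_X}_{\one^{BG}}\in\pi_0\bigl(\Map_{\Fun(BG,\catC)}(\one^{BG},\one^{BG})\bigr)=\pi_0(\one^{BG})$, the ring structure on the target being the canonical one from the preceding proposition and its remark.

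First I would check that $\psi$ is a homomorphism of semirings. Additivity, $\psi([X\sqcup Y])=\psi([X])+\psi([Y])$, is exactly the additivity of cardinality over finite disjoint unions proved above (which again uses $0$-semiadditivity); and since \catC, hence $\Fun(BG,\catC)$, is stable, $\pi_0(\one^{BG})$ is an abelian group, so $\psi$ passes to a ring map $\Omega(G)\to\pi_0(\one^{BG})$. For multiplicativity, note that Cartesian product of $G$-sets corresponds under $(-)/G$ to fibre product over $BG$: this is Lemma~\ref{G decomp} on orbits, and it extends to all $X,Y$ by the additivity just proved, since $(-)/G$ and base change both distribute over disjoint unions. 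Thus $q_{X\times Y}\cong q_Y\times_{BG}q_X$, and Corollary~\ref{pullbackint} gives $\card{q_{X\times Y}}_{\one^{BG}}=\card{q_Y}_{\one^{BG}}\circ\card{q_X}_{\one^{BG}}$, which equals $\psi([X])\,\psi([Y])$ because composition is the (commutative) multiplication of $\pi_0(\one^{BG})$. Unitality holds since the trivial $G$-set gives $q_{\ast}=\Id_{BG}$, a $(-2)$-truncated map, so $\psi([\ast])=\Id_{\one^{BG}}=1$.

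Next, since \catC\ is $p$-local so is $\Fun(BG,\catC)$, whence $\pi_0(\one^{BG})$ is a $\integer_{(p)}$-algebra and the ring map $\Omega(G)\to\pi_0(\one^{BG})$ factors uniquely through $\Omega_p(G)=\Omega(G)\otimes\integer_{(p)}$. It then suffices to show $\psi([G/Q])=\card{BQ\to BG}_{\one^{BG}}$ is a unit. Applying Proposition~\ref{pullback} to the fibre square of $BQ\to BG$ over the basepoint $b\colon\pt\to BG$ gives $b^{*}\psi([G/Q])=\card{G/Q}_{\one}=[G:Q]\cdot\Id_{\one}$ in $\pi_0(\one)$ (the fibre is now the bare $[G:Q]$-element set, so additivity applies); as $Q$ is a Sylow $p$-subgroup, $[G:Q]$ is prime to $p$ and hence a unit in the $\integer_{(p)}$-algebra $\pi_0(\one)$. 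Since $b^{*}\colon\Fun(BG,\catC)\to\catC$ is conservative ($BG$ being connected), the endomorphism $\psi([G/Q])\cdot\Id_{\one^{BG}}$ of $\one^{BG}$ becomes an equivalence after $b^{*}$, hence is itself an equivalence, i.e. $\psi([G/Q])\in\pi_0(\one^{BG})^{\times}$. By the universal property of localization $\psi$ then extends uniquely to $\Omega_p(G)[G/Q]^{-1}\to\pi_0(\one^{BG})$.

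The main obstacle I anticipate is the multiplicativity step: one must check carefully that $(-)/G$ sends Cartesian products of \emph{arbitrary} $G$-sets to fibre products over $BG$ (Lemma~\ref{G decomp} records only the orbit case), that all maps occurring are \catC-ambidextrous so Corollary~\ref{pullbackint} applies, and that composition of the resulting cardinalities is the ring multiplication of $\pi_0(\one^{BG})$ and not a twist of it. A secondary subtlety is the invertibility of $\psi([G/Q])$: conservativity of $b^{*}$ only \emph{reflects} equivalences and is far from faithful, which is exactly enough to promote the unit $[G:Q]\in\pi_0(\one)$ to a unit of $\pi_0(\one^{BG})$ but does not let one compute $\psi([G/Q])$ on the nose. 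Everything else is formal bookkeeping with the results already established.
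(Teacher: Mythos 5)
Your proposal is correct and follows essentially the same route as the paper: define the map on (orbits of) finite $G$-sets by $G/H\mapsto\card{BH\to BG}$, verify multiplicativity via Lemma \ref{G decomp} together with Corollary \ref{pullbackint}, use $p$-locality of $\pi_0(\one^{BG})$ to factor through $\Omega_p(G)$, and invert $[G/Q]$ by checking invertibility of $\card{BQ\to BG}$ on the fibre over the basepoint. Your write-up merely makes explicit some steps the paper leaves implicit (additivity, unitality, extension of the product formula from orbits to general $G$-sets, and the conservativity of evaluation at the basepoint).
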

\begin{proof}
The homomorphism is defined on the additive basis on $\Omega(G)$:
\begin{equation*}
    G/H\mapsto \card{BH\to BG}.
\end{equation*}
Then it suffices to check that it preserves the product. Note that by the above definition
\begin{equation*}
    G/H_1\times G/H_2\mapsto \sum_{i}\card{BK_i\to BG}
    = \card{(\sqcup_{i}BK_i)\to BG}.
\end{equation*}
However, from the pullback square in Lemma \ref{G decomp} and Corollary \ref{pullbackint}, we have
\begin{equation*}
    \card{(\sqcup_{i}BK_i)\to BG} = \card{BH_2\to BG}\circ \card{BH_1\to BG},
\end{equation*}
which shows that the product is preserved.

As \catC\ is $p$-local, $\pi_0(\one^{BG})$ is $p$-local. Moreover, $G/Q$ is mapped to $\card{BQ\to BG}$, which is invertible, as the fiber of $BQ\to BG$ is $G/Q$, which is of order not divisible by $p$, and the evaluation at basepoint detects invertibility. Therefore we can localize the homomorphism to get $\psi$ we want.
\end{proof}
With $\psi$ at hand, an expression of $1\in \Omega_p(G)[G/Q]^{-1}$ gives an expression of $1\in \pi_0(\one^{BG})$, which we call the ``partition of unity'', in analogy to the tool we use frequently in differential geometry.

\begin{thm}\label{thm_1}
For finite group $G$ and stable symmetric monoidal $p$-local \inftycat\ \catC, in $\pi_0(\one^{BG})$ we have a partition of unity
\begin{equation*}
    1 = \sum_{\{Q_{i_1},...,Q_{i_k}\}}(-1)^{k-1}\frac{\card{Q_{i_1}\cap...\cap Q_{i_k}}}{\card{G}}\card{B({Q_{i_1}\cap...\cap Q_{i_k}})\to BG}.
\end{equation*}
In the case where \catC\ is a $1$-semiadditive stable symmetric monoidal $p$-local \inftycat, and $f\in\one^{BG}$, then
\begin{equation*}
    \int_{BG}f = \sum_{\{Q_{i_1},...,Q_{i_k}\}}(-1)^{k-1}\frac{\card{Q_{i_1}\cap...\cap Q_{i_k}}}{\card{G}}\int_{BG}f\circ\card{B({Q_{i_1}\cap...\cap Q_{i_k}})\to BG},
\end{equation*}
where the sum is over non-empty subsets of $\{Q_{1},...,Q_{n}\}$, which is the full set of $p$-Sylow subgroups of $G$.
\end{thm}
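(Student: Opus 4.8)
The plan is to deduce both formulas from a single combinatorial identity in the ($p$-local) Burnside ring, transported along the ring homomorphism $\psi\colon\Omega_p(G)[G/Q]^{-1}\to\pi_0(\one^{BG})$ of the preceding proposition. Writing $Q_S:=\bigcap_{i\in S}Q_i$ for $\varnothing\neq S\subseteq\{1,\dots,n\}$ and $c_S:=(-1)^{|S|-1}|Q_S|/|G|$, the first step is to establish
\begin{equation*}
    1=\sum_{\varnothing\neq S}c_S\,[G/Q_S]
\end{equation*}
in $\Omega_{\rational}(G)[G/Q]^{-1}$; since $\psi([G/H])=\card{BH\to BG}$, applying $\psi$ then yields the partition of unity, and when \catC\ is moreover $1$-semiadditive the second formula follows by multiplying on the left by $f$ in the ring $\pi_0(\one^{BG})$ and applying the additive, $\integer$-linear map $\int_{BG}\colon\pi_0(\one^{BG})\to\pi_0(\one)$ to each summand (using Proposition~\ref{specified homo} to commute $f\circ(-)$ past the integral where needed).

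For the Burnside-ring identity I would argue componentwise, which is legitimate by Proposition~\ref{subring}: it suffices to compare $\phi_K$ of the two sides for every subgroup $K\le G$. Using $\phi_K([G/D])=|(G/D)^K|=\tfrac1{|D|}\#\{g\in G:g^{-1}Kg\subseteq D\}$ and setting $A_i:=\{g\in G:g^{-1}Kg\subseteq Q_i\}$, one has $\#\{g:g^{-1}Kg\subseteq Q_S\}=\bigl|\bigcap_{i\in S}A_i\bigr|$, so
\begin{equation*}
    \phi_K\Bigl(\sum_{\varnothing\neq S}c_S[G/Q_S]\Bigr)=\frac1{|G|}\sum_{\varnothing\neq S}(-1)^{|S|-1}\Bigl|\bigcap_{i\in S}A_i\Bigr|=\frac1{|G|}\,|A_1\cup\dots\cup A_n|
\end{equation*}
by inclusion--exclusion. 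Sylow's theorem now finishes the computation: if $K$ is a $p$-subgroup then every conjugate of $K$ is contained in some Sylow $p$-subgroup, so $A_1\cup\dots\cup A_n=G$ and the value is $1$; if $K$ is not a $p$-subgroup then each $A_i=\varnothing$ and the value is $0$. Since inverting $[G/Q]$ kills exactly the components indexed by non-$p$-subgroups (because $\phi_K([G/Q])=|(G/Q)^K|\neq0$ iff $K$ is sub-conjugate to $Q$, and a $p$-group acting on the $p'$-set $G/Q$ has a $p'$-many fixed points), this matches $\phi_K(1)$ in the localized ring, proving the identity. The pullback decomposition of Lemma~\ref{G decomp} and the multiplicativity of $\psi$ are then used exactly as recorded in the preceding proposition.

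The step I expect to be the main obstacle is making the rational coefficients $c_S$ meaningful in the target. When $Q_S\subsetneq Q_1$ is a proper intersection, $|Q_S|/|G|$ carries a genuine positive power of $p$ in its denominator, which is not invertible in a merely $p$-local \catC; consequently the individual summands $c_S[G/Q_S]$ need not lie in $\Omega_p(G)[G/Q]^{-1}$, and $\psi$ does not a priori extend over them, so one cannot naively move $\psi$ (or $\int_{BG}$) inside the sum. The clean ways around this are either to read the whole identity in $\pi_0(\one^{BG})\otimes\rational$, where the argument above applies verbatim after rationalizing $\psi$, or — to obtain the genuinely $p$-integral statement asserted — to record separately that the full alternating sum, a priori only an element of $\Omega_{\rational}(G)[G/Q]^{-1}$, already lies in $\Omega_p(G)[G/Q]^{-1}$ (indeed it equals $1$), so that the coefficient of each basis vector $[G/D]$ in the expanded sum is $p$-integral, and that the corresponding divisibility holds for $\card{B(\cdot)\to BG}$ in $\pi_0(\one^{BG})$. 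Once this bookkeeping is settled, everything else — additivity and homogeneity of $\int_{BG}$, the ring structure on $\pi_0(\one^{BG})$, and the base-change identities — is routine from the results already established.
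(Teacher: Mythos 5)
Your argument is correct, but your route to the key Burnside-ring identity is genuinely different from the paper's. The paper starts from the explicit \mobius-function formula for the primitive idempotent $e_p^{1}$ of $\Omega_p(G)$ (Theorem \ref{idempotents}), observes that after inverting $[G/Q]$ every other primitive idempotent dies so that $e_p^{1}$ becomes the unit, and then regroups $\lambda(D,1)=\sum_S\mu(D,S)$ by inclusion--exclusion over the Sylow subgroups using $\sum_{D\le U\le Q_T}\mu(D,U)=\delta(D,Q_T)$. You instead verify the identity directly in the table of marks: $\phi_K$ of the right-hand side equals $\frac{1}{|G|}\,|A_1\cup\dots\cup A_n|$ by inclusion--exclusion, and Sylow's theorem evaluates this to the indicator of ``$K$ is a $p$-subgroup'', i.e.\ to $\phi_K(1)$ in the localization. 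This is more elementary and self-contained (only Proposition \ref{subring} and Sylow are needed, not the idempotent formula), and it makes visible that the element in question is precisely the idempotent of $\tilde{\Omega}_{\rational}(G)$ supported on $p$-subgroups. What the paper's route buys is the $p$-integrality bookkeeping for free: the remark after Theorem \ref{idempotents} already records that the coefficient of each $[G/D]$ in $e_p^{1}$ lies in $\integer_{(p)}$, which is exactly the delicate point you flag. Your proposed fix does close this: the element you compute has marks $1$ on $p$-subgroups and $0$ elsewhere, hence satisfies $f(H)=f(O^p(H))$, so Proposition \ref{p-local idempotents} places it in $\Omega_p(G)$ before localizing, and the coefficients obtained after grouping the subsets $T$ by their common intersection $Q_T$ are $p$-integral, so $\psi$ applies. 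The termwise-rationality issue you raise is real and is in fact also left implicit in the paper: the displayed sum must be read as this grouped sum (or in the rationalization), since the individual coefficients $|Q_T|/|G|$ are not $p$-integral. Your deduction of the integral formula (compose with $f$, then use additivity and homogeneity of $\int_{BG}$) matches the paper's final step.
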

\begin{proof}
Recall from Proposition \ref{p-local idempotents} that the idempotents of $\Omega_p(G)$ are in the form $\sum_(S)e_{(S)}$, where the sum is over conjugate classes of $S$, such that $O^p(S)=H$ for a fixed $p$-perfect subgroup $H$. For $Q$ a Sylow $p$-subgroup, since $\phi_{S,p}(Q)=0$ if $S$ is not a subgroup of $Q$, then after inverting $[G/Q]$, in the decomposition of $1\in \Omega_p(G)$ into primitive idempotents of $\Omega_p(G)s$, we only have one summand left, which corresponds to the trivial subgroup denoted also by $1$. In particular, from Theorem \ref{idempotents} we have in $\Omega_p(G)[G/Q]^{-1}$
\begin{equation*}
\begin{aligned}
    1 = e_p^{1} &\coloneqq \frac{1}{\card{G}}\sum_{D\leq G}\card{D}\lambda(D,1)[G/D]\\
    &= \frac{1}{\card{G}}\sum_{D\leq G}\card{D}\sum_{S}\mu(D,S)[G/D],
\end{aligned}
\end{equation*}
where $S$ ranges over all $p$-subgroups of $G$. Note that if $D$ here is not a $p$-subgroup, then $\mu(D,S)$ is $0$ for any $p$-subgroup $S$. Since for fixed $D$, we have
\begin{equation*}
\begin{aligned}
    \sum_S\mu(D,S) &= \sum_{\{Q_{i_1},...,Q_{i_k}\}}(-1)^{k-1}\sum_{D\leq U\leq Q_{i_1}\cap...\cap Q_{i_k}}\mu(D,U),
    \end{aligned}
\end{equation*}
where $\sum_{D\leq U\leq Q_{i_1}\cap...\cap Q_{i_k}}\mu(D,U)$ is $0$ unless $D=Q_{i_1}\cap...\cap Q_{i_k}$. Therefore, when we range $D$ over the $p$-subgroups of $G$, each non-empty subset of $\{Q_1,...,Q_n\}$ appears in non-zero values of $\mu$ exactly once, so we have the that
\begin{equation*}
    1 = \sum_{\{Q_{i_1},...,Q_{i_k}\}}(-1)^{k-1}\frac{\card{Q_{i_1}\cap...\cap Q_{i_k}}}{\card{G}}[G/(Q_{i_1}\cap...\cap Q_{i_k})].
\end{equation*}
Mapping to $\pi_0(\one^{BG})$, that turns into
\begin{equation*}
    1 = \sum_{\{Q_{i_1},...,Q_{i_k}\}}(-1)^{k-1}\frac{\card{Q_{i_1}\cap...\cap Q_{i_k}}}{\card{G}}\card{B({Q_{i_1}\cap...\cap Q_{i_k}})\to BG}.
\end{equation*}
Post-composing with $f$ and integral over $BG$ we have the equation we want.
\end{proof}
\begin{rem}
Note that by Proposition \ref{specified fubini}, let $q$ denote the map $BH\to BG$, then we have
\begin{equation*}
    \int_{BH}q^*f=\int_{BG}\int_qq^*f,
\end{equation*}
then by Proposition \ref{specified homo} we have
\begin{equation*}
\begin{aligned}
   \int_{BG}\int_qq^*f&=\int_{BG}f\circ\int_q\Id\\
   &=\int_{BG}f\circ \card{q}.
\end{aligned}
\end{equation*}
Therefore we may further simplify the equation slightly in this direction.
\end{rem}
\begin{lem}\label{A_H}
For path-connected space $A$ with fundamental group $G$, consider the following pullback square:
\begin{equation*}
    \begin{tikzcd}
        A_H\arrow{r}\arrow{d}&A\arrow{d}\\
        BH\arrow{r}&BG,
    \end{tikzcd}
\end{equation*}
where $A\to BG$ is the map corresponding to the universal cover of $A$, and $BH\to BG$ is associated with subgroup inclusion $H\to G$. The pullback $A_H$ is path-connected with fundamental group $H$, and the higher homotopy groups are the same as $A$.
\end{lem}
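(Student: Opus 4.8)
The plan is to reduce the statement to the long exact sequence of homotopy groups of a fibration. First I would recall that the map $A\to BG$ classifying the universal cover is, by construction, a map to $K(G,1)$ inducing the identity on $\pi_1$; hence its homotopy fiber $F$ satisfies $\pi_1(F)=0$, $\pi_0(F)=\ast$, and $\pi_n(F)\cong\pi_n(A)$ for $n\geq 2$, which one reads off from the long exact sequence of $F\to A\to BG$ using $\pi_n(BG)=0$ for $n\geq 2$. In other words $F\simeq\widetilde{A}$, the universal cover of $A$, which is connected and simply connected.

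Next I would use that the square in the statement is a pullback in $\Spc$, so the fiber of the left vertical map $A_H\to BH$ is equivalent to the fiber of the right vertical map $A\to BG$, namely $\widetilde{A}$. This gives a fiber sequence $\widetilde{A}\to A_H\to BH$ with connected base and connected fiber, so $A_H$ is already path-connected.

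Then I would run the long exact sequence of this fiber sequence. Since $BH=K(H,1)$ we have $\pi_1(BH)=H$ and $\pi_n(BH)=0$ for $n\geq 2$. For $n\geq 2$ both $\pi_n(BH)$ and $\pi_{n+1}(BH)$ vanish, so $\pi_n(A_H)\cong\pi_n(\widetilde{A})\cong\pi_n(A)$, the last isomorphism because passing to the universal cover does not change homotopy in degrees $\geq 2$. At the bottom, exactness of $0=\pi_1(\widetilde{A})\to\pi_1(A_H)\to\pi_1(BH)=H\to\pi_0(\widetilde{A})=\ast$ identifies $\pi_1(A_H)$ with $H$; and since the composite $A_H\to BH\to BG$ coincides with $A_H\to A\to BG$, whose second factor is a $\pi_1$-isomorphism, this identification is compatible with the inclusion $H\hookrightarrow G$.

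The only point I expect to need genuine care is the initial identification of the homotopy fiber of $A\to BG$ with the universal cover, together with the basepoint and low-degree exactness bookkeeping at the $\pi_0$–$\pi_1$ end; everything else is a mechanical application of the fibration long exact sequence. If one prefers to bypass homotopy groups entirely, an equivalent route is: $BH\to BG$ is a covering map with fiber $G/H$, covering maps are stable under pullback, so $A_H\to A$ is a covering map whose monodromy is the (transitive) translation action of $\pi_1(A)=G$ on $G/H$; transitivity gives path-connectedness, the point stabilizer $H$ gives $\pi_1(A_H)=H$, and a covering map induces isomorphisms on $\pi_n$ for $n\geq 2$.
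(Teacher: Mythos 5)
Your argument is correct and lands in the same family as the paper's: a long exact sequence of homotopy groups. The packaging differs slightly — the paper runs the Mayer–Vietoris-type sequence of the homotopy pullback square directly, obtaining $\pi_1(A_H)$ as the kernel of $\pi_1(A)\times\pi_1(BH)\to\pi_1(BG)$, i.e.\ of $(g,h)\mapsto hg^{-1}$, whereas you first identify the common fiber of the two vertical maps with the universal cover $\tilde{A}$ and then run the fibration sequence $\tilde{A}\to A_H\to BH$. Both are routine and equally rigorous; yours has the small advantage of making the connectedness of $A_H$ immediate (connected fiber over connected base) and of exhibiting the identification $\pi_1(A_H)\cong H$ as visibly compatible with $H\hookrightarrow G$, a point the paper addresses by inspecting the kernel. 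Your second, covering-space route ($BH\to BG$ is a covering with fiber $G/H$, coverings pull back, transitive monodromy with stabilizer $H$) is genuinely more elementary and matches the paper's closing remark that $A_H\to A$ is the cover of $A$ corresponding to $H\hookrightarrow G$; either version is a complete proof.
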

\begin{proof}
From the long exact sequence of homotopy groups associated with the homotopy pullback square, we have the long exact sequence
\begin{equation*}
   ...\to \pi_{n+1}BG\to \pi_nA_H\to \pi_nA\times\pi_nBH\to \pi_nBG\to...\to \pi_1BG\to \pi_0A_H\to \pi_0(A\times BH).
\end{equation*}
Therefore we have
\begin{equation*}
    \pi_i(A_H)\cong \pi_iA
\end{equation*}
for $i\geq 2$.
Moreover, consider
\begin{equation*}
    0\to \pi_1A_H\to \pi_1A\times \pi_1BH\to \pi_1BG\to 0,
\end{equation*}
since $\pi_1A\times\pi_1BH\to \pi_1BG$ is just $G\times H\to G:(g,h)\mapsto hg^{-1}$, the kernel $\pi_1A_H$ is clearly isomorphic to $H$, and $A_H\to A$ can then be viewed as the covering map of $A$ associated to $H\hookrightarrow G$.
\end{proof}

With the previous lemma, we immediately have the following corollaries.
\begin{cor}
Consider \catC\ as in Theorem \ref{thm_1}, and $A$ a $\pi$-finite \catC-ambidextrous space. Then we have
\begin{equation*}
    \card{A} = \sum_{\{Q_{i_1},...,Q_{i_k}\}}(-1)^{k-1}\frac{\card{Q_{i_1}\cap...\cap Q_{i_k}}}{\card{G}}\card{A_{Q_{i_1}\cap...\cap Q_{i_k}}},
\end{equation*}
where the sum is over non-empty subsets of $\{Q_{1},...,Q_{n}\}$.
\end{cor}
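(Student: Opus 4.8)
The plan is to transport the partition of unity of Theorem \ref{thm_1} along the map classifying the universal cover of $A$, and then integrate back down to a point. Throughout, take $A$ to be path-connected with $\pi_1(A)=G$ as in Lemma \ref{A_H}, and write $q\colon A\to BG$ for the map classifying its universal cover; the homotopy fibre of $q$ is the universal cover $\widetilde{A}$, which is simply connected with $\pi_n(\widetilde{A})\cong\pi_n(A)$ finite for $n\geq 2$, hence $\pi$-finite. For a nonempty subset $I$ of the set $\{Q_1,\dots,Q_n\}$ of Sylow $p$-subgroups of $G$ write $Q_I:=\bigcap_{Q\in I}Q$, so that $Q_I$ ranges over the intersections appearing in the statement.

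The first point to settle is that $q$ is \catC-ambidextrous — which is not automatic, since \catC\ is only assumed $1$-semiadditive. The covering $\widetilde{A}\to A$ has discrete finite fibres, hence is \catC-ambidextrous because \catC\ is $0$-semiadditive; composing it with the \catC-ambidextrous map $A\to\pt$ via Proposition \ref{ambisquare}(1) shows that $\widetilde{A}$ is \catC-ambidextrous; and since ambidexterity of a map of spaces is detected on its fibres by \cite[Proposition~4.3.5]{AmbiKn}, $q$ is \catC-ambidextrous. The same two inputs show that $BG\to\pt$ is \catC-ambidextrous (it is $1$-finite), that for every $H\leq G$ the map $BH\to BG$ is \catC-ambidextrous (its fibre $G/H$ is $0$-finite), and — using the covering $A_H\to A$ (fibre $G/H$) together with the pullback square of Lemma \ref{A_H}, which identifies the structure map $A_H\to BG$ with the composite $A_H\to A\to BG$ — that both $A_H$ and $A_H\to BG$ are \catC-ambidextrous.

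Next I would propagate the basic pullback identity along $q$. By Lemma \ref{A_H}, $A_H\cong A\times_{BG}BH$, so Corollary \ref{pullbackint} applied to the maps $q\colon A\to BG$ and $BH\to BG$ over $BG$, evaluated at the unit $\one^{BG}\in\Fun(BG,\mathcal{C})$, gives
\begin{equation*}
    \card{A_H\to BG}=\card{BH\to BG}\circ\card{A\to BG}\in\pi_0(\one^{BG}),
\end{equation*}
the order of the composition being irrelevant since $\pi_0(\one^{BG})$ is a commutative ring. To conclude, feed $f=\card{A\to BG}\in\pi_0(\one^{BG})$ into the integral form of Theorem \ref{thm_1}:
\begin{equation*}
    \int_{BG}\card{A\to BG}=\sum_{\varnothing\neq I}(-1)^{|I|-1}\frac{\card{Q_I}}{\card{G}}\int_{BG}\bigl(\card{A\to BG}\circ\card{BQ_I\to BG}\bigr).
\end{equation*}
By the displayed identity above, each inner composite equals $\card{A_{Q_I}\to BG}$; and by the Higher Fubini theorem (Proposition \ref{specified fubini}) applied to $A\to BG\to\pt$ and to $A_{Q_I}\to BG\to\pt$, one has $\card{A}=\int_{BG}\card{A\to BG}$ and $\card{A_{Q_I}}=\int_{BG}\card{A_{Q_I}\to BG}$. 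Substituting these yields the claimed formula in $\pi_0(\one)$.

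I expect the genuinely delicate step to be the verification that $q\colon A\to BG$ and the maps $A_H\to BG$ are \catC-ambidextrous: because \catC\ is not assumed $\infty$-semiadditive, this must be extracted from the hypothesis that $A$ is \catC-ambidextrous, and it relies on the fibre criterion \cite[Proposition~4.3.5]{AmbiKn} and the $0$-semiadditivity of the coverings $\widetilde{A}\to A$ and $A_H\to A$. Everything after that — identifying $A_H$ with the relevant pullback, matching the composition orders in $\pi_0(\one^{BG})$, and the two applications of Fubini — is routine bookkeeping.
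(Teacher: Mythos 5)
Your proposal is correct and follows essentially the same route as the paper: identify $A_{Q_I}$ with the pullback $A\times_{BG}BQ_I$ via Lemma \ref{A_H}, use Corollary \ref{pullbackint} to write $\card{A_{Q_I}\to BG}$ as the composite of $\card{A\to BG}$ and $\card{BQ_I\to BG}$, substitute $f=\card{A\to BG}$ into the integral form of Theorem \ref{thm_1}, and integrate down to a point by Fubini. Your additional verification that $A\to BG$ and the maps $A_H\to BG$ are \catC-ambidextrous (via the fibre criterion and $0$-semiadditivity) makes explicit a point the paper leaves implicit, but it does not change the argument.
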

\begin{proof}
From Lemma \ref{A_H} and Corollary \ref{pullbackint} we know
\begin{equation*}
    \card{A_{Q_{i_1}\cap...\cap Q_{i_k}}\to BG} = \card{A\to BG}\circ \card{B(Q_{i_1}\cap...\cap Q_{i_k})\to BG},
\end{equation*}
so we can rewrite the equation in Theorem \ref{thm_1}, namely
\begin{equation*}
     \int_{BG}\card{A\to BG} = \sum_{\{Q_{i_1},...,Q_{i_k}\}}(-1)^{k-1}\frac{\card{Q_{i_1}\cap...\cap Q_{i_k}}}{\card{G}}\int_{BG}\card{A_{Q_{i_1}\cap...\cap Q_{i_k}}\to BG},
\end{equation*}
which gives the required equation.
\end{proof}
\begin{cor}\label{BG expression}
Consider \catC\ as in Theorem \ref{thm_1}, then for finite group $G$, let $Q_1,...,Q_n$ be the Sylow $p$ subgroups of $G$, we have
\begin{equation*}
    \card{BG} = \sum_{\{Q_{i_1},...,Q_{i_k}\}}(-1)^{k-1}\frac{\card{Q_{i_1}\cap...\cap Q_{i_k}}}{\card{G}}\card{B(Q_{i_1}\cap...\cap Q_{i_k})},
\end{equation*}
where the sum is over non-empty subsets of $\{Q_{1},...,Q_{n}\}$.
\end{cor}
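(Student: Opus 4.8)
The plan is to deduce this directly from the preceding corollary by specializing $A$ to $BG$. First I would check that $BG$ meets the hypotheses there: for a finite group $G$ the space $BG$ is $1$-finite, and since \catC\ is (as in Theorem \ref{thm_1}) in particular $1$-semiadditive, $BG$ is \catC-ambidextrous; moreover $BG$ is path-connected with $\pi_1(BG)\cong G$, so the corollary applies with this $A$ and with $Q_1,\dots,Q_n$ the Sylow $p$-subgroups of $G$. It then already gives $\card{BG}=\sum_{\{Q_{i_1},\dots,Q_{i_k}\}}(-1)^{k-1}\frac{\card{Q_{i_1}\cap\cdots\cap Q_{i_k}}}{\card{G}}\card{(BG)_{Q_{i_1}\cap\cdots\cap Q_{i_k}}}$, so the only remaining task is to identify the covers $(BG)_H$.

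For this I would invoke Lemma \ref{A_H}: for any $H\leq G$ the pullback $(BG)_H$ is path-connected with fundamental group $H$ and with the same higher homotopy groups as $BG$. Since $\pi_i(BG)=0$ for $i\geq 2$, this forces $(BG)_H$ to be a $K(H,1)$, i.e.\ $(BG)_H\simeq BH$, and hence $\card{(BG)_H}=\card{BH}$. Applying this to each intersection $Q_{i_1}\cap\cdots\cap Q_{i_k}$ and substituting into the displayed formula yields exactly the claimed expression.

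There is essentially no genuine obstacle: the two points needing verification — that $BG$ is \catC-ambidextrous, and that its connected subgroup covers are again classifying spaces — are immediate, the former from $1$-semiadditivity and the latter from Lemma \ref{A_H} plus the vanishing of higher homotopy of $BG$. (Alternatively one could bypass the previous corollary and argue directly: apply $\int_{BG}(-)$ to the partition of unity of Theorem \ref{thm_1} in $\pi_0(\one^{BG})$, using that integration over a fixed space is additive and $\integer_{(p)}$-linear — the coefficients $\card{Q_{i_1}\cap\cdots\cap Q_{i_k}}/\card{G}$ being $p$-local units — together with the identity $\card{BH}=\int_{BG}\card{BH\to BG}$, which follows from Higher Fubini (Proposition \ref{specified fubini}) and homogeneity (Proposition \ref{specified homo}). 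But the reduction to the preceding corollary is the cleaner route, so that is the one I would write up.)
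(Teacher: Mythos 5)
Your proposal is correct and follows exactly the paper's route: the paper's entire proof is to apply the preceding corollary to $A=BG$, and your additional verifications (that $BG$ is \catC-ambidextrous by $1$-semiadditivity and that $(BG)_H\simeq BH$ via Lemma \ref{A_H}) are just the details the paper leaves implicit.
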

\begin{proof}
Simply apply the last corollary to $A=BG$.
\end{proof}
\begin{rem}
In the last corollary, if we assume in addition that \catC\ is of height $1$, then we can further express $\card{B(Q_{i_1}\cap...\cap Q_{i_k})}$ in terms of $\card{BC_p}$.
\end{rem}
Now we give several examples, with the assumption that \catC\ is of height $1$.

\begin{example}
The easiest example is when $G$ has order prime to $p$, so the Sylow $p$ subgroup is trivial. Therefore we have $\card{BG}=\card{G}^{-1}$.
\end{example}

\begin{example}
Take $p=2$, and $G=S_3$, the permutation group. Then the Sylow $2$ subgroups are $C_2$. There are $3$ Sylow $2$ subgroups, the intersection of any two of them is trivial. Applying Corollary \ref{BG expression} we have
\begin{equation*}
\begin{aligned}
\card{BS_3}&=\frac{2\cdot 3}{6}\card{BC_2}-\frac{1\cdot 3}{6}\card{\pt}+\frac{1}{6}\card{\pt}\\
&=\card{BC_2}-1/3.
\end{aligned}
\end{equation*}
\end{example}

\begin{example}
Take $G=GL_2(F_p)$. The order of $GL_2(F_p)$ is $(p^2-1)(p^2-p)=p(p+1)(p-1)^2$, so the Sylow $p$ subgroup is $C_p$. Note that the matrix
$\begin{pmatrix}
1&1\\
0&1
\end{pmatrix}$
has order $p$, hence it generates a Sylow $p$-subgroup denoted by $Q$. From Sylow theorems, the number of Sylow $p$ subgroups is $\card{G:NQ}$. Note that $NQ=\{\begin{pmatrix}a&b\\0&c\end{pmatrix}|a,c\neq 0\}$, so $\card{NQ}=p(p-1)^2$, therefore the number of Sylow $p$ subgroups is $p+1$.

Applying Corollary \ref{BG expression} we have
\begin{equation*}
    \begin{aligned}
       \card{BGL_2(F_p)}&=\frac{p(p+1)}{p(p+1)(p-1)^2}\card{BC_p}+\sum_{2\leq i\leq (p+1)}{p+1\choose i}(-1)^{i-1}\frac{1}{p(p+1)(p-1)^2}\card{\pt}\\
       &=\frac{1}{(p-1)^2}\card{BC_p}-\frac{1}{(p+1)(p-1)^2}.
    \end{aligned}
\end{equation*}
Here we used the fact that for $n\in \NN$, $\sum_{2\leq i\leq n}(-1)^{i-1}{n\choose i}=1-n$.
\end{example}

\begin{example}\label{example C_q^p}
Consider prime $p$ and let $G$ be the permutation group $S_p$. Then the Sylow $p$ subgroups of $S_p$ are $C_p$. 

The number of elements in $S_p$ of order $p$ is $(p-1)!$, in correspondence to the number of $p$-cycles $(1,a_2,...,a_p)$. The intersection of any two Sylow $p$ subgroups is the trivial subgroup, so the number of Sylow $p$ subgroups is just $(p-2)!$. Applying Corollary \ref{BG expression} we have
\begin{equation*}
\begin{aligned}
   \card{BS_p} &= \frac{p}{p!}\card{BC_p}+\sum_{2\leq i\leq (p-2)!}{(p-2)!\choose i}(-1)^{i-1}\frac{1}{p!}\card{\pt}\\
   &= \frac{1}{(p-1)!}\card{BC_p}+\frac{1-(p-2)!}{p!}.
\end{aligned}
\end{equation*}
Note that by the third Sylow Theorem (see \cite[Theorem~7.7.6]{ArtinAlg}) we have $(p-2)!\equiv 1\modulo p$, therefore we do not have to actually divide $p$ in this calculation. Also, $(p-2)!\equiv 1\modulo p$ can be viewed as a corollary of Wilson's Theorem (see \cite{Wilson}).
\end{example}
\subsection{General Space}
In this section, we give an expression for the cardinality of a general $\pi$-finite space $A$ in an \inftycat\ of height $1$. The procedure consists of several decompositions of $A$.

Throughout this section, unless specified otherwise, we always assume $A$ is $\pi$-finite path connected, and the local systems are \catC-valued, where \catC\ is stable, symmetric monoidal, $p$-local, $1$-semiadditive and of height $1$ (therefore $\infty$-semiadditive). By Remark \ref{WLOGsymmetric} this assumption will not cause loss of generality.
\begin{lem}
Suppose $A$ has fundamental group $G$, then it can be written as
\begin{equation*}
    A \cong A_{q_1}\times_{BG}...\times_{BG}A_{q_n},
\end{equation*}
where $q_i$ are prime numbers, and each $A_{q_i}$ is a path-connected space with fundamental group $G$, and whose higher homotopy groups are $q_i$-groups.
\end{lem}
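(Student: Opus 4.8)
The plan is to build the decomposition from the Postnikov tower of the universal cover together with the primary decomposition of each finite homotopy group. Since $A$ is path-connected with $\pi_1(A) = G$, the classifying map $A \to BG$ (the one detecting the universal cover, as in Lemma \ref{A_H}) is the first stage of the Postnikov tower; its fiber $\widetilde{A}$ is simply connected with $\pi_n(\widetilde{A}) = \pi_n(A)$ for $n \geq 2$, and each $\pi_n(\widetilde{A})$ is a finite abelian group. The first step is to apply the primary decomposition to each $\pi_n(\widetilde{A})$: writing $\pi_n(\widetilde{A}) \cong \bigoplus_{q} \pi_n(\widetilde{A})_{(q)}$ as a finite product over the (finitely many) primes $q$ dividing $|\pi_n(\widetilde{A})|$, and letting $q_1, \dots, q_n$ be the finite set of primes occurring for \emph{some} $n \geq 2$.

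Next I would produce, for each prime $q_i$, the space $A_{q_i}$ by killing the $q_j$-primary parts of all higher homotopy groups for $j \neq i$. Concretely, one can realize $A_{q_i}$ as the pullback of $A \to BG$ along itself in a way that remembers only the $q_i$-primary homotopy: more carefully, build the Postnikov tower of $A$ relative to $BG$ (i.e. the tower of $\widetilde{A}$), and at each stage $k$ the $k$-invariant lives in $H^{k+1}(A_{\leq k-1}; \pi_k)$; using the product decomposition $\pi_k = \prod_j (\pi_k)_{(q_j)}$ the stage splits into a fiber product over $j$ of spaces with a single-primary $k$-th homotopy group. Assembling these stage-by-stage (and using that fiber products over $BG$ commute with the tower limits, since $\pi$-finite Postnikov towers are finite) yields $A_{q_i}$, a path-connected space with $\pi_1 = G$ and $\pi_k(A_{q_i})$ equal to the $q_i$-primary part of $\pi_k(A)$ for $k \geq 2$, hence a $q_i$-group. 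The desired equivalence
\begin{equation*}
A \;\cong\; A_{q_1} \times_{BG} \cdots \times_{BG} A_{q_n}
\end{equation*}
then follows by comparing homotopy groups: the long exact sequence of the iterated fiber product over $BG$, together with $\pi_1(A_{q_i} \times_{BG} \cdots) = G$ (the fiber products over $BG$ don't change $\pi_1$, exactly as in Lemma \ref{A_H}), shows $\pi_k$ of the right-hand side is $\prod_i (\pi_k A)_{(q_i)} = \pi_k A$ for $k \geq 2$, and the map $A \to \prod^{BG}_i A_{q_i}$ induced by the projections is a $\pi_*$-isomorphism, hence an equivalence.

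The main obstacle is making the ``kill the off-primary homotopy compatibly over $BG$'' step precise: the $G$-action on the higher homotopy groups need not respect a chosen primary splitting a priori, but in fact it does, because the primary decomposition of a finite abelian group is functorial (characteristic subgroups), so the $G$-action automatically preserves each $q$-primary summand and the splitting is $G$-equivariant; this is what lets the construction descend to spaces over $BG$ rather than just simply connected spaces. A secondary point to handle carefully is that one must perform the decomposition on the whole tower at once rather than one $k$-invariant at a time — but since $A$ is $\pi$-finite the tower has finite length, so there is no convergence issue and a finite induction on the Postnikov stages suffices.
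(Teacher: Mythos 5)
Your top-level strategy coincides with the paper's: decompose the simply connected universal cover $\tilde{A}$ into its primary pieces, observe that the decomposition is functorial (the primary summands of a finite abelian group are characteristic, so the splitting is $G$-equivariant), and descend by taking $G$-quotients, which turns a product over a point into a fiber product over $BG$. The difference is that the paper simply cites the prime decomposition $\tilde{A}\cong\prod_q\tilde{A}_q$ for simply connected $\pi$-finite spaces (\cite[Theorem~5.7]{InfinitySylow}) and spends its one substantive sentence on the equivariance point, which you also identify correctly; you instead attempt to prove the decomposition of $\tilde{A}$ from scratch via the Postnikov tower.

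That from-scratch part has a genuine gap at the inductive step. Decomposing the coefficient group $\pi_k=\prod_j(\pi_k)_{(q_j)}$ only tells you that $K(\pi_k,k+1)\cong\prod_jK((\pi_k)_{(q_j)},k+1)$, i.e.\ that the $k$-invariant is a tuple of classes $\kappa_j\in H^{k+1}(A_{\leq k-1};(\pi_k)_{(q_j)})$. To conclude that $A_{\leq k}$ splits as a fiber product of single-primary towers, assuming inductively that $A_{\leq k-1}\cong\prod_j(A_{\leq k-1})_{q_j}$, you must show that each $\kappa_j$ is pulled back along the projection to the factor $(A_{\leq k-1})_{q_j}$. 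By the K\"unneth formula this reduces to the vanishing of the reduced cohomology $\tilde{H}^{*}(X;M)$ for $X$ simply connected $\pi$-finite with $q_l$-primary homotopy groups and $M$ a finite $q_j$-group, $l\neq j$ (a Serre-class argument). This vanishing is exactly the content of the cited theorem and appears nowhere in your sketch; the sentence ``the stage splits into a fiber product over $j$'' asserts precisely what needs to be proved. If you either supply this cohomological vanishing or simply cite the known decomposition of simply connected $\pi$-finite spaces as the paper does, the rest of your argument (equivariance of the characteristic splitting, passage to $G$-quotients, comparison of homotopy groups via the fiber product over $BG$) goes through.
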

\begin{proof}
By \cite[Theorem~5.7]{InfinitySylow}, the universal cover of $A$ can be written as
\begin{equation*}
    \tilde{A}\cong \prod_{q}\tilde{A}_q,
\end{equation*}
where $q$ ranges over all prime numbers and $\tilde{A}_q$ is simply connected with $q$-higher homotopy groups. Since $A_{q_i}$ is the $q_i$-completion of $A$, the decomposition is functorial, and thus invariant under $G$. Therefore taking the $G$-quotient we have the required equation.
\end{proof}

Last lemma shows that we could compute $\card{A\to BG}$ by the product of $\card{A_q\to BG}$. Moreover, we could use Proposition \ref{relative principal} to further simplify the calculation with the Postnikov tower of $A$.

First, we give a brief recollection of the Postnikov tower of $A$.

Let $A$ be $\pi$-finite with fundamental group $G$, consider a Postnikov tower for $A$:
\begin{equation*}
    \begin{tikzcd}
        &\vdots\arrow{d}\\
        &A_3\arrow{d}\\
        &A_2\arrow{d}\\
        A\arrow{uur}\arrow{ur}\arrow{r}
        &A_1.
    \end{tikzcd}
\end{equation*}
Now for fibration $A_{i}\to A_{i-1}$, $i\geq 2$, the fiber is $K(\pi_nA,n)=B^n(\pi_nA)$. Consider the universal covering of $A$, denoted by $\tilde{A}$, the corresponding Postnikov tower is principal, and can be thought of as a universal covering of the Postnikov tower of $A$:
\begin{equation*}
    \begin{tikzcd}
        &\vdots\arrow{d}\\
        &\tilde{A}_3\arrow{d}\\
        &\tilde{A}_2\arrow{d}\\
        \tilde{A}\arrow{uur}\arrow{ur}\arrow{r}
        &\tilde{A}_1.
    \end{tikzcd}
\end{equation*}
The fiber of $\tilde{A}_{i}\to \tilde{A}_{i-1}$ is also $B^n(\pi_nA)$, and the action of $G$ on the fiber in induced from the action on $\pi_nA$. In short, the morphisms and fibrations in the Postnikov tower of $\tilde{A}$ are $G$-equivariant. Therefore we have the following lemma.

\begin{lem}\label{card of Eilenberg}
Suppose the fundamental group of space $A$ is $p$-group, denoted by $P$, and suppose that $Q\coloneqq \pi_iA$ is a group of order prime to $p$, then
\begin{equation*}
    \card{(B^iQ)/P\to BP} = (-1)^i\card{Q/P\to BP}.
\end{equation*}
Note that $Q/P$ here means the quotient of a set by a group action, instead of a quotient group.
\end{lem}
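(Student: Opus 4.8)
\textbf{The plan.} The natural strategy is an induction on $i$ built on the $P$-equivariant bar/path--loop fibration together with Proposition~\ref{relative principal}. Write $X_i := B^iQ$, pointed, with its $P$-action through $\mathrm{Aut}(Q)$ (which fixes the basepoint). For $i\geq 1$ we have the $P$-equivariant principal fibration $B^{i-1}Q = \Omega X_i \to E_i \to X_i$, where $E_i$ is the based path space of $X_i$; since $P$ fixes the basepoint, $E_i$ is $P$-equivariantly contractible. Taking homotopy $P$-orbits turns this into $(B^{i-1}Q)/P \to E_i/P \to (B^iQ)/P$, and $E_i/P\simeq BP$ with structure map $E_i/P\to BP$ an equivalence (it is the Borel projection of a contractible space). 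Note also that $\mathcal C$ is $\infty$-semiadditive (being $1$-semiadditive of height $1$), so all the spaces in sight are $\mathcal C$-ambidextrous and the relevant cardinalities are defined.

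\textbf{Step 1 (the recursion).} For $i\geq 1$, apply Proposition~\ref{relative principal} with $F = B^{i-1}Q$, $\tilde A = E_i$, $\tilde B = B^iQ$ and $G = P$: provided $\card{(B^{i-1}Q)/P\to BP}$ is invertible, we obtain
\[
1 \;=\; \card{E_i/P\to BP} \;=\; \card{(B^{i-1}Q)/P\to BP}\cdot \card{(B^iQ)/P\to BP},
\]
the first equality because $E_i/P\to BP$ is an equivalence. Hence $\card{(B^iQ)/P\to BP} = \card{(B^{i-1}Q)/P\to BP}^{-1}$; each stage of the tower simply inverts the previous cardinality.

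\textbf{Step 2 (base case and invertibility).} For $i = 0$ there is nothing to prove, so what is needed is that $\card{Q/P\to BP}$ is invertible. Writing $Q/P = Q_{hP}$ as the disjoint union of $BP_x$ over orbit representatives $x$, with $P_x = \mathrm{Stab}_P(x)$, the map to $BP$ restricts on each summand to $B(P_x\hookrightarrow P)$, whose fibre over the basepoint is the finite set $P/P_x$. By the remark following Proposition~\ref{pullback}, evaluating $\card{Q/P\to BP}$ at the basepoint of $BP$ gives $\sum_x |P/P_x|\cdot\Id = |Q|\cdot\Id$ (orbit counting). Since $|Q|$ is prime to $p$ and $\mathcal C$ is $p$-local, this is invertible; as $BP$ is connected, evaluation at the basepoint $\Fun(BP,\mathcal C)\to\mathcal C$ is conservative, so $\card{Q/P\to BP}$ is invertible. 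This is the only place the prime-to-$p$ hypothesis is used.

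\textbf{Step 3 (conclusion).} Induct on $i$. The case $i=0$ is trivial, and $\card{Q/P\to BP}$ is invertible by Step~2. If $\card{(B^{i-1}Q)/P\to BP} = \card{Q/P\to BP}^{(-1)^{i-1}}$ and is invertible, then Step~1 applies and yields $\card{(B^iQ)/P\to BP} = \card{(B^{i-1}Q)/P\to BP}^{-1} = \card{Q/P\to BP}^{(-1)^i}$, again invertible; iterating $i$ times down to the discrete case $B^0Q = Q$ gives the formula of the statement. The main obstacle is purely bookkeeping: checking that the path--loop fibration and the equivalence $E_i/P\simeq BP$ are genuinely $P$-equivariant and compatible with the projections to $BP$, so that Proposition~\ref{relative principal} can be applied with $\card{E_i/P\to BP} = 1$; after that, Step~2's invertibility input is what makes the induction run.
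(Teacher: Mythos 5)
Your proof is correct and follows essentially the same route as the paper: the $P$-equivariant path--loop fibration $B^{i-1}Q\to \pt\to B^iQ$ fed into Proposition~\ref{relative principal} to get the inversion recursion, with the base case handled by noting the fiber of $Q/P\to BP$ is the discrete set $Q$ of order prime to $p$, hence invertible in the $p$-local setting. (Your conclusion $\card{(B^iQ)/P\to BP}=\card{Q/P\to BP}^{(-1)^i}$, with $(-1)^i$ as an exponent rather than a coefficient, is also what the paper's own proof and its subsequent use actually establish.)
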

\begin{proof}
For $k\geq 1$, we have the principal fibration
\begin{equation*}
    B^{k-1}Q\to \pt\to B^kQ.
\end{equation*}
By Proposition \ref{relative principal}, if in addition, $\card{(B^{k-1}Q)/P\to BP}$ is invertible, then we have
\begin{equation*}
    \card{(B^kQ)/P\to BP}=\card{(B^{k-1}Q)/P\to BP}^{-1}.
\end{equation*}
Therefore if in the base case $\card{(Q/P)\to BP}$ is invertible, then
\begin{equation*}
    \card{(B^kQ)/P\to BP}=\card{(Q/P)\to BP}^{(-1)^k}.
\end{equation*}
To show that $\card{(Q/P)\to BP}$ is invertible, it suffices to show that each fiber is invertible. However, each fiber is just $Q$ as a discrete space, so $\card{Q}$ is invertible because the order of $Q$ is not divisible by $p$.
\end{proof}
With the previous lemma, we could calculate $\card{A\to BP}$ using the Postnikov tower.
\begin{prop}
Consider space $A$ with fundamental group $P$ a $p$-group, and all higher homotopy groups are $q$ groups, as described in the lemma above, then
\begin{equation*}
    \card{A\to BP} = \prod_{i\geq 2}\card{(\pi_iA)/P\to BP}^{(-1)^i}.
\end{equation*}
\end{prop}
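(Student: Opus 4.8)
The plan is to induct up the Postnikov tower of $A$, applying Proposition \ref{relative principal} at each stage. Since $A$ is $\pi$-finite it is $m$-finite for some $m$, so $A\cong A_m$ and its Postnikov tower is the finite chain $A_m\to A_{m-1}\to\cdots\to A_2\to A_1=BP$, in which $A_1\to BP$ is the identity. Each $A_i$ is $\pi$-finite and \catC\ is $\infty$-semiadditive, so every structure map $A_i\to BP$ is \catC-ambidextrous and all the cardinalities that appear below are well-defined elements of the commutative ring $\pi_0(\one^{BP})$.

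First I would treat a single stage $i\geq 2$. As recalled just before the statement, the Postnikov tower of the universal cover $\tilde A$ is principal and $P$-equivariant, and the tower of $A$ is obtained from it by passing to $P$-quotients; in particular the stage $A_i\to A_{i-1}$ is the $P$-quotient of the $P$-equivariant principal fibration $\tilde A_i\to\tilde A_{i-1}$ with fiber $F=B^i(\pi_iA)$. Proposition \ref{relative principal} (with $G=P$) then gives
\begin{equation*}
    \card{A_i\to BP}=\card{(B^i\pi_iA)/P\to BP}\cdot\card{A_{i-1}\to BP},
\end{equation*}
provided $\card{(B^i\pi_iA)/P\to BP}$ is invertible. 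This invertibility is exactly what Lemma \ref{card of Eilenberg} supplies: since $\pi_iA$ has order prime to $p$ and \catC\ is $p$-local, the map $(\pi_iA)/P\to BP$ has fibers the discrete set $\pi_iA$ with $\card{\pi_iA}$ invertible, so $\card{(\pi_iA)/P\to BP}$ is invertible; and Lemma \ref{card of Eilenberg} identifies $\card{(B^i\pi_iA)/P\to BP}$ with the power $\card{(\pi_iA)/P\to BP}^{(-1)^i}$, which is then invertible as well.

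Next I would assemble the telescoping product. Iterating the displayed identity from $i=2$ up to $i=m$, with base case $\card{A_1\to BP}=\card{\Id_{BP}}=1$ (the identity map is an isomorphism, hence \catC-ambidextrous, with cardinality the identity of $\one^{BP}$), yields
\begin{equation*}
    \card{A\to BP}=\prod_{i=2}^{m}\card{(B^i\pi_iA)/P\to BP}=\prod_{i=2}^{m}\card{(\pi_iA)/P\to BP}^{(-1)^i}.
\end{equation*}
Finally the product may be extended over all $i\geq 2$ without change, since for $i>m$ we have $\pi_iA=0$, hence $(\pi_iA)/P\cong BP$ and the corresponding factor equals $1$; this gives the stated formula.

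The substantive input — that the Postnikov tower of $\tilde A$ is principal and that the tower of $A$ is its $P$-quotient — has already been set up in the recollection preceding the statement, so what remains is essentially bookkeeping. The one point that needs genuine care is verifying the invertibility hypothesis of Proposition \ref{relative principal} at every stage; this is where Lemma \ref{card of Eilenberg}, $p$-locality, and the fact that invertibility of a morphism in $\Fun(BP,\catC)$ is detected on the fiber over the basepoint all come in. I do not anticipate any other real obstacle.
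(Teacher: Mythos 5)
Your proposal is correct and follows essentially the same route as the paper's proof: apply Proposition \ref{relative principal} to each stage of the ($P$-equivariantly principal) Postnikov tower, use Lemma \ref{card of Eilenberg} to identify and invert each factor $\card{(B^i\pi_iA)/P\to BP}$, and telescope from the base case $A_1=BP$. Your write-up is just a more explicit version of the paper's argument, with the invertibility hypothesis checked carefully at each stage.
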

\begin{proof}
Consider the Postnikov tower of $A$ with the same notation as above. From Proposition \ref{relative principal}, for $i\geq 2$ we have
\begin{equation*}
    \card{A_i\to BP} = \card{A_{i-1}\to BP}\circ\card{((B^i\pi_iA)/P)\to BP}.
\end{equation*}
Note that $A_1\to BP$ is just $BP\to BP$, combining the expression in Lemma \ref{card of Eilenberg}, we have
\begin{equation*}
    \card{A\to BP} = \prod_{i\geq 2}\card{(\pi_iA)/P\to BP}^{(-1)^i}.
\end{equation*}
\end{proof}
We have considered in the above proposition the case denoted by $A_q$, where $q\neq p$. For the case $q=p$ we have the following proposition. Note that in this case we actually use the property of \catC\ being of height $1$.
\begin{prop}
If $A$ is a $p$-space, then
\begin{equation*}
    \card{A\to BP} = \prod_{i\geq 2}\card{\pi_iA\times BP\to BP}^{-1^{i-1}}
\end{equation*}
\end{prop}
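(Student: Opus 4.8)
The argument runs parallel to the proof of the previous proposition (the case $q\neq p$); the only change is where one is forced to halt the descent through the Eilenberg--MacLane tower. Write $P=\pi_1A$ and $Q_i=\pi_iA$ for $i\geq 2$, so that every $Q_i$ is a finite $p$-group; since $A$ is $\pi$-finite its Postnikov tower $A\to\cdots\to A_2\to A_1=BP$ is finite, so all products below are finite.

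The first step is the Postnikov induction relative to $BP$. As recalled before the statement, the Postnikov tower of the universal cover $\tilde A$ is principal and $P$-equivariant, the $i$-th fibration having fibre $B^iQ_i$, and applying $(-)_{hP}$ returns the tower of $A$ over $BP$. Thus each stage $A_i\to A_{i-1}$ is of the form handled by Proposition \ref{relative principal} with $F=B^iQ_i$ and $G=P$, giving
\begin{equation*}
    \card{A_i\to BP}=\card{(B^iQ_i)_{hP}\to BP}\circ\card{A_{i-1}\to BP},
\end{equation*}
provided $\card{(B^iQ_i)_{hP}\to BP}$ is invertible. Now the fibre of $(B^iQ_i)_{hP}\to BP$ over the basepoint is $B^iQ_i$, a connected nilpotent $\pi$-finite $p$-space; since $\Ht(\mathcal{C})\leq 1$, $\card{B^iQ_i}$ is invertible by \cite[Theorem~3.2.7]{AmbiHeight}, and invertibility of $\card{(B^iQ_i)_{hP}\to BP}$ then follows from the fibrewise description in the remark after Proposition \ref{pullback} and the connectedness of $BP$. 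Starting from $\card{A_1\to BP}=\card{\Id_{BP}}=\Id$ and iterating over the (finitely many) stages, one obtains $\card{A\to BP}=\prod_{i\geq 2}\card{(B^iQ_i)_{hP}\to BP}$.

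The second step evaluates each factor by descending the tower $B^iQ_i\leftarrow B^{i-1}Q_i\leftarrow\cdots\leftarrow BQ_i$. For each $k$ with $2\leq k\leq i$ the principal fibration $B^{k-1}Q_i\to\pt\to B^kQ_i$ is $P$-equivariant, and applying Proposition \ref{relative principal} to it exactly as in the proof of Lemma \ref{card of Eilenberg} (with $\tilde A=\pt$, $\tilde B=B^kQ_i$, $F=B^{k-1}Q_i$, $G=P$) yields
\begin{equation*}
    \Id=\card{BP\to BP}=\card{(B^{k-1}Q_i)_{hP}\to BP}\circ\card{(B^kQ_i)_{hP}\to BP},
\end{equation*}
hence $\card{(B^kQ_i)_{hP}\to BP}=\card{(B^{k-1}Q_i)_{hP}\to BP}^{-1}$; the invertibility of $\card{(B^{k-1}Q_i)_{hP}\to BP}$ needed here holds for $k-1\geq 1$ by the same height-$1$ argument. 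Composing these $i-1$ identities gives $\card{(B^iQ_i)_{hP}\to BP}=\card{(BQ_i)_{hP}\to BP}^{(-1)^{i-1}}$, and feeding this into the product of the first step gives the asserted formula, where $\pi_iA\times BP\to BP$ is read as the Borel fibration $(B\pi_iA)_{hP}\to BP$ (a genuine product over $BP$ precisely when $P$ acts trivially on $\pi_iA$).

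The delicate point — and the reason the exponent is $(-1)^{i-1}$ and not the $(-1)^i$ of the case $q\neq p$ — is that one \emph{cannot} take the final descent step from $BQ_i$ to the discrete set $Q_i$: that would require $|Q_i|$, a power of $p$, to be invertible, which is a height-$\leq 0$ phenomenon, whereas here only $\Ht(\mathcal{C})\leq 1$ is available. Consequently the main obstacle is entirely one of bookkeeping the invertibility hypotheses of Proposition \ref{relative principal} at every node of both towers; all of these reduce to the fact that $\card{B^kH}$ is invertible for $H$ a finite $p$-group and $k\geq 1$ in a height-$\leq 1$ category, which for $k=1$ is Proposition \ref{cyclic} together with the invertibility of $\card{BC_p}$, and for $k\geq 2$ is \cite[Theorem~3.2.7]{AmbiHeight}.
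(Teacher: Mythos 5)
Your proof is correct and follows essentially the same route as the paper: a Postnikov induction over $BP$ via Proposition \ref{relative principal}, followed by the descent $B^kQ\rightsquigarrow B^{k-1}Q$ through the $P$-equivariant principal fibrations $B^{k-1}Q\to\pt\to B^kQ$, halting at $BQ$ (rather than the discrete $Q$) because $|Q|$ is a power of $p$ — which is exactly where the exponent $(-1)^{i-1}$ comes from, with all invertibility supplied by the height-$1$ hypothesis. The only cosmetic difference is that the paper refines the tower so the $P$-action on the fibers is trivial and hence writes genuine products $B^iQ\times BP$, whereas you keep the Borel construction $(B^iQ)_{hP}$ throughout; both readings lead to the same formula.
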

\begin{proof}
    As in Lemma \ref{card of Eilenberg}, we again denote $\pi_iA$ by $Q$, and in this case, $Q$ is a $p$-group, and the Postnikov tower of $A$ can be refined so as to have the fibers with trivial action. Therefore we have
    \begin{equation*}
        (B^iQ)/P\cong B^iQ\times BP.
    \end{equation*}
    
    As in the proof of Lemma \ref{card of Eilenberg}, since the fiber of $BQ\times BP \to BP$ is just $BP$, which is invertible when \catC\ is of height $1$, $\card{BQ\times BP \to BP}$ is invertible. Therefore we have
    \begin{equation*}
        \card{(B^iQ/P)\to BG} = \card{(BQ/P)\to BG}^{-1^{i-1}}.
    \end{equation*}
    In particular, $\card{(B^iQ/P)\to BG}$ is invertible, therefore from Proposition \ref{relative principal} we have
    \begin{equation*}
        \card{A\to BP} = \prod_{i\geq 2}\card{B\pi_iA\times BP\to BP}^{-1^{i-1}}.
    \end{equation*}
    
\end{proof}

\begin{rem}
Note that $\card{B\pi_iA\times BP\to BP}$ is canonically determined by $\card{B\pi_iA}$, therefore we could slightly abuse the notation, and write $\card{B\pi_iA\times BP\to BP}$ as $\card{B\pi_iA}_{BP}$.
\end{rem}

\begin{note}
For an abelian group $G$ and prime $q$, we denote by $G_q$ for the Sylow $q$-subgroup.
\end{note}
Finally, in the next two corollaries, we combine what we have, and build an expression of $\card{A}$.
\begin{cor}
If $A$ is a space with fundamental group $P$ a $p$-group, and \catC\ is of height $1$. then
\begin{equation*}
    \card{A\to BP} = \prod_{i\geq 2}\card{(B(\pi_iA)_p)}_{BP}^{-1^{i-1}}\prod_{q\neq p}\card{((\pi_iA)_q/P\to BP}^{-1^i}.
\end{equation*}
\end{cor}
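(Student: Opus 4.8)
The plan is to reduce the statement to the two preceding propositions by first splitting $A$ into its $q$-primary pieces. Since the fundamental group $P$ is a $p$-group, the $q$-primary decomposition lemma of this section applies with $G=P$, giving
\[
A \;\cong\; A_p \times_{BP} A_{q_1} \times_{BP} \cdots \times_{BP} A_{q_n},
\]
where $q_1,\dots,q_n$ are the primes different from $p$ occurring among the (finite) groups $\pi_i A$, each $A_q$ is path-connected with $\pi_1 A_q = P$, and the higher homotopy groups of $A_q$ are $q$-groups. First I would record the identification $\pi_i A_q \cong (\pi_i A)_q$ for $i\geq 2$: since $A_q$ is the $q$-completion of the universal cover glued back along the $P$-action, on homotopy groups it extracts the Sylow $q$-subgroup of $\pi_i A$, and the residual $P$-action on $(\pi_i A)_q$ is the restriction of the action on $\pi_i A$. (Primes $q$ not dividing any $|\pi_i A|$ give $A_q \simeq BP$, contributing the identity, so they may be dropped.)

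Next I would apply Corollary \ref{pullbackint} repeatedly. The decomposition realizes $A \to BP$ as an iterated fiber product over the common base $BP$ of the maps $A_p \to BP$ and $A_{q_j} \to BP$, so distributivity of cardinality over pullback squares gives
\[
\card{A \to BP} \;=\; \card{A_p \to BP} \cdot \prod_{q \neq p} \card{A_q \to BP}.
\]
The hypotheses of Corollary \ref{pullbackint} are satisfied precisely because all the fiber products share the base $BP$, which is what the decomposition lemma provides.

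Then I would substitute the two cases already established. The factor $A_p$ is a $p$-space with fundamental group $P$ and $\pi_i A_p = (\pi_i A)_p$, so the proposition on $p$-spaces gives $\card{A_p \to BP} = \prod_{i\geq 2}\card{B(\pi_i A)_p}_{BP}^{(-1)^{i-1}}$. Each factor $A_q$ with $q \neq p$ has fundamental group $P$ a $p$-group and higher homotopy groups the $q$-groups $(\pi_i A)_q$, so the preceding proposition gives $\card{A_q \to BP} = \prod_{i\geq 2}\card{((\pi_i A)_q)/P \to BP}^{(-1)^i}$. Inserting these into the product above and collecting, for each $i\geq 2$, the $p$-primary contribution $\card{B(\pi_i A)_p}_{BP}^{(-1)^{i-1}}$ together with the contributions $\card{((\pi_i A)_q)/P \to BP}^{(-1)^i}$ over $q\neq p$, reproduces the asserted formula.

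The main obstacle I anticipate is bookkeeping rather than a genuine difficulty: verifying that $A_q$ has homotopy groups exactly the Sylow $q$-subgroups with the correct induced $P$-action, so that the two earlier propositions apply verbatim, and checking that every cardinality invoked at an intermediate stage of those propositions' Postnikov inductions is invertible. The latter is where the height $1$ hypothesis is essential — it makes $\card{BP}$, and hence $\card{B(\pi_i A)_p}_{BP}$ and the other intermediate terms, invertible — and it must be tracked through each use of Proposition \ref{relative principal}. Once these points are pinned down, the proof is a clean assembly of the ingredients.
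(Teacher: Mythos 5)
Your proposal is correct and is exactly the assembly the paper intends for this corollary (which it states without proof): decompose $A$ over $BP$ into its primary factors via the lemma at the start of the section, apply Corollary \ref{pullbackint} to turn the iterated fiber product into a product of cardinalities, and substitute the two preceding propositions for the $p$-primary and $q$-primary factors. The bookkeeping points you flag (that $\pi_i A_q \cong (\pi_i A)_q$ with the induced $P$-action, and that height $1$ supplies the invertibility needed in each use of Proposition \ref{relative principal}) are precisely the details the paper leaves implicit.
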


\begin{cor}\label{final equation}
Let $A$ be a space with fundamental group $G$, and let $Q_1,...,Q_n$ be the Sylow $p$ subgroups of $G$, then we have
\begin{equation*}
    \begin{aligned}
    \card{A} &= \sum_P(-1)^{k-1}\frac{\card{P}}{\card{G}}\card{A_P}\\
         &= \sum_P(-1)^{k-1}\frac{\card{P}}{\card{G}}\int_{BP}\prod_{i\geq 2}\card{(B(\pi_iA)_p)}_{BP}^{-1^{i-1}}\prod_{q\neq p}\card{((\pi_iA)_q/P\to BP}^{-1^i}\\
         &= \sum_P(-1)^{k-1}\frac{\card{P}}{\card{G}}\left(\prod_{i\geq 2}\card{B(\pi_iA)_p}^{-1^{i-1}}\right)\int_{BP}\left(\prod_{i\geq 2}\prod_{q\neq p}\card{((\pi_iA)_q/P\to BP}^{-1^i}\right)\\
         &= \left(\prod_{i\geq 2}\card{B(\pi_iA)_p}^{-1^{i-1}}\right)\sum_P(-1)^{k-1}\frac{\card{P}}{\card{G}}\int_{BP}\left(\prod_{i\geq 2}\prod_{q\neq p}\card{((\pi_iA)_q/P\to BP}^{-1^i}\right)\\
         &=\left(\prod_{i\leq 2}\card{BC_p}^{-1^{i-1}\log_{p}\card{\pi_i(A)_p}}\right)\sum_P(-1)^{k-1}\frac{\card{P}}{\card{G}}\int_{BP}\left(\prod_{i\geq 2}\prod_{q\neq p}\card{((\pi_iA)_q/P\to BP}^{-1^i}\right)
         \end{aligned}
\end{equation*}
where $P$ range over the intersection of the elements of non-empty subsets of $\{Q_1,...,Q_n\}$.
\end{cor}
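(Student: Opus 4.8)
The plan is to chain the five displayed equalities together, each one an instance of a result already in hand. The first equality is precisely the earlier corollary expressing $\card{A}$ as the alternating sum $\sum_{\{Q_{i_1},\dots,Q_{i_k}\}}(-1)^{k-1}\tfrac{\card{Q_{i_1}\cap\cdots\cap Q_{i_k}}}{\card{G}}\card{A_{Q_{i_1}\cap\cdots\cap Q_{i_k}}}$, applied to our $\pi$-finite $A$; it applies because under the standing hypotheses $\mathcal{C}$ is $\infty$-semiadditive, so $A$ is $\mathcal{C}$-ambidextrous. Write $P$ for the generic intersection index of this sum. For the second equality I would first invoke Fubini (Proposition \ref{specified fubini}, in the form $\card{A_P}=\int_{BP}\card{A_P\to BP}$ coming from the factorization $A_P\to BP\to\pt$) and then Lemma \ref{A_H}, which identifies $A_P$ as path-connected with $\pi_1 A_P = P$ and $\pi_i A_P \cong \pi_i A$ for $i\ge 2$; this puts $A_P$ exactly in the situation of the earlier corollary for spaces with $p$-group fundamental group, so $\card{A_P\to BP}$ expands as $\prod_{i\ge2}\card{B(\pi_iA)_p}_{BP}^{(-1)^{i-1}}\prod_{q\neq p}\card{(\pi_iA)_q/P\to BP}^{(-1)^i}$.

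The third equality is an application of homogeneity. By construction, and by Proposition \ref{pullback} for the base change along $BP\to\pt$, each factor $\card{B(\pi_iA)_p}_{BP}$ is the image under $(BP\to\pt)^\ast$ of the scalar $\card{B(\pi_iA)_p}\in\pi_0(\one_{\mathcal{C}})$; since $\pi_0(\one^{BP})$ is a commutative ring, Proposition \ref{specified homo} lets me pull each such factor, and hence the entire $P$-independent product $\prod_{i\ge2}\card{B(\pi_iA)_p}^{(-1)^{i-1}}$, outside $\int_{BP}$. The fourth equality pulls this same $P$-independent factor out of the finite sum over $P$, using only distributivity in the commutative ring $\pi_0(\one^{BG})$. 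Finally, for the fifth equality, each $(\pi_iA)_p$ is the Sylow $p$-subgroup of the finite abelian group $\pi_i A$, hence a $p$-group of order $p^{\log_p\card{(\pi_iA)_p}}$, so Proposition \ref{cyclic} — this is where height $1$ re-enters — gives $\card{B(\pi_iA)_p}=\card{BC_p}^{\log_p\card{(\pi_iA)_p}}$, and substituting this into the factored-out product yields the stated closed form.

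The main obstacle is bookkeeping rather than anything conceptual: one must keep careful track of which ambient category each cardinality symbol belongs to ($\mathcal{C}$, $\Fun(BP,\mathcal{C})$, or $\Fun(BG,\mathcal{C})$) and check that the identifications survive the intervening $q^\ast$'s — all of which is governed by Proposition \ref{pullback} and the compatibility of $\card{-}$ with pullback — and one must match the exponents $(-1)^{i-1}$ versus $(-1)^i$ coming from the two Postnikov-tower corollaries being spliced in the second line. It is also worth recording that $i$ ranges only over the finitely many degrees in which $A$ has nontrivial homotopy, so every product appearing is finite.
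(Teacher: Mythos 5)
Your proposal is correct and follows exactly the route the paper intends: the first equality is the earlier Sylow-intersection corollary for $\card{A}$, the second splices in Fubini along $A_P\to BP\to\pt$ together with Lemma \ref{A_H} and the preceding Postnikov-tower corollary, the third and fourth use homogeneity and ring distributivity to extract the $P$-independent factor, and the fifth is Proposition \ref{cyclic}. (The paper gives no separate proof, presenting the corollary as precisely this combination; note also that $\prod_{i\leq 2}$ in the last displayed line is a typo for $\prod_{i\geq 2}$, which your argument implicitly corrects.)
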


\begin{rem}
Note that $(\pi_iA)_q/P\to BP$ can be regarded as $BH_1\sqcup...\sqcup BH_m$, corresponding to the decomposition $(\pi_iA)_q=P/H_1\sqcup...\sqcup P/H_m$, which is just $(\pi_iA)_q$ inside the Burnside ring.  Note also that if we split the summands of the product $\prod_{i\geq 2}\prod_{q\neq p}\card{((\pi_iA)_q/P\to BP}^{-1^i}$ into two subsets with degree $1$ and $-1$ respectively, we can do multiplications effectively for each subset in the Burnside ring $\Omega_p(P)$, which makes the integration much easier. However, the calculation in this expression is still very strenuous and should be rather considered to be the job of some algebraic programming algorithm.
\end{rem}

\begin{example}
Consider the permutation group $S_p$ and the permutation action of $S_p$ on $C_q^p$. This induces a free action of $S_p$ on $B^2(C_q^p)$, so taking the quotient we have space $A$, with fundamental group $S_p$ acting on $\pi_2(A)=C_q^p$ by the permutation action. Then from Corollary \ref{final equation} and Example \ref{example C_q^p} we have
\begin{equation*}
    \begin{aligned}
       \card{A} &= \frac{1}{p-1}\card{A_{C_p}}+\frac{1-(p-2)!}{p!}A_{\pt} \\
       &= \frac{1}{p-1}\card{A_{C_p}}+\frac{1-(p-2)!}{p!}B^2(C_q^p)\\
       &= \frac{q}{p-1}\card{BC_p}+\frac{q^p-q}{p(p-1)}+\frac{1-(p-2)!}{p_!}\cdot q^p.
    \end{aligned}
\end{equation*}
In the last step we just decompose the group $C_q^p$ as quotients of $C_p$, namely
\begin{equation*}
    C_q^p = q\cdot C_p/C_p + \frac{q^p-q}{p}C_p/1.
\end{equation*}
\end{example}
\newpage

\bibliographystyle{alpha}
\phantomsection\addcontentsline{toc}{section}{\refname}
\bibliography{cardiref}

\begin{thebibliography}{GGN16}

\bibitem[AR08]{ausoni2008chromatic}
Christian Ausoni and John Rognes.
\newblock {The chromatic red-shift in algebraic K-theory}.
\newblock {\em L`Enseignement Math{\'e}matique}, 54(2):13--15, 2008.

\bibitem[Art11]{ArtinAlg}
Michael Artin.
\newblock {\em Algebra}, volume 543.
\newblock Pearson Education., 2011.

\bibitem[Bec]{DualHist}
James~C. Becker.
\newblock Representation theorya history of duality in algebraic topology.
\newblock {https://www.math.purdue.edu/~gottlieb/Bibliography/}.

\bibitem[BK72]{CoreRing}
A.K. Bousfield and D.M. Kan.
\newblock The core of a ring.
\newblock {\em Journal of Pure and Applied Algebra}, 2(1):73--81, 1972.

\bibitem[CSY18]{TeleAmbi}
Shachar Carmeli, Tomer~M Schlank, and Lior Yanovski.
\newblock Ambidexterity in chromatic homotopy theory.
\newblock {\em arXiv preprint arXiv:1811.02057}, 2018.

\bibitem[CSY20]{AmbiHeight}
Shachar Carmeli, Tomer~M Schlank, and Lior Yanovski.
\newblock {Ambidexterity and Height}.
\newblock {\em arXiv preprint arXiv:2007.13089}, 2020.

\bibitem[Die]{RepTheory}
Tammo~tom Dieck.
\newblock Representation theory.
\newblock {https://www.uni-math.gwdg.de/tammo/}.

\bibitem[GGN16]{GepnerUniversality}
David Gepner, Moritz Groth, and Thomas Nikolaus.
\newblock Universality of multiplicative infinite loop space machines.
\newblock {\em Algebraic \& Geometric Topology}, 15(6):3107--3153, 2016.

\bibitem[Har17]{ambiSpan}
Yonatan Harpaz.
\newblock Ambidexterity and the universality of finite spans.
\newblock 2017.

\bibitem[Hat02]{HatcherTop}
Allen Hatcher.
\newblock {\em Algebraic Topology}, volume 544.
\newblock Cambridge University Press., 2002.

\bibitem[HL13]{AmbiKn}
Michael Hopkins and Jacob Lurie.
\newblock {Ambidexterity in $K(n)$-local stable homotopy theory}.
\newblock {\em preprint}, 2013.

\bibitem[HS99]{hoveymorava}
Mark Hovey and Neil~P Strickland.
\newblock {\em Morava $K$-theories and localisation}, volume 666.
\newblock American Mathematical Soc., 1999.

\bibitem[Lura]{LurieChrome}
Jacob Lurie.
\newblock Chromatic homotopy theory.
\newblock {http://www.math.harvard.edu/~lurie/}.

\bibitem[Lurb]{HA}
Jacob Lurie.
\newblock Higher algebra.
\newblock {http://www.math.harvard.edu/~lurie/}.

\bibitem[Lur09a]{htt}
Jacob Lurie.
\newblock {\em Higher topos theory}, volume 170 of {\em Annals of Mathematics
  Studies}.
\newblock Princeton University Press, Princeton, NJ, 2009.

\bibitem[Lur09b]{TopFieldTheory}
Jacob Lurie.
\newblock {On the Classification of Topological Field Theories}.
\newblock {\em arXiv preprint arXiv:0905.0465}, 2009.

\bibitem[PS17]{InfinitySylow}
Matan Prasma and Tomer~M. Schlank.
\newblock Sylow theorems for $\infty$-groups.
\newblock {\em Topology and its Applications}, 222:121--138, 2017.

\bibitem[Rot64]{Rota}
Gian-Carlo Rota.
\newblock On the foundations of combinatorial theory i. theory of m{\"o}bius
  functions.
\newblock {\em Zeitschrift f{\"u}r Wahrscheinlichkeitstheorie und verwandte
  Gebiete}, 2(4):340--368, 1964.

\bibitem[Wei]{Wilson}
Eric~W. Weisstein.
\newblock Wilson's theorem. {From MathWorld---A Wolfram Web Resource}.
\newblock {https://mathworld.wolfram.com/WilsonsTheorem.html}.

\bibitem[Yos83]{BurnsideIdem}
Tomoyuki Yoshida.
\newblock Idempotents of burnside rings and dress induction theorem.
\newblock {\em Journal of Algebra}, 80:90--105, 1983.

\end{thebibliography}

\end{document}